\RequirePackage{fix-cm}
\documentclass[smallextended,envcountsame]{svjour3}       
\smartqed  
\usepackage{graphicx}
\usepackage{geometry} 
\usepackage{amsfonts}
\usepackage{amsmath,amssymb,bm}
\usepackage{xcolor}
\usepackage{mathtools}
\usepackage{ulem}
\usepackage{pgfplots}
\usepackage{subcaption}
\pgfplotsset{compat=1.18}
\usepackage{cancel}
\usepackage{mathrsfs}
\usepackage{comment}
\usepackage{hyperref}

\newcommand{\dual}[2]{\left \langle #1,#2 \right \rangle}
\newcommand{\dotp}[2]{\left ( #1,#2 \right )}

\newcommand{\IR}{\mathbb{R}}
\newcommand{\IC}{\mathbb{C}}

\newcommand{\IN}{\mathbb{N}}

\newcommand{\IV}{\mathcal{V}}

\newcommand{\bx}{\bm{x}}

\newcommand{\cB}{\mathcal{B}}
\newcommand{\cF}{\mathcal{F}}
\newcommand{\cL}{\mathcal{L}}
\newcommand{\cH}{\mathcal{H}}
\newcommand{\cI}{\mathcal{I}}

\newcommand{\cT}{\mathcal{Q}}
\newcommand{\cV}{\mathcal{V}}

\newcommand{\cX}{\mathcal{X}}

\newcommand{\sfA}{\mathsf{A}}

\newcommand{\rb}{{(\mathrm{rb})}}
\newcommand{\hU}{\widehat{U}^{(2)}}

\newcommand{\cVrb}{\cV^\rb_R}
\newcommand{\vrb}{v^\rb_R}

\DeclareMathOperator{\erf}{\mathrm{erf}}
\DeclareMathOperator{\erfc}{\mathrm{erfc}}
\DeclareMathOperator{\Sinc}{\mathrm{Sinc}}

\DeclareMathOperator*{\argmin}{arg\,min}

\DeclareMathOperator{\abs}{\mathrm{abs}}
\DeclareMathOperator{\spanv}{\mathrm{span}}

\newcommand{\norm}[1]{\left \lVert #1 \right \rVert}
\newcommand{\snorm}[1]{\left \lvert #1 \right \rvert}

\DeclareMathOperator*{\esssinf}{ess\,inf}
\DeclareMathOperator*{\esssup}{ess\,sup}
\renewcommand{\Re}{\mathrm{Re}}
\renewcommand{\Im}{\mathrm{Im}}

\numberwithin{equation}{section}
\numberwithin{theorem}{section}

\mathtoolsset{showonlyrefs,showmanualtags}

\def\cdf(#1)(#2)(#3){0.5*(1+(erf((#1-#2)/(#3*sqrt(2)))))}%

\begin{document}

\title{A Model Order Reduction Method for Seismic Applications Using the Laplace Transform
}


\author{Fernando Henr\'iquez         \and
        Matthias Schlottbom 
}


\institute{Fernando Henr\'iquez \at
Institute for Analysis and Scientific Computing, Vienna University of Technology, Wiedner Hauptstra{\ss}e 8-10, A-1040 Wien, Austria. \\
              \email{fernando.henriquez@asc.tuwien.ac.at}           
           \and
           Matthias Schlottbom \at
            Department of Applied Mathematics, University of Twente, 7500 AE Enschede, the Netherlands. \\ \email{m.schlottbom@utwente.nl}  
}

\date{Version of \today.}

\maketitle

\begin{abstract}
We devise and analyze a reduced basis model order reduction (MOR) strategy for an abstract wave problem with vanishing initial conditions and a source term given by the product of a temporal Ricker wavelet and a spatial profile. Such wave problems comprise the acoustic and elastic wave equations, with applications in seismic modeling. Motivated by recent Laplace-domain MOR methodologies, we construct reduced bases that approximate the time-domain solution with exponential accuracy. We prove convergence bounds that are explicit and robust with respect to the parameters controlling the Ricker wavelet’s shape and width and identify an intrinsic accuracy limit dictated by the wavelet’s value at the initial time. In particular, the resulting error bound is independent of the underlying Galerkin discretization space and yields computable criteria for the regime in which exponential convergence is observed.
\keywords{Wave Equation \and Model Order Reduction \and Reduced Basis \and POD \and Laplace Transform \and Ricker Wavelet}
\end{abstract}

\section{Introduction}

\subsection{Motivation: Seismic Modeling}
\label{sec:motivation}
The computational modeling of seismic phenomena---through techniques such as seismic tomography
\cite{Rawlinson2014SeismicTomography,Tromp2005AdjointBanana} and full waveform inversion
\cite{Bozdag2011MisfitPhaseEnvelope,Tromp2013FullWaveform}---requires the solution of a
{\it many-query} problem, i.e., repeated evaluations of a computational model representing the
underlying physical system. For realistic models, these repeated solves are computationally
demanding and often preclude real-time simulations. 

Seismic wave propagation is typically modeled by the wave equation, which may describe
either acoustic or elastic waves, supplemented with appropriate initial conditions and
a source term represented by a nonzero right-hand side.
To capture the time dependence of realistic seismic sources, a common modeling choice is
the so-called Ricker wavelet, originally introduced in
\cite{ricker1943further,ricker1944wavelet}, which provides a convenient representation of
the spectral properties of seismic phenomena.
The Ricker wavelet has been widely employed as a source term in computational seismic
modeling; see, for instance,
\cite{komatitsch1999introduction,alford1974accuracy}; see also \cite{schneider2004plane,schneider2025understanding,SchneiderChen2011} for applications in electromagnetic problems.
Among these contributions, the most recent work to our knowledge is \cite{hawkins2025model},
where a model order reduction (MOR) method for the parametric elastic wave equation is
proposed, based on the construction of a reduced basis in the Laplace domain, to efficiently compute time-domain seismograms for a two-dimensional subsurface model of Groningen, the Netherlands.
For current approaches tackling this particular problem, we refer to \cite{Hawkinsetal2023} for a model reduction approach based on eigenfunctions.

The main goal of this work is to devise and analyze a model order reduction (MOR)
strategy for an abstract wave problem, which could represent either acoustic or elastic, subject to
vanishing initial conditions and driven by a right-hand side defined as the product
of a temporal Ricker wavelet and a spatial profile.
While this setting shares structural similarities with the framework in \cite{hawkins2025model}, we shift from parametric exploration to focus on the fundamental acceleration of the evolution problem itself.
Motivated by the MOR methodologies introduced in the recent works
\cite{henriquez2024fast,HenriquezHesthaven2024}, which similarly exploit the Laplace
transform for the construction of efficient reduced bases, we propose and
thoroughly analyze reduced basis schemes that approximate the solution of the
evolution problem with exponential accuracy, at a convergence rate that is robust
with respect to the parameters governing the shape and width of the Ricker wavelet.
This robustness is achieved by carefully exploiting the properties of the Ricker wavelet’s spectral content.
Notably, although the original problem is not parametric, after the application of the
Laplace transform, the Laplace variable enters as a parameter. 
In view of this, we borrow ideas from model order reduction for parametric wave problems to accelerate single-instance evolution solves.

\subsection{Model Order Reduction of Parametric PDEs and The Reduced Basis Method}
\label{sec:mor_parametric_PDEs}
Model order reduction (MOR) comprises a broad class of techniques designed to
reduce the computational cost associated with families of parameter-dependent
problems, such as parametric partial differential equations.
A central observation is that, although a high-fidelity discretization may be
very large, the associated {\it solution manifold} often possesses a low-dimensional
structure that can be accurately approximated.

Among MOR approaches, the reduced basis (RB) method is one of the most widely used
frameworks.
It is typically organized into an {\it offline} stage, in which representative
high-fidelity solutions (snapshots) are computed for selected parameter values,
and an {\it online} stage, in which new parameter instances are treated efficiently
by projecting onto the resulting reduced space.
A standard construction of the reduced space is proper orthogonal decomposition (POD)
\cite{berkooz1993proper,kunisch2001galerkin,liang2002proper}, which extracts a basis
of prescribed dimension via a singular value decomposition of the snapshot matrix.
Alternatively, the basis can be generated by greedy procedures
\cite{hesthaven2014efficient,bui-thanh_model_2008,buffa_priori_2012,devore_greedy_2013}
(and, for evolution problems, POD--Greedy strategies
\cite{haasdonk2013convergence,siena2023introduction}).

RB methods for stationary problems are by now well established, including rigorous
a posteriori error estimation and certification procedures; see
\cite{hesthaven2016certified,quarteroni2015reduced,prud2002reliable,rozza2014fundamentals}.
For time-dependent problems, a variety of extensions and alternative strategies have been proposed.
For example, \cite{haasdonk2008reduced} applies RB techniques to linear evolution equations
with a Finite Volume spatial discretization, while non-linear MOR based on local
reduced-order bases is discussed in \cite{lieu2006reduced}.
Further developments include structure-preserving RB methods for Hamiltonian systems
\cite{afkham2017structure,hesthaven2022rank,hesthaven2021structure,hesthaven2023adaptive},
non-intrusive approaches based on radial basis function interpolation
\cite{audouze2013nonintrusive,xiao2017parameterized}, and more recently {\it data-driven}
methodologies such as dynamic mode decomposition (DMD)
\cite{schmid2010dynamic,kutz2016dynamic,duan2023non} and operator learning
\cite{peherstorfer2016data,duan2023machine}; see \cite{Hesthaven2022} for a broader overview.

It is worth noting that the construction of reduced-order models based on linear spaces
may be particularly challenging for wave- and transport-dominated problems.
In \cite{GU19}, it was shown that the Kolmogorov $N$-width of the solution manifold
associated with a one-dimensional parametric wave equation admits the lower bound
$N^{-\frac{1}{2}}$; related results for linear transport can also be found in \cite{OhlbergerRave2016} with refinements in \cite{AGU25}.
A slow decay of the Kolmogorov $N$-width suggests that achieving high accuracy may require reduced spaces of large dimension, which can undermine the efficiency of reduced basis methods as fast surrogates for the underlying parametric problem.
In order to {\it break} this phenomenon, which is usually referred as the 
Kolmogorov barrier \cite{peherstorfer2022breaking}, one needs to resort to {\it non-linear}
MOR techniques. Among these, we highlight online adaptive basis \cite{peherstorfer2020model,iollo2014advection},
dynamic low-rank approximation \cite{koch2007dynamical,sapsis2009dynamically,musharbash2015error},
neural networks \cite{lee2020model}, and the use of non-linear transformations to improve RB error
\cite{reiss2018shifted,taddei2020registration}.
Furthermore, though no longer part of the realm of non-linear MOR techniques, an active line
of research in MOR reduction for transport-dominated parametric problems focuses on the stability
of reduced-order models, see, e.g., \cite{urban2012new,yano2014space}, also \cite{wang2012proper,pan2018data,iollo2000stability} for works on closure models, and \cite{peng2016symplectic,hesthaven2022structure} for symplectic RB construction.
For a comprehensive review of MOR techniques for time-dependent problems, we refer to \cite{hesthaven2022reduced} and, in particular for transport-dominated problems, the recent survey \cite{hesthaven2026nonlinear}.

\subsection{The Laplace Transform for MOR of Evolution Problems}
\label{sec:laplace_MOR}
An alternative route to reduced-order modeling for evolution problems is to work in the
Laplace domain. Applying the Laplace transform to a time-dependent PDE yields a family of
time-independent problems that depend on the complex Laplace variable, which may be viewed as
an additional parameter. This viewpoint makes it possible to leverage established reduced basis
technology for stationary parametric problems.

This idea has been explored, for instance, in \cite{guglielmi2023model} for parametric linear
second-order parabolic equations, where the time-domain solution is recovered by numerically
inverting the Laplace transform via contour-deformation techniques \cite{guglielmi2021pseudospectral}.
Related Laplace-domain reduced basis constructions have also been considered in
\cite{huynh_laplace_2011,Bigoni2020a}. 
In \cite{Bigoni2020a}, the focus is on the wave equation,
for which contour-deformation strategies break down, and the inverse Laplace transform is instead
computed using Weeks' method, see also \cite{hawkins2025model} for an application of Weeks' method in the context of elastic wave equations. We refer to \cite{kuhlman_review_2013} and the references there for an overview of numerical Laplace inversion techniques.

A central difficulty in such approaches is, however, the numerical inversion of the Laplace transform, which
may suffer from stability and accuracy issues, particularly over long time
intervals. The present work is motivated by these challenges and takes a first step toward a stable
and accurate reduced basis methodology for wave problems driven by Ricker wavelet source terms.

\subsection{Contributions}
In this work, we introduce a MOR technique for the abstract {\it non-parametric} wave equation
with vanishing initial conditions and a right-hand side given by the product of a spatial profile
and a temporal Ricker wavelet. As explained in Section~\ref{sec:motivation}, this setting is
motivated by its relevance in the computational modeling of seismic phenomena. 

For the construction of efficient reduced spaces, we follow the Laplace-transform-based approach
of \cite{henriquez2024fast,HenriquezHesthaven2024}, in the following referred to as LT-MOR algorithm. 
In contrast to the Laplace-domain MOR strategies
reviewed in Section~\ref{sec:laplace_MOR}, we do not recover the time-domain solution by explicitly
computing an inverse Laplace transform (e.g., via contour deformation or related inversion
methods). This circumvents stability and accuracy issues associated with numerical Laplace
inversion.

Moreover, \cite{henriquez2024fast,HenriquezHesthaven2024} show, for parabolic problems and the wave
equation, respectively, that reduced spaces constructed in the Laplace domain can be used
successfully for the underlying time-evolution problem and yield exponential convergence with
respect to both the reduced dimension and the number of Laplace-domain samples used to build the
basis. We note, however, that these analyses focus on reduced bases for semi-discrete problems
(discrete in space, continuous in time), whereas \cite{feischl2025optimal} provides a fully discrete
analysis for the parabolic case, including optimal time adaptivity.

From an algorithmic perspective, the proposed approach (building on
\cite{henriquez2024fast,HenriquezHesthaven2024,feischl2025optimal}) follows the standard
offline--online decomposition commonly used in projection-based MOR and summarized in
Section~\ref{sec:mor_parametric_PDEs}.

\begin{itemize}
    \item[(i)]
    {\bf Offline stage.}
    Applying the Laplace transform to evolution problem leads to a family
    of time-independent problems parametrized by the complex Laplace variable.
    A set of sampling points is selected in the Laplace domain, and for each
    sample the corresponding high-fidelity problem is solved after spatial
    finite element discretization.
    The resulting solutions are collected and compressed using a proper
    orthogonal decomposition to construct a reduced basis of low dimension.

    \item[(ii)]
    {\bf Online stage.}
    Once the reduced basis has been constructed, the original time-dependent
    problem is projected onto this space.
    The resulting reduced system is then advanced in time using a suitable
    time-integration scheme, leading to a significantly smaller system of
    ordinary differential equations. 
\end{itemize}

Within this framework, the online computations amount to updating only a small
number of reduced coefficients at each time step, in contrast to standard
time-stepping schemes that operate on the full set of degrees of freedom of the
high-fidelity model.
The computational cost of the offline stage is dominated by the solution of a
limited number of Laplace-domain problems, which can, however, be parallelized straightforwardly.
We emphasize that, in the proposed approach, the time-domain solution is obtained
without numerically inverting the Laplace transform.

Our main contributions are the following:
\begin{itemize}
    \item[(i)]
    We prove that the solution of the abstract wave equation, under the aforementioned
    assumptions on the data, can be approximated by finite-dimensional subspaces
    with exponential accuracy, at a rate independent of the parameters of the Ricker
    wavelet (peak arrival time $t_0$ and width $\alpha$), up to a threshold determined by the
    values of the Ricker wavelet and its time derivative at $t=0$.
    These quantities, in turn, decay like $e^{-(\alpha t_0)^2}$, up to multiplicative powers of $\alpha$. 
    \item[(ii)]
    We provide an explicit algorithm for the practical construction of a finite-dimensional
    space realizing the previously discussed exponential convergence.
    A key tool is the use of the so-called sinc quadrature for the selection of points
    in the Laplace domain, which in turn define the snapshots to be computed.
    We show that, with this strategy, the discretization error in the Laplace domain
    also decays exponentially, with a rate that again does not depend on the parameters
    of the Ricker wavelet.
\end{itemize}

A few remarks are in place. Firstly, our analysis is independent of the underlying Galerkin
discretization space and is valid in an infinite-dimensional setting. Secondly, even though we do
not consider the parametric problem as in, e.g., \cite{hawkins2025model}, our work provides a first
comprehensive analysis of the dynamics of this particular wave propagation model, which is of
central importance in seismic modeling.
The construction of a reduced model for the parametric counterpart
of the problem considered here---for instance, involving layered media with
piecewise-constant material properties following prescribed statistical
distributions---naturally builds upon the theoretical and computational tools developed in this work.

\subsection{Outline}
This work is structured as follows.
In Section~\ref{sec:model_problem}, we introduce the notation used throughout the paper,
together with the abstract wave equation posed in a Gelfand triple of Hilbert spaces.
We also introduce the Laplace transform and its bilateral counterpart, which are used
extensively in this work, in particular through the Paley--Wiener theorem.

In Section~\ref{sec:laplace_transform_MOR}, we present the proposed MOR technique based on
the Laplace transform, which, unlike the method introduced in \cite{HenriquezHesthaven2024},
relies on samples in the Laplace domain for a frequency-controlled problem.
Therein, we also discuss the computational implementation of the method and address
relevant implementation aspects.

Sections~\ref{sec:convergence_analysis} and \ref{sec:fully_discrete} are devoted to a
detailed analysis of the proposed MOR technique from
Section~\ref{sec:laplace_transform_MOR}.
In particular, in Section~\ref{sec:convergence_analysis} we thoroughly analyze the best
possible approximation of the abstract wave problem by finite-dimensional spaces,
whereas in Section~\ref{sec:fully_discrete} we provide a complete fully discrete error
analysis accounting for the sampling in the Laplace domain.

We conclude in Section~\ref{sec:numerical_results} with a comprehensive set of numerical
experiments illustrating the performance of the proposed method, and in
Section~\ref{sec:concluding_remarks} we provide final concluding remarks and discuss
possible extensions of the present work.

\section{Model Problem}
\label{sec:model_problem}
\subsection{Notation}
Set $\mathbb{R}_+=[0,\infty)$ and let $\mathcal{X}$ be a Banach space.
We denote by $L^1_{\text{loc}}(\mathbb{R}_+;\mathcal{X})$ the set of functions $f:\mathbb{R}_+\to\mathcal{X}$ such that $f$ is Bochner integrable on $[0,T]$ for any $T>0$.
For $\mu\geq 0$, we define $L^2_\mu(\mathbb{R}_+;\mathcal{X})$ 
as the space of all functions $f\in L^1_{\text{loc}}(\mathbb{R}_+;\mathcal{X})$ satisfying the condition
\begin{align}
	\|f\|_{L^2_\mu(\mathbb{R}_+;\mathcal{X})}^2 
	=
	\int_0^\infty e^{-2\mu t} \|f(t)\|_{\mathcal{X}}^2\,\textrm{d}t
	<
	\infty.
\end{align}

Let $\mathcal{V}$ be a real separable Hilbert space with inner product $(\cdot,\cdot)_\mathcal{V}$ and induced norm $\norm{\cdot}_{\mathcal{V}}$. 
We denote by $\mathcal{V}^\star$ its dual and by $\dual{\cdot}{\cdot}_{\mathcal{V}^\star \times \mathcal{V}}$ the $\mathcal{V}^{\star} \times \mathcal{V}$
duality pairing. 
In addition, we consider a (real) separable Hilbert space $\mathcal{H}$ equipped with the inner product $\dotp{\cdot}{\cdot}_{\mathcal{H}}$ and
induced norm $\norm{\cdot}_{\mathcal{H}}$.
Furthermore, we assume that $\mathcal{V}\subset\mathcal{H}$ is dense and continuously embedded.
Identifying $\mathcal{H}$ with its dual $\mathcal{H}^\star$, we have that $\mathcal{V}\subset\mathcal{H}\subset \mathcal{V}^\star$ is a Gelfand triple with pivot space $\mathcal{H}$.

Denoting $\partial_t^k f$ the $k$th distributional derivative of
$f\in L^1_{\text{loc}}(\mathbb{R}_+;\mathcal{H})$, we furthermore define the space
$\mathcal{W}_\mu(\mathbb{R}_+;\mathcal{H},\mathcal{V})$ consisting of functions $f\in L^2_\mu(\mathbb{R}_+;\mathcal{V})$
such that $\partial_t f \in L^2_\mu(\mathbb{R}_+;\mathcal{H})$ and
$\partial_t^2 f \in L^2_\mu(\mathbb{R}_+;\mathcal{V}^\star)$ and equip it with the norm
\begin{align*}
	\|f\|_{\mathcal{W}_\mu(\mathbb{R}_+;\mathcal{H},\mathcal{V})}^2 
	\coloneqq
	\|f\|_{L^2_\mu(\mathbb{R}_+;\mathcal{V})}^2
	+
	\|\partial_t f\|_{L^2_\mu(\mathbb{R}_+;\mathcal{H})}^2
	+
	\|\partial_t^2 f\|_{L^2_\mu(\mathbb{R}_+;\mathcal{V}^\star)}^2.
\end{align*}
By embedding, we have that functions $f\in \mathcal{W}_\mu(\mathbb{R}_+;\mathcal{H},\mathcal{V})$
are continuous with values in $\mathcal{H}$, i.e., $f\in \mathscr{C}(\overline{\mathfrak{I}};\mathcal{H})$ for each $T>0$, and $\partial_t f\in \mathscr{C}(\overline{\mathfrak{I}};\mathcal{V}^\star)$, for each $T>0$ and with $\mathfrak{I} \coloneqq (0,T)$.
\subsection{The Abstract Wave Equation}\label{sec:abstract_wave}
We consider the abstract linear, second-order evolution equation
\begin{align}
    \partial_t^2 u(t) + \mathsf{A} u(t) &= f(t), \quad t>0, \label{eq:wave}\\
    u(0) &= 0, \label{eq:ic1}\\
    \partial_t u(0) &= 0. \label{eq:ic2}
\end{align}
This formulation is obtained by replacing inhomogeneous initial conditions with a  problem involving homogeneous initial data and a time-dependent source term $f(t)=q(t)p$ with a given spatial source $p$. The time-dependent source is constructed using a Ricker wavelet
\begin{equation}\label{eq:ricker_wavelet}
     q(t)
     =
     \left(
        1-\frac{\alpha^2}{2}(t-t_0)^2
     \right)
     e^{
        - \frac{\alpha^2}{4}(t-t_0)^2
     },
\end{equation}
which provides a smooth and temporally localized representation of an impulsive excitation. By appropriately choosing its parameters $\alpha>0$ and $t_0>0$, the resulting forcing reproduces the effect of the prescribed initial displacement and velocity while avoiding singular behavior at the initial time. 
This reformulation allows the problem to be treated entirely within the class of second-order evolution equations with homogeneous initial conditions and structured right-hand sides.


To complete the description of \eqref{eq:wave}, let $\cV\subset\cH\subset\cV^\star$ be a Gelfand triple. Furthermore, we assume that $\mathsf{A}:\mathcal{V} \rightarrow \mathcal{V}^\star$ is a self-adjoint, coercive and bounded linear operator, i.e., it holds that 
$\dual{\mathsf{A}v}{w}_{\mathcal{V}^\star \times \mathcal{V}} = \dual{\mathsf{A}w}{v}_{\mathcal{V}^\star \times \mathcal{V}}$, $\forall v,w \in \mathcal{V}$, 
and there exists $ c_{\mathsf{A}}, \, C_{\mathsf{A}}>0$ such that
\begin{equation}\label{eq:A_continuous_coercive}
    c_{\mathsf{A}}
    \norm{v}^2_{\mathcal{V}}
    \leq \dual{\mathsf{A}v}{v}_{\mathcal{V}^\star \times \mathcal{V}},
    \quad
    \text{and}
    \quad
    \norm{\mathsf{A}v}_{\mathcal{V}^\star}
    \leq
    C_{\mathsf{A}}
    \norm{v}_{\mathcal{V}},
    \quad
    \forall 
    v \in \mathcal{V}.
\end{equation}

\begin{example}[Scalar Wave Equation]\label{example:scalar_wave}
We consider the scalar heterogeneous wave equation equipped with homogeneous Dirichlet boundary conditions in a bounded Lipschitz domain $\Omega \subset \IR^d$, $d \in \IN$, with boundary $\partial \Omega$. 
The physical properties of the medium, such as local stiffness, tension, or permeability, are characterized by the coefficient matrix ${\bf A}(\bx)$, $\bx\in\Omega$. 
We assume that ${\bf A}(\bx)$ is symmetric and uniformly positive definite, i.e., there exist constants $\underline{c}_{{\bf A}}, \overline{c}_{\bf A}>0$ such that 
\begin{equation}\label{eq:elliptic_c}
	\esssinf_{\bm{x} \in \Omega} 
	\boldsymbol\xi^\top 
	{\bf A}({\bm x}) 
	\boldsymbol\xi 
	\geq 
	\underline{c}_{\bm{A}}
    \norm{\boldsymbol\xi}^2_2, 
	\quad \forall \boldsymbol\xi \in \IR^{d} \backslash \{\bm{0}\},
\end{equation}	
and
\begin{equation}\label{eq:continuous_c}
	\esssup_{\bm{x}\in \Omega} 
	\norm{
		\bm{A}(\bm{x}) 
	}_2
	\leq
	\overline{c}_{\bm{A}}.
\end{equation}
The associated function spaces are $\mathcal{V} = H^1_0(\Omega)$, and $\mathcal{H} = L^2(\Omega)$.
The operator $\mathsf{A}:\cV\to \cV^\star$, is then given by
\begin{align*}
	\langle \mathsf{A}u,v\rangle_{\cV^\star\times \cV} 
	= 
	\int\limits_{\Omega}
    \left(
        {\bf A}({\bm x})
        \nabla u(\bm{x})
	\right)
    \cdot
	{\nabla v(\bm{x})}
	\text{d}\bm{x},
	\quad
    \forall
	u,v \in H^1_0(\Omega).
\end{align*}
\end{example}
\begin{example}[Elastic Wave Equation]\label{example:elastic_wave}
Modeling the propagation of waves in a linear elastic, isotropic medium finds applications in seismology \cite{fichtner2010full} and structural engineering \cite{Bigoni2020a}. In this setting, assuming a normalized unit density of the material, the wave speeds can be described by the Lamé parameters $\lambda, \mu \in L^\infty(\Omega)$, where, as in Example~\ref{example:scalar_wave}, $\Omega$ is a bounded domain with Lipschitz boundary.
Assuming again homogeneous Dirichlet conditions for the displacement vector $\mathbf{u}$ on $\partial\Omega$, we consider $\cH=L^2(\Omega)^d$, $d\in\{2,3\}$, and $\cV=H^1_0(\Omega)^d$. 
The operator $\mathsf{A}:\cV\to \cV^\star$, is then given by
\begin{align*}
	\langle \mathsf{A} \mathbf{u},\mathbf{v}\rangle_{\cV^\star\times \cV} 
	= 
	\int\limits_{\Omega}
     \lambda (\nabla \cdot \mathbf{u})(\nabla \cdot \mathbf{v}) + 2\mu \varepsilon(\mathbf{u}):\varepsilon(\mathbf{v})\,\mathrm{d}\bx
	\quad
	\mathbf{u},\, \mathbf{v} \in \cV,
\end{align*}
where $:$ denotes the Frobenius inner product for matrices, and $\varepsilon(\mathbf{v})=\frac{1}{2}(\nabla \mathbf{v}+ (\nabla\mathbf{v})^\top)$ is the infinitesimal strain tensor.
Using the assumption
\begin{equation}
    0<\mu_0<\mu({\bm x})< \mu_1<\infty
    \quad
    \text{and}
    \quad
    0<\lambda_0<\lambda({\bm x})< \lambda_1<\infty,
    \quad
    \text{ for a.e. }
    {\bm x} \in \Omega,
\end{equation}
for some constants $\mu_0,\mu_1,\lambda_0,\lambda_1>0$, one can verify that \eqref{eq:A_continuous_coercive} holds using Korn's inequality.

\end{example}

\subsection{Laplace and Fourier transforms}\label{sec:transforms}
Let $\cX$ be a {\it complex} Hilbert space.
The Laplace transform of $f\in L^1_{\text{loc}}(\IR_+;\cX)$
is defined as
\begin{equation} \label{eq:laplace}
	\cL
	\left\{f \right\}(s) 
	= 
	\int\limits_{0}^{\infty}f(t) e^{-st}\text{d}t,
	\quad
	s \in \mathbb{C}.
\end{equation}
We define the abscissa of convergence of $\cL\{f\}$ as
\begin{equation}
    \abs(f)
    \coloneqq
    \inf_{s \in \mathbb{C}}\{\Re(s): \cL\{f\}(s)\text{ exists}\}.
\end{equation}
Therefore, \eqref{eq:laplace} converges for $s\in\IC$ with $\Re(s)>\abs(f)$.
This motivates the definition of the half-planes 
\begin{align}
    \Pi_\lambda 
    \coloneqq 
    \left\{
        s\in\IC: \Re(s)>\lambda
    \right\},
\end{align}
where $\lambda\in\IR$, and we set as well $\Pi_+\coloneqq \Pi_0$.

Throughout, we also use the notation $\widehat{f}(s) = \cL\{f\}(s)$, for $s \in \Pi_{\abs(f)}$, to denote the Laplace transform of $f$.

We also introduce the  bilateral Laplace transform of a function $f\in L^1_{\text{loc}}(\IR;\cX)$,
\begin{align}\label{eq:bilateral_laplace}
    \cB\{f\}(s)
    \coloneqq
    \int_{-\infty}^\infty f(t) e^{-st} \text{d}t, \quad s\in\mathbb{C},
\end{align}
provided the integral exists. 
One can readily verify that
\begin{align}
	\cB\{f\}(\lambda+\imath\omega) 
    =
    \cF\left\{e^{-\lambda t}f\right\}(\omega),
\end{align}
where $\cF$ is the Fourier transformation, defined, for a square-integrable function, as
\begin{equation}
    \cF\{f\}(\omega)
    \coloneqq
    \int_{-\infty}^\infty f(t) e^{-\imath \omega t}\text{d}t.
\end{equation}

\subsection{Hardy spaces of analytic functions}

We introduce Hardy spaces of analytic functions 
following \cite[Chapter 4]{RR97} and \cite[Section 6.4]{hille1996functional}.

\begin{definition}[Hardy Spaces, {\cite[Definition 6.4.1]{hille1996functional}}]
\label{def:hardy_spaces}
Let $\mathcal{X}$ be a {\it complex} Banach space equipped
with the norm $\norm{\cdot}_{\mathcal{X}}$.
For $p\in[1,\infty)$ and $\mu \in \mathbb{R}$, we denote by
$\mathscr{H}^{p}_\mu(\mathcal{X})$ the set of all $\mathcal{X}$-valued 
functions $f:\Pi_\mu \rightarrow \mathcal{X}$ satisfying
the following properties:
\begin{itemize}
    \item[(i)] 
    The function $f:\Pi_\mu \rightarrow \mathcal{X}$ is holomorphic.
    \item[(ii)]
    It holds that
\begin{align}
	\norm{f}_{\mathscr{H}^{p}_\mu(\mathcal{X})}
	\coloneqq
	\sup_{\sigma>\mu}
	\left(
		\,
		\int\limits_{-\infty}^{+\infty}
		\norm{
			f(\sigma+\imath \tau)
		}^p_\mathcal{X}
		\frac{
		\normalfont\text{d}
		\tau}{2\pi}
	\right)^{\frac{1}{p}}
	<
	\infty.
\end{align}
\end{itemize}
Equipped with the norm $\norm{\cdot}_{\mathscr{H}^{p}_\mu(\mathcal{X})}$
the space $\mathscr{H}^{p}_\mu(\mathcal{X})$ is a Banach one.
\end{definition}

\begin{proposition}[{\cite[Theorem 6.4.3]{hille1996functional}}]
\label{prop:properties_hardy}
Let $p\in [1,\infty)$ and $\mu \in \mathbb{R}$. 
\begin{itemize}
\item[(i)]
For each $f \in \mathscr{H}^{p}_\mu(\mathcal{X})$ the function 
\begin{equation}
	T(\sigma,f)
	= 
	\int\limits_{-\infty}^{+\infty}
	\norm{
		f(\sigma+\imath \tau)
	}^p_\mathcal{X}
	\normalfont\text{d}
	\tau
\end{equation}
is a continuous, monotone decreasing function of $\sigma$,
for $\sigma \geq \mu$. 
In particular, 
\begin{equation}
    T(\mu,f) = \norm{f}^p_{\mathscr{H}^{p}_\mu(\mathcal{X})}
    \quad
    \text{and}
    \quad
    \displaystyle\lim_{\sigma \rightarrow \infty} T(\sigma,f)  = 0.
\end{equation}  

\item[(ii)]
For each $f \in \mathscr{H}^{p}_\mu(\mathcal{X})$
\begin{equation}
	\lim_{\sigma \rightarrow \mu}
	\int\limits_{-\infty}^{+\infty}
	\norm{
		f(\sigma+\imath \tau)
		-
		f(\mu+\imath \tau)
	}^p_\mathcal{X}
	\normalfont\text{d}
	\tau
	= 0.
\end{equation}
\end{itemize}
\end{proposition}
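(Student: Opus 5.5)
The plan is to prove part~(i) first and then derive part~(ii) from it. Throughout, the key tool is a representation of $f$ through its boundary values on $\Re(s)=\mu$.

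\textbf{Step 1: boundary values and a Poisson representation.} Fix $f\in\mathscr{H}^{p}_\mu(\mathcal{X})$ and shift variables to reduce to $\mu=0$. I would show that $f$ has nontangential boundary values $f(\imath\tau)$ on $\Re(s)=0$ with $\norm{f(\imath\cdot)}_{L^p(\IR;\mathcal{X})}\le\norm{f}_{\mathscr{H}^p_0(\mathcal{X})}$.

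The route is the standard scalar argument. For every $x^\star\in\mathcal{X}^\star$, the function $\langle x^\star,f\rangle$ lies in the scalar Hardy space $H^p(\Pi_0)$, so classical theory gives a Poisson representation
\begin{equation}
f(\sigma+\imath\tau)=\int_{-\infty}^{\infty}P_\sigma(\tau-\eta)\,f(\imath\eta)\,\text{d}\eta,\qquad P_\sigma(x)=\frac{\sigma}{\pi(\sigma^2+x^2)}.
\end{equation}
To lift this to vector-valued functions, I would use weak compactness of the family $\{f(\sigma+\imath\cdot)\}_{\sigma>0}$ in $L^p$ (vector-valued). A cleaner alternative is the semigroup property: for $\sigma'<\sigma$, $f(\sigma+\imath\cdot)=P_{\sigma-\sigma'}*f(\sigma'+\imath\cdot)$. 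This identity follows from the Poisson representation of the bounded holomorphic function $f(\cdot+\sigma')$ on $\Pi_0$, where boundedness comes from the Cauchy estimate $\norm{f(s)}\lesssim(\Re s)^{-1/p}\norm{f}$. For $p=1$, or when $\mathcal{X}$ lacks the Radon--Nikod\'ym property, I would rely on the semigroup identity only, since it suffices for what follows.

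\textbf{Step 2: monotonicity and continuity for part (i).} From $f(\sigma+\imath\cdot)=P_{\sigma-\sigma'}*f(\sigma'+\imath\cdot)$, Young's inequality with $\norm{P_\delta}_{L^1}=1$ gives $T(\sigma,f)\le T(\sigma',f)$, so $T(\cdot,f)$ is nonincreasing on $(\mu,\infty)$.

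Continuity for $\sigma>\mu$ follows from the strong continuity of the Poisson semigroup on $L^p(\IR;\mathcal{X})$: approximate by $C_c$ functions and use that $P_\delta$ is an approximate identity.

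Since $T$ is nonincreasing and the norm is defined as a supremum, $\sup_{\sigma>\mu}T(\sigma,f)=\lim_{\sigma\downarrow\mu}T(\sigma,f)$. Defining $T(\mu,f)$ through the boundary function makes it equal to this limit, which I establish in Step~3 together with part~(ii).

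For the limit at infinity, $\norm{P_\delta*g}_{L^p}\to0$ as $\delta\to\infty$ for $p>1$, because $\norm{P_\delta}_{L^{r}}\to0$ for $r>1$ (approximate $g$ by compactly supported functions). The case $p=1$ is harder, since $\int P_\delta*g=\int g$ is not forced to vanish; there I would instead use the mean-zero structure inherited from holomorphy. Concretely, the Fourier transform of a boundary function of an $H^1$ function vanishes on a half-line, so the Fourier support argument gives decay, via $\widehat{P_\delta*g}=e^{-\delta|\xi|}\widehat g$ together with vanishing at $\xi=0$.

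\textbf{Step 3: part (ii).} By Step~1, $f(\sigma+\imath\cdot)=P_{\sigma-\mu}*f(\mu+\imath\cdot)$, and $P_\delta*g\to g$ in $L^p$ as $\delta\to0$ for $g\in L^p(\IR;\mathcal{X})$, again by density of $C_c$ functions and the approximate identity property. This proves~(ii). It also yields $T(\sigma,f)\to T(\mu,f)$, which closes part~(i).

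The main obstacle is Step~1: identifying the boundary function $f(\mu+\imath\cdot)$ in the vector-valued setting. I would first try to obtain it as the $L^p$-limit of $f(\sigma+\imath\cdot)$ by proving that this family is Cauchy as $\sigma\downarrow\mu$. The route is to use the semigroup identity and uniform $L^p$ bounds, noting that the differences $P_{\delta}*g_{\sigma'}-g_{\sigma'}$ are controlled uniformly over the family.
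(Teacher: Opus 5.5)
The genuine gap is the endpoint $\sigma=\mu$, i.e.\ part~(ii) and the identity $T(\mu,f)=\norm{f}^p_{\mathscr{H}^p_\mu(\mathcal{X})}$. Step~3 needs two things: a boundary function $f(\mu+\imath\cdot)\in L^p(\IR;\mathcal{X})$, and the representation $f(\sigma+\imath\cdot)=P_{\sigma-\mu}*f(\mu+\imath\cdot)$. The interior identity $f(\sigma+\imath\cdot)=P_{\sigma-\sigma'}*f(\sigma'+\imath\cdot)$, $\sigma'>\mu$, provides neither. So your remark that for $p=1$, or without the Radon--Nikod\'ym property, the semigroup identity ``suffices for what follows'' is false.

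Your closing plan (Cauchy property from the semigroup identity plus uniform $L^p$ bounds) cannot work either. For $p=1$ and $\mu=0$, the Poisson kernel $u(\sigma+\imath\tau)=P_\sigma(\tau)$ has both properties and no $L^1$-limit as $\sigma\downarrow0$. Even with holomorphy, the conclusion fails in a general Banach space. Take $\mathcal{X}=c_0$, $\mu=0$, $\phi(s)=(s-1)/(s+1)$ and $f(s)=(s+1)^{-2}(\phi(s)^n)_{n\in\IN}$. This $f$ is holomorphic (a locally uniform limit of its partial sums) with $\norm{f(s)}_{c_0}\le\snorm{s+1}^{-2}$, hence $f\in\mathscr{H}^p_0(c_0)$ for every $p\in[1,\infty)$. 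Suppose $f(\sigma+\imath\cdot)\to G$ in $L^p(\IR;c_0)$. Testing with coordinate functionals gives $G(\tau)_n=(1+\imath\tau)^{-2}\phi(\imath\tau)^n$ for a.e.\ $\tau$ and all $n$. These have modulus $(1+\tau^2)^{-1}$ for every $n$, so $G(\tau)\notin c_0$. Thus (ii) is false for arbitrary Banach $\mathcal{X}$, and Step~1 cannot be closed without a hypothesis on $\mathcal{X}$.

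The open-half-plane part of your argument is correct in every Banach space. The Cauchy estimate, the interior Poisson identity, Young's inequality and strong continuity of the Poisson semigroup give monotonicity and continuity of $T(\cdot,f)$ on $(\mu,\infty)$, and for $p>1$ also $T(\sigma,f)\to0$. For $p=1$ your Fourier sketch needs one more step, because pointwise decay of $e^{-\delta|\xi|}\widehat{g}(\xi)$ does not control $\norm{P_\delta*g}_{L^1}$. Use $\int g=0$, which your half-line support observation and continuity of $\widehat{g}$ at $0$ do give, together with $\norm{P_\delta*g}_{L^1}\le\int\norm{P_1(\cdot-y/\delta)-P_1}_{L^1(\IR)}\norm{g(y)}_{\mathcal{X}}\,\text{d}y\to0$ by dominated convergence.

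To repair the endpoint, assume $\mathcal{X}$ reflexive and $1<p<\infty$; this covers the Hilbert, $p=2$ setting in which the paper uses the result. Then carry out your first option:
\begin{itemize}
\item[(a)] $L^p(\IR;\mathcal{X})$ is reflexive, so $f(\sigma_k+\imath\cdot)\rightharpoonup g_0$ weakly along some sequence $\sigma_k\downarrow\mu$.
\item[(b)] Since $P_{\sigma-\sigma_k}\to P_{\sigma-\mu}$ in $L^1(\IR)$, and convolution with a fixed $L^1$ kernel is weakly continuous, the interior identity passes to the limit as $f(\sigma+\imath\cdot)=P_{\sigma-\mu}*g_0$.
\item[(c)] The approximate-identity property then yields (ii) with $f(\mu+\imath\cdot)\coloneqq g_0$, and also $T(\mu,f)=\lim_{\sigma\downarrow\mu}T(\sigma,f)=\sup_{\sigma>\mu}T(\sigma,f)$.
\end{itemize}
For $p=1$ this compactness is unavailable. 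You would need a vector-valued F.\ and M.\ Riesz theorem, which none of your ingredients supplies.
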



The following result is a Hilbert space-valued version of the 
Paley-Wiener representation theorem.

\begin{theorem}[Paley-Wiener Theorem, {\cite[Section 4.8, Theorem E]{RR97}}]
\label{thm:paley_wiener}
Let $\mathcal{X}$ be a {\it Hilbert} space and let $\mu \in \mathbb{R}$.
Then, the map 
$
	\mathcal{L}:
	L^2_\mu(\IR_+;\mathcal{X}) 
	\rightarrow 
	\mathscr{H}^{2}_\mu(\mathcal{X})
$
is an isometric isomorphism, i.e., 
\begin{equation}
    \mathcal{L} 
    \in 
    \mathscr{L}_{\normalfont\text{iso}}
    \left(
        L^2_\mu(\IR_+;\mathcal{X}), \mathscr{H}^{2}_\mu(\mathcal{X})
    \right),
\end{equation}
and for each $f \in L^2_\mu(\IR_+;\mathcal{X})$
\begin{equation}\label{eq:norm_equiv}
	\norm{f}_{L^2_\mu\left(\mathbb{R}_+;\mathcal{X}\right)} 
	= 
	\norm{
		\mathcal{L}\{f\}
	}_{\mathscr{H}^{2}_\mu(\mathcal{X})}.
\end{equation}
\end{theorem}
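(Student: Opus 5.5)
We reduce the statement to the scalar Fourier--Plancherel theorem. The vector-valued issues are handled by expanding in an orthonormal basis of the separable Hilbert space $\mathcal{X}$, and the supremum over $\sigma$ in the Hardy norm is handled through the monotonicity in Proposition~\ref{prop:properties_hardy}.

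\textbf{Isometry.} Let $f\in L^2_\mu(\IR_+;\mathcal{X})$ and fix $\sigma>\mu$. Extend $f$ by zero to $\IR$ and set $g_\sigma(t)\coloneqq e^{-\sigma t}f(t)$. For $t\geq 0$ we have $e^{-\sigma t}\le e^{-\mu t}$, so $g_\sigma\in L^1\cap L^2(\IR;\mathcal{X})$; the $L^1$ bound follows from Cauchy--Schwarz because $e^{-(\sigma-\mu)t}\in L^2(\IR_+)$. It follows that $\mathcal{L}\{f\}(\sigma+\imath\tau)=\cF\{g_\sigma\}(\tau)$, and the integral converges absolutely. Differentiating under the integral sign, which is justified by dominated convergence, shows that $\mathcal{L}\{f\}$ is holomorphic on $\Pi_\mu$.

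Next we apply the vector-valued Plancherel identity $\int\norm{\cF g}^2_{\mathcal{X}}\,\frac{\mathrm d\tau}{2\pi}=\int\norm{g}^2_{\mathcal{X}}\,\mathrm dt$, which holds for Hilbert-space-valued $g$ by applying the scalar identity to each coordinate $\dotp{g}{e_k}$ and summing with monotone convergence. This gives
\begin{equation*}
\int_{-\infty}^{\infty}\norm{\mathcal{L}\{f\}(\sigma+\imath\tau)}^2_{\mathcal{X}}\frac{\mathrm d\tau}{2\pi}=\int_0^\infty e^{-2\sigma t}\norm{f(t)}^2_{\mathcal{X}}\,\mathrm dt .
\end{equation*}
The right-hand side increases as $\sigma\downarrow\mu$ and, by monotone convergence, tends to $\norm{f}^2_{L^2_\mu}$. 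Taking the supremum over $\sigma>\mu$ yields \eqref{eq:norm_equiv}, and in particular $\mathcal{L}$ maps into $\mathscr{H}^2_\mu(\mathcal{X})$. Linearity of $\mathcal{L}$ is immediate.

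\textbf{Surjectivity.} This is the main obstacle. Let $F\in\mathscr{H}^2_\mu(\mathcal{X})$; multiplying by $e^{\mu t}$ reduces the problem to $\mu=0$.
\begin{enumerate}
\item For $\sigma>0$, define $g_\sigma\coloneqq\cF^{-1}\{F(\sigma+\imath\,\cdot)\}\in L^2(\IR;\mathcal{X})$ and $h_\sigma(t)\coloneqq e^{\sigma t}g_\sigma(t)$.
\item Show that $h_\sigma$ does not depend on $\sigma$. We first test against a compactly supported smooth $\varphi$, so that the pairing involves only a compactly supported object. We then shift the vertical contour from $\sigma_1$ to $\sigma_2$ using Cauchy's theorem on rectangles. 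The horizontal edges contribute nothing: since $\tau\mapsto\int_{\sigma_1}^{\sigma_2}\norm{F(x+\imath\tau)}^2\,\mathrm dx$ is integrable by the uniform bound on $T(\sigma,F)$, there are heights $\tau_n\to\pm\infty$ along which these integrals tend to zero. Call the common function $h$.
\item Show that $h$ vanishes on $(-\infty,0)$. Plancherel gives $\int e^{-2\sigma t}\norm{h(t)}^2\,\mathrm dt=T(\sigma,F)/2\pi\le\norm{F}^2$ for every $\sigma>0$. Letting $\sigma\to\infty$, the weight $e^{-2\sigma t}$ blows up on $t<0$, which forces $h=0$ a.e.\ there; alternatively, use $\lim_{\sigma\to\infty}T(\sigma,F)=0$ from Proposition~\ref{prop:properties_hardy}(i).
\item Conclude. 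Letting $\sigma\to0$ and using Fatou's lemma gives $h\in L^2_0(\IR_+;\mathcal{X})$. By construction $\mathcal{L}\{h\}(\sigma+\imath\tau)=F(\sigma+\imath\tau)$ for all $\sigma>0$, which proves surjectivity.
\end{enumerate}
An isometric linear bijection is then an isometric isomorphism.

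Alternatively, the whole argument can be carried out coordinatewise: $F_k\coloneqq\dotp{F}{e_k}$ lies in the scalar space $\mathscr{H}^2_\mu$, so the scalar Paley--Wiener theorem gives $h_k$ with $\sum_k\norm{h_k}^2=\sum_k\norm{F_k}^2_{\mathscr{H}^2}$. The delicate point is justifying that this sum is bounded by $\norm{F}^2_{\mathscr{H}^2_\mu}$, since the supremum over $\sigma$ and the sum over $k$ need not interchange. Here the monotonicity of $T(\sigma,\cdot)$ from Proposition~\ref{prop:properties_hardy}(i) resolves it: each coordinate supremum is a limit as $\sigma\downarrow\mu$ of a monotone quantity, so monotone convergence allows swapping the limit and the sum. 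Then $h=\sum_k h_k e_k$ is the required preimage.
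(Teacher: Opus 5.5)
Your proof is correct. It differs from the paper mainly in that the paper gives no proof: Theorem~\ref{thm:paley_wiener} is imported from \cite[Section 4.8, Theorem E]{RR97}.

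What you wrote is the classical scalar Paley--Wiener argument, lifted to Hilbert-space values:
\begin{itemize}
\item \emph{Isometry:} Plancherel on each vertical line, followed by monotone convergence as $\sigma\downarrow\mu$.
\item \emph{Surjectivity:} you define $e^{\sigma t}\cF^{-1}\{F(\sigma+\imath\,\cdot)\}$ and show it is independent of $\sigma$ by a Cauchy contour shift. The horizontal edges vanish because $\int_{\sigma_1}^{\sigma_2}T(x,F)\,\mathrm{d}x<\infty$. You then force support in $[0,\infty)$ by letting $\sigma\to\infty$ in the uniform bound $T(\sigma,F)\le 2\pi\norm{F}^2_{\mathscr{H}^2_\mu(\cX)}$.
\end{itemize}
The Hilbert-space structure enters only through the vector-valued Plancherel identity and the vector-valued du Bois-Reymond lemma. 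You correctly reduce both to scalar coordinates.

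The two routes buy different things. The citation gives brevity and full generality. Your route is self-contained, and it shows that surjectivity needs only the uniform bound on $T(\sigma,F)$. The monotonicity from Proposition~\ref{prop:properties_hardy} is needed only in your coordinatewise alternative, where you rightly identify it as what allows exchanging $\sup_\sigma$ with $\sum_k$.

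Two small details are worth adding:
\begin{itemize}
\item The identity $\cL\{h\}(\sigma+\imath\tau)=F(\sigma+\imath\tau)$ first comes out for a.e.\ $\tau$ as an equality in $L^2(\IR;\cX)$. It extends to every $\tau$ by continuity of both sides.
\item The statement does not assume $\cX$ is separable, but you expand in a countable orthonormal basis. This is harmless: a strongly measurable $f$ is essentially separably valued, and a holomorphic $F$ has separable range, so everything lives in a separable closed subspace of $\cX$. In the paper's applications, $\cX\in\{\cV,\cH,\cV^\star\}$ is separable anyway.
\end{itemize}
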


\section{Model Order Reduction using the Laplace Transform}
\label{sec:laplace_transform_MOR}
In this section, we present the MOR algorithm based on the Laplace transform 
and describe its practical computational implementation.

\subsection{LT-MOR Algorithm}

We consider the problem of accurately approximating the solution of the second-order evolution equation \eqref{eq:wave}--\eqref{eq:ic2} in a subspace $\cV_{R,M}^\rb\subset\cV$ of dimension $R$. The subscript $M\in\IN$ with $M\geq R$ indicates that
$\cV_{R,M}^\rb$ is computed from $M$ suitable snapshots.
Once $\cV_{R,M}^\rb$ has been constructed, we use a Galerkin approximation of \eqref{eq:wave}--\eqref{eq:ic2} to find an approximation $u_{R,M}^\rb:[0,T]\to \cV_{R,M}^\rb$ of the solution $u$ as follows.

\begin{problem}[Reduced semi-discrete problem] \label{pr:sdpr}
We seek $u^{\normalfont\textrm{(rb)}}_{R,M} \in \mathcal{W}_\mu\left(\mathbb{R}_+;\mathcal{H},\cV_{R,M}^\rb\right)$, for some $\mu>0$, such that for all $t>0$ it holds that
\begin{equation}\label{eq:semi_discrete_rom}
\begin{aligned}
	\dotp{
		\partial^2_t
		u_{R,M}^{\normalfont\textrm{(rb)}}(t)
	}{
		v_{R,M}^{\normalfont\textrm{(rb)}}
	}_{\mathcal{H}}
	+ 
    \dual{\sfA u_{R,M}^\rb(t)}{v_{R,M}^\rb}_{\cV^\star\times\cV} = \dotp{f(t)}{v_{R,M}^\rb}_\cH
	\quad
	\forall v_{R,M}^{\normalfont\textrm{(rb)}} \in \cV_{R,M}^\rb,
\end{aligned}
\end{equation}
equipped with vanishing initial conditions, i.e., 
$u_{R,M}^\rb(0)=0$ and $\partial_t u_{R,M}^\rb(0)=0$. 
\end{problem}

We note that, since $\cV_{R,M}^\rb$ is finite dimensional, \eqref{eq:semi_discrete_rom} amounts to a linear ordinary differential equation, which has a unique solution by the Picard-Lindel\"of theorem.

For the construction of a suitable approximation space $\cV_{R,M}^\rb$, we consider an auxiliary problem in the Laplace domain that corresponds to a lossy Helmholtz equation
with a right-hand side.

\begin{problem}[Frequency-controlled auxiliary problem]\label{pbrm:laplace_discrete}
Let $\mu>0$.
For each $s \in \Pi_\mu$,
we seek $\widehat{U}^{(2)}(s) \in \mathcal{V}$ such that\footnote{The superscript is used to highlight that we target the second derivative.}
\begin{equation}
    s^2  \widehat{U}^{(2)}(s) + \sfA \widehat{U}^{(2)}(s)= \cB\{\partial^2_t q\}(s)\, p.
\end{equation}
\end{problem}
We emphasize that the source term in Problem~\ref{pbrm:laplace_discrete} corresponds to the bilateral Laplace transform of the second temporal derivative of the source term $f$.
%
Below, we derive an error bound for \( \widehat{U}^{(2)}(s) - \cL\{\partial_t^2 u\}(s) \), which shows that this difference becomes small for suitable choices of the parameters \( \alpha \) and \( t_0 \) that determine the shape and location of the Ricker wavelet \( q \). Moreover, we show that the rapid decay of \( \cB\{q\}(\mu + \imath \tau) \) for fixed \( \mu > 0 \) and as \( |\tau| \to \infty \) implies the corresponding decay of \( \widehat{U}^{(2)}(\mu + \imath \tau) \) as \( |\tau| \to \infty \).
As a consequence, the smooth function \( \tau \mapsto \widehat{U}^{(2)}(\mu + \imath \tau) \) can be efficiently approximated by sampling at a finite set of points
\begin{equation}
    \mathcal{P}
    \coloneqq\{ s_j \coloneqq  \mu + \imath \tau_j : j = 1, \ldots, M \},
    \quad M \in \IN.
\end{equation}
To achieve further compression, we construct the space $\mathcal{V}^{\text{(rb)}}_{R,M}$ as follows
\begin{equation}\label{eq:fPOD}
	\mathcal{V}^{\text{(rb)}}_{R,M}
	=
	\argmin_{
	\substack{
		\mathcal{V}_R \subset \mathcal{V}
		\\
		\text{dim}(\mathcal{V}_R)\leq R	
	}
	}
    	\sum_{j=1}^{M} 
	\omega_j
	\norm{
		\Re
		\left\{
			\widehat{U}^{(2)}(s_j) 
		\right\}
		- 
		\mathsf{P}_{\mathcal{V}_R}
		\Re
		\left\{
			\widehat{U}^{(2)}(s_j) 
		\right\}
	}^2_{\cV},
\end{equation}
where $\{\omega_1,\ldots,\omega_{M}\}$ are (strictly positive) weights
and $\mathsf{P}_{\mathcal{V}_R}: \mathcal{V} \rightarrow \mathcal{V}_R$
denotes the $\mathcal{V}$-orthogonal projection onto $\mathcal{V}_R$.

\begin{remark}
The reason why we only consider the real part of the snapshots
$\left\{\widehat{U}^{(2)}(s_1) ,\dots, \widehat{U}^{(2)}(s_M)  \right\}$ in \eqref{eq:fPOD} is thoroughly discussed
in \cite[Section 5.1]{henriquez2024fast}.
\end{remark}

\subsection{Practical Implementation}
In order to realize the proposed method, in the following, let $\mathcal{V}_h \subset \mathcal{V}$ be a finite-dimensional Galerkin discretization space of dimension $N_h$. Consider a basis
$\{\varphi_{j,h}\}_{j=1}^{N_h}$ for $\mathcal{V}_h$. The minimization problem stated in
\eqref{eq:fPOD} admits the following algebraic form:
\begin{equation}\label{eq:tPOD_algebraic_frequency}
	\bm{\Phi}^{\textrm{(rb)}}_{R,M}
	=
	\arg\min_{\bm\Phi \in \mathscr{V}_{R}}
	\sum_{j=1}^{M}
	\omega_j
	\norm{
		\Re\{\widehat{\bm{\mathsf{U}}}_h^{(2)}(s_j)\}
		-
		\bm{\Phi}\bm{\Phi}^\top
		\bm{\mathsf{B}}_h
		\Re\{\widehat{\bm{\mathsf{U}}}_h^{(2)}(s_j)\}
	}^2_{\bm{\mathsf{B}}_h},
\end{equation}
where
\begin{equation}\tag*{}
	\norm{\bm{\mathsf{v}}}_{\bm{\mathsf{B}}_h}
	=
	\sqrt{
		\left(\bm{\mathsf{B}}_h\bm{\mathsf{v}},\bm{\mathsf{v}}\right)_{\IC^{N_h}}
	},
	\qquad
	\bm{\mathsf{v}} \in \IC^{N_h},
\end{equation}
and
\begin{equation}
	\mathscr{V}_{R}
	\coloneqq
	\left\{
		\bm{\Phi} \in \IR^{N_h \times R}:
		\bm{\Phi}^\top \bm{\mathsf{B}}_h \bm{\Phi} = \bm{\mathsf{I}}_{R}
	\right\}.
\end{equation}
Here, we identify each $\widehat{U}^{(2)}_h(s)\in \mathcal{V}_h$, i.e., the Galerkin solution to
Problem~\ref{pbrm:laplace_discrete} in $\mathcal{V}_h$, with its vector of degrees of freedom
$\widehat{\bm{\mathsf{U}}}_h^{(2)}(s) \in \IC^{N_h}$ in the basis $\{\varphi_{j,h}\}_{j=1}^{N_h}$.

The connection between the solution to \eqref{eq:fPOD} and \eqref{eq:tPOD_algebraic_frequency}
is as follows. Set
\begin{equation}\label{eq:def_basis_rb}
	\varphi^{\textrm{(rb)}}_{k,M}
	\coloneqq
	\sum_{j=1}^{N_h}
	\left( \bm{\phi}^{\textrm{(rb)}}_{k,M} \right)_j \,\varphi_{j,h}
	\in \mathcal{V},
	\qquad
	k=1,\dots,R,
\end{equation}
where
\begin{equation}\tag*{}
	\bm{\Phi}^{\textrm{(rb)}}_{R,M}
	=
	\left(
		\bm{\phi}^{\textrm{(rb)}}_{1,M},
		\dots,
		\bm{\phi}^{\textrm{(rb)}}_{R,M}
	\right),
\end{equation}
and $\left( \bm{\phi}^{\textrm{(rb)}}_{k,M} \right)_j$ signifies the $j$-th component of
$\bm{\phi}^{\textrm{(rb)}}_k$. Then $\left\{\varphi^{\textrm{(rb)}}_{1,M},\ldots,\varphi^{\textrm{(rb)}}_{R,M}\right\}$
is an orthonormal basis of $\mathcal{V}_{R,M}^{(\textrm{rb})}$ in the $\mathcal{V}$-inner product.
Set
\begin{equation}\label{eq:snapshot_M_LT_RB}
	\bm{\mathsf{S}}
	\coloneqq
	\left(
		\Re\{ \widehat{\bm{\mathsf{U}}}_h(s_1)\},
		\dots,
		\Re\{ \widehat{\bm{\mathsf{U}}}_h(s_{M})\}
	\right)
	\in \IR^{N_h \times M},
\end{equation}
and
\begin{equation}\tag*{}
	\bm{\mathsf{D}}
	\coloneqq
	\text{diag}\left(\omega_1, \ldots, \omega_{M} \right)
	\in \IR^{M \times M}.
\end{equation}

Let $\bm{\mathsf{B}}_h = \bm{\mathsf{R}}_h^\top \bm{\mathsf{R}}_h$ be the Cholesky
decomposition of $\bm{\mathsf{B}}_h$, where $\bm{\mathsf{R}}_h$ is an upper triangular matrix.
Furthermore, consider
\begin{equation}\tag*{}
	\widetilde{\bm{\mathsf{S}}}
	=
	\bm{\mathsf{R}}_h\,\bm{\mathsf{S}}\,\bm{\mathsf{D}}^{\frac{1}{2}}
\end{equation}
together with its singular value decomposition (SVD)
$\widetilde{\bm{\mathsf{S}}} = \widetilde{\bm{\mathsf{U}}}\widetilde{\Sigma}\widetilde{\bm{\mathsf{V}}}^\top$,
where
\begin{equation}
	\widetilde{\bm{\mathsf{U}}}
	=
	\left(
		\widetilde{\boldsymbol{\zeta}}_1,
		\ldots,
		\widetilde{\boldsymbol{\zeta}}_{N_h}
	\right)\in \IR^{N_h \times N_h},
	\quad\text{and}\quad
	\widetilde{\bm{\mathsf{V}}}
	=
	\left(
		\widetilde{\boldsymbol{\psi}}_1,
		\ldots,
		\widetilde{\boldsymbol{\psi}}_{M}
	\right)\in \IR^{M \times M}
\end{equation}
are orthogonal matrices, referred to as the left and right singular vectors of
$\widetilde{\bm{\mathsf{S}}}$, respectively, and
\begin{equation}\tag*{}
	\widetilde{\Sigma}
	=
	\operatorname{diag}\left(\widetilde\sigma_1, \ldots, \widetilde\sigma_r,0,\ldots,0\right)
	\in \IR^{N_h \times M},
	\qquad
	\widetilde\sigma_1 \geq \cdots \geq \widetilde\sigma_r>0,
\end{equation}
where $r = \operatorname{rank}(\widetilde{\bm{\mathsf{S}}})\leq  \min\{N_h,M\}$.

According to the Schmidt--Eckart--Young theorem (see, e.g.,
\cite[Proposition~6.2]{quarteroni2015reduced}), for any $R\leq r$ the solution
$\bm{\Phi}^{\textrm{(rb)}}_R$ to \eqref{eq:tPOD_algebraic_frequency} is given by
\begin{equation}\label{eq:reduced_basis_Phi}
	\bm{\Phi}^{\textrm{(rb)}}_{R,M}
	=
	\left(
		\bm{\mathsf{R}}^{-1}_h\widetilde{\boldsymbol{\zeta}}_1,
		\ldots,
		\bm{\mathsf{R}}^{-1}_h\widetilde{\boldsymbol{\zeta}}_{R}
	\right),
\end{equation}
which consists of the first $R$ left singular vectors of $\widetilde{\bm{\mathsf{S}}}$
multiplied from the left by $\bm{\mathsf{R}}^{-1}_h$. Moreover,
\begin{equation}\label{eq:error_SVD}
\begin{aligned}
	\min_{\bm{\Phi} \in \mathscr{V}_{R}}
	\sum_{j=1}^{M}
	\omega_j
	\norm{
		\Re\left\{\widehat{\bm{\mathsf{U}}}_h^{(2)}(s_j)\right\}
		-
		\bm{\Phi}\bm{\Phi}^\top
		\bm{\mathsf{B}}_h
		\Re\left\{\widehat{\bm{\mathsf{U}}}_h^{(2)}(s_j)\right\}
	}^2_{\bm{\mathsf{B}}_h}
	&=
	\sum_{j=R+1}^r \widetilde{\sigma}_j^2.
\end{aligned}
\end{equation}

\begin{remark}
We emphasize that the presented results do not depend on the dimension of $\cV_h$. Also, it is easy to check that all results hold true if $\cV$ is replaced by $\cV_h$.
\end{remark}

\section{Convergence Analysis}
\label{sec:convergence_analysis}
We present a complete analysis of the LT-MOR method tailored for 
the setting described in Section~\ref{sec:abstract_wave}.
To this end, we discuss properties of the evolution problem \eqref{eq:wave}--\eqref{eq:ic2} with more general initial conditions and source terms than the one in Section~\ref{ssec:auxiliary_L_probl}.
Next, in Section~\ref{ssec:sinc}, we recall important results concerning sinc interpolation
and quadrature.
In Section~\ref{ssec:low_rank}, we quantify the approximation error of $\partial_t^2u$
using finite-dimensional subspaces of dimension $R$, which is based on the combination of three key ingredients: 
\begin{itemize}
    \item[(i)]
    A bound for the consistency error between the Laplace transform of $\partial_t^2 u$ and the solution $\widehat{U}^{(2)}$ to Problem~\ref{pbrm:laplace_discrete}.
    \item[(ii)]
    The behavior of the solution $\widehat{U}^{(2)}(s)$ for large $\Im(s)$.
    \item[(iii)]
    The sinc interpolation error bound recalled ahead in Section~\ref{ssec:sinc}.
\end{itemize}
Finally, in Section~\ref{ssec:exponential_convergence} we present an error bound for approximating $u$ with respect to the $L^2_\mu(\IR_+;\cV)$ norm in the best possible linear space. This bound consists of a term that decays exponentially in $R$ and a consistency error that is related to the behavior of the Ricker wavelet for $t<0$.

\subsection{Auxiliary Laplace domain problem}\label{ssec:auxiliary_L_probl}
We apply the Laplace transform to the evolution problem \eqref{eq:wave}--\eqref{eq:ic2}.
Since the Laplace transform employs complex Hilbert spaces, we understand with slight abuse of notation $\cH$ and $\cV$ as their corresponding complexifications.

For the sake of convenience in our analysis, we consider the following auxiliary problem in the 
Laplace domain with general data.

\begin{problem}[Laplace Domain Auxiliary Problem]\label{pbrm:laplace}
Let $\mu>0$, $\phi_0 \in \mathcal{V}$, $\phi_1 \in \mathcal{H}$, and
$f \in L^2_\mu(\mathbb{R}_+;\mathcal{H})$.
For each $s \in \Pi_\mu$, we seek $\widehat{\phi}(s) \in \cV$ such that
\begin{equation}\label{eq:laplace_weak}
	(s^2 + \mathsf{A})\widehat{\phi}(s)
	=
    \widehat{f}(s) + s \phi_0 + \phi_1,
\end{equation}
where $\widehat{f}(s) = \mathcal{L}\{f\}(s) \in \mathscr{H}^2_\mu(\mathcal{H})$,
for $s \in \Pi_\mu$, (cf. Theorem~\ref{thm:paley_wiener}, Paley-Wiener).
\end{problem}


To analyze Problem~\ref{pbrm:laplace}, we define the following norm
\begin{equation}
	\norm{v}_{\varrho}
	\coloneq
	\sqrt{
		\varrho^2
		\norm{v}^2_{\mathcal{H}}
		+
		\norm{v}^2_{\mathcal{V}}
	},
	\quad
	v \in \mathcal{V},
\end{equation}
for a real parameter $\varrho>0$,
and the corresponding induced norm for the operator $\mathsf{B}:\mathcal{V} \rightarrow \mathcal{V}^\star $
\begin{equation}\label{eq:operator_norm_rho}
	\norm{\mathsf{B}}_{\varrho}
	\coloneqq
	\sup_{w,v \in \mathcal{V} \backslash \{0\}}
	\frac{
		\snorm{\dual{\mathsf{B}w}{v}_{\mathcal{V}^\star \times \mathcal{V}}}
	}{   
		\norm{w}_{\varrho}\norm{v}_{\varrho}
	},
	\quad
	\mathsf{B} \in \mathscr{L}(\mathcal{V},\mathcal{V}^\star).
\end{equation}
We note that in either case $\norm{\cdot}_\varrho$ is equivalent to $\norm{\cdot}_{\mathcal{V}}$ and 
$\norm{\cdot}_{\mathscr{L}(\mathcal{V},\mathcal{V}^\star)}$ respectively.
 
We are interested in bounding the inf-sup constant
\begin{equation}
	\gamma(s)
	\coloneqq
	\inf_{w \in \mathcal{V} \backslash \{0\}}
	\sup_{v\in  \mathcal{V} \backslash \{0\}}
	\frac{
		\snorm{
			\dual{(s^2+\mathsf{A})w}{v}_{\mathcal{V}^\star \times \mathcal{V}}
		}
	}{
		\norm{w}_{\snorm{s}}
		\norm{v}_{\snorm{s}}
	}
\end{equation}
explicitly in terms of $s \in \Pi_\mu$.
As in \cite{MST20}, one can show the following result.

\begin{lemma}\label{lmm:inf_sup}
Let $\mu>0$ be given. For each $s \in \Pi_\mu$, one has
\begin{equation}
	\gamma(s)
	\geq
	\frac{
		{\Re\{s\}}
	}{
		\snorm{s} 	}
    \min\{1, c_{\mathsf{A}}\}
    >0,
\end{equation}
where $c_{\mathsf{A}}$ is the coercivity constant from \eqref{eq:A_continuous_coercive}.
\end{lemma}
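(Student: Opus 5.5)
The plan is the standard trick from \cite{MST20}: test with a rotated copy of the trial function, $v = \bar{s}\, w / \snorm{s}$. Hence $\snorm{\dual{(s^2+\mathsf{A})w}{v}} = \snorm{\dual{(s^2+\mathsf{A})w}{\bar s w}}/\snorm{s}$, and it suffices to bound the real part of this pairing from below.

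Fix $s \in \Pi_\mu$ and $w \in \mathcal{V}\setminus\{0\}$. Since the complexified sesquilinear pairing is linear in the first argument and antilinear in the second, we have
\begin{equation}
\dual{(s^2+\mathsf{A})w}{s w}_{\mathcal{V}^\star\times\mathcal{V}}
= s^2\bar{s}\,\norm{w}^2_{\mathcal{H}} + \bar{s}\,\dual{\mathsf{A}w}{w}_{\mathcal{V}^\star\times\mathcal{V}}
= s\snorm{s}^2\norm{w}_{\mathcal{H}}^2 + \bar{s}\,\dual{\mathsf{A}w}{w}_{\mathcal{V}^\star\times\mathcal{V}}.
\end{equation}
Here the first term uses the Gelfand triple identification $\dual{w}{v} = \dotp{w}{v}_{\mathcal{H}}$. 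Self-adjointness of $\mathsf{A}$, extended to the complexification, makes $\dual{\mathsf{A}w}{w}$ real, and coercivity gives $\dual{\mathsf{A}w}{w} \geq c_{\mathsf{A}}\norm{w}_{\mathcal{V}}^2$. Taking real parts, and using $\Re\{\bar s\} = \Re\{s\} > 0$, we obtain
\begin{equation}
\Re \dual{(s^2+\mathsf{A})w}{s w} \geq \Re\{s\}\left( \snorm{s}^2\norm{w}_{\mathcal{H}}^2 + c_{\mathsf{A}}\norm{w}^2_{\mathcal{V}}\right) \geq \Re\{s\}\min\{1,c_{\mathsf{A}}\}\norm{w}^2_{\snorm{s}}.
\end{equation}

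Now choose $v = s w$ in the supremum. Then $\norm{v}_{\snorm{s}} = \snorm{s}\norm{w}_{\snorm{s}}$, so
\begin{equation}
\sup_{v} \frac{\snorm{\dual{(s^2+\mathsf{A})w}{v}}}{\norm{w}_{\snorm{s}}\norm{v}_{\snorm{s}}} \geq \frac{\Re\{s\}\min\{1,c_{\mathsf{A}}\}\norm{w}_{\snorm{s}}^2}{\snorm{s}\norm{w}_{\snorm{s}}^2}.
\end{equation}
Taking the infimum over $w$ yields the claim. Positivity follows because $\Re\{s\} > \mu > 0$.

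There is no real obstacle. The only point needing care is convention: whether the complexified duality pairing is antilinear in the second slot, and that $\mathsf{A}$ extended to the complexification satisfies $\dual{\mathsf{A}w}{w} \geq c_{\mathsf{A}}\norm{w}_{\mathcal{V}}^2$. The latter holds by splitting $w$ into real and imaginary parts and using the symmetry of $\mathsf{A}$, which cancels the cross terms. If the pairing is instead bilinear, one tests with $v = \overline{s w}$, and the computation is identical.
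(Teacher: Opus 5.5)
Your argument is correct and is essentially the paper's proof. The paper tests with $v=\frac{s}{\snorm{s}}w$, bounds the modulus from below by the real part, and then applies coercivity, which is what you do with $v=sw$; the positive rescaling does not change the ratio. The only blemish is your opening line, where you write $\bar{s}w/\snorm{s}$ instead of the $sw$ you actually compute with. This is harmless, because multiplying $v$ by any nonzero scalar leaves the ratio in the inf-sup unchanged.
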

\begin{proof}
We follow the steps of the proof given in \cite{MST20}, which in turn uses that of \cite{HBN86},
for convenience of the reader.
Let $w\in\cV$ be given and set $v=\frac{s}{|s|}w$. Observe that
\begin{equation}
\begin{aligned}
	\snorm{\dual{(s^2+\mathsf{A})w}{v}_{\mathcal{V}^\star \times \mathcal{V}}}
	&
	\geq
	\snorm{\Re\left\{\dual{(s^2+\mathsf{A}) w}{v}_{\mathcal{V}^\star \times \mathcal{V}}\right\}}
	\\
	&
	=
	\frac{{\Re\{s\}}}{|s|}\left(|s|^2\norm{w}_\cH^2+\Re\dual{\mathsf{A}w}{w}_{\cV^\star\times\cV}\right),
\end{aligned}
\end{equation}
which proves the claim by \eqref{eq:A_continuous_coercive}.
\end{proof}

\begin{proposition}\label{prop:stability_laplace_domain_problem}
Let $\mu>0$ and assume that $\phi_0,\phi_1\in\cV$ and $\mathsf{A}\phi_0, \mathsf{A}\phi_1\in\cH$.
Then, for each $s \in \Pi_{\mu}$, there exists a unique 
$\widehat{\phi}(s) \in \mathcal{V}$ solution to Problem 
\ref{pbrm:laplace} satisfying
\begin{equation}\label{eq:bound_laplace_1}
	\norm{\widehat{\phi}(s)}_{\mathcal{V}}
	\leq
	\frac{\norm{\phi_0}_{\mathcal{V}}}{\snorm{s}}
	+
	\frac{\norm{\phi_1}_{\mathcal{V}}}{\snorm{s}^2}
	+
	\frac{
		1
	}{
		\Re\{s\} \min\{1, c_{\mathsf{A}}\}
	}
	\left(
		\norm{\widehat{f}(s)}_{\mathcal{H}}
		+
		\frac{\norm{\mathsf{A}\phi_0}_{\mathcal{H}}}{\snorm{s}}
		+
		\frac{\norm{\mathsf{A}\phi_1}_{\mathcal{H}}}{\snorm{s}^2}
	\right)
\end{equation}
and
\begin{equation}\label{eq:bound_laplace_2}
    \norm{s \widehat{\phi}(s) -\phi_0}_{\mathcal{H}}
	\leq
	\frac{\norm{\phi_1}_{\mathcal{H}}}{\snorm{s}}
	+
	\frac{
		1
	}{
		\Re\{s\}  \min\{1, c_{\mathsf{A}}\}
	}
	\left(
		\norm{\widehat{f}(s)}_{\mathcal{H}}
		+
		\frac{\norm{\mathsf{A}\phi_0}_{\mathcal{H}}}{\snorm{s}}
		+
		\frac{\norm{\mathsf{A}\phi_1}_{\mathcal{H}}}{\snorm{s}^2}
	\right)
\end{equation}
together with
\begin{equation}\label{eq:bound_laplace_3}
\begin{aligned}
    \norm{s^2 \widehat{\phi}(s) - s \phi_0 -\phi_1}_{\mathcal{V}^\star}
	\leq
	&
	\norm{\widehat{f}(s)}_{\mathcal{V}^\star}
	+ 
	C_{\mathsf{A}}
	\left(
    		\frac{\norm{\phi_0}_{\mathcal{V}}}{\snorm{s}} 
    		+ 
    		\frac{\norm{\phi_1}_{\mathcal{V}}}{\snorm{s}^2} 
	\right)
	\\
	&
	+
	\frac{C_{\mathsf{A}}}{{\Re\{s\}}\min\{1, c_{\mathsf{A}}\}}
	\left(
		\norm{\widehat{f}(s)}_\mathcal{H} 
		+ 
		\frac{\norm{\mathsf{A} \phi_0}_\mathcal{H}}{\snorm{s}} 
		+ 
		\frac{\norm{\mathsf{A} \phi_1}_\mathcal{H}}{\snorm{s}^2} 
	\right).
\end{aligned}
\end{equation}
Furthermore, the map $\Pi_\mu \ni s \mapsto \widehat{\phi}(s) \in \mathcal{V}$
is holomorphic.
\end{proposition}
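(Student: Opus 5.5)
The plan is to reduce to homogeneous initial data by subtracting an explicit ansatz, and then use Lemma~\ref{lmm:inf_sup}. Set
\begin{equation}
	\widehat{\psi}(s) \coloneqq \widehat{\phi}(s) - \frac{\phi_0}{s} - \frac{\phi_1}{s^2}.
\end{equation}
Inserting this into \eqref{eq:laplace_weak}, the terms $s\phi_0+\phi_1$ cancel against $s^2(\phi_0/s+\phi_1/s^2)$. What remains is the equation
\begin{equation}
	(s^2+\mathsf{A})\widehat{\psi}(s) = \widehat{f}(s) - \frac{\mathsf{A}\phi_0}{s} - \frac{\mathsf{A}\phi_1}{s^2} \eqqcolon g(s) \in \mathcal{H}.
\end{equation}
The hypothesis $\mathsf{A}\phi_0,\mathsf{A}\phi_1\in\cH$ is exactly what puts $g(s)$ in $\cH$. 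We also have $\norm{g(s)}_\cH\le \norm{\widehat f(s)}_\cH+\norm{\mathsf{A}\phi_0}_\cH/|s|+\norm{\mathsf{A}\phi_1}_\cH/|s|^2$. Existence and uniqueness of $\widehat\psi(s)$, and hence of $\widehat\phi(s)$, follow from Lemma~\ref{lmm:inf_sup} via the Babu\v{s}ka–Ne\v{c}as theorem. The adjoint condition holds because the adjoint is $\bar s^2+\mathsf{A}$ with $\Re \bar s=\Re s>0$. Alternatively, the coercivity-type estimate in the lemma's proof, with test function $v=\frac{s}{|s|}w$, directly gives Lax–Milgram for the rotated form.

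For the a priori bound I will test with $v=\frac{s}{|s|}\widehat\psi$, as in the proof of Lemma~\ref{lmm:inf_sup}. This gives
\begin{equation}
	\frac{\Re s}{|s|}\big(|s|^2\norm{\widehat\psi}_\cH^2+c_{\mathsf A}\norm{\widehat\psi}_\cV^2\big)\le \norm{g}_\cH\norm{\widehat\psi}_\cH .
\end{equation}
The left side dominates $\frac{\Re s}{|s|}\min\{1,c_{\mathsf A}\}\max\{|s|^2\norm{\widehat\psi}^2_\cH,\norm{\widehat\psi}^2_\cV\}$.

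\textbf{Main obstacle.} The delicate step is getting exactly the factor $1/(\Re s\,\min\{1,c_\mathsf A\})$ for both $\norm{\widehat\psi}_\cV$ and $|s|\norm{\widehat\psi}_\cH$. For the $\cH$-part, divide by $|s|\norm{\widehat\psi}_\cH$ to get $|s|\norm{\widehat\psi}_\cH\le \norm{g}_\cH/(\Re s\min\{1,c_\mathsf A\})$. For the $\cV$-part, I would combine with the bound $\norm{\widehat\psi}_\cH\le \norm{g}_\cH/(|s|\Re s\,\min\{1,c_\mathsf A\})$. That yields $\norm{\widehat\psi}_\cV^2\le \norm g_\cH^2/(\Re s\,\min\{1,c_\mathsf A\})^2$. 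If constants do not match precisely I would adjust using $\min\{1,c_\mathsf A\}\le c_\mathsf A$.

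The triangle inequality $\norm{\widehat\phi}_\cV\le \norm{\widehat\psi}_\cV+\norm{\phi_0}_\cV/|s|+\norm{\phi_1}_\cV/|s|^2$ then gives \eqref{eq:bound_laplace_1}. For \eqref{eq:bound_laplace_2}, note that $s\widehat\phi-\phi_0=s\widehat\psi+\phi_1/s$, and use the $\cH$-bound on $|s|\norm{\widehat\psi}_\cH$. For \eqref{eq:bound_laplace_3}, use the equation itself: $s^2\widehat\phi-s\phi_0-\phi_1=\widehat f(s)-\mathsf A\widehat\phi(s)$. Then bound $\norm{\mathsf A\widehat\phi}_{\cV^\star}\le C_\mathsf A\norm{\widehat\phi}_\cV$ and insert \eqref{eq:bound_laplace_1}.

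Holomorphy follows because $s\mapsto s^2+\mathsf A$ is a holomorphic family of isomorphisms $\cV\to\cV^\star$ on $\Pi_\mu$. Inversion is analytic in operator norm, via the Neumann series around each point. The right-hand side $s\mapsto\widehat f(s)+s\phi_0+\phi_1$ is holomorphic into $\cV^\star$ by the Paley–Wiener theorem (Theorem~\ref{thm:paley_wiener}). The composition is therefore holomorphic.
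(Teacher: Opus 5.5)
Your proposal is correct and follows essentially the paper's route. You use the same shift $\widehat{\phi}(s)-\phi_0/s-\phi_1/s^2$ to obtain an $\cH$-valued right-hand side, the inf-sup estimate of Lemma~\ref{lmm:inf_sup} (re-derived directly by testing with $\frac{s}{|s|}\widehat\psi$ instead of quoting the abstract inf-sup bound in the $\norm{\cdot}_{|s|}$-norm), triangle inequalities for \eqref{eq:bound_laplace_1}--\eqref{eq:bound_laplace_2}, and holomorphy via inversion of $s^2+\mathsf{A}$. Your derivation of \eqref{eq:bound_laplace_3} from $\widehat f(s)-\mathsf{A}\widehat\phi(s)$ and \eqref{eq:bound_laplace_1} is algebraically identical to the paper's bound on $s^2\widehat{w}(s)$.
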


\begin{proof}
The operator $s^2 +\mathsf{A}: \mathcal{V} \rightarrow \mathcal{V}^\star$
is continuous according to
\begin{equation}
	\norm{s^2 +\mathsf{A}}_{\snorm{s}}
	\leq
	\max \{1,C_{\mathsf{A}} \},
\end{equation}
where the operator norm $\norm{\cdot}_{\snorm{s}}$ has been defined
in \eqref{eq:operator_norm_rho} with $\rho=\snorm{s}$, and with $C_{\mathsf{A}}$ as in \eqref{eq:A_continuous_coercive}.
It follows, also due to Lemma~\ref{lmm:inf_sup} and \cite[Thm. 2.1.44]{sauter2010boundary}
for each $s\in \Pi_\mu$, that there exists a unique
$\widehat{\phi}(s) \in \mathcal{V}$ solution to Problem~\ref{pbrm:laplace} satisfying
\begin{equation}
\begin{aligned}
	\norm{\widehat{\phi}(s)}_{\snorm{s}}
	&
	\leq
	\frac{\snorm{s}}{{\Re\{s\}}\min\{1, c_{\mathsf{A}}\}}
	\sup_{v \in \mathcal{V} \backslash \{0\}}
	\frac{\snorm{\dual{\widehat{f}(s) + s \phi_0 + \phi_1}{v}_{\mathcal{V}^\star \times \mathcal{V}}}}{\norm{v}_{\snorm{s}}}
	\\
	&
	\leq
	\frac{1}{{\Re\{s\}}\min\{1, c_{\mathsf{A}}\}}
	\left(
		\norm{\widehat{f}(s)}_\mathcal{H} + \snorm{s}\norm{\phi_0}_\mathcal{H} + \norm{\phi_1}_\mathcal{H}
	\right).
\end{aligned}
\end{equation}
Next, recalling that we have assumed $\phi_0,\phi_1 \in \mathcal{V}$, we define for each $s \in \Pi_\mu$
\begin{equation}\label{eq:eqaution_W_s}
	\widehat{w}(s) \coloneqq \widehat{\phi}(s) - \frac{\phi_0}{s} -  \frac{\phi_1}{s^2} \in \mathcal{V},
\end{equation}
which is solution to
\begin{equation}
	(s^2 + \mathsf{A})\widehat{w}(s)
	=
    \widehat{f}(s) - \frac{ \mathsf{A} \phi_0}{s} - \frac{ \mathsf{A} \phi_1}{s^2}
    \in \mathcal{H}.
\end{equation}
Recalling Lemma~\ref{lmm:inf_sup}, we obtain
\begin{equation}\label{eq:bound_W_s}
\begin{aligned}
	\norm{\widehat{w}(s)}_{\snorm{s}}
	&
	\leq
	\frac{\snorm{s}}{{\Re\{s\}}\min\{1, c_{\mathsf{A}}\}}
	\sup_{v \in \mathcal{V} \backslash \{0\}}
	\frac{\snorm{\dual{\widehat{f}(s) - \frac{ \mathsf{A} \phi_0}{s} - \frac{ \mathsf{A} \phi_1}{s^2}}{v}_{\mathcal{V}^\star \times \mathcal{V}}}}{\norm{v}_{\snorm{v}}}
	\\
	&
	\leq
	\frac{1}{{\Re\{s\}}\min\{1, c_{\mathsf{A}}\}}
	\left(
		\norm{ \widehat{f}(s) }_\mathcal{H} + \frac{\norm{\mathsf{A} \phi_0}_\mathcal{H}}{\snorm{s}} + \frac{\norm{\mathsf{A} \phi_1}_\mathcal{H}}{\snorm{s}^2} 
	\right),
\end{aligned}
\end{equation}
which combined with \eqref{eq:eqaution_W_s} yields \eqref{eq:bound_laplace_1}.

Next, we calculate
\begin{equation}
	\norm{s \widehat{\phi}(s) -\phi_0}_{\mathcal{H}}
	\leq
	\snorm{s}
	\norm{\widehat{w}(s)}_{\mathcal{H}}
	+
	\frac{
		\norm{\phi_1}_\mathcal{H}
	}{
		\snorm{s}
	},
\end{equation}
which combined with \eqref{eq:eqaution_W_s} yields \eqref{eq:bound_laplace_2}. Finally, we have that 
\begin{equation}
	\norm{s^2 \widehat{\phi}(s) - s \phi_0 -\phi_1}_{\mathcal{V}^\star}
	=
	\norm{s^2 \widehat{w}(s)}_{\mathcal{V}^\star}
	=
	\sup_{v \in \mathcal{V} \backslash \{0\}}
	\frac{
		\snorm{\dual{s^2 \widehat{w}}{v}_{\mathcal{V}^\star \times \mathcal{V}}}
	}{
		\norm{v}_\mathcal{V}
	},
\end{equation}
thus using \eqref{eq:eqaution_W_s} we obtain
\begin{equation}
\begin{aligned}
	\norm{s^2 \widehat{\phi}(s) - s \phi_0 -\phi_1}_{\mathcal{V}^\star}
	&
	=
	\sup_{v \in \mathcal{V} \backslash \{0\}}
	\frac{
		\snorm{\dual{\widehat{f}(s) - \frac{ \mathsf{A} \phi_0}{s} - \frac{ \mathsf{A} \phi_1}{s^2} - \mathsf{A}\widehat{w}(s) }{v}_{\mathcal{V}^\star \times \mathcal{V}}}
	}{
		\norm{v}_\mathcal{V}
	}
	\\
	&
	\leq
	\norm{\widehat{f}(s)}_{\mathcal{V}^\star}
	+ 
	\frac{\norm{\mathsf{A} \phi_0}_{\mathcal{V}^\star}}{\snorm{s}} 
	+ 
	\frac{\norm{\mathsf{A} \phi_1}_{\mathcal{V}^\star}}{\snorm{s}^2} 
	+
	\norm{\mathsf{A}\widehat{w}(s)}_{\mathcal{V}^\star}
	\\
	&
	\leq
	\norm{ \widehat{f}(s)}_{\mathcal{V}^\star}
	+ 
	C_{\mathsf{A}}
	\left(
    		\frac{\norm{\phi_0}_{\mathcal{V}}}{\snorm{s}} 
    		+ 
    		\frac{\norm{\phi_1}_{\mathcal{V}}}{\snorm{s}^2} 
    		+
    		\norm{\widehat{w}(s)}_{\mathcal{V}}
	\right),
\end{aligned}
\end{equation}
which, together with \eqref{eq:bound_W_s}, yields \eqref{eq:bound_laplace_3}.
\end{proof}
The map $\Pi_\mu \ni s \mapsto s^2 +\mathsf{A} \in \mathscr{L}(\mathcal{V},\mathcal{V}^\star)$ is holomorphic,
where $\mathscr{L}(\mathcal{V},\mathcal{V}^\star)$ corresponds to the Banach space of bounded linear operators
from $\mathcal{V}$ to $\mathcal{V}^\star$ equipped with the standard operator norm 
\begin{equation}
	\norm{\mathsf{B}}_{\mathscr{L}(\mathcal{V},\mathcal{V}^\star).}
	\coloneqq
	\sup_{w,v \in \mathcal{V} \backslash \{0\}}
	\frac{
		\snorm{\dual{\mathsf{B}w}{v}_{\mathcal{V}^\star \times \mathcal{V}}}
	}{   
		\norm{w}_{\mathcal{V}}\norm{v}_{\mathcal{V}}
	},
	\quad
	\mathsf{B} \in \mathscr{L}(\mathcal{V},\mathcal{V}^\star).
\end{equation}	
The map
\begin{equation}
    \mathscr{L}_{\text{iso}}(\mathcal{V},\mathcal{V}^\star) 
    \subset\mathscr{L}(\mathcal{V},\mathcal{V}^\star)\rightarrow \mathscr{L}(\mathcal{V},\mathcal{V}^\star): \mathsf{B} \mapsto \mathsf{B}^{-1}
\end{equation}
is holomorphic as stated in \cite[Proposition 4.20]{henriquez2021shape}, where $\mathscr{L}_{\text{iso}}(\mathcal{V},\mathcal{V}^\star) \subset \mathscr{L}(\mathcal{V},\mathcal{V}^\star)$ corresponds to the {\it open} subset of bounded linear operators with a bounded inverse. 
Furthermore, as a consequence of Theorem~\ref{thm:paley_wiener} (Paley-Wiener), 
$\Pi_\mu \ni s \mapsto \mathcal{L}\{f\}(s) \in \mathcal{H}$ is analytic as well. Therefore, we may conclude that 
$\Pi_\mu \ni s \mapsto \widehat{\phi}(s) \in \mathcal{V}$ is holomorphic as the composition of holomorphic maps. 

As a direct consequence of Proposition~\ref{prop:stability_laplace_domain_problem} and Theorem~\ref{thm:paley_wiener}, we obtain the following result. 

\begin{corollary}\label{cor:hardy_uniqueness_time}
Let $\mu>0$ and assume that $\phi_0,\phi_1\in\cV$ and $\mathsf{A}\phi_0, \mathsf{A}\phi_1\in\cH$. Then, 
\begin{equation}
    \widehat{\phi}(s) \in \mathscr{H}^2_\mu(\mathcal{V}),
    \quad
    s\widehat{\phi}(s) - \phi_0\in \mathscr{H}^2_\mu(\mathcal{H}),
    \quad
    \text{and}
    \quad
    s^2\widehat{\phi}(s) - s\phi_0 -\phi_1\in \mathscr{H}^2_\mu(\mathcal{V}^\star),
\end{equation}
and
\begin{equation}
\begin{aligned}
    \norm{\widehat{\phi}(s)}_{ \mathscr{H}^2_\mu(\mathcal{V})}
    &
    \lesssim
   	\frac{\norm{\phi_0}_{\mathcal{V}}}{\sqrt{\mu}}
	+
	\frac{\norm{\phi_1}_{\mathcal{V}}}{\sqrt{\mu^3}}
	+
	\frac{
		1
	}{
		\mu\min\{1, c_{\mathsf{A}}\}
	}
	\left(
		\norm{\widehat{f}}_{\mathscr{H}^2_\mu(\mathcal{H})}
		+
		\frac{\norm{\mathsf{A}\phi_0}_{\mathcal{H}}}{\sqrt{\mu}}
		+
		\frac{\norm{\mathsf{A}\phi_1}_{\mathcal{H}}}{\sqrt{\mu^3}}
	\right)
    \\
    \norm{s\widehat{\phi}(s) -\phi_0}_{\mathscr{H}^2_\mu(\mathcal{H})}
    &
    \lesssim
   	\frac{\norm{\phi_1}_{\mathcal{V}}}{\sqrt{\mu^3}}
	+
	\frac{
		1
	}{
		\mu \min\{1, c_{\mathsf{A}}\}
	}
	\left(
		\norm{\widehat{f}}_{\mathscr{H}^2_\mu(\mathcal{H})}
		+
		\frac{\norm{\mathsf{A}\phi_0}_{\mathcal{H}}}{\sqrt{\mu}}
		+
		\frac{\norm{\mathsf{A}\phi_1}_{\mathcal{H}}}{\sqrt{\mu^3}}
	\right)
    \\
    \norm{s^2\widehat{\phi}(s) - s\phi_0 -\phi_1}_{\mathscr{H}^2_\mu(\mathcal{V}^\star)}
    &
    \lesssim
   	\norm{\widehat{f}}_{{\mathscr{H}^2_\mu(\mathcal{V}^\star)}}
	+ 
	C_{\mathsf{A}}
	\left(
   		\frac{\norm{\phi_0}_{\mathcal{V}}}{\sqrt{\mu}}
		+
		\frac{\norm{\phi_1}_{\mathcal{V}}}{\sqrt{\mu^3}}
	\right)
	\\
	&
	+
	\frac{C_{\mathsf{A}}}{{\mu}\min\{1, c_{\mathsf{A}}\}}
	\left(
		\norm{\widehat{f}}_{{\mathscr{H}^2_\mu(\mathcal{H})}} 
		+ 
		\frac{\norm{\mathsf{A}\phi_0}_{\mathcal{H}}}{\sqrt{\mu}}
		+
		\frac{\norm{\mathsf{A}\phi_1}_{\mathcal{H}}}{\sqrt{\mu^3}}
	\right),
\end{aligned}
\end{equation}
with a constant independent of $\mu$.

Furthermore, there exists a unique
$\phi \coloneqq \mathcal{L}^{-1}\left\{\widehat{\phi}\right\} \in \mathcal{W}_\mu(\mathbb{R}_+;\mathcal{H},\mathcal{V})$
solution to 
\begin{align}
    \partial^2_t \phi (t)    +  \mathsf{A}  \phi (t)    &=    f(t),    \quad t>0,\\
    \phi(0) &= \phi_0 ,\\
    \partial_t \phi(0) &= \phi_1. 
\end{align}
satisfying
\begin{equation}
\begin{aligned}
	\norm{\phi}_{\mathcal{W}_\mu(\mathbb{R}_+,\mathcal{V},\mathcal{H})}
	\lesssim
    &
	\max\left\{1,C_{\mathsf{A}}\right\}
	\left(
   		\frac{\norm{\phi_0}_{\mathcal{V}}}{\sqrt{\mu}}
		+
		\frac{\norm{\phi_1}_{\mathcal{V}}}{\sqrt{\mu^3}}
	\right)
    +
    \norm{{f}}_{{L^2_\mu(\mathbb{R}_+;\mathcal{V}^\star)}}
    \\
    &
    +
	\frac{C_{\mathsf{A}}}{{\mu}\min\{1, c_{\mathsf{A}}\}}
	\left(
		\norm{{f}}_{{L^2_\mu(\mathbb{R}_+;\mathcal{H})}} 
		+ 
		\frac{\norm{\mathsf{A}\phi_0}_{\mathcal{H}}}{\sqrt{\mu}}
		+
		\frac{\norm{\mathsf{A}\phi_1}_{\mathcal{H}}}{\sqrt{\mu^3}}
	\right).
\end{aligned}
\end{equation}
\end{corollary}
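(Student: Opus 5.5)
The plan is to square the pointwise bounds of Proposition~\ref{prop:stability_laplace_domain_problem}, integrate them along vertical lines $\Re\{s\}=\sigma>\mu$, and take the supremum over $\sigma$. The holomorphy of $s\mapsto\widehat{\phi}(s)$ on $\Pi_\mu$ is already part of that proposition. The same holds for $s\widehat{\phi}(s)-\phi_0$ and $s^2\widehat{\phi}(s)-s\phi_0-\phi_1$, which are polynomial combinations with holomorphic coefficients. So only the finiteness of the Hardy norms has to be checked, together with the stated bounds.

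\textbf{Step 1: integrating the pointwise bounds.} Take \eqref{eq:bound_laplace_1} with $s=\sigma+\imath\tau$ and use $(a_1+\dots+a_5)^2\le 5\sum a_i^2$. This leaves four model integrals:
\[
\int_{\IR}\frac{d\tau}{\sigma^2+\tau^2}=\frac{\pi}{\sigma},
\qquad
\int_{\IR}\frac{d\tau}{(\sigma^2+\tau^2)^2}=\frac{\pi}{2\sigma^3},
\]
the corresponding integrals for the terms carrying a $1/\Re\{s\}^2$ prefactor, and $\int_{\IR}\norm{\widehat{f}(\sigma+\imath\tau)}_\cH^2\,d\tau\le 2\pi\norm{\widehat{f}}^2_{\mathscr{H}^2_\mu(\cH)}$. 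Since $\sigma>\mu$ and $1/\Re\{s\}\le 1/\mu$, each term is bounded by the corresponding $\mu$-power. Taking the square root and the supremum over $\sigma>\mu$ gives the first estimate. The terms $\norm{\mathsf{A}\phi_0}_\cH/(\mu\snorm{s})$ even yield $\mu^{-3/2}$, which is at least as good as claimed. The second and third estimates follow in the same way from \eqref{eq:bound_laplace_2} and \eqref{eq:bound_laplace_3}. For the third, the $\widehat{f}$ contributions are handled by the $\mathscr{H}^2_\mu(\cV^\star)$ and $\mathscr{H}^2_\mu(\cH)$ norms, using the continuous embedding $\cH\subset\cV^\star$. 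All constants are absolute, so they are independent of $\mu$.

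\textbf{Step 2: recovering the time-domain solution.} By Theorem~\ref{thm:paley_wiener}, there are unique functions $\phi\in L^2_\mu(\IR_+;\cV)$, $\psi_1\in L^2_\mu(\IR_+;\cH)$ and $\psi_2\in L^2_\mu(\IR_+;\cV^\star)$ with Laplace transforms $\widehat{\phi}$, $s\widehat{\phi}-\phi_0$ and $s^2\widehat{\phi}-s\phi_0-\phi_1$, respectively. Their norms equal the Hardy norms bounded in Step~1.

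The next task is to identify $\psi_1=\partial_t\phi$ and $\psi_2=\partial_t^2\phi$ and to show $\phi(0)=\phi_0$, $\partial_t\phi(0)=\phi_1$. I would set $\Phi(t)=\phi_0+\int_0^t\psi_1$. The Laplace transform of this antiderivative is $\phi_0/s+\widehat{\psi}_1(s)/s=\widehat{\phi}(s)$, which is valid for $\Re\{s\}>\mu$ because $\psi_1$ lies in $L^2_\mu$ and hence in $L^1$ with exponential weight $e^{-\sigma t}$ for $\sigma>\mu$. Injectivity of the Laplace transform then gives $\phi=\Phi$. Hence $\partial_t\phi=\psi_1$ in the distributional sense and $\phi(0)=\phi_0$. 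The same argument with $\psi_2$ and $\phi_1$ gives the second derivative and the second initial condition. Thus $\phi\in\mathcal{W}_\mu(\IR_+;\cH,\cV)$, and its norm bound follows by summing the three Hardy estimates and absorbing constants into $\max\{1,C_{\mathsf{A}}\}$. Here $\norm{\widehat{f}}_{\mathscr{H}^2_\mu}$ is replaced by $\norm{f}_{L^2_\mu}$ via the isometry.

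\textbf{Step 3: the equation and uniqueness.} The equation holds because both sides have equal Laplace transforms. Indeed, $\mathcal{L}\{\partial_t^2\phi+\mathsf{A}\phi\}=s^2\widehat{\phi}-s\phi_0-\phi_1+\mathsf{A}\widehat{\phi}=\widehat{f}$ by \eqref{eq:laplace_weak}, since $\mathsf{A}$ commutes with Bochner integration. For uniqueness, let $\phi$ be any solution in $\mathcal{W}_\mu$. The same transform rules show that $\mathcal{L}\{\phi\}$ solves Problem~\ref{pbrm:laplace}, which has a unique solution for each $s$, so $\mathcal{L}\{\phi\}=\widehat{\phi}$.

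I expect the main obstacle to be Step~2: justifying the derivative rules for the Laplace transform in this weak $L^2_\mu$ setting, with the appropriate values in $\cV^\star$. The antiderivative-plus-injectivity argument above is the route I would try first.
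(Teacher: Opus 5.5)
Your Step~1 is exactly the paper's proof, which consists only of inserting $\int_{\IR}\snorm{\mu+\imath\tau}^{-2}\,\mathrm{d}\tau=\pi/\mu$ and $\int_{\IR}\snorm{\mu+\imath\tau}^{-4}\,\mathrm{d}\tau=\pi/(2\mu^{3})$ into \eqref{eq:bound_laplace_1}--\eqref{eq:bound_laplace_3}. Your Steps~2--3 (antiderivative plus injectivity of $\mathcal{L}$, and uniqueness via the same transform rules) are sound. They supply the time-domain part that the paper only asserts as a consequence of Paley--Wiener. The first and third Hardy estimates do come out as stated.

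The gap is your claim that the second estimate follows ``in the same way'' from \eqref{eq:bound_laplace_2}. The $\phi_1$-terms there are $\norm{\phi_1}_{\cH}/\snorm{s}$ and $\norm{\sfA\phi_1}_{\cH}/(\Re\{s\}\min\{1,c_{\sfA}\}\snorm{s}^2)$. The square of the first integrates to $\pi\norm{\phi_1}^2_{\cH}/\sigma$, so it yields $\norm{\phi_1}_{\cH}/\sqrt{\mu}$, not $\norm{\phi_1}_{\cV}/\sqrt{\mu^3}$. No $\mu$-independent constant converts one into the other. In fact the stated bound is false. Take $f=0$, $\phi_0=0$ and $\phi_1=e$ with $\sfA e=\lambda e$, $\norm{e}_{\cH}=1$ (e.g.\ a Dirichlet eigenfunction in Example~\ref{example:scalar_wave}). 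Then $\phi(t)=\lambda^{-1/2}\sin(\sqrt{\lambda}\,t)\,e$ and
\[
\norm{s\widehat{\phi}(s)-\phi_0}^2_{\mathscr{H}^2_\mu(\cH)}
=\norm{\partial_t\phi}^2_{L^2_\mu(\IR_+;\cH)}
=\int_0^\infty e^{-2\mu t}\cos^2(\sqrt{\lambda}\,t)\,\mathrm{d}t
=\frac{1}{4\mu}+\frac{\mu}{4(\mu^2+\lambda)}
\geq\frac{1}{4\mu}.
\]
The claimed right-hand side, however, equals $\norm{e}_{\cV}\mu^{-3/2}+\lambda\mu^{-5/2}/\min\{1,c_{\sfA}\}$. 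So for large $\mu$ the left-hand side exceeds any fixed multiple of it.

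Since $\norm{\phi}_{\mathcal{W}_\mu(\IR_+;\cH,\cV)}\geq\norm{\partial_t\phi}_{L^2_\mu(\IR_+;\cH)}$, the same example also refutes the final $\mathcal{W}_\mu$ bound. Your ``summing the three Hardy estimates'' therefore cannot produce it either. The paper's one-line proof has the same oversight. The correct term is $\norm{\phi_1}_{\cH}/\sqrt{\mu}$ in the second estimate, and it must also be added to the $\mathcal{W}_\mu$ bound; with that correction your argument goes through. Summing also gives $\max\{1,C_{\sfA}\}$ rather than $C_{\sfA}$ in front of the last bracket of the $\mathcal{W}_\mu$ bound, which matters when $C_{\sfA}\ll 1$.
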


\begin{proof}
Recalling that
\begin{equation}\label{eq:bound_integral_mu}
	\int_{-\infty}^{+\infty}
	\frac{	\text{d}
	\tau}{\snorm{\mu + \imath \tau}^2}
	=
	\frac{\pi}{\mu}
	\quad
	\text{and}
	\quad
	\int_{-\infty}^{+\infty}
	\frac{	\text{d}
	\tau}{\snorm{\mu + \imath \tau}^4}
	=
	\frac{\pi}{2\mu^3}
\end{equation}
together with \eqref{eq:bound_laplace_1}--\eqref{eq:bound_laplace_3} we may obtain
the bounds stated in this result. 

\end{proof}

\subsection{Sinc Quadrature and Interpolation in a Strip}\label{ssec:sinc}
We recall important definitions and results concerning sinc interpolation
and quadrature in a strip as introduced in \cite[Chapter 3]{stenger2012numerical}.

In the following, for a given $\mu>0$, we set $\mathscr{D}_\mu \coloneqq \{z \in \mathbb{C}:\; \snorm{\Im\{z\}} < \mu\}$ and for $\varepsilon>0$ we consider
\begin{equation}
    \mathscr{D}_\mu(\varepsilon)
    \coloneqq
    \left\{
        z \in \mathbb{C}:
        \;
        \snorm{\Re\{z\}}< \frac{1}{\varepsilon}
        \quad
        \text{and}
        \quad
        \snorm{\Im\{z\}}< \mu(1-{\varepsilon})
    \right\}.
\end{equation}

\begin{definition}\label{eq:definition_H_bb}
Let $p\in[1,\infty)$, $\mu>0$, and let $\mathcal{X}$ be a {\it complex}
Banach space equipped with the induced norm $\norm{\cdot}_\mathcal{X}\coloneqq \sqrt{\dotp{\cdot}{\cdot}_\mathcal{X}}$.
We denote by $\mathbb{H}^p(\mathscr{D}_\mu;\mathcal{X})$ the family of all functions
$\varphi: \mathscr{D}_\mu \rightarrow \mathcal{X}$ that are analytic in $\mathscr{D}_\mu$
and such that
\begin{equation}
	\norm{\varphi}_{\mathbb{H}^p(\mathscr{D}_\mu;\mathcal{X})}
	\coloneqq
	\left(
        \lim_{\varepsilon \rightarrow 0^+}
		\int_{\partial \mathscr{D}_\mu(\varepsilon) }
		\norm{\varphi(z)}^p_\mathcal{X}
		\snorm{\text{d}z}
	\right)^p
	<\infty.
\end{equation}
\end{definition}

For a given $\varphi \in \mathbb{H}^p(\mathscr{D}_\mu;\mathcal{X})$, $p\in [1,\infty)$
and $\vartheta>0$, we define for $z \in \mathscr{D}_\mu$
\begin{equation}\label{eq:sinc_interpolation}
	\mathcal{S}(\varphi,\vartheta)(z)
	=
	\sum_{k\in \mathbb{Z}}
	\varphi(k \vartheta)
	\text{Sinc}(k,\vartheta)(z)
	\quad
	\text{and}
	\quad
	\mathcal{S}_N(\varphi,\vartheta)
	=
	\sum_{k=-N}^{N}
	\varphi(k \vartheta)
	\text{Sinc}(k,\vartheta)(z),
\end{equation}
where
\begin{equation}
    \Sinc(k,\vartheta)(z)
    =
    \frac{\sin(\pi(z-k \vartheta)/\vartheta)}{\pi(z-k\vartheta)/\vartheta},
    \quad
    k  \in \mathbb{Z},
    \quad
    z \in \mathbb{C}.
\end{equation}

We recall the following results concerning the approximation 
of sinc interpolation and quadrature in a strip. 

\begin{proposition}[{\cite[Theorem 3.1.3, item (b)]{stenger2012numerical}}]
\label{prop:sinc_interpolation}
Let $\varphi \in \mathbb{H}^2(\mathscr{D}_\mu;\mathcal{X})$ for some $\mu>0$.
Then, it holds for $|y|<\mu$ and $\vartheta>0$
\begin{equation}
	\norm{
	    \varphi(\cdot +\imath y) 
        -
        \mathcal{S}(\varphi,\vartheta)(\cdot +\imath y)
	}_{L^2(\mathbb{R};\mathcal{X})}
    \leq
    \frac{
        \cosh(\pi y/\vartheta)
    }{
        \sinh(\pi \mu/\vartheta)
    }
    \norm{\varphi}_{\mathbb{H}^2(\mathscr{D}_\mu;\mathcal{X})}.
\end{equation}
\end{proposition}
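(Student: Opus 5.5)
The plan is to prove the bound on the Fourier side, which suits the $L^2$ norm and carries over verbatim to the Hilbert-valued case. For $\mathcal{X}$-valued functions, Plancherel holds for Bochner square-integrable functions into a Hilbert space, and the Paley--Wiener theorem for strips holds as well. The latter can be obtained componentwise in an orthonormal basis of $\mathcal{X}$, or from a vector-valued analogue of Theorem~\ref{thm:paley_wiener} applied on each half of the strip.

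\textbf{Step 1: Fourier representation.} Since $\varphi\in\mathbb{H}^2(\mathscr{D}_\mu;\mathcal{X})$, there is $\Phi\in L^2_{\mathrm{loc}}(\mathbb{R};\mathcal{X})$ such that, for $|y|<\mu$,
\begin{equation}
\varphi(x+\imath y)=\frac{1}{2\pi}\int_{\mathbb{R}}\Phi(\xi)e^{\imath\xi(x+\imath y)}\,\mathrm{d}\xi ,
\end{equation}
and
\begin{equation}
\frac{1}{2\pi}\int_{\mathbb{R}}\norm{\Phi(\xi)}_{\mathcal{X}}^2\left(e^{2\mu\xi}+e^{-2\mu\xi}\right)\mathrm{d}\xi=\norm{\varphi}^2_{\mathbb{H}^2(\mathscr{D}_\mu;\mathcal{X})}.
\end{equation}
Here the two exponentials are the limits of the boundary integrals over $\Im z=\pm\mu(1-\varepsilon)$. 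The vertical sides of $\partial\mathscr{D}_\mu(\varepsilon)$ contribute nothing in the limit, which follows from the Hardy-space continuity in Proposition~\ref{prop:properties_hardy}.

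\textbf{Step 2: aliasing formula for the interpolant.} Poisson summation gives the Fourier coefficients of the samples $\varphi(k\vartheta)$ as the periodization $\sum_{m\in\mathbb{Z}}\Phi(\xi+2\pi m/\vartheta)$. The sinc series $\mathcal{S}(\varphi,\vartheta)$ is the band-limited function whose spectrum equals this periodization on $|\xi|<\pi/\vartheta$ and vanishes outside. Continuing analytically to $x+\imath y$ multiplies the spectrum by $e^{-y\xi}$. Hence the Fourier transform in $x$ of $E_y(x)\coloneqq\varphi(x+\imath y)-\mathcal{S}(\varphi,\vartheta)(x+\imath y)$ is:
\begin{itemize}
\item[(a)] $\Phi(\xi)e^{-y\xi}$ for $|\xi|>\pi/\vartheta$;
\item[(b)] $-e^{-y\xi}\sum_{m\neq0}\Phi(\xi+2\pi m/\vartheta)$ for $|\xi|<\pi/\vartheta$.
\end{itemize}

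\textbf{Step 3: estimate.} By Plancherel, $2\pi\norm{E_y}^2_{L^2}$ is the $L^2$ norm squared of this spectrum.
\begin{itemize}
\item[(a)] For the high-frequency part, use $e^{-2y\xi}\le \cosh^2(\pi y/\vartheta)\,\frac{e^{2\mu\xi}+e^{-2\mu\xi}}{\sinh^2(\pi\mu/\vartheta)}$-type inequalities valid for $|\xi|\geq\pi/\vartheta$.
\item[(b)] For the aliased part, apply Cauchy--Schwarz in $m$ with weights $w_m(\xi)=2\cosh(2\mu(\xi+2\pi m/\vartheta))$:
\begin{equation}
\Bigl\|\sum_{m\ne0}\Phi_m\Bigr\|^2\le\Bigl(\sum_{m\neq0}w_m^{-1}\Bigr)\Bigl(\sum_{m\ne0}w_m\|\Phi_m\|^2\Bigr),
\end{equation}
where $\Phi_m=\Phi(\xi+2\pi m/\vartheta)$. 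Then integrate over $|\xi|<\pi/\vartheta$, which unfolds the second factor into an integral over $|\xi|>\pi/\vartheta$ of $\norm{\Phi}^2\cosh(2\mu\xi)$.
\end{itemize}
Combining the two regions so that each frequency is counted once gives a bound of the form $\sup_\xi(\cdot)\cdot\norm{\varphi}^2_{\mathbb{H}^2}$. The remaining task is the scalar extremal problem: show that this supremum is at most $\cosh^2(\pi y/\vartheta)/\sinh^2(\pi\mu/\vartheta)$.

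\textbf{Main obstacle.} The delicate point is this last scalar inequality with the sharp constant, specifically bounding $e^{-2y\xi}\sum_{m\ne0}1/\cosh(2\mu(\xi+2\pi m/\vartheta))$ uniformly for $|\xi|<\pi/\vartheta$.

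If the Fourier bookkeeping does not yield the exact constant, I would switch to Stenger's contour argument. The Cauchy formula on $\partial\mathscr{D}_\mu(\varepsilon)$ applied to $\varphi(t)/\bigl((t-z)\sin(\pi t/\vartheta)\bigr)$ gives the exact error representation
\begin{equation}
\varphi(z)-\mathcal{S}(\varphi,\vartheta)(z)=\frac{\sin(\pi z/\vartheta)}{2\pi\imath}\int_{\partial\mathscr{D}_\mu}\frac{\varphi(t)}{(t-z)\sin(\pi t/\vartheta)}\,\mathrm{d}t .
\end{equation}
On the boundary lines $|\sin(\pi t/\vartheta)|\ge\sinh(\pi\mu/\vartheta)$, and on the line $\Im z=y$ one has $|\sin(\pi z/\vartheta)|\le\cosh(\pi y/\vartheta)$. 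The $L^2$ norm in $x$ of the resulting Cauchy-type integral (a convolution with $1/(t-z)$ along lines) is then controlled by Plancherel, since the Fourier multiplier of that kernel is bounded by one, which yields the stated constant.
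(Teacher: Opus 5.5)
The paper gives no proof of this proposition; it imports it from Stenger, whose argument is the contour representation you list as a fallback. There is a genuine gap in each of your two routes.

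\textbf{The Fourier route.} Steps 1--2 are fine, and the aliasing formula is correct. The problem is Step 3: the scalar inequality you reduce to is not merely unproven, it is false on part of the range $|y|<\mu$. Your bookkeeping gives the constant $\sup_{|\eta|>\pi/\vartheta}\bigl[e^{-2y\eta}/(2\cosh(2\mu\eta))+A(\xi(\eta))\bigr]$. Here $A(\xi)=e^{-2y\xi}\sum_{m\neq0}\bigl(2\cosh(2\mu(\xi+2\pi m/\vartheta))\bigr)^{-1}$, and $\xi(\eta)$ is the reduction of $\eta$ into $(-\pi/\vartheta,\pi/\vartheta)$. Take $\pi\mu/\vartheta=5$, $\pi y/\vartheta=-4.9$, and let $\eta\uparrow 3\pi/\vartheta$, so that $\xi(\eta)\uparrow\pi/\vartheta$. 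The direct term tends to $e^{29.4}/(2\cosh 30)\approx e^{-0.6}\approx 0.55$. The $m=-1$ aliasing term tends to $e^{9.8}/(2\cosh 10)\approx e^{-0.2}\approx 0.82$. The sum is about $1.37$, whereas $\cosh^2(4.9)/\sinh^2(5)\approx 0.82$.

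The loss is structural. Fix $\xi$ and write $w_m=2\cosh(2\mu(\xi+2\pi m/\vartheta))$. On that fiber the error form is $\mathrm{diag}(d_m)+\lambda uu^\ast$, with $d_m=e^{-2y(\xi+2\pi m/\vartheta)}/w_m$, $u_m=w_m^{-1/2}$ and $\lambda=e^{-2y\xi}$. You bound its top eigenvalue by $\max_m d_m+\lambda\norm{u}^2$, which double counts when the two pieces live on different indices $m$. For $y=0$ your route does close: the direct term is then one of the aliasing terms, and with $b=\pi\mu/\vartheta$ one gets $2\sum_{m\neq0}w_m^{-1}\leq(1+e^{-4b})/\sinh(2b)\leq 1/\sinh^2 b$. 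That is the only case used later in the paper, but it is not what the statement asserts.

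\textbf{The contour route.} This is the right route, but as sketched it misses the step that gives the stated constant. If you estimate the lower-line and upper-line Cauchy integrals separately (each multiplier has modulus at most one) and add, you get $\frac{\cosh(\pi y/\vartheta)}{\sinh(\pi\mu/\vartheta)}\bigl(\norm{\varphi(\cdot-\imath\mu)}_{L^2}+\norm{\varphi(\cdot+\imath\mu)}_{L^2}\bigr)$. That is only bounded by $\sqrt{2}$ times the claim, because $\norm{\varphi}^2_{\mathbb{H}^2(\mathscr{D}_\mu;\mathcal{X})}$ is the sum of the squares of the two boundary norms.

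The missing observation is orthogonality. Write the error as $\sin(\pi z/\vartheta)\,\bigl(G_-(z)+G_+(z)\bigr)$. On the line $z=x+\imath y$, $G_-$ is the convolution of $g_-=\varphi(\cdot-\imath\mu)/\sin(\pi(\cdot-\imath\mu)/\vartheta)$ with a kernel whose multiplier is $e^{-(\mu+y)\xi}\chi_{\{\xi>0\}}$. Likewise $G_+$ is the convolution of $g_+$ with multiplier $e^{(\mu-y)\xi}\chi_{\{\xi<0\}}$. Their spectra are disjoint, so $G_\pm(\cdot+\imath y)$ are orthogonal in $L^2(\mathbb{R};\mathcal{X})$. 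Hence $\norm{G_-+G_+}^2\leq\norm{g_-}^2+\norm{g_+}^2\leq\norm{\varphi}^2_{\mathbb{H}^2}/\sinh^2(\pi\mu/\vartheta)$. Only after this should you apply $\snorm{\sin(\pi(x+\imath y)/\vartheta)}\leq\cosh(\pi y/\vartheta)$ pointwise.

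Two smaller points. First, pass to the limit along $1/\varepsilon\in\vartheta(\mathbb{Z}+\tfrac12)$, so that $\snorm{\sin(\pi t/\vartheta)}\geq1$ on the vertical sides and their contribution vanishes. Second, Proposition~\ref{prop:properties_hardy} concerns half-planes, so it does not justify your Step 1 claim about the vertical sides. There you only need $\norm{\varphi(\cdot+\imath\mu)}^2+\norm{\varphi(\cdot-\imath\mu)}^2\leq\norm{\varphi}^2_{\mathbb{H}^2}$. This follows from nonnegativity of the vertical contributions and Fatou's lemma.
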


Next, we consider the sinc quadrature in the strip.
For a given function $\varphi: \mathbb{R} \rightarrow \mathcal{X}$
and for $\vartheta>0$, we define
\begin{equation}\label{eq:def_I_T}
    \mathcal{I}(\varphi)
    \coloneqq
    \int_{\mathbb{R}}
    \varphi(x) \,\text{d}x
    \quad
    \text{and}
    \quad
    \mathcal{Q}(\varphi,\vartheta)
    \coloneqq
    \vartheta
    \sum_{k \in \mathbb{Z}} \varphi(k\vartheta)
\end{equation}
together with 
\begin{equation}
    \mathcal{Q}_M(\varphi,\vartheta)
    \coloneqq
    \vartheta
    \sum_{k=-M}^{M} \varphi(k\vartheta).
\end{equation}

\begin{proposition}[{\cite[Theorem 3.2.1]{stenger2012numerical}}]
\label{prop:integral_sinc}
Let $\varphi \in \mathbb{H}^p(\mathscr{D}_\mu;\mathbb{C})$ for some $\mu>0$ and $p\in [1,\infty)$. 
Then, it holds that
\begin{equation}
	\snorm{
	       \mathcal{I}(\varphi)
           -
           \mathcal{Q}(\varphi,\vartheta)
	}
    \leq
    \frac{
        e^{-\pi \mu \vartheta}
    }{
        2\sinh(\pi \mu/\vartheta)
    }
    \norm{\varphi}_{\mathbb{H}^1(\mathscr{D}_\mu;\mathbb{C})}.
\end{equation}
\end{proposition}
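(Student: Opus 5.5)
The statement is quoted from \cite[Theorem 3.2.1]{stenger2012numerical}, and I would reprove it by the classical contour-integral (residue) argument on the truncated strips $\mathscr{D}_\mu(\varepsilon)$. Throughout I take $\varphi\in\mathbb{H}^1(\mathscr{D}_\mu;\mathbb{C})$, since the right-hand side only involves the $\mathbb{H}^1$ norm. The key observation is that the kernel $\frac{\pi}{\vartheta}\cot(\pi z/\vartheta)$ has simple poles at $z=k\vartheta$, $k\in\mathbb{Z}$, each with residue $1$. Hence, by the residue theorem on the rectangle $\mathscr{D}_\mu(\varepsilon)$,
\begin{equation}
    \vartheta\sum_{|k\vartheta|<1/\varepsilon}\varphi(k\vartheta)
    =
    \frac{1}{2\imath}\int_{\partial\mathscr{D}_\mu(\varepsilon)}\varphi(z)\cot(\pi z/\vartheta)\,\text{d}z .
\end{equation}

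In parallel, I would write $\int_{-1/\varepsilon}^{1/\varepsilon}\varphi(x)\,\text{d}x$ as a contour integral. By Cauchy's theorem, $\int\varphi\,\text{d}x$ equals the integral over the upper boundary and also the integral over the lower boundary, up to the vertical sides. I would then insert the constant $\mp\imath$ that $\cot$ approaches for $\Im z\to\pm\infty$:
\begin{equation}
    \cot w + \imath = \frac{2\imath e^{2\imath w}}{e^{2\imath w}-1}
    \quad\text{and}\quad
    \cot w - \imath = \frac{e^{-\imath w}}{\sin w}\ \text{(up to a unimodular constant)}.
\end{equation}
Subtracting the two representations yields the error identity
\begin{equation}
    \mathcal{I}(\varphi)-\mathcal{Q}(\varphi,\vartheta)
    =
    \frac{1}{2\imath}\left(
    \int_{\Im z=-\mu}\frac{\varphi(z)\,e^{-\imath\pi z/\vartheta}}{\sin(\pi z/\vartheta)}\,\text{d}z
    -
    \int_{\Im z=\mu}\frac{\varphi(z)\,e^{\imath\pi z/\vartheta}}{\sin(\pi z/\vartheta)}\,\text{d}z
    \right).
\end{equation}
This identity holds after passing to the limit $\varepsilon\to0^+$, where the horizontal sides of $\mathscr{D}_\mu(\varepsilon)$ approach $\Im z=\pm\mu(1-\varepsilon)\to\pm\mu$. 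The sign of the exponential is chosen on each line so that it decays there.

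The estimate is then elementary. On $\Im z=\pm\mu$ one has $|e^{\pm\imath\pi z/\vartheta}|=e^{-\pi\mu/\vartheta}$ and, for $z=x\pm\imath\mu$, $|\sin(\pi z/\vartheta)|\ge\sinh(\pi\mu/\vartheta)$. Summing the two boundary lines reproduces $\norm{\varphi}_{\mathbb{H}^1(\mathscr{D}_\mu;\mathbb{C})}$ and gives the bound $\frac{e^{-\pi\mu/\vartheta}}{2\sinh(\pi\mu/\vartheta)}\norm{\varphi}_{\mathbb{H}^1}$. The exponent comes out as $\pi\mu/\vartheta$; I read the displayed $e^{-\pi\mu\vartheta}$ as a typo for this.

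The main technical obstacle is justifying the limit $\varepsilon\to0^+$. Three things must be shown. First, the contributions of the vertical sides $\Re z=\pm1/\varepsilon$ vanish, and for this I would pass along a suitable sequence $\varepsilon_n\to0$. Second, the horizontal integrals converge to those on $\Im z=\pm\mu$. Third, the truncated sums converge to $\mathcal{Q}(\varphi,\vartheta)$ and the truncated integrals to $\mathcal{I}(\varphi)$. All three use the finiteness of the limit defining the $\mathbb{H}^1$ norm. A subtlety is that the vertical sides may pass through poles $k\vartheta$; I would handle this by choosing the truncation radius $1/\varepsilon_n$ halfway between consecutive nodes, where $|\cot(\pi z/\vartheta)|$ is uniformly bounded on the vertical segment. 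Combined with an $L^1$-type bound on $\varphi$ along these segments extracted from the definition of the norm, this makes the vertical contributions tend to zero. Absolute summability of $\vartheta\sum_k|\varphi(k\vartheta)|$ follows from the same contour representation with $\cot$ replaced by its modulus, or alternatively from the mean-value property of $\varphi$ on discs of radius $\mu$ around $k\vartheta$.
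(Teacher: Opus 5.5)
Your main line is the classical contour argument behind \cite[Theorem 3.2.1]{stenger2012numerical}, which the paper only cites and does not prove. It consists of residues of $\frac{\pi}{\vartheta}\cot(\pi z/\vartheta)$ for $\mathcal{Q}$, Cauchy's theorem for $\mathcal{I}$, subtraction of the limits $\mp\imath$ of $\cot$ to obtain the kernels $e^{\pm\imath\pi z/\vartheta}/\sin(\pi z/\vartheta)$, and the bound $\snorm{\sin(\pi z/\vartheta)}\geq\sinh(\pi\mu/\vartheta)$ on $\Im z=\pm\mu$. Your reading of the exponent as $e^{-\pi\mu/\vartheta}$ is correct; the paper itself uses $e^{-\pi\eta/\vartheta}/(2\sinh(\pi\eta/\vartheta))$ when it applies the result in the proof of Lemma~\ref{eq:error_discrete_norm}. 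There is one harmless sign slip. With both lines oriented left to right, $\mathcal{I}(\varphi)-\mathcal{Q}(\varphi,\vartheta)=\frac{1}{2\imath}\bigl(\int_{\Im z=\mu}\frac{\varphi(z)e^{\imath\pi z/\vartheta}}{\sin(\pi z/\vartheta)}\text{d}z-\int_{\Im z=-\mu}\frac{\varphi(z)e^{-\imath\pi z/\vartheta}}{\sin(\pi z/\vartheta)}\text{d}z\bigr)$, which is the negative of what you wrote. You can check this with $\varphi(z)=e^{-z^2}$, for which $\mathcal{Q}>\mathcal{I}$. The bound is unaffected.

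There is, however, a real gap in the limiting step, which you yourself identify as the main obstacle. Choosing $1/\varepsilon_n=(N_n+\tfrac12)\vartheta$ does give $\snorm{\cot(\pi z/\vartheta)}=\snorm{\tanh(\pi\Im z/\vartheta)}\leq 1$ on the vertical sides. But the definition of $\norm{\varphi}_{\mathbb{H}^1(\mathscr{D}_\mu;\mathbb{C})}$ tells you directly only that $\int_{\Re z=\pm1/\varepsilon}\snorm{\varphi}\snorm{\text{d}z}$ stays bounded, since it is part of the quantity whose limit is finite. It does not tell you that this integral tends to zero. A bounded kernel against a bounded $L^1$ quantity leaves a bounded, possibly non-vanishing remainder in your identity. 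The same problem affects the vertical pieces in the Cauchy representation of $\int\varphi\,\text{d}x$.

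The missing ingredient is decay of $\varphi$ as $\snorm{\Re z}\to\infty$. One standard way to get it:
\begin{itemize}
\item Since $\snorm{\varphi}$ is subharmonic, compare it with the Poisson kernel of $\mathscr{D}_\mu(\varepsilon)$. On the horizontal sides this kernel is dominated by that of the strip $\snorm{\Im z}<\mu(1-\varepsilon)$, and its contribution from the far vertical sides is exponentially small.
\item This yields $\sup_{0\leq y<\mu}\int_{\mathbb{R}}\bigl(\snorm{\varphi(x+\imath y)}+\snorm{\varphi(x-\imath y)}\bigr)\text{d}x\leq\norm{\varphi}_{\mathbb{H}^1(\mathscr{D}_\mu;\mathbb{C})}$, hence $\varphi\in L^1(\mathscr{D}_\mu)$.
\item The mean-value property then gives $\varphi(z)\to0$ as $\snorm{\Re z}\to\infty$, uniformly in $\snorm{\Im z}\leq\mu'$, for every $\mu'<\mu$.
\end{itemize}

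With this in hand, decouple the two limits:
\begin{itemize}
\item Fix $\mu'<\mu$ and send the vertical sides at $\pm(N+\tfrac12)\vartheta$ to infinity. Their contributions now vanish, and the symmetric partial sums converge because every other term in the identity converges.
\item Bound the resulting identity on $\Im z=\pm\mu'$ by $\frac{e^{-\pi\mu'/\vartheta}}{2\sinh(\pi\mu'/\vartheta)}$ times the two line integrals, then let $\mu'\to\mu$.
\end{itemize}
This also makes your second step unnecessary. As stated, that step needs boundary values of $\varphi$ on $\Im z=\pm\mu$, which the paper's definition does not supply.

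Finally, replacing $\cot$ by its modulus in the contour representation does not produce $\vartheta\sum_k\snorm{\varphi(k\vartheta)}$. The mean-value alternative does work, with discs of radius $<\mu$, but only once $\varphi\in L^1(\mathscr{D}_\mu)$ is known, so it rests on the same missing ingredient.
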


\subsection{Low-Rank Approximation}\label{ssec:low_rank}
Let $\mu>0$ be fixed. In view of Proposition~\ref{prop:stability_laplace_domain_problem}, there exists a unique solution $\widehat{U}^{(2)}(s) \in \mathcal{V}$ of Problem~\ref{pbrm:laplace_discrete} for each $s\in\Pi_\mu$.
To analyze the behavior of $\widehat{U}^{(2)}(\mu+\imath\tau)$ as a function of $\tau\in\IR$, which will allow us to use the results on sinc approximation from the previous section, we start with some results concerning the bilateral Laplace transform of $\partial_t^2 q$.
To this end, we note that $q=\partial_t^2 g$ with
\begin{equation}\label{eq:gausian_minus}
    g(t) = -\frac{2}{\alpha^2}\, e^{-\frac{\alpha^2}{4}(t - t_0)^2},
\end{equation}
and obtain the following result. 

\begin{lemma}\label{lem:bilateral_laplace_ricker}
It holds that
\begin{align}
	\cB\{g\}(s)
    = 
    -\frac{4\sqrt{\pi}}{\alpha^3} e^{\left(\frac{s}{\alpha}\right)^2-s t_0}.
\end{align}
\end{lemma}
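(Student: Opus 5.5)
The formula follows from completing the square in the exponent of a Gaussian integral. The plan is to first establish it for real $s$, and then extend it to all $s\in\mathbb{C}$ by analytic continuation.

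\emph{Convergence.} Since $g$ is a Gaussian, $|g(t)e^{-st}| = \frac{2}{\alpha^2} e^{-\frac{\alpha^2}{4}(t-t_0)^2 - \Re\{s\} t}$, which is integrable on $\mathbb{R}$ for every $s\in\mathbb{C}$. Hence $\cB\{g\}(s)$ in \eqref{eq:bilateral_laplace} is defined on the whole complex plane. Moreover, differentiation under the integral sign (dominated locally uniformly in $s$) shows that it is an entire function of $s$.

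\emph{Real $s$.} Substitute $t = t_0 + x$, which gives
\[
\cB\{g\}(s) = -\frac{2}{\alpha^2} e^{-st_0}\int_{\mathbb{R}} e^{-\frac{\alpha^2}{4}x^2 - sx}\,\mathrm{d}x .
\]
Completing the square yields
\[
\frac{\alpha^2}{4}x^2 + sx = \frac{\alpha^2}{4}\Bigl(x + \frac{2s}{\alpha^2}\Bigr)^2 - \frac{s^2}{\alpha^2}.
\]
For real $s$, the shift $y = x + 2s/\alpha^2$ is a real translation. The standard Gaussian integral $\int_{\mathbb{R}} e^{-\frac{\alpha^2}{4}y^2}\,\mathrm{d}y = \frac{2\sqrt{\pi}}{\alpha}$ then gives
\[
\cB\{g\}(s) = -\frac{2}{\alpha^2}\cdot\frac{2\sqrt{\pi}}{\alpha}\, e^{\frac{s^2}{\alpha^2} - st_0} = -\frac{4\sqrt{\pi}}{\alpha^3} e^{\left(\frac{s}{\alpha}\right)^2 - st_0}.
\]

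\emph{Extension to complex $s$.} Both sides of the claimed identity are entire in $s$, and they agree on $\mathbb{R}$. By the identity theorem they agree on all of $\mathbb{C}$.

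As an alternative to analytic continuation, one could justify the complex shift of the contour $y\mapsto y+2\imath\Im\{s\}/\alpha^2$ directly via Cauchy's theorem on a rectangle. The vertical sides vanish by Gaussian decay. The only step requiring care is exactly this justification of the complex shift, or equivalently the entirety of $\cB\{g\}$. Everything else is a routine computation.
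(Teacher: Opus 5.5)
Your proof is correct and follows essentially the same route as the paper: completing the square in the Gaussian integral. The paper first reduces to $\cB\{e^{-t^2}\}$ via the shift and scaling rules, whereas you complete the square directly after substituting $t=t_0+x$. Your appeal to the identity theorem for complex $s$ is in fact more careful than the paper, which performs the substitution $u=t+s/2$ for complex $s$ without justifying the contour shift.
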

\begin{proof}
Using transformation rules for the bilateral Laplace transform \cite[Table~14.1]{bracewell2000fourier},
we obtain that
\begin{align}
    \cB\left\{e^{-\frac{\alpha^2}{4}(t-t_0)^2}\right\}(s) 
    = 
    e^{-st_0}
    \cB
    \left\{
        e^{- (\frac{\alpha t}{2})^2}
    \right\}(s) 
    = 
    \frac{2}{\alpha}e^{-st_0}\cB\{e^{-t^2}\}\left(\frac{2s}{\alpha}\right).
\end{align}
Using the substitution $u=t+s/2$, we further have that
\begin{align}
    \cB\{e^{-t^2}\}&=\int_{-\infty}^\infty e^{-(t^2 + st)}\,dt 
    =e^{s^2/4} \int_{-\infty}^\infty e^{-\left(t^2 + st + \left(\frac{s}{2}\right)^2\right)}\,dt \\
&=e^{s^2/4} \int_{-\infty}^\infty e^{-u^2}\,du= \sqrt{\pi}e^{s^2/4}.
\end{align}
Therefore, the result for $\cB\{g\}$ follows. 
\end{proof}

The transform $\cB\{q\}$ again follows using the transformation rules of the bilateral Laplace transform for derivatives. Indeed, observe that
\begin{equation}
	\mathcal{B}
	\left\{
		\partial^2_t  q 
	\right\}(s)
	=
	(-s)^{4}
	\mathcal{B}
	\left\{
		g
	\right\}(s)
	=
	-\frac{4\sqrt{\pi}}{\alpha^3}
	(-s)^{4}
	e^{
		{\left(\frac{s}{\alpha}\right)^2-s t_0}
	}.
\end{equation}
and for each $s \in \mathbb{C}$
\begin{equation}\label{eq:bound_g_n}
	\snorm{
	\mathcal{B}
	\left\{
		\partial^2_t  q
	\right\}(s)
	}
	\leq
	\frac{4\sqrt{\pi}}{\alpha^3}
	e^{-\Re\{s\} t_0+\frac{\Re\{s\}^2}{\alpha^2}}
	\snorm{s}^{4}
	e^{-\frac{\Im\{s\}^2}{\alpha^2}}.
\end{equation}
Combining this bound with the stability bounds derived in Proposition~\ref{prop:stability_laplace_domain_problem}, we obtain the following result that shows exponential decay of $\widehat{U}^{(2)}(\mu+\imath\tau)$ as a function of $\tau$.
\begin{lemma}\label{lmm:regularity_U_n}
Let $\mu >0$ be given. 
\begin{itemize}
\item[(i)]
For any $s \in \Pi_{\mu}$, it holds that
\begin{align}\label{eq:bound_U}
	\norm{
		\widehat{U}^{(2)}(s)
	}_\mathcal{V}
    \lesssim
    \frac{e^{-\Re\{s\} t_0+\frac{\Re\{s\}^2}{\alpha^2}}}{\alpha^3 \Re\{s\}
    \min\{1, c_{\mathsf{A}}\}}
    \snorm{s}^{4}
	e^{-\frac{\Im\{s\}^2}{\alpha^2}}
	\norm{p}_\mathcal{H}.
\end{align}
\item[(ii)] For any $\eta \in (0,{\mu})$, it holds that
$\widehat{U}^{(2)}(\mu + \imath \cdot) \in \mathbb{H}^2(\mathscr{D}_\eta;\mathcal{V})$
and
\begin{align}
	\norm{
		\widehat{U}^{(2)}(\mu + \imath \cdot)
	}_{\mathbb{H}^2(\mathscr{D}_\eta;\mathcal{V})}
    \leq
    \frac{e^{-(\mu-\eta) t_0+\frac{(\mu+\eta)^2}{\alpha^2}}}{\alpha^3(\mu-\eta) \min\{1, c_{\mathsf{A}}\}}
    \left(
		\mu^\frac{9}{2}
		+
		\alpha^\frac{9}{2}
    \right)
    \norm{p}_\mathcal{H}.
\end{align}
\end{itemize}
\end{lemma}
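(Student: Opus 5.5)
For item (i), I will apply Proposition~\ref{prop:stability_laplace_domain_problem} to Problem~\ref{pbrm:laplace_discrete}. This is Problem~\ref{pbrm:laplace} with $\phi_0=\phi_1=0$ and right-hand side $\cB\{\partial_t^2 q\}(s)\,p\in\cH$. Note that Proposition~\ref{prop:stability_laplace_domain_problem} only uses the value of the right-hand side at the fixed $s$: its proof relies on the inf-sup bound of Lemma~\ref{lmm:inf_sup}, not on $\widehat f$ being a Laplace transform. Hence \eqref{eq:bound_laplace_1} yields
\begin{equation}
	\norm{\widehat{U}^{(2)}(s)}_\cV \leq \frac{\snorm{\cB\{\partial_t^2 q\}(s)}\,\norm{p}_\cH}{\Re\{s\}\min\{1,c_{\sfA}\}}.
\end{equation}
Inserting \eqref{eq:bound_g_n} gives \eqref{eq:bound_U} directly, with the constant $4\sqrt{\pi}$ absorbed into $\lesssim$. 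Holomorphy of $s\mapsto\widehat U^{(2)}(s)$ on $\Pi_\mu$ follows as after Proposition~\ref{prop:stability_laplace_domain_problem}: $s\mapsto\cB\{\partial_t^2q\}(s)$ is entire by Lemma~\ref{lem:bilateral_laplace_ricker}, and operator inversion is holomorphic.

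For item (ii), set $\varphi(z)\coloneqq\widehat U^{(2)}(\mu+\imath z)$ for $z\in\mathscr D_\eta$. If $z=x+\imath y$ with $\snorm{y}<\eta$, then $s=\mu-y+\imath x$ satisfies $\Re\{s\}=\mu-y\in(\mu-\eta,\mu+\eta)$, so $s\in\Pi_{\mu-\eta}\subset\Pi_0$. Thus $\varphi$ is analytic on $\mathscr D_\eta$. (Item (i) holds on any $\Pi_{\mu'}$ with $\mu'>0$, so this shift is harmless.) To bound the $\mathbb H^2$ norm, I will bound the integrals over the two horizontal lines $y=\pm\eta(1-\varepsilon)$. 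The contributions of the vertical segments at $\snorm{x}=1/\varepsilon$ vanish as $\varepsilon\to0$ thanks to the Gaussian factor $e^{-x^2/\alpha^2}$. On each horizontal line, (i) gives
\begin{equation}
	\norm{\varphi(x+\imath y)}_\cV\lesssim \frac{e^{-(\mu-\eta)t_0+(\mu+\eta)^2/\alpha^2}}{\alpha^3(\mu-\eta)\min\{1,c_{\sfA}\}}\,\snorm{s}^4 e^{-x^2/\alpha^2}\norm{p}_\cH.
\end{equation}
Here I bound $e^{-\Re\{s\}t_0}\le e^{-(\mu-\eta)t_0}$, $e^{\Re\{s\}^2/\alpha^2}\le e^{(\mu+\eta)^2/\alpha^2}$, and $1/\Re\{s\}\le 1/(\mu-\eta)$.

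The remaining work is the scalar integral $\int_\IR \snorm{s}^8 e^{-2x^2/\alpha^2}\,\mathrm dx$, taken to the power $1/2$. I will use $\snorm{s}^8\lesssim(\mu+\eta)^8+x^8\lesssim\mu^8+x^8$, since $\eta<\mu$. The Gaussian moments are $\int e^{-2x^2/\alpha^2}\,\mathrm dx\sim\alpha$ and $\int x^8e^{-2x^2/\alpha^2}\,\mathrm dx\sim\alpha^9$. The square root is therefore $\lesssim \mu^4\alpha^{1/2}+\alpha^{9/2}$, and Young's inequality gives $\mu^4\alpha^{1/2}\le\tfrac89\mu^{9/2}+\tfrac19\alpha^{9/2}$. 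Summing the two lines yields the claimed factor $\mu^{9/2}+\alpha^{9/2}$. The absolute constants must be absorbed, or the $\leq$ in the statement read as holding up to a generic constant.

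The main obstacle is careful bookkeeping rather than any conceptual step. The norm in Definition~\ref{eq:definition_H_bb} has an unusual outer exponent, so I will interpret it as the standard $\mathbb H^2$ norm, i.e., the square root of the boundary integral. I also need to check that the $\mathscr{D}_\mu(\varepsilon)$ limit reduces to the two line integrals, which dominated convergence handles given the Gaussian decay.
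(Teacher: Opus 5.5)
Your proposal is correct and follows the paper's proof essentially step by step. Item (i) comes from Proposition~\ref{prop:stability_laplace_domain_problem} with $\phi_0=\phi_1=0$ combined with \eqref{eq:bound_g_n}. Item (ii) bounds the two horizontal-line contributions using (i) with $\Re\{s\}\in(\mu-\eta,\mu+\eta)$, and the vertical segments vanish. The only cosmetic difference is that you evaluate $\int_{\IR}\snorm{s}^8e^{-2x^2/\alpha^2}\,\mathrm{d}x$ through Gaussian moments plus Young's inequality, whereas the paper splits the integral at $\mu\pm\eta$ to get $\mu^9+\alpha^9$ directly.
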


\begin{proof}
It follows from Proposition~\ref{prop:stability_laplace_domain_problem} 
and \eqref{eq:bound_g_n} that for each $s \in \Pi_\mu$
\begin{equation}\label{eq:bound_Laplace_V_s}
\begin{aligned}
	\snorm{s}
	\norm{
		\widehat{U}^{(2)}(s)
	}_\mathcal{H}
	+
	\norm{
		\widehat{U}^{(2)}(s)
	}_\mathcal{V}
	\lesssim
	&
	\frac{
	\snorm{\mathcal{B}
	\left\{
		\partial^2_t  q (t)
	\right\}(s)}
	}{{\Re\{s\}} \min\{1, c_{\mathsf{A}}\}}
	\norm{p}_\mathcal{H}
	\\
	\lesssim
	&
	\frac{e^{-\Re\{s\} t_0+ \frac{\Re\{s\}^2}{\alpha^2}}}{\alpha^3 {\Re\{s\}} \min\{1, c_{\mathsf{A}}\}} 
	\snorm{s}^{4}
	e^{-\frac{\Im\{s\}^2}{\alpha^2}}
	\norm{p}_\mathcal{H},
\end{aligned}
\end{equation}
thus proving \eqref{eq:bound_U} in item (i).
\end{proof}

Next, we proceed to prove that  $\widehat{U}^{(2)}(\mu+\imath \cdot )$
verifies Definition~\ref{eq:definition_H_bb}.
Exactly as in the proof of Proposition~\ref{prop:stability_laplace_domain_problem},
we may prove that the map $\Pi_\mu \ni s \mapsto \widehat{U}^{(2)}(s)$ is analytic and so is
$\mathscr{D}_\eta \ni z  \mapsto \widehat{U}^{(2)}(\mu+\imath z)$, for each $\eta \in (0,\mu)$.
For each $\eta \in (0,\mu)$, we estimate
\begin{equation}
	\norm{
		\widehat{U}^{(2)}(\mu + \imath \cdot)
	}^2_{\mathbb{H}^2(\mathscr{D}_\eta;\mathcal{V})}
    =
    \lim_{\varepsilon \rightarrow 0^+}
	\int_{\partial \mathscr{D}_\eta(\varepsilon) }
	\norm{\widehat{U}^{(2)}(\mu + \imath z)}^2_\mathcal{V}
	\snorm{\text{d}z}
    =
    \text{(I) + (II) + (III) + (IV)},
\end{equation}
where, for $\varepsilon \in (0,1)$, one has
\begin{equation}
\begin{aligned}
    \text{(I)}
    =
    \lim_{\varepsilon \rightarrow 0^+}
    \int_{-\frac{1}{\varepsilon}}^{\frac{1}{\varepsilon}}
    \norm{\widehat{U}^{(2)}(\mu + \imath (\tau  +\imath\eta(1-\varepsilon)))}^2_\mathcal{V}
    \text{d}\tau
    \\
    \text{(II)}
    =
    \lim_{\varepsilon \rightarrow 0^+}
    \int_{-\frac{1}{\varepsilon}}^{\frac{1}{\varepsilon}}
    \norm{\widehat{U}^{(2)}(\mu + \imath (\tau - \imath\eta(1-\varepsilon)))}^2_\mathcal{V}
    \text{d}\tau
\end{aligned}
\end{equation}
and
\begin{equation}
\begin{aligned}
    \text{(III)}
    =
    \lim_{\varepsilon \rightarrow 0^+}
    \int_{-\eta(1-\varepsilon)}^{\eta(1-\varepsilon)}
    \norm{\widehat{U}^{(2)}  
    \left(\mu+\imath\left(+\frac{1}{\varepsilon}+\imath \tau\right)\right)}^2_\mathcal{V}
    \text{d}\tau
    \\
    \text{(IV)}
    =
    \lim_{\varepsilon \rightarrow 0^+}
    \int_{-\eta(1-\varepsilon)}^{\eta(1-\varepsilon)}
    \norm{\widehat{U}^{(2)}
    \left(\mu+\imath\left(-\frac{1}{\varepsilon}+\imath\tau\right)\right)}^2_\mathcal{V}
    \text{d}\tau.
\end{aligned}
\end{equation}
Next, recalling \eqref{eq:bound_Laplace_V_s}
and Lemma~\ref{lem:bilateral_laplace_ricker}, we calculate (I) and (II) as follows
\begin{equation}
\begin{aligned}
	\lim_{\varepsilon \rightarrow 0^+}
    \int_{-\frac{1}{\varepsilon}}^{\frac{1}{\varepsilon}}
    \norm{\widehat{U}^{(2)}(\mu + \imath (\tau  \pm \imath\eta(1-\varepsilon)))}^2_\mathcal{V}
    &
    \text{d}\tau
    \\
    &
    \hspace{-3.5cm}
    \lesssim
    \frac{e^{-2(\mu-\eta) t_0+2\frac{(\mu+\eta)^2}{\alpha^2}}}{\alpha^6(\mu-\eta)^2\min\{1, c^2_{\mathsf{A}}\}}
    \lim_{\varepsilon \rightarrow 0^+}
    \int_{-\frac{1}{\varepsilon}}^{\frac{1}{\varepsilon}}
    (\tau^2+(\mu\pm\eta(1-\varepsilon))^2)^4
	e^{-2\frac{\tau^2}{\alpha^2}}
    \text{d}\tau
	\norm{p}^2_\mathcal{H}
    \\
    &
    \hspace{-3.5cm}
    \lesssim
    \frac{e^{-2(\mu-\eta) t_0+2\frac{(\mu+\eta)^2}{\alpha^2}}}{\alpha^6(\mu-\eta)^2 \min\{1, c^2_{\mathsf{A}}\}}
    \left(
         \int_0^{\mu\pm\eta}
    		(\tau^2+(\mu\pm\eta)^2)^4
      	e^{-2\frac{\tau^2}{\alpha^2}}
         \text{d}\tau
     \right.
     \\
     &
     \left.
         +
         \int_{\mu\pm\eta}^{\infty}
    		(\tau^2+(\mu\pm\eta)^2)^4
      	e^{-2\frac{\tau^2}{\alpha^2}}
         \text{d}\tau
    \right)
    \norm{p}^2_\mathcal{H}
    \\
    &
    \hspace{-3.5cm}
    \lesssim
    \frac{e^{-2(\mu-\eta) t_0+2\frac{(\mu+\eta)^2}{\alpha^2}}}{\alpha^6(\mu-\eta)^2 \min\{1, c^2_{\mathsf{A}}\}}
    \left(
    		(\mu\pm\eta)^9
		+
		\alpha^9
         \int_{\sqrt{2} \frac{\mu\pm\eta}{\alpha}}^{\infty}
    		\tau^8
      	e^{-\tau^2}
         \text{d}\tau
    \right)
    \norm{p}^2_\mathcal{H},
\end{aligned}
\end{equation}
thus yielding
\begin{equation}
\begin{aligned}
	\lim_{\varepsilon \rightarrow 0^+}
    \int_{-\frac{1}{\varepsilon}}^{\frac{1}{\varepsilon}}
    \norm{\widehat{U}^{(2)}(\mu + \imath (\tau  \pm \imath\eta(1-\varepsilon)))}^2_\mathcal{V}
    \leq
    \frac{e^{-2(\mu-\eta) t_0+2\frac{(\mu+\eta)^2}{\alpha^2}}}{\alpha^6(\mu-\eta)^2 \min\{1, c^2_{\mathsf{A}}\}}
    \left(
    		\mu^9
		+
		\alpha^9
    \right)
    \norm{p}^2_\mathcal{H}.
\end{aligned}
\end{equation}

In addition, one can verify straightforwardly that (III) = (IV) = 0,
thus yielding the result stated in item (ii).

\begin{lemma}\label{lmm:error_U}
Let $u \in \mathcal{W}_\mu(\mathbb{R}_+;\mathcal{H},\mathcal{V})$
for some $\mu>0$ be the solution to \eqref{eq:wave}--\eqref{eq:ic2}.
Assume $p \in \mathcal{V}$ and $\mathsf{A}p \in \mathcal{H}$.
\begin{itemize}
	\item[(i)]
	One has that $\partial^2_t u \in \mathcal{W}_{\mu}(\mathbb{R}_+;\mathcal{H},\mathcal{V})$
	and, for each $s \in \Pi_\mu$, it holds that
\begin{equation}\label{eq:bound_L_dt2_u}
\begin{aligned}
	\norm{
		\mathcal{L}\{\partial^2_t u\}(s)
	}_{\mathcal{V}}
	\lesssim
	&
	\frac{
		\snorm{\mathcal{L}\{\partial^2_t q\}(s)}
		\norm{p}_{\mathcal{H}}
	}{
		\Re\{s\} \min\{1, c_{\mathsf{A}}\}
	}
	+
	\left(
      	\norm{p}_{\mathcal{V}}
      	+
      	\frac{
			\norm{\mathsf{A}p}_{\mathcal{H}}
		}{
			\Re\{s\}  \min\{1, c_{\mathsf{A}}\}
		}
	\right)
	\left(
		\frac{\snorm{q(0)}}{\snorm{s}}
		+
		\frac{\snorm{\partial_t q(0)}}{\snorm{s}^2}
	\right).
\end{aligned}
\end{equation}
\item[(ii)] For each $s \in \Pi_\mu$ it holds that
\begin{equation}
\begin{aligned}
	\norm{
		\mathcal{L}\{\partial^2_t u\}(s)
		-
		\widehat{U}^{(2)}(s)
	}_{\mathcal{V}}
	\leq
	&
    \frac{1}{\snorm{s}^2}
    \left(
            \norm{p}_\mathcal{V}
            +
    		\frac{
        \norm{\mathsf{A}p}_{\mathcal{H}}
        }{
        \Re\{s\}  \min\{1, c_{\mathsf{A}}\}
        }
    \right)
	\int_{-\infty}^{0}
	\snorm{\partial^2_t  q (t)}
	e^{-\Re\{s\} t}
	\normalfont\text{d} t
	\\
	&
	+
	\left(
      	\norm{p}_{\mathcal{V}}
      	+
      	\frac{
			\norm{\mathsf{A}p}_{\mathcal{H}}
		}{
			\Re\{s\}  \min\{1, c_{\mathsf{A}}\}
		}
	\right)
	\left(
		\frac{\snorm{q(0)}}{\snorm{s}}
		+
		\frac{\snorm{\partial_t q(0)}}{\snorm{s}^2}
	\right).
\end{aligned}
\end{equation}
\end{itemize}
\end{lemma}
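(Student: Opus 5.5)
The plan is to differentiate the evolution problem twice in time. Then $w\coloneqq\partial_t^2 u$ solves a wave problem of the form covered by Problem~\ref{pbrm:laplace}, and both items follow from the pointwise-in-$s$ bound \eqref{eq:bound_laplace_1} of Proposition~\ref{prop:stability_laplace_domain_problem}.

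\emph{Step 1 (equation for $w$).} Formally, $w$ satisfies $\partial_t^2 w+\sfA w=\partial_t^2 q\,p$. Its initial data are read off from \eqref{eq:wave}--\eqref{eq:ic2}:
\begin{equation}
w(0)=f(0)-\sfA u(0)=q(0)p,
\qquad
\partial_t w(0)=\partial_t f(0)-\sfA\partial_t u(0)=\partial_t q(0)\,p .
\end{equation}
Set $\phi_0\coloneqq q(0)p$ and $\phi_1\coloneqq\partial_t q(0)p$. Both lie in $\cV$ with $\sfA\phi_0,\sfA\phi_1\in\cH$ because $p\in\cV$ and $\sfA p\in\cH$. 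Also $\partial_t^2 q\,p\in L^2_\mu(\IR_+;\cH)$, since $q$ is a Gaussian times a polynomial.

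To make this rigorous rather than formal, I would take the unique solution $\phi\in\mathcal{W}_\mu$ of this problem given by Corollary~\ref{cor:hardy_uniqueness_time}. Then integrate it twice from $0$: $v(t)\coloneqq\int_0^t\int_0^r\phi$. Checking that $\partial_t^2(\partial_t^2 v+\sfA v-f)=0$ with vanishing data, uniqueness (Corollary~\ref{cor:hardy_uniqueness_time} again) gives $v=u$. Hence $\partial_t^2u=\phi\in\mathcal{W}_\mu$ and $\mathcal{L}\{\partial_t^2u\}=\widehat\phi$. Equivalently, on the Laplace side one checks that $s^2\widehat u-\ldots$ satisfies \eqref{eq:laplace_weak} with this data. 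This justification is the main technical obstacle; the estimates afterwards are direct.

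\emph{Step 2 (item (i)).} Apply \eqref{eq:bound_laplace_1} with $\widehat f(s)=\mathcal{L}\{\partial_t^2q\}(s)p$, $\phi_0=q(0)p$ and $\phi_1=\partial_tq(0)p$. This yields \eqref{eq:bound_L_dt2_u} directly, after grouping the $\norm{p}_\cV$ and $\norm{\sfA p}_\cH$ contributions and using $\snorm{s}\ge\Re\{s\}$.

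\emph{Step 3 (item (ii)).} Split the bilateral transform as
\begin{equation}
\mathcal{B}\{\partial_t^2q\}(s)=\mathcal{L}\{\partial_t^2q\}(s)+c(s),
\qquad
c(s)\coloneqq\int_{-\infty}^0\partial_t^2q(t)e^{-st}\,\mathrm{d}t .
\end{equation}
Then, by linearity, $d(s)\coloneqq\mathcal{L}\{\partial_t^2u\}(s)-\widehat{U}^{(2)}(s)$ solves
\begin{equation}
(s^2+\sfA)d(s)=s\,q(0)p+\partial_tq(0)p-c(s)p .
\end{equation}
For fixed $s$, the term $-c(s)p$ is treated as an additional "$\phi_1$-type" datum $\tilde\phi_1=-c(s)p$, with no $\widehat f$ term. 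Since Proposition~\ref{prop:stability_laplace_domain_problem} is a pointwise statement in $s$, its constancy in $s$ is irrelevant. Concretely, I would write
\begin{equation}
d(s)=\frac{q(0)p}{s}+\frac{\partial_tq(0)p-c(s)p}{s^2}+\widehat{z}(s),
\qquad
(s^2+\sfA)\widehat z(s)=-\frac{q(0)\sfA p}{s}-\frac{(\partial_tq(0)-c(s))\sfA p}{s^2},
\end{equation}
and bound $\widehat z(s)$ via Lemma~\ref{lmm:inf_sup} as in \eqref{eq:bound_W_s}. Finally, using $\snorm{c(s)}\le\int_{-\infty}^0\snorm{\partial_t^2q(t)}e^{-\Re\{s\}t}\,\mathrm{d}t$ produces exactly the stated bound.
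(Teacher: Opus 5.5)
Your proposal is correct and follows the paper's proof. For (i) you apply \eqref{eq:bound_laplace_1} with data $(\mathcal{L}\{\partial_t^2 q\}p,\ q(0)p,\ \partial_t q(0)p)$; for (ii) you subtract the explicit terms and bound the remainder with the inf-sup estimate, and your $\widehat z(s)$ is exactly the paper's $\widehat h(s)-q(0)p/s-\partial_t q(0)p/s^2$. The only differences are that the paper routes $c(s)$ through the source slot of Proposition~\ref{prop:stability_laplace_domain_problem} rather than the $\phi_1$ slot, and that your uniqueness (or Laplace-side) justification that $\partial_t^2 u$ solves the differentiated problem is a welcome addition, since the paper only asserts it.
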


\begin{proof}
Observe that $\partial^2_t u$ is the solution to
\begin{align}
    \partial^4_t u(t)    +  \mathsf{A}  \partial^2_t  u(t)    &=  \partial^2_t f(t), \quad t>0,   \\
    \partial^2_t u (0) &= f(0),\\
    \partial^3_t  u(0) &= \partial_t f(0).
\end{align}
Therefore, in the Laplace domain, for each $s \in \Pi_\mu$, one has that
\begin{equation}
	s^2 \mathcal{L}\{  \partial^2_t  u \}(s)
	+
	\mathsf{A} \mathcal{L}\{  \partial^2_t  u \}(s)
	=
	\mathcal{L}\{  \partial^2_t  q \}(s) p
	+
	s q(0) p
	+
	\partial_t q(0) p.
\end{equation}
Since $f(0)=q(0)p\in \cV$ and $\partial_t f(0)=\partial_t q(0) p\in\cV$, the assumption $\sfA p\in\cH$ allows us to apply Proposition~\ref{prop:stability_laplace_domain_problem} and item(i) follows.

To verify (ii), let $s\in\Pi_\mu$ and observe that
\begin{equation}
\begin{aligned}
	\mathcal{L}\{  \partial^2_t  q \}(s)
	=
	\int_{0}^{\infty}
	\partial^2_t  q (t)
	e^{-s t}
	\text{d} t
	=
	\mathcal{B}
	\left\{
		\partial^2_t  q 
	\right\}(s)
	-
	\int_{-\infty}^{0}
	\partial^2_t  q  (t)
	e^{-s t}
	\text{d} t.
\end{aligned}
\end{equation}
Therefore, the difference $\widehat{w}(s) = \mathcal{L}\{\partial^2_t u\}(s) -\widehat{U}^{(2)}(s) \in \mathcal{V} $ satisfies the problem 
\begin{equation}
	s^2 \widehat{w}(s)
	+
	\mathsf{A} \widehat{w}(s)
	=
    -
    p
	\int_{-\infty}^{0}
	\partial^2_t  q  (t)
	e^{-s t}
	\text{d} t
	+
	s q(0) p
	+
	\partial_t q(0) p.
\end{equation}
Furthermore, since $p\in\cV$ by assumption, one has that
\begin{equation}
	\widehat{h}(s) 
    =
    \widehat{w}(s)
    +
    \frac{p}{s^2}
	\int_{-\infty}^{0}
	\partial^2_t  q  (t)
	e^{-s t}
	\text{d} t,
\end{equation}
satisfies $\widehat{h}(s) \in\cV$, for each $s \in \Pi_\mu$.
We therefore have the bound
\begin{equation}\label{eq:bound_W_s_H}
    \norm{\widehat{w}(s)}_\mathcal{V}
    \leq
    \norm{\widehat{h}(s)}_\mathcal{V}
    +
    \frac{\norm{p}_\mathcal{V}}{\snorm{s}^2}
	\int_{-\infty}^{0}
	\snorm{\partial^2_t  q  (t)}
	e^{-\Re\{s\} t}
	\text{d} t.
\end{equation}
Observe that $\widehat{h}(s)$ satisfies the equation
\begin{equation}\label{eq:Laplace_h_s}
	s^2 \widehat{h}(s)
	+
	\mathsf{A} \widehat{h}(s)
	=
    \frac{\mathsf{A} p}{s^2}
	\int_{-\infty}^{0}
	\partial^2_t  q  (t)
	e^{-s t}
	\text{d} t
	+
	s q(0) p
	+
	\partial_t q(0) p.
\end{equation}
Since $p \in \mathcal{V}$ and $\mathsf{A}p \in \mathcal{H}$, Proposition~\ref{prop:stability_laplace_domain_problem}
applied to~\eqref{eq:Laplace_h_s} implies that, for every $s \in \Pi_\mu$,
\begin{equation}
\begin{aligned}
    \norm{\widehat{h}(s)}_\mathcal{V}
	\leq
	&
    \frac{
        \norm{\mathsf{A}p}_{\mathcal{H}}
    }{
        \Re\{s\}  \min\{1, c_{\mathsf{A}}\}
        \snorm{s}^2
    }
	\int_{-\infty}^{0}
	\snorm{\partial^2_t  q (t)}
	e^{-\Re\{s\} t}
	\text{d} t
	\\
	&
	+
	\left(
      	\norm{p}_{\mathcal{V}}
      	+
      	\frac{
			\norm{\mathsf{A}p}_{\mathcal{H}}
		}{
			\Re\{s\}  \min\{1, c_{\mathsf{A}}\}
		}
	\right)
	\left(
		\frac{\snorm{q(0)}}{\snorm{s}}
		+
		\frac{\snorm{\partial_t q(0)}}{\snorm{s}^2}
	\right).
\end{aligned}
\end{equation}
This bound, together with \eqref{eq:bound_W_s_H}, yields the final result.

\end{proof}
Combining the consistency error bound stated in the previous lemma (item (ii)), exponential decay of $\widehat{U}^{(2)}(\mu+\imath \tau)$ as a function of $\tau\in\IR$ for $\mu>0$ fixed from Lemma~\ref{lmm:regularity_U_n}, and the sinc interpolation error bounds shown in Section~\ref{ssec:sinc} we obtain the following bound for approximation of $\partial_t^2 u$ 
in finite-dimensional subspaces $\mathcal{V}_R$ of dimension $R$.

\begin{lemma}\label{lem:approximation_Hardy_spaces_2}
Let  $u \in \mathcal{W}_\mu(\mathbb{R}_+;\mathcal{V},\mathcal{H})$
for some $\mu>0$ be the solution to \eqref{eq:wave}--\eqref{eq:ic2}.
Furthermore, assume that $p \in \mathcal{V}$ and 
$\mathsf{A} p \in \mathcal{V}$. Then, for any $\eta \in (0,\mu)$, it holds that
\begin{equation}\label{eq:bound_inf_V}
\begin{aligned}
	\inf_{
	\substack{
		\mathcal{V}_R \subset \mathcal{V}
		\\
		\normalfont\text{dim}(\mathcal{V}_R)\leq R	
	}
	}	
	\norm{
		\partial^2_t u
		-
		\mathsf{P}_{\mathcal{V}_R}
		\partial^2_t u
	}_{L^2(\mathfrak{I};\mathcal{V})}
    \lesssim
    &
    e^{\mu T}
    \left(
    \frac{e^{-(\mu-\eta) t_0+\frac{(\mu+\eta)^2}{\alpha^2}}}{\alpha^3(\mu-\eta)}
    \left(
		\mu^4
		+
		\alpha^4
    \right)
        \right.
        \\
        &
        \left.
        +
   \frac{
   	   \mu^{\frac{1}{6}}
   		(\alpha^2+\mu^2)^2 
   }{
   	\alpha^{\frac{11}{3}}
   }
    \frac{
        e^{-\mu t_0 + \frac{\mu^2}{\alpha^2}}
        }{
        \min\{1, c_{\mathsf{A}}\}
    }
    R^\frac{7}{6}
    \right)
	e^{
		-\left(\frac{\pi R \eta}{2\alpha} \right)^{\frac{2}{3}}
	}
	 \norm{p}_\mathcal{H}
	\\
	&
	+
    \frac{e^{\mu T}}{\sqrt{\mu^3}}
    \left(
    \frac{
        \norm{\mathsf{A}p}_{\mathcal{H}}
        }{
        \mu  \min\{1, c_{\mathsf{A}}\}
        }
        +
        \norm{p}_\mathcal{V}
    \right)
	\int_{-\infty}^{0}
	\snorm{\partial^2_t  q (t)}
	e^{-\mu t}
	\normalfont\text{d} t
	\\
	&
	+
	e^{\mu T}
	\left(
      	\norm{p}_{\mathcal{V}}
      	+
      	\frac{
			\norm{\mathsf{A}p}_{\mathcal{H}}
		}{
			\mu \min\{1, c_{\mathsf{A}}\}
		}
	\right)
	\left(
		\frac{\snorm{q(0)}}{\sqrt{\mu}}
		+
		\frac{\snorm{\partial_t q(0)}}{\sqrt{\mu^3}}
	\right).
\end{aligned}
\end{equation}
\end{lemma}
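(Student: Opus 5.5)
We first pass from the finite interval to the weighted half-line and then to the Laplace domain. Since $e^{-2\mu t}\geq e^{-2\mu T}$ on $\mathfrak{I}=(0,T)$, we have for any $\mathcal{V}_R$
\begin{equation}
\norm{\partial_t^2 u-\mathsf{P}_{\mathcal{V}_R}\partial_t^2 u}_{L^2(\mathfrak{I};\mathcal{V})}\leq e^{\mu T}\norm{\partial_t^2 u-\mathsf{P}_{\mathcal{V}_R}\partial_t^2 u}_{L^2_\mu(\IR_+;\mathcal{V})}.
\end{equation}
By Theorem~\ref{thm:paley_wiener}, and because $\mathsf{P}_{\mathcal{V}_R}$ commutes with $\mathcal{L}$, the right-hand side equals $\norm{(\mathsf{I}-\mathsf{P}_{\mathcal{V}_R})\mathcal{L}\{\partial_t^2u\}}_{\mathscr{H}^2_\mu(\mathcal{V})}$. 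By Proposition~\ref{prop:properties_hardy} this is the $L^2(\IR,\mathrm{d}\tau/2\pi)$ norm on the line $\Re s=\mu$. On that line we split $\mathcal{L}\{\partial_t^2u\}=\widehat{U}^{(2)}+(\mathcal{L}\{\partial_t^2u\}-\widehat{U}^{(2)})$. The projection has norm at most one, so the consistency part is bounded by its $L^2$ norm over the line. We bound it using Lemma~\ref{lmm:error_U}(ii) with $\Re s=\mu$, together with \eqref{eq:bound_integral_mu}: the $\snorm{s}^{-2}$ term produces the factor $1/\sqrt{\mu^3}$ multiplying $\int_{-\infty}^0\snorm{\partial_t^2q}e^{-\mu t}\mathrm{d}t$, and the $q(0)/\snorm{s}$ and $\partial_tq(0)/\snorm{s}^2$ terms produce the factors $1/\sqrt{\mu}$ and $1/\sqrt{\mu^3}$. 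This gives the last two lines of \eqref{eq:bound_inf_V}. The hypothesis $\mathsf{A}p\in\mathcal{V}\subset\mathcal{H}$ is what lets Lemma~\ref{lmm:error_U} apply.

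It remains to construct a space $\mathcal{V}_R$ for which $\norm{\widehat{U}^{(2)}(\mu+\imath\cdot)-\mathsf{P}_{\mathcal{V}_R}\widehat{U}^{(2)}(\mu+\imath\cdot)}_{L^2(\IR;\mathcal{V})}$ decays like $e^{-(\pi R\eta/2\alpha)^{2/3}}$. We take $\vartheta>0$ and $N$ with $R=2N+1$, and set $\mathcal{V}_R=\spanv\{\widehat{U}^{(2)}(\mu+\imath k\vartheta):\snorm{k}\leq N\}$. If real-valuedness is needed, we instead use real and imaginary parts and absorb a factor $2$ into $R$. Since $\mathsf{P}_{\mathcal{V}_R}$ is a best approximation, the error is at most $\norm{\varphi-\mathcal{S}_N(\varphi,\vartheta)}$ with $\varphi(\tau)=\widehat{U}^{(2)}(\mu+\imath\tau)$. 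We split this as $\norm{\varphi-\mathcal{S}(\varphi,\vartheta)}+\norm{\sum_{\snorm{k}>N}\varphi(k\vartheta)\Sinc(k,\vartheta)}$.

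For the first term we apply Proposition~\ref{prop:sinc_interpolation} with $y=0$ and Lemma~\ref{lmm:regularity_U_n}(ii). This gives $\norm{\varphi}_{\mathbb{H}^2(\mathscr{D}_\eta)}/\sinh(\pi\eta/\vartheta)\lesssim e^{-\pi\eta/\vartheta}\,\frac{e^{-(\mu-\eta)t_0+(\mu+\eta)^2/\alpha^2}}{\alpha^3(\mu-\eta)}(\mu^{9/2}+\alpha^{9/2})\norm{p}_{\mathcal{H}}$. The stated powers $\mu^4+\alpha^4$ suggest we should rebalance the polynomial factor slightly; we expect this is routine, for instance by using a constant $\mu$-dependent bound rather than the exact one.

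For the truncation tail, the functions $\Sinc(k,\vartheta)$ are orthogonal in $L^2(\IR)$ with squared norm $\vartheta$. The tail is therefore $\big(\vartheta\sum_{\snorm{k}>N}\norm{\varphi(k\vartheta)}_{\mathcal{V}}^2\big)^{1/2}$. Lemma~\ref{lmm:regularity_U_n}(i) bounds each term by $\frac{e^{-\mu t_0+\mu^2/\alpha^2}}{\alpha^3\mu\min\{1,c_{\mathsf{A}}\}}(\mu^2+k^2\vartheta^2)^2e^{-k^2\vartheta^2/\alpha^2}\norm{p}_{\mathcal{H}}$. Comparing the sum with a Gaussian integral, the tail behaves like $(\mu^2+(N\vartheta)^2)^2 e^{-(N\vartheta/\alpha)^2}$ times the prefactor.

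\textbf{Main obstacle: balancing the two terms.} We equate the exponents $\pi\eta/\vartheta=(N\vartheta/\alpha)^2$, which gives $\vartheta=(\pi\eta\alpha^2/N^2)^{1/3}$. Both exponents then equal $(\pi\eta N/\alpha)^{2/3}$, and with $N\approx R/2$ this is $(\pi R\eta/2\alpha)^{2/3}$. We must also track the polynomial prefactors: substituting $N\vartheta=(\pi\eta\alpha^2N)^{1/3}$ into $(\mu^2+(N\vartheta)^2)^2$, together with the factor $\sqrt{\vartheta}$ and the Gaussian-sum comparison, should produce the factor $\mu^{1/6}(\alpha^2+\mu^2)^2\alpha^{-11/3}R^{7/6}$. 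We expect this to require some crude estimates such as $\eta<\mu$ and $N\geq1$. Collecting the three contributions and multiplying by $e^{\mu T}$ yields \eqref{eq:bound_inf_V}.
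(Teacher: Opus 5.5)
Your argument is essentially the paper's proof. It uses the same steps: the factor $e^{\mu T}$ and Theorem~\ref{thm:paley_wiener} to move to the line $\Re\{s\}=\mu$; the triangle inequality against $\widehat{U}^{(2)}$, with the consistency term bounded by Lemma~\ref{lmm:error_U}(ii) and \eqref{eq:bound_integral_mu}; the space spanned by the snapshots $\widehat{U}^{(2)}(\mu+\imath k\vartheta)$; the split into the sinc interpolation error (Proposition~\ref{prop:sinc_interpolation}) plus an orthogonal truncation tail controlled by Lemma~\ref{lmm:regularity_U_n}(i); and the same balancing $\vartheta=(\pi\eta\alpha^2/N^2)^{1/3}$ with $N\approx R/2$. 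Your predicted tail prefactor is correct. The tail is $\lesssim \frac{e^{-\mu t_0+\mu^2/\alpha^2}}{\alpha^{5/2}\mu\min\{1,c_{\mathsf{A}}\}}(\alpha^2+\mu^2)^2 (N\vartheta/\alpha)^{7/2}e^{-(N\vartheta/\alpha)^2}\norm{p}_{\mathcal{H}}$. Then $(N\vartheta/\alpha)^{7/2}=(\pi\eta N/\alpha)^{7/6}$, $\eta^{7/6}/\mu\leq\mu^{1/6}$ and $\alpha^{-5/2}\alpha^{-7/6}=\alpha^{-11/3}$ give the stated factor.

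The step you call routine is not. No rebalancing turns $\mu^{9/2}+\alpha^{9/2}$ into $\mu^4+\alpha^4$ with a constant independent of $\mu$ and $\alpha$, because the ratio grows like $\max\{\mu,\alpha\}^{1/2}$. Even the sharp evaluation $\int_{\IR}(\tau^2+a^2)^4e^{-2\tau^2/\alpha^2}\,\mathrm{d}\tau\asymp\alpha(a^2+\alpha^2)^4$ only improves the factor to $\alpha^{1/2}(\mu^4+\alpha^4)$. In addition, Lemma~\ref{lmm:regularity_U_n}(ii) carries a factor $1/\min\{1,c_{\mathsf{A}}\}$ that is missing from the first term of \eqref{eq:bound_inf_V}. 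The paper's proof does not close this gap either; it simply cites Lemma~\ref{lmm:regularity_U_n}. What the argument (yours and the paper's) actually proves is \eqref{eq:bound_inf_V} with $(\mu^{9/2}+\alpha^{9/2})/\min\{1,c_{\mathsf{A}}\}$ in the first term, and you should state that rather than expect to recover the printed powers.

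There are two smaller points. First, the estimate $1/\sinh(\pi\eta/\vartheta)\lesssim e^{-\pi\eta/\vartheta}$, the sum-to-integral comparison, and $\int_x^\infty\tau^8e^{-\tau^2}\,\mathrm{d}\tau\lesssim x^7e^{-x^2}$ all need $N\vartheta/\alpha=(\pi\eta N/\alpha)^{1/3}$ bounded below, i.e.\ $R\gtrsim\alpha/\eta$; $N\geq 1$ is not enough. The paper assumes $R\vartheta/(2\alpha)>1$ in its proof. The small-$R$ regime can be covered by the trivial bound $\norm{\widehat{U}^{(2)}(\mu+\imath\cdot)}_{L^2(\IR;\mathcal{V})}$, which the corrected first term dominates there. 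Second, you do not lose a factor $2$ in $R$ when passing to a real space. Since $\widehat{U}^{(2)}(\overline{s})=\overline{\widehat{U}^{(2)}(s)}$ and $\widehat{U}^{(2)}(\mu)$ is real, the real span of the real and imaginary parts of the $2N+1$ snapshots has dimension at most $2N+1$. This matters because absorbing a factor $2$ would change the rate to $(\pi R\eta/(4\alpha))^{2/3}$, which no constant can compensate.
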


\begin{proof}
In the following, let $\mu>0$ be fixed and $\mathcal{V}_R$ be a finite-dimensional subspace of $\mathcal{V}$, which will be specified below.
As a consequence of Lemma~\ref{lmm:error_U}, item (i), one has that $\partial^2_t u \in \mathcal{W}_{\mu}(\mathbb{R}_+;\mathcal{H},\mathcal{V})$.
Theorem~\ref{thm:paley_wiener} (Paley-Wiener) and Proposition~\ref{prop:properties_hardy}, item (i), yields
\begin{equation}
\begin{aligned}
	\norm{
		\partial^2_t u
		-
		\mathsf{P}_{\mathcal{V}_R}
		\partial^2_t u
	}_{L^2(\mathfrak{I};\mathcal{V})}
    \leq
    &
    e^{\mu T}
	\norm{
		\big(\partial^2_t u 
		-
		\mathsf{P}_{\mathcal{V}_R}
		\partial^2_t u\big) e^{-\mu t}
	}_{L^2(\mathfrak{I};\mathcal{V})}
    \\
    =
    &
    e^{\mu T}
	\norm{
		\partial^2_t u 
		-
		\mathsf{P}_{\mathcal{V}_R}
		\partial^2_t u
	}_{L^2_\mu(\mathbb{R}_+;\mathcal{V})}
	\\
    =
    &
    \frac{
        e^{\mu T}
    }{
        \sqrt{2 \pi}
    }
	\left(
		\int_{-\infty}^{+\infty}
		\norm{
    		\mathcal{L}\left\{\partial^2_t u \right\}
            (\mu+\imath \tau)
    		-
    		\mathsf{P}_{\mathcal{V}_R}
    		\mathcal{L} \left\{\partial^2_t u \right\}
        	(\mu+\imath \tau )
		}^2_\mathcal{V}
		\normalfont\text{d} \tau
	\right)^{\frac{1}{2}},
\end{aligned}
\end{equation}
and for any $w_R \in L^2(\mathbb{R};\mathcal{V}_R)$ 
\begin{equation}
\begin{aligned}
	\norm{
		\partial^2_t u
		-
		\mathsf{P}_{\mathcal{V}_R}
		\partial^2_t u
	}_{L^2(\mathfrak{I};\mathcal{V})}
    \leq
    \frac{
        e^{\mu T}
    }{
        \sqrt{2 \pi}
    }
    \norm{
        \mathcal{L}\left\{\partial^2_t u \right\}
        (\mu+\imath \cdot)
        -
        w_R
    }_{L^2(\mathbb{R};\mathcal{V})}
\end{aligned}
\end{equation}
and 
\begin{equation}\label{eq:decomp}
\begin{aligned}
	\norm{
		\partial^2_t u
		-
		\mathsf{P}_{\mathcal{V}_R}
		\partial^2_t u
	}_{L^2(\mathfrak{I};\mathcal{V})}
    \leq
    &
    \frac{
        e^{\mu T}
    }{
        \sqrt{2 \pi}
    }
    \norm{
        \mathcal{L}\left\{\partial^2_t u \right\}
        (\mu+\imath \cdot)
        -
        \widehat{U}^{(2)}(\mu+\imath\cdot)
    }_{L^2(\mathbb{R};\mathcal{V})}
    \\
    &
    +
    \frac{
        e^{\mu T}
    }{
        \sqrt{2 \pi}
    }
    \norm{
        w_R
        -
        \widehat{U}^{(2)}(\mu+\imath\cdot)
    }_{L^2(\mathbb{R};\mathcal{V})},
\end{aligned}
\end{equation}
where the first term on the right-hand side of \eqref{eq:decomp} is bounded as a
consequence of Lemma~\ref{lmm:error_U}, item (ii), and \eqref{eq:bound_integral_mu},
according to 
\begin{equation}\label{eq:error_consistency}
\begin{aligned}
    \norm{
        \mathcal{L}\left\{\partial^2_t u \right\}
        (\mu+\imath \cdot)
        -
        \widehat{U}^{(2)}(\mu+\imath\cdot)
    }_{L^2(\mathbb{R};\mathcal{V})}
    \\
    	&
	\hspace{-2cm}
    \lesssim
    \frac{1}{\sqrt{\mu^3}}
    \left(
    \frac{
        \norm{\mathsf{A}p}_{\mathcal{H}}
        }{
        \mu  \min\{1, c_{\mathsf{A}}\}
        }
        +
        \norm{p}_\mathcal{V}
    \right)
	\int_{-\infty}^{0}
	\snorm{\partial^2_t  q (t)}
	\exp(-\mu t)
	\normalfont\text{d} t
    \\
    	&
	\hspace{-2cm}
   	+
	\left(
      	\norm{p}_{\mathcal{V}}
      	+
      	\frac{
			\norm{\mathsf{A}p}_{\mathcal{H}}
		}{
			\Re\{s\}  \min\{1, c_{\mathsf{A}}\}
		}
	\right)
	\left(
		\frac{\snorm{q(0)}}{\snorm{s}}
		+
		\frac{\snorm{\partial_t q(0)}}{\snorm{s}^2}
	\right).
\end{aligned}
\end{equation}
For each $\tau \in \mathbb{R}$, we set $\zeta(\tau) = \widehat{U}^{(2)}(\mu+\imath\tau)$.
Observe that $\zeta \in \mathbb{H}^2(\mathscr{D}_{\eta },\mathcal{V})$ for any $\eta \in (0,\mu)$
as a consequence of Lemma~\ref{lmm:regularity_U_n}.
Thus, recalling Proposition \ref{prop:sinc_interpolation} with
$y = 0$ and $\vartheta>0$ to be specified, we have that for any $\eta \in (0,\mu)$ it holds that
\begin{equation}\label{eq:sinc_approx_ineq}
\begin{aligned}
    \norm{
        \zeta
        -
        \mathcal{S}_{\left\lfloor \frac{R}{2} \right\rfloor-1}(\zeta,\vartheta)
    }_{L^2(\mathbb{R};\mathcal{V})}
    \\
   	&
	\hspace{-2cm}
	\leq
    \norm{
        \zeta
        -
        \mathcal{S}(\zeta,\vartheta)
    }_{L^2(\mathbb{R};\mathcal{V})}
    +
    \norm{
        \mathcal{S}(\zeta,\vartheta)
        -
        \mathcal{S}_{\left\lfloor \frac{R}{2} \right\rfloor-1}(\zeta,\vartheta)
    }_{L^2(\mathbb{R};\mathcal{V})}
    \\
   	&
	\hspace{-2cm}
	\leq
    \frac{
        \norm{
             \widehat{U}^{(2)}(\mu+\imath\cdot)
        }_{\mathbb{H}^2(\mathscr{D}_\eta;\mathcal{V})}
    }{
        \snorm{\sinh(\pi \delta/\vartheta )}
    }
    +
    \norm{
        \mathcal{S}(\zeta,\vartheta)
        -
        \mathcal{S}_{\left\lfloor \frac{R}{2} \right\rfloor -1 }(\zeta,\vartheta)
    }_{L^2(\mathbb{R};\mathcal{V})}
    \\
   	&
	\hspace{-2cm}
	\lesssim
    \norm{
         \widehat{U}^{(2)}(\mu+\imath\cdot)
    }_{\mathbb{H}^2(\mathscr{D}_\eta;\mathcal{V})}
    e^{-\pi \delta/\vartheta }
    +
    \norm{
        \mathcal{S}(\zeta,\vartheta)
        -
        \mathcal{S}_{\left\lfloor \frac{R}{2} \right\rfloor-1}(\zeta,\vartheta)
    }_{L^2(\mathbb{R};\mathcal{V})}.
\end{aligned}
\end{equation}
In view of \eqref{eq:sinc_interpolation},
for each $R \in \mathbb{N}$, we set
\begin{equation}
    \mathcal{V}_R 
    \coloneqq
    \spanv
    \left\{\hU(\mu+\imath k\vartheta): 
    k=-\left\lfloor \frac{R}{2}  \right\rfloor+1,\ldots,\left\lfloor \frac{R}{2} \right\rfloor-1
    \right\}
    \subset \mathcal{V}.
\end{equation}  
In particular, one has that
$\mathcal{S}_{\left\lfloor \frac{R}{2} \right\rfloor-1}(\zeta,\vartheta)(\tau) \in \mathcal{V}_R$,
for each $\tau \in \mathbb{R}$.
Observe that $\text{dim}(\mathcal{V}_R) \leq R$.
Using orthogonality of $\{\Sinc(m,\vartheta)\}_{m \in \mathbb{Z}}$, see \cite[p.139]{stenger2012numerical}, 
we obtain
\begin{align}
 \norm{
        \mathcal{S}(\zeta,\vartheta)
        -
        \mathcal{S}_{\left\lfloor \frac{R}{2} \right\rfloor-1}(\zeta,\vartheta)
    }_{L^2(\mathbb{R};\mathcal{V})}^2 
    =\vartheta  \sum_{|m|\geq\left\lfloor \frac{R}{2} \right\rfloor} \norm{\hU(\mu+\imath m\vartheta)}_\cV^2.
\end{align}
Recalling Lemma~\ref{lmm:regularity_U_n}, we obtain
\begin{equation}
\begin{aligned}
    \norm{
        \mathcal{S}(\zeta,\vartheta)
        -
        \mathcal{S}_{\left\lfloor \frac{R}{2} \right\rfloor-1}(\zeta,\vartheta)
    }^2_{L^2(\mathbb{R};\mathcal{V})}
    &
    \lesssim
    \frac{
        e^{-2\mu t_0 +2\frac{\mu^2}{\alpha^2}}
        }{
         \alpha^6  \mu^2\min\{1, c^2_{\mathsf{A}}\}
    }
    \vartheta \norm{p}^2_{\cH}
    \sum_{\snorm{m}\geq\left\lfloor \frac{R}{2} \right\rfloor}
	((m \vartheta)^2+\mu^2)^{4}
	e^{
		-2{\left(\frac{m \vartheta}{\alpha}\right)^2}
	}
	.
\end{aligned}
\end{equation}
Observe that under the assumption $\frac{R \vartheta}{2 \alpha} >1$
\begin{equation}
\begin{aligned}
    \vartheta
    \sum_{\snorm{m}\geq\left\lfloor \frac{R}{2} \right\rfloor}
	((m \vartheta)^2+\mu^2)^{4}
	e^{
		-2{\left(\frac{m \vartheta}{\alpha}\right)^2}
	}
	&
    \lesssim
    \int_{\frac{R}{2} \vartheta }^{\infty}
	(\tau^2+\mu^2)^{4}
	e^{
		-{\left(\sqrt{2}\frac{\tau}{\alpha}\right)^2}
	}
    \text{d}\tau
    \\
    &
    \lesssim
    \alpha
    (\alpha^2+\mu^2)^4
    \int_{\sqrt{2} \frac{R \vartheta}{2 \alpha} }^{\infty}
	\tau^{8}
	e^{
		-\tau^2
	}
    \text{d}\tau.
\end{aligned}
\end{equation}
We calculate and obtain
\begin{equation}\label{eq:integra_x_to_8}
    \int
	\tau^{8}
	e^{
		-\tau^2
	}
    \text{d}\tau
    =
	\frac{1}{32}\left(
  -2\,\tau	e^{
		-\tau^2
	}
  \left(105 + 70 \tau^2 + 28 \tau^4 + 8 \tau^6\right)
  + 105 \sqrt{\pi}\,\erf(\tau)
  \right)
  +C,
\end{equation}
for an arbitrary constant $C>0$.
Therefore, one has that
\begin{equation}
\begin{aligned}
    \norm{
        \mathcal{S}(\zeta,\vartheta)
        -
        \mathcal{S}_{\left\lfloor \frac{R}{2} \right\rfloor-1}(\zeta,\vartheta)
    }_{L^2(\mathbb{R};\mathcal{V})}
    \lesssim
    (\alpha^2+\mu^2)^2
    \frac{
        e^{-\mu t_0 + \frac{\mu^2}{\alpha^2}}
        }{
        \sqrt{\alpha^5} \mu
        \min\{1, c_{\mathsf{A}}\}
    }
    \left(\frac{R\vartheta}{2\alpha}\right)^\frac{7}{2}
    e^{
    -\left(\frac{R\vartheta}{2\alpha}\right)^2}
    \norm{p}_{\mathcal{V}}.
\end{aligned}
\end{equation}

Next, for a given $\eta \in (0,\mu)$, we set $\vartheta = \left( \frac{4\pi \alpha^2 \eta}{R^2} \right)^{\frac{1}{3}}$,
which yields $\frac{R\vartheta}{2\alpha} = \left(\frac{\pi R \eta}{2\alpha} \right)^{\frac{1}{3}}$.
From \eqref{eq:sinc_approx_ineq} we obtain 
\begin{equation}
\begin{aligned}
    \norm{
        \zeta
        -
        \mathcal{S}_{\left\lfloor \frac{R}{2} \right\rfloor-1}(\zeta,\vartheta)
    }_{L^2(\mathbb{R},\mathcal{V})}
    \\
    &
    \hspace{-2cm}
    \lesssim
    \left(
        \norm{
            \widehat{U}^{(2)}(\mu+\imath\cdot)
        }_{\mathbb{H}^2(\mathscr{D}_\eta;\mathcal{V})}
    +
   \frac{
   	   \mu^{\frac{1}{6}}
   		(\alpha^2+\mu^2)^2 
   }{
   	\alpha^{\frac{11}{3}}
   }
    \frac{
        e^{-\mu t_0 + \frac{\mu^2}{\alpha^2}}
        }{
        \min\{1, c_{\mathsf{A}}\}
    }
    R^\frac{7}{6}
    \norm{p}_{\mathcal{V}}
    \right)
	e^{
		-\left(\frac{\pi R \eta}{2\alpha} \right)^{\frac{2}{3}}
	}.
\end{aligned}
\end{equation}
Recalling \eqref{eq:error_consistency}, \eqref{eq:sinc_approx_ineq}, and 
item (i) in Lemma~\ref{lmm:regularity_U_n}, we obtain the final result. 
\end{proof}

It remains to bound the term that includes $\partial_t^2q$ in \eqref{eq:bound_inf_V}.

\begin{lemma}\label{lmm:Smallness_Rickert_wavelet}
Let $\mu>0$ be given and assume
$t_0\geq 2 \frac{\alpha+ \mu}{\alpha^2}$.
It holds that
\begin{equation}
	\int_{-\infty}^{0}
	\snorm{\partial^2_t  q (t)}
	e^{-\mu t}
	\normalfont\text{d} t
	\lesssim
	\alpha
	e^{
        -
        \left(
            \frac{\alpha}{2} t_0
            -
            1
        \right)^2
	}.
\end{equation}
\end{lemma}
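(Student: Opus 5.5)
We compute $\partial_t^2 q$ explicitly and then bound a Gaussian tail integral. Write $x = \tfrac{\alpha}{2}(t-t_0)$, so that $q(t) = (1-2x^2)e^{-x^2}$. Since $\tfrac{\mathrm{d}x}{\mathrm{d}t}=\tfrac{\alpha}{2}$, we get
\begin{equation}
	\partial_t^2 q(t) = \frac{\alpha^2}{4}\,\frac{\mathrm{d}^2}{\mathrm{d}x^2}\Big[(1-2x^2)e^{-x^2}\Big] = \frac{\alpha^2}{4}\,P(x)\,e^{-x^2},
\end{equation}
where $P$ is an explicit even quartic polynomial (up to constants, the fourth Hermite polynomial, $P(x) = -4x^4+24x^2-12$ or similar). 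The only facts needed are $\deg P = 4$ with absolute constants, so $|P(x)|\lesssim (1+x^2)^2$.

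Next we substitute into the integral. For $t\in(-\infty,0]$ we have $x\in(-\infty,-x_0]$ with $x_0 \coloneqq \tfrac{\alpha}{2}t_0$, and $t = t_0 + \tfrac{2x}{\alpha}$, so $e^{-\mu t} = e^{-\mu t_0}e^{-2\mu x/\alpha}$ and $\mathrm{d}t = \tfrac{2}{\alpha}\mathrm{d}x$. Substituting $y=-x\geq x_0$, the integral becomes
\begin{equation}
	\frac{\alpha}{2}\,e^{-\mu t_0}\int_{x_0}^{\infty} |P(y)|\, e^{-y^2 + \frac{2\mu}{\alpha}y}\,\mathrm{d}y .
\end{equation}
Completing the square gives $-y^2+\tfrac{2\mu}{\alpha}y = -(y-\tfrac{\mu}{\alpha})^2 + \tfrac{\mu^2}{\alpha^2}$. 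This already yields the prefactor $\alpha$ claimed in the statement.

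The main task is to absorb the polynomial $|P(y)|$ and the shift into the target exponent $-(x_0-1)^2$. The first attempt is to use $|P(y)|\lesssim (1+y^2)^2 \lesssim e^{2y-1}$ or, more crudely, $(1+y^2)^2 \le C e^{y}$ for $y\ge 0$, with a universal $C$. The integrand is then at most $e^{-y^2+(1+\frac{2\mu}{\alpha})y}$. By the assumption $t_0\geq 2\frac{\alpha+\mu}{\alpha^2}$, i.e.\ $x_0 \ge 1+\tfrac{\mu}{\alpha}$, we can write, for $y\ge x_0$,
\begin{equation}
	-y^2 + \Big(1+\tfrac{2\mu}{\alpha}\Big)y \;\le\; -(y-1)^2 + 1 - y + \tfrac{2\mu}{\alpha}y ,
\end{equation}
using $2y$ in place of $1+2\mu/\alpha$ wherever convenient.

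The delicate part is that the factor $e^{-\mu t_0}=e^{-2\mu x_0/\alpha}$ must cancel the growth $e^{2\mu y/\alpha}$ near the lower limit. Concretely, we combine the two as $e^{-\frac{2\mu}{\alpha}(x_0 - y)}\le 1$ is false for $y>x_0$, so instead we keep the $\mu$-term inside the Gaussian: $-y^2+\frac{2\mu}{\alpha}y - \frac{2\mu}{\alpha}x_0 \le -y^2 + \frac{2\mu}{\alpha}(y-x_0)$. Since $y\ge x_0\ge 1+\mu/\alpha$, we have $\frac{2\mu}{\alpha}(y-x_0)\le 2(y-1)(y-x_0)$, which is dominated by part of $y^2$. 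After this bookkeeping the integrand is bounded by $C e^{-(y-1)^2}$ times a factor decaying in $y-x_0$.

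Finally, the standard Gaussian tail bound $\int_{a}^\infty e^{-(y-1)^2}\,\mathrm{d}y\lesssim e^{-(a-1)^2}$ for $a\ge 1$ gives the result with $a=x_0=\tfrac{\alpha}{2}t_0$.

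We expect the main obstacle to be this last exponent bookkeeping. If the crude absorption loses too much, the fallback is to integrate $y^k e^{-(y-\mu/\alpha)^2}$ exactly, as in \eqref{eq:integra_x_to_8}, and bound the resulting $\erfc$ and polynomial terms using $x_0-\mu/\alpha\ge 1$.
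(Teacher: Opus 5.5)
Your setup matches the paper. You reduce the integral to $\tfrac{\alpha}{2}e^{-\mu t_0}\int_{x_0}^\infty |P(y)|\,e^{-y^2+\frac{2\mu}{\alpha}y}\,\mathrm{d}y$, replace the quartic by an exponential, and read the hypothesis as $x_0\ge 1+\tfrac{\mu}{\alpha}$. The paper uses $|H_4(x)|\lesssim e^{2|x|}$ for the second step. Incidentally, $P(x)=-8x^4+24x^2-6=-\tfrac12 H_4(x)$, though only its degree matters.

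\textbf{The gap.} The step you flag as the main obstacle is the whole content of the lemma, and as written it does not close. After $|P(y)|\le Ce^{y}$, the exponent to be integrated over $[x_0,\infty)$ is
\[
\Phi(y)=-y^2+\Big(1+\tfrac{2\mu}{\alpha}\Big)y-\tfrac{2\mu}{\alpha}x_0=-(y-1)^2-(x_0-1)+\Big(\tfrac{2\mu}{\alpha}-1\Big)(y-x_0).
\]
So the factor multiplying $e^{-(y-1)^2}$ grows in $y-x_0$ as soon as $\mu>\alpha/2$. The term $\tfrac{2\mu}{\alpha}y$ moves the centre of the Gaussian to the right, so a comparison with a fixed Gaussian centred at $1$ cannot be uniform in $\mu$. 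Consequently:
\begin{itemize}
\item Your claim that the integrand is $Ce^{-(y-1)^2}$ times a factor decaying in $y-x_0$ is false in that regime.
\item The claim that $2(y-1)(y-x_0)$ is ``dominated by part of $y^2$'' is also false, since it grows like $2y^2$.
\item Your displayed inequality and the next one are identities, so nothing has been absorbed up to that point.
\item The closing tail bound $\int_a^\infty e^{-(y-1)^2}\,\mathrm{d}y\lesssim e^{-(a-1)^2}$ is applied to a majorant you have not established.
\end{itemize}

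\textbf{The fix.} Expand around the endpoint $x_0$ instead. Write $y=x_0+s$ with $s\ge0$. The cross term in $(y-1)^2=(x_0-1)^2+2(x_0-1)s+s^2$ absorbs the $\mu$-growth precisely because $x_0-1\ge\mu/\alpha$. Your inequality $\tfrac{2\mu}{\alpha}(y-x_0)\le 2(y-1)(y-x_0)$ is true but too lossy; what you need is $\tfrac{2\mu}{\alpha}s\le 2(x_0-1)s$. This gives $\Phi(x_0+s)\le-(x_0-1)^2-s^2-s$, hence $\int_{x_0}^\infty e^{\Phi(y)}\,\mathrm{d}y\le e^{-(x_0-1)^2}$, which finishes the proof.

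The paper does the equivalent computation by completing the square in the full exponent. It then applies $\mathrm{erfc}(z)\le e^{-z^2}$ at $z=x_0-\tfrac{\alpha+\mu}{\alpha}\ge0$, which is exactly where the hypothesis enters. The $\mu$-dependence then cancels identically: $-\mu t_0+\tfrac{(\alpha+\mu)^2}{\alpha^2}-\big(x_0-\tfrac{\alpha+\mu}{\alpha}\big)^2=-x_0^2+2x_0$.

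Your fallback via exact integration can also be made rigorous, but only after two further steps. You need the same cancellation, $-\tfrac{2\mu}{\alpha}x_0+\tfrac{\mu^2}{\alpha^2}-\big(x_0-\tfrac{\mu}{\alpha}\big)^2=-x_0^2$. You also need to bound the powers of $\mu/\alpha$ in the shifted quartic by powers of $x_0$. As sketched, it is not yet an argument.
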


\begin{proof}
Firstly, observe that
\begin{equation}
	{\partial^2_t  q (t)}
	=
	{\partial^4_t  g (t)}
	=
	-\frac{2}{\alpha^2}
	\partial^4_t  e^{- \frac{\alpha^2}{4} (t-t_0)^2},
\end{equation}
with $g(t)$ is as in \eqref{eq:gausian_minus}. Therefore, one has that
\begin{equation}
	{\partial^2_t  q (t)}
	=
	-\frac{\alpha^2}{8}
	H_4\left(\alpha\frac{t-t_0}{2}\right)
	e^{- \frac{\alpha^2}{4} (t-t_0)^2},
\end{equation}	
where $H_n(x) \coloneqq (-1)^n e^{x^2} \frac{d^n}{dx^n} e^{-x^2}$ corresponds to the $n$-th Hermite polynomial. Thus, 
\begin{equation}
	\snorm{
		H_4\left(\alpha\frac{t-t_0}{2}\right)
	}
	\lesssim
	e^{\alpha\snorm{t-t_0}}.
\end{equation}
Therefore, assuming $t_0>0$,
\begin{equation}
\begin{aligned}
	\int_{-\infty}^{0}
	\snorm{\partial^2_t  q (t)}
	e^{-\mu t}
	\normalfont\text{d} t
	&
	\lesssim
	\alpha^2
	\int_{-\infty}^{0}
	e^{- \frac{\alpha^2}{4} (t-t_0)^2-\alpha(t-t_0) -\mu t}
	\normalfont\text{d} t
	\\
	&
	=
	\alpha^2 e^{- \mu t_0 }
	\int_{-\infty}^{0}
	e^{- \frac{\alpha^2}{4} (t-t_0)^2-(\alpha+\mu)(t-t_0)}
	\normalfont\text{d} t
	\\
	&
	=
	\alpha^2 
	e^{
		- \mu t_0 
		+
		\frac{(\alpha+\mu)^2}{\alpha^2}
	}
	\int_{-\infty}^{0}
	e^{- \left(\frac{\alpha}{2} (t-t_0)+\frac{(\alpha+\mu)}{\alpha}\right)^2}
	\normalfont\text{d} t
	\\
	&
	=
	\alpha
	e^{
		-\mu t_0 
		+
		\frac{(\alpha+\mu)^2}{\alpha^2}
	}
	\int_{-\infty}^{-\frac{\alpha}{2} t_0+\frac{(\alpha+\mu)}{\alpha}}
	e^{- \tau^2}
	\normalfont\text{d} \tau.
\end{aligned}
\end{equation}
Next, we calculate 
\begin{equation}
\begin{aligned}
	\int_{-\infty}^{-\frac{\alpha}{2} t_0+\frac{(\alpha+\mu)}{\alpha})}
	e^{- \tau^2}
	\normalfont\text{d} \tau
	&
	=
	\frac{\sqrt{\pi}}{2}
	\left(
		\text{erf}
		\left(
            -\frac{\alpha}{2} t_0+\frac{(\alpha+\mu)}{\alpha}
		\right)
		+ 
		1
	\right)
	\\
	&
	=
	\frac{\sqrt{\pi}}{2}
	\left(
		-
		\text{erf}
		\left(
            \frac{\alpha}{2} t_0
            -
            \frac{(\alpha+\mu)}{\alpha}
		\right)
		+ 
		1
	\right)
	\\
	&
	=
	\frac{\sqrt{\pi}}{2}
		\text{erfc}
		\left(
            \frac{\alpha}{2} t_0
            -
            \frac{(\alpha+\mu)}{\alpha}
		\right).
\end{aligned}
\end{equation}
For $x>0$, it holds that $\text{erfc}(x) \leq e^{-x^2}$,
thus yielding
\begin{equation}
\begin{aligned}
	\int_{-\infty}^{0}
	\snorm{\partial^2_t  q (t)}
	e^{-\mu t}
	\normalfont\text{d} t
	&
	\lesssim
	\alpha
	e^{
		-\mu t_0 
		+
		\frac{(\alpha+\mu)^2}{\alpha^2}
	}
	e^{
        -
        \left(
            \frac{\alpha}{2} t_0
            -
            \frac{(\alpha+\mu)}{\alpha}
        \right)^2
	}
	\\
	&
	=
	\alpha
	e^{
		-
        \frac{\alpha^2}{4} t^2_0
        +
        \alpha
        t_0
	}
\end{aligned}
\end{equation}
and concluding the proof.
\end{proof}

\begin{remark}[Dependence of the convergence rate in Lemma~\ref{lem:approximation_Hardy_spaces_2}
on $\alpha$ and $t_0$]\label{rmk:dependence_alpha_t_0}
In view of Lemma~\ref{lmm:Smallness_Rickert_wavelet}, we discuss the structure of the 
result presented in Lemma~\ref{lem:approximation_Hardy_spaces_2}. 
The bound stated in \eqref{eq:bound_inf_V} is composed of two main terms: (i) exponential 
convergence in $R$, i.e., the dimension of sought finite-dimensional space, and (ii) a term decaying exponentially with $\alpha t_0$, which reads as follows: if we select, in particular, $\mu = \alpha$ and consider $\eta =\mu/2$, we have that
\begin{equation}
 \inf_{
	\substack{
		\mathcal{V}_R \subset \mathcal{V}
		\\
		\normalfont\text{dim}(\mathcal{V}_R)\leq R	
	}
	}	
	\norm{
		\partial^2_t u
		-
		\mathsf{P}_{\mathcal{V}_R}
		\partial^2_t u
	}_{L^2(\mathfrak{I};\mathcal{V})}
    \lesssim
    e^{{\alpha} T}
    \left(
    e^{
		-\left(\frac{\pi R }{4} \right)^{\frac{2}{3}}
        - {\alpha}  \frac{t_0}{2}
    }
    +
    e^{
         -
        \frac{\alpha^2}{4} t^2_0
    }
     \right),
\end{equation}
where the hidden constant depends on rational powers of $\alpha$ and $T$, and on $p$ and on $R^{7/6}$.
Observe that we obtain a rate of exponential convergence (in $R$) that is {\it independent}
of the characterizing parameters of the Ricker wavelet, $\alpha$ and $t_0$ as long as the error is larger than the consistency error, which scales like $e^{\alpha T-\frac{\alpha^2}{4} t_0^2}$. 
Taking into account the natural requirement $t_0<T$, the latter decays, however, exponentially fast as $\alpha$ increases,
which results in a narrower wavelet.
\end{remark}

\subsection{Error Analysis: Exponential Convergence}\label{ssec:exponential_convergence}
Let $u \in \mathcal{W}_\mu(\mathbb{R}_+;\mathcal{H},\mathcal{V})$ for some $\mu>0$ be the solution to \eqref{eq:wave} equipped with the vanishing initial conditions \eqref{eq:ic1}--\eqref{eq:ic2}.
Define the operator $\mathsf{T}:L^2_\mu(\IR_+) \rightarrow \mathcal{V} $ as
\begin{equation}
	\mathsf{T} g
	\coloneqq
	\int_{0}^{\infty}
	\partial^2_t u(t)
	g(t)
	e^{-2\mu t}
	\text{d} t,
	\quad 
	\forall g \in L^2_\mu(\IR_+),
\end{equation}
where $\norm{\partial^2_t u}_{L^2_\mu(\IR_+;\mathcal{V})}<\infty$ 
as a consequence of Lemma~\ref{lmm:error_U}, thus rendering a Hilbert-Schmidt kernel. 
The adjoint operator $\mathsf{T}^\star: \mathcal{V} \rightarrow L^2_\mu(\IR_+)$ of $\mathsf{T}$
satisfies
\begin{equation}
	\dotp{
		g
	}{
		\mathsf{T}^\star 
		v
	}_{ L^2_\mu(\IR_+)}
	=
	\dotp{
		\mathsf{T} g
	}{
		 v
	}_{\mathcal{V}},
	\quad 
	\forall g \in L^2_\mu(\IR_+), 
	\quad
	\forall v \in \mathcal{V},
\end{equation}
and is given by
\begin{equation}
	\left(
		\mathsf{T}^\star 
		v
	\right)(t)
	=
	\dotp{
		\partial^2_t {u}(t)
	}{
		v
	}_{\mathcal{V}}
	,
	\quad 
	\forall v \in \mathcal{V}.
\end{equation}
Define
\begin{equation}
	\mathsf{K} \coloneqq \mathsf{T} \mathsf{T}^\star:\mathcal{V} \rightarrow \mathcal{V}
	\quad
	\text{and}
	\quad 
	\mathsf{C} \coloneqq \mathsf{T}^\star \mathsf{T}: L^2_\mu(\IR_+) \rightarrow L^2_\mu(\IR_+).
\end{equation}
Observe that $\mathsf{K}$ can be alternatively represented as an infinite matrix
${\bm{\mathsf{K}}} \in \mathbb{R}^{\mathbb{N} \times \mathbb{N}}$ with entries
\begin{equation}
	\left(
		{\bm{\mathsf{K}}}
	\right)_{i,j}
	=
	\int_{0}^{\infty} 
	\dotp{
		\partial^2_t {u}(\tau)
	}{
		\varphi_{i}
	}_{\mathcal{V}}
	\dotp{
		\varphi_{j}
	}{
		\partial^2_t {u}(\tau)
	}_{\mathcal{V}}
	e^{-2\mu \tau}
	\text{d}
	\tau,
    \quad
    i,j \in \mathbb{N},
\end{equation}
where $\{\varphi_{i}\}_{i \in \mathbb{N}}$ is an orthonormal basis of $\mathcal{V}$.
This operator is symmetric, positive, compact and of trace class, thus admitting an orthonormal eigenbasis $\{\left(\bm{\zeta}_i, \sigma_i\right)\}_{i\in \mathbb{N}} \subset \mathbb{R}^\mathbb{N} \times \mathbb{R}_+$, where the sequence of eigenvalues is
ordered decreasingly.  
Set
\begin{align}
	\zeta_{i}
	=
	\sum_{j=1}^{\infty}
	\left(
	\boldsymbol\zeta_{i}
	\right)_j
	\varphi_{j}
	\in 
	\mathcal{V}.
\end{align}
For each $R \in \mathbb{N}$, we define
\begin{equation}\label{eq:v_rb_optimal}
	\mathcal{V}^{\text{(rb)}}_R
	\coloneqq
	\text{span}
	\left\{	
		\zeta_1
		,
		\dots
		,
		\zeta_R
	\right\}
	\subset
	\mathcal{V}.
\end{equation}
As stated, for example, in \cite[Section 6.4]{quarteroni2015reduced}, it holds 
with $\mathcal{V}^{\text{(rb)}}_R$ as in \eqref{eq:v_rb_optimal} that
\begin{equation}\label{eq:min_time_domain_2}
	\mathcal{V}^{\text{(rb)}}_R
	=
	\argmin_{
	\substack{
		\mathcal{V}_R \subset \mathcal{V}
		\\
		\text{dim}(\mathcal{V}_R)\leq R	
	}
	}	
	\norm{
		\partial^2_t u
		-
		\mathsf{P}_{\mathcal{V}_R}
		\partial^2_t u
	}^2_{L^2_\mu(\IR_+;\mathcal{V})},
\end{equation}
where $\mathsf{P}_{\mathcal{V}_R}: \mathcal{V} \rightarrow \mathcal{V}_R$ denotes the
$\mathcal{V}$-orthogonal projection operator onto the finite dimensional subspace ${\mathcal{V}_R}$.

As discussed in \cite{henriquez2024fast}, the norm equivalence stated in Theorem~\ref{thm:paley_wiener} (Paley-Wiener) implies that
the minimization problem \eqref{eq:min_time_domain_2} can be expressed as follows:
\begin{equation}\label{eq:min_laplace_domain}
	\mathcal{V}^{\text{(rb)}}_R
	=
	\argmin_{
	\substack{
		\mathcal{V}_R \subset \mathcal{V}
		\\
		\text{dim}(\mathcal{V}_R)\leq R	
	}
	}	
	\norm{
		\mathcal{L}\{\partial^2_t u\}
		-
		\mathsf{P}_{\mathcal{V}_R}
		\mathcal{L}\{\partial^2_t u\}
	}^2_{\mathscr{H}^2_\mu(\mathcal{V})},
\end{equation}
where $ \mathcal{L}\{\partial^2_t u\}(s) = s^2 \mathcal{L}\{u\} \in \mathscr{H}^2_\mu(\mathcal{V})$
by virtue of the property of the Laplace transform of the second derivative of a function and the fact that 
\eqref{eq:wave} is equipped with vanishing initial conditions. 

As thoroughly discussed in \cite{henriquez2024fast}, and discussed in this work ahead in
Section~\ref{sec:fully_discrete}, $\mathcal{V}^{\text{(rb)}}_{R,M}$ as in \eqref{eq:fPOD} corresponds to an approximation of $\mathcal{V}^{\text{(rb)}}_R$ as in \eqref{eq:min_laplace_domain}.

Let  $\Pi^{\text{(rb)}}_R:\cV \to\cVrb$ be the so-called {\it elliptic projection}
onto the reduced subspace $\cVrb$, which is defined as follows: for a given
$v \in \mathcal{V}$, its elliptic projection $\Pi^{\text{(rb)}}_R v \in\cVrb$
is defined as
\begin{align}\label{eq:elliptic_projection}
	\dual{
		\mathsf{A}
		\Pi^{\text{(rb)}}_R v
	}{
		\vrb
	}_{\mathcal{V}^\star \times \mathcal{V}^\star}
	=
	\dual{
		\mathsf{A} v
	}{
		\vrb
	}_{\mathcal{V}^\star \times \mathcal{V}^\star},
	\quad\forall \vrb\in\cVrb.
\end{align}
C\'ea's lemma states that
\begin{align}\label{eq:cea_elliptic_projection}
    \norm{u - \Pi^{\text{(rb)}}_R u}_\cV 
    \leq 
    \sqrt{\frac{C_\mathsf{A}}{c_\mathsf{A}}}
    \inf_{\vrb\in\cVrb}\norm{u-\vrb}_\cV,
\end{align}
where $C_\mathsf{A},c_\mathsf{A}>0$ are as in \eqref{eq:A_continuous_coercive}.

\begin{proposition}\label{eq:best_approximation}
Let $u \in \mathcal{W}_\mu(\mathbb{R}_+;\mathcal{H},\mathcal{V})$ for some $\mu>0$ be the solution to \eqref{eq:wave}--\eqref{eq:ic2} 
and let $u^{\normalfont\text{(rb)}}_R$ be the solution to Problem~\ref{pr:sdpr}
with $\cVrb$ as in \eqref{eq:min_laplace_domain}.

Then, it holds that
\begin{equation}
\begin{aligned}
	\norm{
		u
		-
		u^{\normalfont\text{(rb)}}_R
	}_{L^2(\mathfrak{I};\mathcal{V})}
    +
    &
	\norm{
		\partial_t 
		\left(
			u
			-
			u^{\normalfont\text{(rb)}}_R
		\right)
	}_{L^2(\mathfrak{I};\mathcal{H})}
	\\
	&
    \lesssim
    \sqrt{\frac{C_\mathsf{A}}{c_\mathsf{A}}}
   \max\left\{T,T^2, \frac{T}{\min\{1,\sqrt{ c_{\mathsf{A}} }\}}\right\}
    \norm{
        \left(
            \normalfont\text{Id}
            -
            \mathsf{P}^{\normalfont\text{(rb)}}_R
        \right) \partial^2_t u
    }_{L^2(\mathfrak{I};\mathcal{V})},
\end{aligned}
\end{equation}
where $\mathsf{P}^{\normalfont\text{(rb)}}_R: \mathcal{V} 
\rightarrow \mathcal{V}^{\normalfont\text{(rb)}}_R$ denotes
the $\mathcal{V}$-orthogonal projection onto $\mathcal{V}^{\normalfont\text{(rb)}}_R$.
\end{proposition}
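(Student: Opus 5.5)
Following the standard error analysis for Galerkin semi-discretizations of second-order hyperbolic problems (Dupont/Baker-type), I would split the error with the elliptic projection \eqref{eq:elliptic_projection}:
\begin{equation}
u - u^{\normalfont\text{(rb)}}_R = \rho + \theta,
\quad
\rho \coloneqq u - \Pi^{\text{(rb)}}_R u,
\quad
\theta \coloneqq \Pi^{\text{(rb)}}_R u - u^{\normalfont\text{(rb)}}_R \in \cVrb.
\end{equation}
Because $\Pi^{\text{(rb)}}_R$ is time-independent and linear, it commutes with $\partial_t$. By C\'ea's lemma \eqref{eq:cea_elliptic_projection}, each of $\rho$, $\partial_t\rho$, $\partial_t^2\rho$ is bounded pointwise in $t$ by $\sqrt{C_\mathsf{A}/c_\mathsf{A}}$ times the $\cV$-best-approximation error. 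That error is in turn at most $\norm{(\mathrm{Id}-\mathsf{P}^{\normalfont\text{(rb)}}_R)\,\cdot\,}_\cV$ of the corresponding derivative of $u$.

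The key point is to reduce everything to $\partial_t^2 u$. Since $u(0)=\partial_t u(0)=0$, we have $u(t)=\int_0^t (t-s)\partial_t^2 u(s)\,\mathrm{d}s$ and $\partial_t u(t)=\int_0^t\partial_t^2u(s)\,\mathrm{d}s$. The projection $\mathrm{Id}-\mathsf{P}^{\normalfont\text{(rb)}}_R$ commutes with these integrals. Cauchy--Schwarz on $\mathfrak{I}$ then gives $\norm{(\mathrm{Id}-\mathsf{P}^{\normalfont\text{(rb)}}_R)\partial_tu}_{L^2(\mathfrak{I};\cV)}\lesssim T\norm{(\mathrm{Id}-\mathsf{P}^{\normalfont\text{(rb)}}_R)\partial_t^2u}_{L^2(\mathfrak{I};\cV)}$ and the analogous bound with $T^2$ for $u$ itself. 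This controls $\norm{\rho}_{L^2(\mathfrak{I};\cV)}$ and $\norm{\partial_t\rho}_{L^2(\mathfrak{I};\mathcal{H})}$ (via the continuous embedding $\cV\subset\cH$), and explains the factors $T,T^2$.

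For $\theta$ I would use the error equation. Subtracting \eqref{eq:semi_discrete_rom} from \eqref{eq:wave} tested with $v\in\cVrb$, and using the definition of $\Pi^{\text{(rb)}}_R$, gives
\begin{equation}
\dotp{\partial_t^2\theta}{v}_\cH + \dual{\mathsf{A}\theta}{v}_{\cV^\star\times\cV} = -\dotp{\partial_t^2\rho}{v}_\cH,
\quad \forall v\in\cVrb,
\end{equation}
with $\theta(0)=\partial_t\theta(0)=0$. Testing with $v=\partial_t\theta$ yields the energy identity
\begin{equation}
\tfrac12\tfrac{\mathrm{d}}{\mathrm{d}t}\left(\norm{\partial_t\theta}_\cH^2+\dual{\mathsf{A}\theta}{\theta}\right) = -\dotp{\partial_t^2\rho}{\partial_t\theta}_\cH.
\end{equation}
Integrating over $(0,t)$ and applying a Gronwall/absorption argument gives
\begin{equation}
\sup_{t\le T}\left(\norm{\partial_t\theta(t)}_\cH+\sqrt{c_\mathsf{A}}\norm{\theta(t)}_\cV\right)\lesssim\int_0^T\norm{\partial_t^2\rho}_\cH\,\mathrm{d}t\le\sqrt{T}\,\norm{\partial_t^2\rho}_{L^2(\mathfrak{I};\cH)}.
\end{equation}
Taking $L^2(\mathfrak{I})$ norms adds another $\sqrt{T}$, and dividing by $\min\{1,\sqrt{c_\mathsf{A}}\}$ produces the term $T/\min\{1,\sqrt{c_\mathsf{A}}\}$. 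Finally, $\norm{\partial_t^2\rho}_\cH\lesssim\norm{\partial_t^2\rho}_\cV\le\sqrt{C_\mathsf{A}/c_\mathsf{A}}\norm{(\mathrm{Id}-\mathsf{P}^{\normalfont\text{(rb)}}_R)\partial_t^2u}_\cV$.

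The main obstacle is regularity. The energy argument needs $\partial_t^2\rho\in L^2(\mathfrak{I};\cV)$ and hence $\partial_t^2u\in L^2_\mu(\IR_+;\cV)$. This is supplied by Lemma~\ref{lmm:error_U}(i), which gives $\partial_t^2u\in\mathcal{W}_\mu(\IR_+;\cH,\cV)$ under the assumptions on $p$. The energy identity is legitimate because the reduced problem is a finite-dimensional ODE with smooth data. Collecting the three contributions and bounding them by the maximum of $T$, $T^2$ and $T/\min\{1,\sqrt{c_\mathsf{A}}\}$ yields the stated estimate.
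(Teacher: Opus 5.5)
Your proposal is correct and follows essentially the same route as the paper: you split the error with the elliptic projection and run an energy estimate on the reduced-space component, tested with its time derivative. You then use the time-Poincar\'e inequality from the vanishing initial data and C\'ea's lemma. The differences are cosmetic: you close the energy estimate with a sup-in-time absorption argument rather than Young's inequality with $\epsilon = 2T$, and you apply C\'ea pointwise before the Poincar\'e step, whereas the paper applies it at the end.
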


\begin{proof}
For a.e. $t \in \IR_{+}$ define 
\begin{equation}\label{eq:def_eta}
	\eta^{\text{(rb)}}_R(t) 
	\coloneqq 
	u^{\normalfont\text{(rb)}}_R(t)
	   -
	\left(\Pi^{\text{(rb)}}_R u\right)(t)
	\in
	\IV^{\text{(rb)}}_R.
\end{equation} 
By subtracting \eqref{eq:wave} and \eqref{eq:semi_discrete_rom},
and recalling that by definition $\cVrb\subset\cV$ we obtain
\begin{equation}\label{eq:subs_rb}
	\dual{
		\partial^2_t
		\left(
			u^{\normalfont\text{(rb)}}_R(t)
			-
			u(t)
		\right)
	}{
		v^{\text{(rb)}}_R
	}_{\mathcal{V}^\star\times \mathcal{V}}	
	+
	\mathsf{a}
	\dotp{
		u^{\normalfont\text{(rb)}}_R(t)
		-
		u(t)
	}{
		v^{\text{(rb)}}_R
	}
	=
	0,
    \quad
    \forall v^{\text{(rb)}}_R\in \IV^{\text{(rb)}}_R.
\end{equation}
Therefore, it follows from \eqref{eq:subs_rb} and the definition
of the elliptic projection, stated in \eqref{eq:elliptic_projection}, that
$\eta^{\text{(rb)}}_R(t)$ satisfies
for a.e.~$t \in \mathfrak{I}$
\begin{equation}\label{eq:residual_eta}
\begin{aligned}
	\dual{	
		\partial^2_t
		\eta^{\text{(rb)}}_R(t)
	}{
		v^{\text{(rb)}}_R
	}_{\mathcal{V}^\star\times \mathcal{V}}	
    +
	\mathsf{a}
	\dotp{
		\eta^{\text{(rb)}}_R(t)
	}{
		v^{\text{(rb)}}_R
	}
	=
   	&
	\dual{
		\left(
			\text{Id}
			-
			\Pi^{\text{(rb)}}_R
		\right)
		\partial^2_t
		 u(t) 
	}{
		v^{\text{(rb)}}_R
	}_{\mathcal{V}^\star\times \mathcal{V}}	,	
    \quad
    \forall v^{\text{(rb)}}_R \in \IV^{\text{(rb)}}_R,
\end{aligned}
\end{equation}
equipped with vanishing initial conditions.
Set $v^{\text{(rb)}}_R = \partial_t \eta^{\text{(rb)}}_R(t) \in  \IV^{\text{(rb)}}_R$, thus
\begin{equation}
\begin{aligned}
    \frac{1}{2}
    \partial_t
    \left(
    	\dotp{	
    		\partial_t
    		\eta^{\text{(rb)}}_R(t)
    	}{
    	   \partial_t 
           \eta^{\text{(rb)}}(t)
    	}_{\mathcal{H}}
        +
        \mathsf{a}
        \dotp{
            \eta^{\text{(rb)}}_R(t)
        }{
            \eta^{\text{(rb)}}_R(t)
        }
    \right)
    =
    \dotp{
		\left(
			\text{Id}
			-
			\Pi^{\text{(rb)}}_R
		\right)
		\partial^2_t
		 u(t) 
	}{
		\partial_t \eta^{\text{(rb)}}_R(t)
	}_{\mathcal{V}^\star\times \mathcal{V}}.
\end{aligned}
\end{equation}
Integrating over $[0,s]$, for any $s \in  \mathfrak{I}$ and $\epsilon>0$, we have
\begin{equation}
\begin{aligned}
   \frac{1}{2} \norm{	
        \partial_t
        \eta^{\text{(rb)}}_R(s)
    }^2_{\mathcal{H}}
	+
	c_{\mathsf{A}}
	\norm{
		\eta^{\text{(rb)}}_R(s)
	}^2_{\mathcal{V}}
	\text{d}\tau
    &
	\leq
    \norm{
    \left(
        \text{Id}
        -
        \Pi^{\text{(rb)}}_R
    \right) \partial^2_t u
    }_{L^2(\mathfrak{I};\mathcal{H})}
	\norm{
		\partial_t \eta^{\text{(rb)}}_R
	}_{L^2(\mathfrak{I};\mathcal{H})}
    \\
    &
    \leq
    \frac{\epsilon}{2}
    \norm{
    \left(
        \text{Id}
        -
       \Pi^{\text{(rb)}}_R
    \right) \partial^2_t u
    }^2_{L^2(\mathfrak{I};\mathcal{H})}
	+
    \frac{1}{2\epsilon}
    \norm{
		\partial_t \eta^{\text{(rb)}}_R
	}^2_{L^2(\mathfrak{I};\mathcal{H})}.
\end{aligned}
\end{equation}
Integrating over $\mathfrak{I}$, we obtain
\begin{equation}
   \frac{1}{2}  \norm{	
        \partial_t
        \eta^{\text{(rb)}}_R
    }^2_{L^2(\mathfrak{I};\mathcal{H})}
	+
	c_{\mathsf{A}}
	\norm{
		\eta^{\text{(rb)}}_R
	}^2_{L^2(\mathfrak{I};\mathcal{V})}
    \leq
    \frac{\epsilon T}{2}
    \norm{
    \left(
        \text{Id}
        -
       \Pi^{\text{(rb)}}_R
    \right) \partial^2_t u
    }^2_{L^2(\mathfrak{I};\mathcal{H})}
	+
    \frac{T}{2\epsilon}
    \norm{
		\partial_t \eta^{\text{(rb)}}_R
	}^2_{L^2(\mathfrak{I};\mathcal{H})}.
\end{equation}
Thus with $\epsilon=2T$, we obtain
\begin{equation}\label{eq:bound_eta_rb}
    \norm{	
        \partial_t
        \eta^{\text{(rb)}}_R
    }_{L^2(\mathfrak{I};\mathcal{H})}
	+
	\norm{
		\eta^{\text{(rb)}}_R
	}_{L^2(\mathfrak{I};\mathcal{V})}
    \leq
    \frac{\sqrt{2}T}{\min\{\frac{\sqrt{2}}{2} ,\sqrt{ c_{\mathsf{A}}}\}}
    \norm{
        \left(
            \text{Id}
            -
            \Pi^{\text{(rb)}}_R
        \right) \partial^2_t u
    }_{L^2(\mathfrak{I};\mathcal{H})}.
\end{equation}
Therefore, the triangle inequality implies that
\begin{equation}
\begin{aligned}
	\norm{
		u
		-
		u^{\normalfont\text{(rb)}}_R
	}_{L^2(\mathfrak{I};\mathcal{V})}
    +
    &
	\norm{
		\partial_t 
		\left(u
		-
		 u^{\normalfont\text{(rb)}}_R
		 \right)
	}_{L^2(\mathfrak{I};\mathcal{H})}
	\\
    \leq
    &
	\norm{
		\eta^{\text{(rb)}}_R
	}_{L^2(\mathfrak{I};\mathcal{V})}
    +
    \norm{	
        \partial_t
        \eta^{\text{(rb)}}_R
    }_{L^2(\mathfrak{I};\mathcal{H})}
    \\
    &
    +
    \norm{
        \left(
            \text{Id}
            -
             \Pi^{\text{(rb)}}_R
        \right)  u
    }_{L^2(\mathfrak{I};\mathcal{V})}
    +
    \norm{
        \left(
            \text{Id}
            -
             \Pi^{\text{(rb)}}_R
        \right)  \partial_t u
    }_{L^2(\mathfrak{I};\mathcal{H})}.
\end{aligned}
\end{equation}
Observe that since $u(0) = \partial_t u(0) = 0$, and using the vector-valued version of
Poincaré's inequality, we obtain
\begin{equation}
    \norm{
    \left(
        \text{Id}
        -
         \Pi^{\text{(rb)}}_R
    \right)
    u}_{L^2(\mathfrak{I};\mathcal{V})}
    \leq
    T
    \norm{
    \left(
        \text{Id}
        -
        \Pi^{\text{(rb)}}_R
    \right)
    \partial_t u}_{L^2(\mathfrak{I};\mathcal{V})}
\end{equation}
and
\begin{equation}
    \norm{
        \left(
            \text{Id}
            -
             \Pi^{\text{(rb)}}_R
        \right)
        \partial_t u
    }_{L^2(\mathfrak{I};\mathcal{V})}
    \leq
    T
    \norm{
    \left(
        \text{Id}
        -
         \Pi^{\text{(rb)}}_R
    \right)
    \partial^2_t u}_{L^2(\mathfrak{I};\mathcal{V})},
\end{equation}
which shows that
\begin{equation}
    \norm{
        \left(
            \text{Id}
            -
            \Pi^{\text{(rb)}}_R
        \right)
        u
    }_{L^2(\mathfrak{I};\mathcal{V})}
    \leq
    T^2
    \norm{
    \left(
        \text{Id}
        -
         \Pi^{\text{(rb)}}_R
    \right)
    \partial^2_t u}_{L^2(\mathfrak{I};\mathcal{V})}.
\end{equation}
Recalling \eqref{eq:bound_eta_rb} and the continuity of the embedding
$\mathcal{V} \subset \mathcal{H}$, we obtain
\begin{equation}
\begin{aligned}
	\norm{
		u
		-
		u^{\normalfont\text{(rb)}}_R
	}_{L^2(\mathfrak{I};\mathcal{V})}
    +
    &
	\norm{
		\partial_t 
		\left(u
		-
		 u^{\normalfont\text{(rb)}}_R
		 \right)
	}_{L^2(\mathfrak{I};\mathcal{H})}
	\\
	&
    \lesssim
    \max
    \left\{
        T,T^2,\frac{\sqrt{2}T}{\min\{\frac{\sqrt{2}}{2} ,\sqrt{ c_{\mathsf{A}}}\}}
    \right\}
    \norm{
        \left(
            \text{Id}
            -
        		\Pi^{\text{(rb)}}_R
        \right) \partial^2_t u
    }_{L^2(\mathfrak{I};\mathcal{V})}.
\end{aligned}
\end{equation}
Together with \eqref{eq:cea_elliptic_projection}, this yields the final result. 
\end{proof}

As a consequence of Proposition~\ref{eq:best_approximation} and 
Lemma~\ref{lem:approximation_Hardy_spaces_2}, we may state the following result. 

\begin{theorem}\label{thm:approximation_Hardy_spaces_3}
Let $u \in \mathcal{W}_\mu(\mathbb{R}_+;\mathcal{H},\mathcal{V})$ for some $\mu>0$ be the solution to \eqref{eq:wave}--\eqref{eq:ic2} 
and let $u^{\normalfont\text{(rb)}}_R$ be the solution to Problem~\ref{pr:sdpr}
with $\cVrb$ as in \eqref{eq:min_laplace_domain}.
Furthermore, assume that $p \in \mathcal{V}$ and 
$\mathsf{A} p \in \mathcal{V}$. 
Then, for any $\eta \in (0,\mu)$ it holds that
\begin{equation}
\begin{aligned}
	\norm{
		u
		-
		u^{\normalfont\text{(rb)}}_R
	}_{L^2(\mathfrak{I};\mathcal{V})}
    +
    &
	\norm{
		\partial_t 
		\left(
			u
			-
			u^{\normalfont\text{(rb)}}_R
		\right)
	}_{L^2(\mathfrak{I};\mathcal{H})}
	\\
    \lesssim
    &
    C(T,\mathsf{A})
    \left(
    \frac{\exp\left(-(\mu-\eta) t_0+\frac{(\mu+\eta)^2}{\alpha^2}\right)}{\alpha^3(\mu-\eta)}
    \left(
		\mu^4
		+
		\alpha^4
    \right)
        \right.
        \\
        &
        \left.
        +
		\alpha \mu^3
    		\exp\left(-\mu t_0 +\frac{\mu^2}{\alpha^2}\right)
		\left( R \eta\right)^\frac{7}{6}
    \right)
	\exp
	\left(
		-\left(\frac{\pi R \eta}{2\alpha} \right)^{\frac{2}{3}}
	\right)
	 \norm{p}_\mathcal{H}
	\\
	+
	&
	C(T,\mathsf{A})
	\left(
    \frac{1}{\sqrt{\mu^3}}
    \left(
    \frac{
        \norm{\mathsf{A}p}_{\mathcal{H}}
        }{
        \mu  \min\{1, c_{\mathsf{A}}\}
        }
        +
        \norm{p}_\mathcal{V}
    \right)
	\int_{-\infty}^{0}
	\snorm{\partial^2_t  q (t)}
	\exp(-\mu t)
	\normalfont\text{d} t
	\right.
	\\
	&
	\left.
	+
	\snorm{q(0)}
	\frac{\norm{p}_{\mathcal{V}}}{\sqrt{\mu}}
	+
	\snorm{\partial_t q(0)}
	\frac{\norm{p}_{\mathcal{V}}}{\sqrt{\mu^3}}
	+
	\frac{
        \norm{\mathsf{A}p}_{\mathcal{H}}
	}{
		\mu
	}
	\left(
		\frac{\snorm{q(0)}}{\sqrt{\mu}}
		+
		\frac{\snorm{\partial_t q(0)}}{\sqrt{\mu^3}}
	\right)
	\right).
\end{aligned}
\end{equation}
where
\begin{equation}\label{eq:constant_C_T_A}
	C(T,\mathsf{A})
	\coloneqq
	\exp(\mu T)
    \sqrt{\frac{C_\mathsf{A}}{c_\mathsf{A}}}
    \max
    \left\{
        T,T^2,\frac{\sqrt{2}T}{\min\{\frac{\sqrt{2}}{2} ,\sqrt{ c_{\mathsf{A}}}\}}
    \right\}.
\end{equation}
\end{theorem}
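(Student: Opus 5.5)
The proof combines Proposition~\ref{eq:best_approximation} with Lemma~\ref{lem:approximation_Hardy_spaces_2}. First, I apply Proposition~\ref{eq:best_approximation} to $u^{\normalfont\text{(rb)}}_R$ built on $\cVrb$ as in \eqref{eq:min_laplace_domain}. This bounds the left-hand side by
\[
\sqrt{C_\mathsf{A}/c_\mathsf{A}}\,\max\{T,T^2,\sqrt{2}T/\min\{\tfrac{\sqrt{2}}{2},\sqrt{c_\mathsf{A}}\}\}\;\norm{(\mathrm{Id}-\mathsf{P}^{\normalfont\text{(rb)}}_R)\partial^2_t u}_{L^2(\mathfrak{I};\mathcal{V})}.
\]

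The next step is to relate this projection error to the infimum over all $R$-dimensional subspaces. Since $\cVrb$ minimizes the $L^2_\mu(\IR_+;\mathcal{V})$ projection error by \eqref{eq:min_time_domain_2}, I argue in three steps:
\[
\norm{(\mathrm{Id}-\mathsf{P}^{\normalfont\text{(rb)}}_R)\partial^2_t u}_{L^2(\mathfrak{I};\mathcal{V})}\le e^{\mu T}\norm{(\mathrm{Id}-\mathsf{P}^{\normalfont\text{(rb)}}_R)\partial^2_t u}_{L^2_\mu(\IR_+;\mathcal{V})}
\le e^{\mu T}\inf_{\dim\mathcal{V}_R\le R}\norm{(\mathrm{Id}-\mathsf{P}_{\mathcal{V}_R})\partial^2_t u}_{L^2_\mu(\IR_+;\mathcal{V})}.
\]
This infimum is exactly what the proof of Lemma~\ref{lem:approximation_Hardy_spaces_2} controls. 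That proof passes through the $L^2_\mu$-norm before multiplying by $e^{\mu T}$, so I reuse its bound with the prefactor $e^{\mu T}$ absorbed into $C(T,\mathsf{A})$ as defined in \eqref{eq:constant_C_T_A}. The hypothesis $\mathsf{A}p\in\mathcal{V}$ implies $\mathsf{A}p\in\mathcal{H}$, so Lemmas~\ref{lmm:error_U} and~\ref{lem:approximation_Hardy_spaces_2} apply.

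Then I rewrite the terms of \eqref{eq:bound_inf_V} in the stated form.
\begin{itemize}
\item The consistency terms involving $\int_{-\infty}^0|\partial_t^2 q|e^{-\mu t}\,\mathrm{d}t$, $|q(0)|$ and $|\partial_t q(0)|$ are copied over. I expand the product $\bigl(\norm{p}_\mathcal{V}+\norm{\mathsf{A}p}_\mathcal{H}/(\mu\min\{1,c_\mathsf{A}\})\bigr)\bigl(|q(0)|/\sqrt{\mu}+|\partial_tq(0)|/\sqrt{\mu^3}\bigr)$ and absorb the factor $\min\{1,c_\mathsf{A}\}^{-1}$ into the $\mathsf{A}$-dependent hidden constant.
\item The first exponential term, with $(\mu^4+\alpha^4)$, carries over unchanged.
\item The second exponential term, with prefactor $\mu^{1/6}(\alpha^2+\mu^2)^2\alpha^{-11/3}R^{7/6}$, must be recast as $\alpha\mu^3(R\eta)^{7/6}$. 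This is where care is needed. I would retrace the choice $\vartheta=(4\pi\alpha^2\eta/R^2)^{1/3}$ in the tail estimate and write $(R\vartheta/2\alpha)^{7/2}=(\pi R\eta/2\alpha)^{7/6}$. This produces the factor $(R\eta)^{7/6}$ together with a power of $\alpha$, and I collect the powers of $\alpha$ and $\mu$ into the displayed monomial, modulo the hidden constant.
\end{itemize}

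The main obstacle is this bookkeeping of powers of $\alpha,\mu$ in the sinc-truncation term. If the exponents do not match exactly, I would state the bound with the hidden constant allowed to depend on rational powers of $\alpha$ and $\mu$, as the paper already does in Remark~\ref{rmk:dependence_alpha_t_0}. The exponential rate $e^{-(\pi R\eta/2\alpha)^{2/3}}$ is untouched by this step.
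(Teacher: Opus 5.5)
Your proposal is the paper's own argument, which is just the one-line combination of Proposition~\ref{eq:best_approximation} with Lemma~\ref{lem:approximation_Hardy_spaces_2}. Your detour through the $L^2_\mu(\IR_+;\mathcal{V})$-optimality of $\cVrb$ is a tightening the paper glosses over: $\cVrb$ is optimal for the weighted norm, not for $L^2(\mathfrak{I};\mathcal{V})$, so the Lemma's infimum cannot be inserted directly. With your route the single factor $e^{\mu T}$ lands correctly in $C(T,\mathsf{A})$.

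Your bookkeeping worry is also justified. Retracing $\vartheta=(4\pi\alpha^2\eta/R^2)^{1/3}$ gives the prefactor $(\alpha^2+\mu^2)^2\alpha^{-11/3}\mu^{-1}(R\eta)^{7/6}/\min\{1,c_{\mathsf{A}}\}$. Its ratio to $\alpha\mu^3(R\eta)^{7/6}$ is of order $(\alpha^2+\mu^2)^2/(\alpha^{14/3}\mu^4)$, which is bounded only in regimes such as $\alpha,\mu\gtrsim 1$ (e.g.\ $\mu=\alpha\geq 1$). So the displayed monomial holds only with your fallback of a parameter-dependent hidden constant, and the paper's proof supports nothing more than that either.
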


\section{Fully Discrete Error Analysis}\label{sec:fully_discrete}
For a given $\mu>0$ and $\mathcal{V}_R$ any finite-dimensional subspace of dimension $R$
of $\mathcal{V}$, we define
\begin{subequations}
\begin{align}
	\varepsilon(\mathcal{V}_R)
	&
	\coloneqq
	\norm{
		\partial^2_t u
		-
		\mathsf{P}_{\mathcal{V}_R}
		\partial^2_t u
	}^2_{L^2_\mu(\mathbb{R}_+;\mathcal{V})} 
     \label{eq:error_1}
    \\
	\widetilde{\varepsilon}(\mathcal{V}_R)
	&
	\coloneqq
		\int_{-\infty}^{+\infty}
	\norm{
		\widehat{U}^{(2)}(\mu+\imath \tau) 
		-
		\mathsf{P}_{\mathcal{V}_R}
		\widehat{U}^{(2)}(\mu+\imath \tau)
	}^2_{\mathcal{V}}
	\text{d}\tau
     \label{eq:error_2}
	\quad
	\text{and}
	\\
	\varepsilon^{(M)}(\mathcal{V}_R)
	&
	\coloneqq
    	\sum_{j=-M}^{M} 
	\omega_j
	\norm{
		\widehat{U}^{(2)}(s_j) 
		- 
		\mathsf{P}_{\mathcal{V}_R}
		\widehat{U}^{(2)}(s_j) 
	}^2_{\cV}
    \label{eq:error_3},
\end{align}
\end{subequations}
where $u$ is the solution to \eqref{eq:wave}--\eqref{eq:ic2} 
and, for each $s \in \Pi_\mu$, $\widehat{U}^{(2)}(s)$ corresponds to the solution to
Problem~\ref{pbrm:laplace_discrete},and $P_{\cV_R}:\cV\to\cV_R$ is the $\cV$-orthogonal projection onto $\cV_R$.

\begin{lemma}\label{eq:error_discrete_norm}
Let $u \in \mathcal{W}_\mu(\mathbb{R}_+;\mathcal{H},\mathcal{V})$ for some $\mu>0$ be the solution to \eqref{eq:wave}--\eqref{eq:ic2}
and, for each $s \in \Pi_\mu$, let $\widehat{U}^{(2)}(s)$ correspond to the solution to
Problem~\ref{pbrm:laplace_discrete}.
Furthermore, assume that $p \in \mathcal{V}$ and 
$\mathsf{A} p \in \mathcal{V}$.  
For each $\eta \in (0,\mu)$, consider the weights and points
\begin{equation}
	\omega_k =
	\left( \frac{\pi \alpha^2 \eta}{M^2} \right)^{\frac{1}{3}}
	\quad
	\text{and}
	\quad
	s_ k = \mu +\imath k \left( \frac{\pi \alpha^2 \eta}{M^2} \right)^{\frac{1}{3}},
	\quad
	k=-M,\dots,M,
\end{equation}
respectively. Then, for any $\eta \in (0,\mu)$, it holds that
\begin{equation}
\begin{aligned}
	\snorm{
		\varepsilon(\mathcal{V}_R )
		-
		\varepsilon^{(M)}(\mathcal{V}_R)
	}
	\lesssim
	&
	\left(
       	\norm{
    			\widehat{U}^{(2)}(\mu+\imath \cdot) 
    		}^2_{\mathbb{H}^2(\mathscr{D}_\eta;\mathcal{V})}
	\right.
	\\
	&
	\left.
        +
		\frac{e^{-2\mu t_0+2\frac{\mu^2}{\alpha^2}}}{\alpha^5 \mu^2 \min\{1, c^2_{\mathsf{A}}\}}
    		(\alpha^2+\mu^2)^4
    		\left(\frac{\pi M \eta}{\alpha} \right)^\frac{7}{3}
            \norm{p}^2_{\mathcal{H}}
	\right)
	e^{-2 \left(\frac{\pi M \eta}{\alpha} \right)^{\frac{2}{3}}}
	\\
	&
	+
	E,
\end{aligned}
\end{equation}
where $	\varepsilon(\mathcal{V}_R )$ and $\varepsilon^{(M)}(\mathcal{V}_R)$
are as in \eqref{eq:error_1} and \eqref{eq:error_3}, respectively, and
\begin{equation}\label{eq:discrete_norm_Error_E}
\begin{aligned}
	E
	\coloneqq
	&
	\left(
    \frac{e^{-\mu t_0+\frac{\mu^2}{\alpha^2}}}{\alpha^3 \mu \min\{1, c_{\mathsf{A}}\}}
    \left(
        \mu^9
        +
        \alpha^9
    \right)^{\frac{1}{2}}
	\norm{p}_\mathcal{H}
	+
	\frac{
		\norm{\partial^2_t q}_{L^2_\mu(\mathbb{R}_+)}
		\norm{p}_{\mathcal{H}}
	}{
		\mu \min\{1, c_{\mathsf{A}}\}
	}
	\right.
	\\
	&
	+
	\left.
	\left(
	\norm{p}_{\mathcal{V}}
	+
	\frac{
        \norm{\mathsf{A}p}_{\mathcal{H}}
	}{
		\mu  \min\{1, c_{\mathsf{A}}\}
	}
	\right)
	\left(
		\frac{\snorm{q(0)}}{\sqrt{\mu}}
		+
		\frac{\snorm{\partial_t q(0)}}{\sqrt{\mu^3}}
	\right)
	\right)
	\left(
	\left(
	\norm{p}_{\mathcal{V}}
	+
	\frac{
        \norm{\mathsf{A}p}_{\mathcal{H}}
	}{
		\mu  \min\{1, c_{\mathsf{A}}\}
	}
	\right)
	\left(
		\frac{\snorm{q(0)}}{\sqrt{\mu}}
		+
		\frac{\snorm{\partial_t q(0)}}{\sqrt{\mu^3}}
	\right)
	\right.
	\\
	&
	+
	\left.
    \frac{1}{\sqrt{\mu^3}}
    \left(
    \frac{
        \norm{\mathsf{A}p}_{\mathcal{H}}
        }{
        \mu \min\{1, c_{\mathsf{A}}\}
        }
        +
        \norm{p}_\mathcal{V}
    \right)
	\int_{-\infty}^{0}
	\snorm{\partial^2_t  q (t)}
	e^{-\mu t}
	\normalfont\text{d} t
	\right).
\end{aligned}
\end{equation}
\end{lemma}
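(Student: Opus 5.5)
We split the difference through the intermediate quantity $\widetilde{\varepsilon}(\mathcal{V}_R)$ of \eqref{eq:error_2}:
\begin{equation}
	\snorm{\varepsilon(\mathcal{V}_R)-\varepsilon^{(M)}(\mathcal{V}_R)}
	\leq
	\snorm{\varepsilon(\mathcal{V}_R)-\tfrac{1}{2\pi}\widetilde{\varepsilon}(\mathcal{V}_R)}
	+
	\snorm{\tfrac{1}{2\pi}\widetilde{\varepsilon}(\mathcal{V}_R)-\varepsilon^{(M)}(\mathcal{V}_R)},
\end{equation}
with the $2\pi$ normalisation absorbed into the weights or the hidden constant.

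\textbf{Consistency term.} By Theorem~\ref{thm:paley_wiener} and Proposition~\ref{prop:properties_hardy}, $\varepsilon(\mathcal{V}_R)=\frac{1}{2\pi}\int_{\IR}\norm{(\mathrm{Id}-\mathsf{P}_{\mathcal{V}_R})\mathcal{L}\{\partial_t^2u\}(\mu+\imath\tau)}_\cV^2\,\mathrm{d}\tau$. Write $a=(\mathrm{Id}-\mathsf{P}_{\mathcal{V}_R})\mathcal{L}\{\partial_t^2u\}(\mu+\imath\cdot)$ and $b=(\mathrm{Id}-\mathsf{P}_{\mathcal{V}_R})\widehat{U}^{(2)}(\mu+\imath\cdot)$. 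Then
\begin{equation}
	\snorm{\norm{a}^2_{L^2}-\norm{b}^2_{L^2}}\leq(\norm{a}_{L^2}+\norm{b}_{L^2})\norm{a-b}_{L^2}.
\end{equation}
Since projections have norm at most one, we have $\norm{a-b}_{L^2}\leq\norm{\mathcal{L}\{\partial_t^2u\}-\widehat{U}^{(2)}}_{L^2(\IR;\cV)}$ on the line $\Re s=\mu$. We bound this by Lemma~\ref{lmm:error_U}(ii) integrated with \eqref{eq:bound_integral_mu}, exactly as in \eqref{eq:error_consistency}; this produces the second factor of $E$.

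For the first factor, $\norm{b}_{L^2}\le\norm{\widehat U^{(2)}(\mu+\imath\cdot)}_{L^2(\IR;\cV)}$, which Lemma~\ref{lmm:regularity_U_n}(i) with $\Re s=\mu$ bounds by the $(\mu^9+\alpha^9)^{1/2}$ term. Also $\norm{a}_{L^2}\le\norm{\mathcal{L}\{\partial_t^2u\}}_{L^2}$, which Lemma~\ref{lmm:error_U}(i) bounds by the $\norm{\partial_t^2q}_{L^2_\mu}$ term (via Paley--Wiener for $\mathcal{L}\{\partial_t^2 q\}$) plus the $q(0),\partial_tq(0)$ terms. Together these give the product structure of $E$.

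\textbf{Quadrature term.} Set $\varphi(\tau)=\norm{(\mathrm{Id}-\mathsf{P}_{\mathcal{V}_R})\widehat U^{(2)}(\mu+\imath\tau)}^2_\cV$. It is not analytic as written, so we use its analytic extension
\begin{equation}
	\varphi(z)=\dotp{(\mathrm{Id}-\mathsf{P}_{\mathcal{V}_R})\widehat U^{(2)}(\mu+\imath z)}{(\mathrm{Id}-\mathsf{P}_{\mathcal{V}_R})\widehat U^{(2)}(\mu+\imath\bar z)}_\cV,
\end{equation}
where the second argument is conjugated so that $z\mapsto\varphi(z)$ is holomorphic in $\mathscr D_\eta$ and agrees with the squared norm on $\IR$. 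By Cauchy--Schwarz on the boundary lines, $\norm{\varphi}_{\mathbb H^1(\mathscr D_\eta;\IC)}\lesssim\norm{\widehat U^{(2)}(\mu+\imath\cdot)}^2_{\mathbb H^2(\mathscr D_\eta;\cV)}$. This is the delicate point: checking that the reflected function stays in the strip and that the boundary integrals pair up correctly, using the symmetry of the strip.

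Proposition~\ref{prop:integral_sinc} then bounds the infinite-sum error by $e^{-\pi\eta/\vartheta}$ times this norm. The truncation to $|k|\le M$ is handled as in the proof of Lemma~\ref{lem:approximation_Hardy_spaces_2}: $\vartheta\sum_{|k|>M}\varphi(k\vartheta)\le\vartheta\sum_{|k|>M}\norm{\widehat U^{(2)}(\mu+\imath k\vartheta)}^2_\cV$. Using Lemma~\ref{lmm:regularity_U_n}(i) and comparing the sum with $\int_{M\vartheta}^\infty(\tau^2+\mu^2)^4e^{-2\tau^2/\alpha^2}\,\mathrm{d}\tau$ via \eqref{eq:integra_x_to_8}, this gives
\begin{equation}
	\lesssim\frac{e^{-2\mu t_0+2\mu^2/\alpha^2}}{\alpha^5\mu^2\min\{1,c_{\mathsf A}^2\}}(\alpha^2+\mu^2)^4\left(\frac{M\vartheta}{\alpha}\right)^{7}e^{-2(M\vartheta/\alpha)^2}\norm{p}_\cH^2 .
\end{equation}

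\textbf{Balancing.} Finally we take $\vartheta=(\pi\alpha^2\eta/M^2)^{1/3}$. Then $\pi\eta/\vartheta=(M\vartheta/\alpha)^2=(\pi M\eta/\alpha)^{2/3}$, so both exponentials match $e^{-2(\pi M\eta/\alpha)^{2/3}}$ up to constants in the exponent. The power $(M\vartheta/\alpha)^7$ becomes $(\pi M\eta/\alpha)^{7/3}$, which yields the claimed estimate.

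The main obstacle I expect is the analytic-extension step for $\varphi$ together with getting the exact factor $2$ in the exponent. Proposition~\ref{prop:integral_sinc} only gives $e^{-\pi\eta/\vartheta}$, so the factor $2$ may need to be absorbed by adjusting $\eta$ or by changing constants.
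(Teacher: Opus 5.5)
Your outline follows the paper's proof step for step: the splitting through $\widetilde{\varepsilon}(\mathcal{V}_R)$, the bound $\snorm{\norm{a}^2-\norm{b}^2}\leq(\norm{a}+\norm{b})\norm{a-b}$ fed by Lemma~\ref{lmm:regularity_U_n}(i) and Lemma~\ref{lmm:error_U}, the analytic extension of the squared residual with its $\mathbb{H}^1$ bound, sinc quadrature plus a tail estimate, and the same $\vartheta$. However, the quadrature step as you wrote it does not give the stated bound. Taking only $e^{-\pi\eta/\vartheta}$ from Proposition~\ref{prop:integral_sinc} yields $e^{-(\pi M\eta/\alpha)^{2/3}}\norm{\widehat{U}^{(2)}(\mu+\imath\cdot)}^2_{\mathbb{H}^2(\mathscr{D}_\eta;\mathcal{V})}$, which is half the exponent claimed. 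Neither of your remedies closes this gap. A factor in an exponent is not a multiplicative constant. Re-tuning $\eta$ or $\vartheta$ either changes the prescribed nodes and weights, or replaces $\mathbb{H}^2(\mathscr{D}_\eta)$ on the right-hand side by the norm on a wider strip. For example, $\mathscr{D}_{2\eta}$ also requires $2\eta<\mu$. Either way the statement is not proved for all $\eta\in(0,\mu)$ as formulated.

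The missing piece is the denominator in Stenger's estimate, which you dropped. Note that the numerator $e^{-\pi\mu\vartheta}$ in the paper's statement of Proposition~\ref{prop:integral_sinc} is a typo for $e^{-\pi\mu/\vartheta}$. The full factor satisfies
\[
\frac{e^{-\pi\eta/\vartheta}}{2\sinh(\pi\eta/\vartheta)}
=\frac{e^{-2\pi\eta/\vartheta}}{1-e^{-2\pi\eta/\vartheta}}
\lesssim e^{-2\pi\eta/\vartheta}
\quad\text{whenever } \frac{\pi\eta}{\vartheta}=\Bigl(\frac{M\vartheta}{\alpha}\Bigr)^2\geq 1 .
\]
This is exactly the regime $M\vartheta/\alpha>1$ you already need for the tail estimate. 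With it, both contributions carry exactly $e^{-2(\pi M\eta/\alpha)^{2/3}}$, which is how the paper concludes.

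Separately, the paper's proof passes over the $2\pi$ normalization silently, and of your two options only the weights work, namely $\omega_k=\vartheta/(2\pi)$. With $\omega_k=\vartheta$ one has $\varepsilon^{(M)}(\mathcal{V}_R)\approx 2\pi\,\varepsilon(\mathcal{V}_R)$. The resulting discrepancy of size $(2\pi-1)\varepsilon(\mathcal{V}_R)$ cannot be absorbed by any hidden constant on the right-hand side.
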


\begin{proof}
For any $\mu >0$, recalling Theorem~\ref{thm:paley_wiener} (Paley-Wiener)
and Proposition~\ref{prop:properties_hardy}, item (i),
we can write $\varepsilon(\cV_R)$ in Laplace domain as follows
\begin{equation}
\begin{aligned}
	\varepsilon(\mathcal{V}_R)
	&
	=
	\norm{
		\mathcal{L}\left\{\partial^2_t u\right\}
		-
		\mathsf{P}_{\mathcal{V}_R}
		\mathcal{L}\left\{\partial^2_t u\right\}
	}^2_{\mathscr{H}^2_\mu(\mathcal{V})}
	\\
	&
	=
	\frac{1}{2\pi }
		\int_{-\infty}^{+\infty}
		\norm{
    		\mathcal{L}\left\{\partial^2_t u \right\} (\mu+\imath \tau)
    		-
    		\mathsf{P}_{\mathcal{V}_R}
    		\mathcal{L} \left\{\partial^2_t u \right\}(\mu+\imath \tau)
		}^2_\mathcal{V}
		\normalfont\text{d} \tau.
\end{aligned}
\end{equation}
Setting
	$w^\pm(s)
	\coloneqq
	\mathcal{L}\left\{\partial^2_t u \right\}(s) \pm \widehat{U}^{(2)}(s)
	\in \mathcal{V}$,
	$s \in \Pi_\mu$,
using the parallelogram law and the Cauchy-Schwarz inequality,
we further obtain that 
\begin{equation}
\begin{aligned}
		\norm{
    		\mathcal{L}\left\{\partial^2_t u \right\}(s)
    		-
    		\mathsf{P}_{\mathcal{V}_R}
    		\mathcal{L} \left\{\partial^2_t u \right\}(s)
		}^2_\mathcal{V}
		&
		-
		\norm{
    			\widehat{U}^{(2)}(s)
    			-
    			\mathsf{P}_{\mathcal{V}_R}
    			\widehat{U}^{(2)}(s)
		}^2_\mathcal{V}
	\\
	&
	\leq
	\norm{
    	w^+(s)
    	-
    	\mathsf{P}_{\mathcal{V}_R}
    	w^+(s)
	}_\mathcal{V}
	\norm{
    	w^-(s)
    	-
    	\mathsf{P}_{\mathcal{V}_R}
    	w^-(s)
	}_\mathcal{V}
	\\
	&
	\leq
	\left(
	\norm{
    	\widehat{U}^{(2)}(s)
	}_\mathcal{V}
	+
	\norm{
    	\mathcal{L}\left\{\partial^2_t u \right\}(s)
	}_\mathcal{V}
	\right)
	\norm{
    	w^-(s)
	}_\mathcal{V}.
\end{aligned}
\end{equation}
Recalling the definition of $\widetilde{\varepsilon}^{(M)}(\mathcal{V}_R)$ given in \eqref{eq:error_2}, we therefore have that
\begin{equation}
\begin{aligned}
	\snorm{
		\varepsilon(\mathcal{V}_R )
		-
		\widetilde{\varepsilon}(\mathcal{V}_R )
	}
	\lesssim
	&
	\left(
        \left(
          	\int_{-\infty}^{+\infty}
          	\norm{
             	\widehat{U}^{(2)}(\mu+\imath \omega)
          	}^2_\mathcal{V} 
    		\normalfont\text{d} \omega
        \right)^{\frac{1}{2}}
   	\right.
	\\
	&
	\left.
      	+
        \left(
      	    \int_{-\infty}^{+\infty}
          	\norm{
             	\mathcal{L}\left\{\partial^2_t u \right\}(\mu+\imath \omega)
          	}^2_\mathcal{V}
    		\normalfont\text{d} \omega
        \right)^{\frac{1}{2}}
	\right)
    \left(
		\int_{-\infty}^{+\infty}
		\norm{
			w^-(\mu+\imath \omega)
		}^2_\mathcal{V}
		\normalfont\text{d} \omega
    \right)^{\frac{1}{2}}.
\end{aligned}
\end{equation}
%
Next, we bound the terms in the previous inequality.
It follows from Lemma~\ref{lmm:regularity_U_n}, item (i),
and \eqref{eq:integra_x_to_8} that
\begin{equation}
\begin{aligned}
    \left(
        \int_{-\infty}^{+\infty}
        \norm{
            \widehat{U}^{(2)}(\mu+\imath \omega)
        }^2_\mathcal{V} 
        \normalfont\text{d} \omega
    \right)^{\frac{1}{2}}
    \\
    &
    \hspace{-3cm}
    \lesssim
    \frac{e^{-\mu t_0+\frac{\mu^2}{\alpha^2}}}{\alpha^3 \mu \min\{1, c_{\mathsf{A}}\}}
    \left(
        \int_{-\infty}^{+\infty}
        (\mu^2+\omega^2)^{4}
        e^{-2\frac{\omega^2}{\alpha^2}}
        \normalfont\text{d} \omega
    \right)^{\frac{1}{2}}
	\norm{p}_\mathcal{H}
    \\
    &
    \hspace{-3cm}
    \lesssim
    \frac{e^{-\mu t_0+\frac{\mu^2}{\alpha^2}}}{\alpha^3 \mu \min\{1, c_{\mathsf{A}}\}}
    \left(
        \int_{0}^{\mu}
        (\mu^2+\omega^2)^{4}
        e^{-2\frac{\omega^2}{\alpha^2}}
        \normalfont\text{d} \omega
        +
        \int_{\mu}^{+\infty}
        (\mu^2+\omega^2)^{4}
        e^{-2\frac{\omega^2}{\alpha^2}}
        \normalfont\text{d} \omega
    \right)^{\frac{1}{2}}
	\norm{p}_\mathcal{H}
    \\
    &
    \hspace{-3cm}
    \lesssim
    \frac{e^{-\mu t_0+\frac{\mu^2}{\alpha^2}}}{\alpha^3 \mu \min\{1, c_{\mathsf{A}}\}}
    \left(
        \mu^9
        +
        \alpha^9
        \int_{\sqrt{2}\frac{\mu}{\alpha}}^{+\infty}
        \tau^8
        e^{-\tau^2}
        \normalfont\text{d} \tau
    \right)^{\frac{1}{2}}
	\norm{p}_\mathcal{H}
    \\
    &
    \hspace{-3cm}
    \lesssim
    \frac{e^{-\mu t_0+\frac{\mu^2}{\alpha^2}}}{\alpha^3 \mu \min\{1, c_{\mathsf{A}}\}}
    \left(
        \mu^9
        +
        \alpha^9
    \right)^{\frac{1}{2}}
	\norm{p}_\mathcal{H}
    \eqqcolon E_1.
\end{aligned}
\end{equation}
From Lemma~\ref{lmm:error_U}, item(i), one has that
\begin{equation}
\begin{aligned}
    \left(
        \int_{-\infty}^{+\infty}
        \norm{
            \mathcal{L}\left\{\partial^2_t u \right\}(\mu+\imath \omega)
        }^2_\mathcal{V}
        \normalfont\text{d} \omega
    \right)^{\frac{1}{2}}
    &
    \lesssim
	\frac{
		\norm{\partial^2_t q}_{L^2_\mu(\mathbb{R}_+)}
		\norm{p}_{\mathcal{H}}
	}{
		\mu \min\{1, c_{\mathsf{A}}\}
	}
	\\
	&
	+
	\left(
	\norm{p}_{\mathcal{V}}
	+
	\frac{
        \norm{\mathsf{A}p}_{\mathcal{H}}
	}{
		\mu  \min\{1, c_{\mathsf{A}}\}
	}
	\right)
	\left(
		\frac{\snorm{q(0)}}{\sqrt{\mu}}
		+
		\frac{\snorm{\partial_t q(0)}}{\sqrt{\mu^3}}
	\right) \eqqcolon E_2,
\end{aligned}
\end{equation}
and from item (ii)
\begin{equation}
\begin{aligned}
    \left(
		\int_{-\infty}^{+\infty}
		\norm{
			w^-(\mu+\imath \omega)
		}^2_\mathcal{V}
		\normalfont\text{d} \omega
    \right)^{\frac{1}{2}}
    \lesssim
	&
    \frac{1}{\sqrt{\mu^3}}
    \left(
    \frac{
        \norm{\mathsf{A}p}_{\mathcal{H}}
        }{
        \mu  \min\{1, c_{\mathsf{A}}\}
        }
        +
        \norm{p}_\mathcal{V}
    \right)
	\int_{-\infty}^{0}
	\snorm{\partial^2_t  q (t)}
	e^{-\mu t}
	\normalfont\text{d} t
	\\
	&
	+
	\left(
	\norm{p}_{\mathcal{V}}
	+
	\frac{
        \norm{\mathsf{A}p}_{\mathcal{H}}
	}{
		\mu  \min\{1, c_{\mathsf{A}}\}
	}
	\right)
	\left(
		\frac{\snorm{q(0)}}{\sqrt{\mu}}
		+
		\frac{\snorm{\partial_t q(0)}}{\sqrt{\mu^3}}
	\right)\eqqcolon E_3,
\end{aligned}
\end{equation}
which allows us to bound $\snorm{
		\varepsilon(\mathcal{V}_R )
		-
		\widetilde{\varepsilon}(\mathcal{V}_R )
	}$ in terms of $E=(E_1+E_2)E_3$ defined in \eqref{eq:discrete_norm_Error_E}.
It remains to bound $\snorm{
		\widetilde{\varepsilon}(\mathcal{V}_R )
        -
        \varepsilon^{(M)}(\cV_R)}$
        and to apply the triangle inequality.
To that end, we observe that the map 
\begin{equation}
	\tau
	\mapsto
    \norm{
		\widehat{U}^{(2)}(\mu+\imath \tau) 
		-
		\mathsf{P}_{\mathcal{V}_R}
		\widehat{U}^{(2)}(\mu+\imath \tau)
	}^2_{\mathcal{V}},
    \quad\tau \in \mathbb{R},
\end{equation}
admits an analytic extension to $\mathscr{D}_\eta$, for each $\eta \in (0,\mu)$, given by
\begin{equation}
	g(z)
	=
	\dotp{
		\widehat{U}^{(2)}(\mu+\imath z) 
		-
		\mathsf{P}_{\mathcal{V}_R}
		\widehat{U}^{(2)}(\mu+\imath z)
	}{
		\overline{\widehat{U}^{(2)}(\mu-\imath z)
		-
		\mathsf{P}_{\mathcal{V}_R}
		\widehat{U}^{(2)}(\mu-\imath z)}
	}_{\mathcal{V}},
	\quad
	z \in \mathscr{D}_\eta,
\end{equation}
where $\dotp{\cdot}{\cdot}_\mathcal{V}$ corresponds to the
$\mathcal{V}$-inner product, which is a sesquilinear form, and
we have used that $\overline{\widehat{U}^{(2)}(s)} = {\widehat{U}^{(2)}(\overline{s})} $,
for each $s \in \Pi_\mu$. The map $z \mapsto g(z)$ is indeed 
analytic, because it is the composition of the map
$\mathcal{V} \times \mathcal{V} \ni (v,w) \mapsto (v,\overline{w}) \in \mathbb{C}$, which is linear in each component,
thus analytic, and the map $z \mapsto \widehat{U}^{(2)}(\mu\pm\imath z) $,
which is analytic as a consequence of Lemma~\ref{lmm:regularity_U_n}, item (ii). 
Furthermore, for each $\eta\in (0,\mu)$
\begin{equation}
\begin{aligned}
	\norm{g}_{\mathbb{H}^1(\mathscr{D}_\eta;\mathbb{C})}
	&
	\leq
	\norm{
			\widehat{U}^{(2)}(\mu+\imath \cdot) 
		-
		\mathsf{P}_{\mathcal{V}_R}
		\widehat{U}^{(2)}(\mu+\imath \cdot)
	}_{\mathbb{H}^2(\mathscr{D}_\eta;\mathcal{V})}
	\norm{
			\widehat{U}^{(2)}(\mu-\imath \cdot) 
		-
		\mathsf{P}_{\mathcal{V}_R}
		\widehat{U}^{(2)}(\mu-\imath \cdot)
	}_{\mathbb{H}^2(\mathscr{D}_\eta;\mathcal{V})}
	\\
	&
	\leq
	\norm{
			\widehat{U}^{(2)}(\mu+\imath \cdot) 
	}_{\mathbb{H}^2(\mathscr{D}_\eta;\mathcal{V})}
	\norm{
			\widehat{U}^{(2)}(\mu-\imath \cdot) 
	}_{\mathbb{H}^2(\mathscr{D}_\eta;\mathcal{V})}
	\\
	&
	\leq
	\norm{
			\widehat{U}^{(2)}(\mu+\imath \cdot) 
	}^2_{\mathbb{H}^2(\mathscr{D}_\eta;\mathcal{V})}.
\end{aligned}
\end{equation}
Recalling the definition of the integral operator $\cI$, its trapezoidal rule approximation $\cT$, and the corresponding truncation $\cT_M$ from \eqref{eq:def_I_T}, it follows from Proposition~\ref{prop:integral_sinc}
that for each $\vartheta>0$ 
\begin{equation}
\begin{aligned}
	\snorm{
		\widetilde{\varepsilon}(\mathcal{V}_R)
		-
		{\varepsilon}^{(M)}(\mathcal{V}_R)
	}
	&
	\leq
	\snorm{
	       \mathcal{I}(g)
          -
           \mathcal{Q}(g,\vartheta)
	}
	+
	\snorm{
	       \mathcal{Q}(g,\vartheta)
     	-
        \mathcal{Q}_M(g,\vartheta)
	}
	\\
	&
	\leq
    \frac{
        e^{-\pi \eta/ \vartheta}
    }{
        2\sinh(\pi \eta/\vartheta)
    }
    \norm{g}_{\mathbb{H}^1(\mathscr{D}_\eta;\mathbb{C})}
   +
	\snorm{
	       \mathcal{Q}(g,\vartheta)
     	-
        \mathcal{Q}_M(g,\vartheta)
	}
	\\
	&
	\lesssim
    e^{- 2 \pi \eta / \vartheta}
   	\norm{
			\widehat{U}^{(2)}(\mu+\imath \cdot) 
	}^2_{\mathbb{H}^2(\mathscr{D}_\eta;\mathcal{V})}
    +
   	\snorm{
	       \mathcal{Q}(g,\vartheta)
     	-
        \mathcal{Q}_M(g,\vartheta)
	}.
\end{aligned}
\end{equation}
According to Lemma~\ref{lmm:regularity_U_n} item (i), one has similar as above that
\begin{equation}
\begin{aligned}
	\snorm{
	       \mathcal{Q}(g,\vartheta)
     	-
        \mathcal{Q}_M(g,\vartheta)
	}
	&
	\leq
	\vartheta
	\sum_{\snorm{k} > M}
	\norm{
			\widehat{U}^{(2)}(\mu+\imath k\vartheta ) 
	}^2_{\mathcal{V}}
	\\
	&
	\leq
	\vartheta
	\frac{e^{-2\mu t_0+2\frac{\mu^2}{\alpha^2}}}{\alpha^6 \mu^2 \min\{1, c^2_{\mathsf{A}}\} }
	\sum_{\snorm{k} > M}
    (\mu^2 + k^2 \vartheta^2)^4
	e^{-2\frac{ k^2 \vartheta^2}{\alpha^2}}
	\norm{p}^2_\mathcal{H}
	\\
	&
	\lesssim
	\frac{e^{-2\mu t_0+2\frac{\mu^2}{\alpha^2}}}{\alpha^6 \mu^2 \min\{1, c^2_{\mathsf{A}}\}}
	\int_{M\vartheta} 
	\left(\mu^2+\omega^2\right)^4
	e^{-2\frac{ \omega^2}{\alpha^2}}
	\text{d} \omega
    \norm{p}^2_\mathcal{H}
	\\
	&
	\lesssim
	\frac{e^{-2\mu t_0+2\frac{\mu^2}{\alpha^2}}}{\alpha^5 \mu^2 \min\{1, c^2_{\mathsf{A}}\}}
	\int_{\sqrt{2}\frac{M\vartheta}{\alpha}} 
	\left(\mu^2+\alpha^2\tau^2\right)^4
	e^{-\tau^2}
	\text{d} \tau
    \norm{p}^2_\mathcal{H}.
\end{aligned}
\end{equation}
Assuming $\frac{M\vartheta}{\alpha}>1$, we hence obtain upon integration the bound
\begin{equation}
\begin{aligned}
	\snorm{
	       \mathcal{Q}(g,\vartheta)
     	-
        \mathcal{Q}_M(g,\vartheta)
	}
	&
	&
	\lesssim
	\frac{e^{-2\mu t_0+2\frac{\mu^2}{\alpha^2}}}{\alpha^5 \mu^2 \min\{1, c^2_{\mathsf{A}}\}}
    (\alpha^2+\mu^2)^4
    \left(\frac{M\vartheta}{\alpha}\right)^7
    e^{
    -\left(\sqrt{2}\frac{M\vartheta}{\alpha}\right)^2}
    \norm{p}^2_{\mathcal{H}}.
\end{aligned}
\end{equation}
In particular, by selecting $\vartheta = \left( \frac{ \pi \alpha^2 \eta}{M^2} \right)^{\frac{1}{3}}$,
which also gives $\frac{M\vartheta}{\alpha} = \left(\frac{\pi M \eta}{\alpha} \right)^{\frac{1}{3}}$, we obtain
\begin{equation}
\begin{aligned}
	\snorm{
		\widetilde{\varepsilon}(\mathcal{V}_R)
		-
		{\varepsilon}^{(M)}(\mathcal{V}_R)
	}
	\lesssim
	&
	\left(
       	\norm{
    			\widehat{U}^{(2)}(\mu+\imath \cdot) 
    		}^2_{\mathbb{H}^2(\mathscr{D}_\eta;\mathcal{V})}
	\right.
	\\
	&
	\left.
        +
		\frac{e^{-2\mu t_0+2\frac{\mu^2}{\alpha^2}}}{\alpha^5 \mu^2 \min\{1, c^2_{\mathsf{A}}\}}
    		(\alpha^2+\mu^2)^4
    		\left(\frac{\pi M \eta}{\alpha} \right)^\frac{7}{3}
	            \norm{p}^2_{\mathcal{H}}\right)
	e^{-2\left(\frac{\pi M \eta}{\alpha} \right)^{\frac{2}{3}}}.
\end{aligned}
\end{equation}
This bound, together with Lemma~\ref{lmm:regularity_U_n} item (i), yields the final result. 
\end{proof}

\begin{theorem}[Fully Discrete Error Estimate]\label{thm:fully_discrete_error}
Consider the setting of Lemma~\ref{eq:error_discrete_norm}. 
Let $u^{\normalfont\text{(rb)}}_{R,M}$ be the solution to Problem~\ref{pr:sdpr} in $\cV^\rb_{R,M}$,
with $\cV^\rb_{R,M}$ as in \eqref{eq:fPOD}.
Then, for any $\eta \in (0,\mu)$, it holds that
\begin{equation}\label{eq:error_bound_fully_discrete}
\begin{aligned}
	\norm{
		u
		-
		u^{\normalfont\text{(rb)}}_{R,M}
	}_{L^2(\mathfrak{I};\mathcal{V})}
    +
    &
	\norm{
		\partial_t 
		\left(
			u
			-
			u^{\normalfont\text{(rb)}}_{R,M}
		\right)
	}_{L^2(\mathfrak{I};\mathcal{H})}
	\\
	&
    \hspace{-2cm}
    \lesssim
    C(T,\mathsf{A})
    \left(
       	\norm{
    		\widehat{U}^{(2)}(\mu+\imath \cdot) 
    	}_{\mathbb{H}^2(\mathscr{D}_\eta;\mathcal{V})}
    \right.
    \\
    &
    \left.
        +
		\frac{e^{-\mu t_0+\frac{\mu^2}{\alpha^2}}}{\alpha^5 \mu^2 \min\{1, c_{\mathsf{A}}\}}
    		(\alpha^2+\mu^2)^2
    		\left(\frac{\pi M \eta}{\alpha} \right)^\frac{7}{6}
        \norm{p}^2_{\mathcal{V}} 
	\right)
	e^{-\left(\frac{\pi M \eta}{\alpha} \right)^{\frac{2}{3}}}
	\\
	&
    \hspace{-2cm}
    + 
    \sqrt{
        \varepsilon
	\left(
		\IV^{\normalfont\text{(rb)}}_{R}
	\right)
    }
    +
    e^{\mu T}E,
\end{aligned}
\end{equation}
where $C(T,\mathsf{A})>0$ and $E$ are as in \eqref{eq:constant_C_T_A}
and \eqref{eq:discrete_norm_Error_E}, respectively, and 
\begin{equation}
\begin{aligned}
    \sqrt{
        \varepsilon
	\left(
		\IV^{\normalfont\text{(rb)}}_{R}
	\right)
    }
    \lesssim
    &
    e^{\mu T}
    \left(
    \frac{e^{-(\mu-\eta) t_0+\frac{(\mu+\eta)^2}{\alpha^2}}}{\alpha^3(\mu-\eta)}
    \left(
		\mu^4
		+
		\alpha^4
    \right)
        \right.
        \\
        &
        \left.
        +
   \frac{
   	   \mu^{\frac{1}{6}}
   		(\alpha^2+\mu^2)^2 
   }{
   	\alpha^{\frac{11}{3}}
   }
    \frac{
        e^{-\mu t_0 + \frac{\mu^2}{\alpha^2}}
        }{
        \min\{1, c_{\mathsf{A}}\}
    }
    R^\frac{7}{6}
    \right)
	e^{
		-\left(\frac{\pi R \eta}{2\alpha} \right)^{\frac{2}{3}}
	}
	 \norm{p}_\mathcal{H}
	\\
	&
	+
    \frac{e^{\mu T}}{\sqrt{\mu^3}}
    \left(
    \frac{
        \norm{\mathsf{A}p}_{\mathcal{H}}
        }{
        \mu  \min\{1, c_{\mathsf{A}}\}
        }
        +
        \norm{p}_\mathcal{V}
    \right)
	\int_{-\infty}^{0}
	\snorm{\partial^2_t  q (t)}
	e^{-\mu t}
	\normalfont\text{d} t
	\\
	&
	+
	e^{\mu T}
	\left(
      	\norm{p}_{\mathcal{V}}
      	+
      	\frac{
			\norm{\mathsf{A}p}_{\mathcal{H}}
		}{
			\mu \min\{1, c_{\mathsf{A}}\}
		}
	\right)
	\left(
		\frac{\snorm{q(0)}}{\sqrt{\mu}}
		+
		\frac{\snorm{\partial_t q(0)}}{\sqrt{\mu^3}}
	\right).
\end{aligned}
\end{equation}
\end{theorem}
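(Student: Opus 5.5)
We propose to follow the structure of the semi-discrete result, Theorem~\ref{thm:approximation_Hardy_spaces_3}, replacing the optimal space $\cVrb$ by the computed space $\cV^\rb_{R,M}$ and controlling the price paid by the quadrature. As a first step we apply the argument of Proposition~\ref{eq:best_approximation} verbatim with $\cVrb$ replaced by $\cV^\rb_{R,M}$. That proof never used optimality of the space, only Galerkin orthogonality, the elliptic projection and C\'ea's lemma \eqref{eq:cea_elliptic_projection}. It therefore yields
\begin{equation*}
\norm{u-u^\rb_{R,M}}_{L^2(\mathfrak{I};\cV)}+\norm{\partial_t(u-u^\rb_{R,M})}_{L^2(\mathfrak{I};\cH)}\lesssim C(T,\mathsf{A})\,e^{-\mu T}\norm{(\mathrm{Id}-\mathsf{P}_{\cV^\rb_{R,M}})\partial_t^2u}_{L^2(\mathfrak{I};\cV)}.
\end{equation*}
Using the weight bound $e^{-\mu t}\geq e^{-\mu T}$ on $\mathfrak{I}$ as in Lemma~\ref{lem:approximation_Hardy_spaces_2}, the last factor is at most $e^{\mu T}\sqrt{\varepsilon(\cV^\rb_{R,M})}$. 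It thus remains to bound $\varepsilon(\cV^\rb_{R,M})$.

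The key step is a quasi-optimality argument for the discrete POD functional. We write
\begin{equation*}
\varepsilon(\cV^\rb_{R,M})\le \snorm{\varepsilon(\cV^\rb_{R,M})-\varepsilon^{(M)}(\cV^\rb_{R,M})}+\varepsilon^{(M)}(\cV^\rb_{R,M}).
\end{equation*}
By minimality of $\cV^\rb_{R,M}$ in \eqref{eq:fPOD}, we then have $\varepsilon^{(M)}(\cV^\rb_{R,M})\le\varepsilon^{(M)}(\cVrb)$, with $\cVrb$ the optimal time-domain space \eqref{eq:min_laplace_domain}. Applying the same decomposition again gives
\begin{equation*}
\varepsilon^{(M)}(\cVrb)\le \varepsilon(\cVrb)+\snorm{\varepsilon(\cVrb)-\varepsilon^{(M)}(\cVrb)}.
\end{equation*}
Both difference terms are controlled by Lemma~\ref{eq:error_discrete_norm}, which holds uniformly for any subspace $\cV_R$. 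This produces twice the quadrature bound (the $\mathbb{H}^2$-norm of $\widehat{U}^{(2)}$, the truncation term, and the factor $e^{-2(\pi M\eta/\alpha)^{2/3}}$) plus $2E$.

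Taking square roots via $\sqrt{a+b+c}\le\sqrt a+\sqrt b+\sqrt c$ halves the exponent. This gives $e^{-(\pi M\eta/\alpha)^{2/3}}$, the power $7/6$, and the square-rooted prefactors. It also produces $\sqrt{\varepsilon(\cVrb)}$, which is exactly the left side of Lemma~\ref{lem:approximation_Hardy_spaces_2} measured in the weighted norm. Hence it obeys the displayed bound of the theorem, including the $R$-exponential term, the Ricker tail of Lemma~\ref{lmm:Smallness_Rickert_wavelet}, and the $q(0)$, $\partial_tq(0)$ consistency terms. Collecting all terms with the factor $C(T,\mathsf{A})$ gives \eqref{eq:error_bound_fully_discrete}; the paper's statement writes $e^{\mu T}E$ rather than $\sqrt{E}$, which we would absorb or state with $\sqrt{E}$ as appropriate.

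We expect two obstacles. The first is the mismatch between \eqref{eq:fPOD}, which uses only $\Re\{\widehat U^{(2)}(s_j)\}$ over $j=1,\dots,M$, and $\varepsilon^{(M)}$ in \eqref{eq:error_3}, which uses the full complex snapshots over $j=-M,\dots,M$. We plan to close this gap using the conjugate symmetry $\overline{\widehat U^{(2)}(s)}=\widehat U^{(2)}(\overline s)$ and the fact that $\mathsf{P}_{\cV_R}$ commutes with conjugation for real spaces $\cV_R$. Together these give $\|v-\mathsf{P}v\|^2+\|\bar v-\mathsf{P}\bar v\|^2=2(\|\Re v-\mathsf{P}\Re v\|^2+\|\Im v-\mathsf{P}\Im v\|^2)$. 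If the imaginary parts cannot be dropped, we would instead argue, following \cite[Section 5.1]{henriquez2024fast}, that the minimizers coincide up to including imaginary parts, and adjust the weights accordingly. The second and minor obstacle is tracking constants, namely the $2\pi$ normalization between $\varepsilon$ and $\widetilde\varepsilon$ and the powers of $\mu$ and $\alpha$, which are absorbed in $\lesssim$.
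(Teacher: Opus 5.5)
Your main chain (rerun the energy and elliptic-projection argument of Proposition~\ref{eq:best_approximation} with $\cV^\rb_{R,M}$, sandwich the minimality of the POD space between two applications of Lemma~\ref{eq:error_discrete_norm}, close with Lemma~\ref{lem:approximation_Hardy_spaces_2}) is exactly the paper's proof. The two points you postpone are genuine gaps, which the paper's proof passes over in the same way, and the remedies you sketch do not close them.

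\textbf{Minimality step.} The inequality $\varepsilon^{(M)}(\cV^\rb_{R,M})\le\varepsilon^{(M)}(\cVrb)$ does not follow from \eqref{eq:fPOD}. Your conjugate-symmetry identity, with $s_{-k}=\overline{s_k}$ and the equal weights $\vartheta$ of Lemma~\ref{eq:error_discrete_norm}, gives
\begin{equation*}
\varepsilon^{(M)}(\cV_R)
=\vartheta\norm{(\mathrm{Id}-\mathsf{P}_{\cV_R})\widehat{U}^{(2)}(\mu)}_{\cV}^2
+2\vartheta\sum_{k=1}^{M}\Big(\norm{(\mathrm{Id}-\mathsf{P}_{\cV_R})\Re\,\widehat{U}^{(2)}(s_k)}_{\cV}^2
+\norm{(\mathrm{Id}-\mathsf{P}_{\cV_R})\Im\,\widehat{U}^{(2)}(s_k)}_{\cV}^2\Big).
\end{equation*}
So $\varepsilon^{(M)}$ is minimized by the POD of the real \emph{and} imaginary parts. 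That is in general not the real-part POD \eqref{eq:fPOD}, whatever weights you put there.

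What justifies dropping imaginary parts is causality, used at the level of $\varepsilon$. For real-valued $f\in L^2_\mu(\IR_+;\cV)$ extended by zero to $t<0$, $\tau\mapsto\Re\,\cL\{f\}(\mu+\imath\tau)$ is the Fourier transform of the even part of $e^{-\mu t}f$. That even part has half the squared $L^2(\IR;\cV)$-norm of $e^{-\mu t}f$, so Plancherel gives $\norm{f}^2_{L^2_\mu(\IR_+;\cV)}=\frac{1}{\pi}\int_{\IR}\norm{\Re\,\cL\{f\}(\mu+\imath\tau)}_{\cV}^2\,\mathrm{d}\tau$.
\begin{itemize}
\item Apply this to $f=(\mathrm{Id}-\mathsf{P}_{\cV_R})\partial_t^2u$ with $\cV_R$ real. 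Then $\widetilde\varepsilon$ and $\varepsilon^{(M)}$ can be defined through $\Re\,\widehat{U}^{(2)}$ alone.
\item Since $\tau\mapsto\Re\,\widehat{U}^{(2)}(\mu+\imath\tau)$ is even, the symmetric sum collapses to $k=0,\dots,M$ with weights proportional to $\vartheta,2\vartheta,\dots,2\vartheta$. These are the $M+1$ snapshots of Section~\ref{sec:numerical_results}, and the minimizer is precisely the real-part POD.
\item The quadrature estimate carries over via the holomorphic extension $z\mapsto\frac12\big(\widehat{U}^{(2)}(\mu+\imath z)+\widehat{U}^{(2)}(\mu-\imath z)\big)$. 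Its $\mathbb{H}^2(\mathscr{D}_\eta;\cV)$-norm is at most that of $\widehat{U}^{(2)}(\mu+\imath\,\cdot)$.
\item The consistency error is controlled by $\Re\,w^-$, with $w^-\coloneqq\cL\{\partial_t^2u\}-\widehat{U}^{(2)}$.
\end{itemize}

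\textbf{The terms $E$ and $2\pi$.} $\sqrt{E}$ cannot be absorbed into $E$: $E$ is the small consistency error, so $\sqrt{E}\gg E$. The displayed $e^{\mu T}E$ does not follow from the paper's squared-level argument either. Nor can the $2\pi$ be hidden in $\lesssim$. $\varepsilon$ carries the factor $\frac{1}{2\pi}$ of the Hardy norm while $\widetilde\varepsilon$ and $\varepsilon^{(M)}$ do not, so $\snorm{\varepsilon-\widetilde\varepsilon}$ is not small. You must compare $2\pi\varepsilon$ with $\widetilde\varepsilon$, which leaves the POD minimizer unchanged.

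Both issues disappear if you compare norms instead of squares. The reverse triangle inequality gives $\snorm{\sqrt{2\pi\varepsilon(\cV_R)}-\sqrt{\widetilde\varepsilon(\cV_R)}}\le\norm{w^-(\mu+\imath\,\cdot)}_{L^2(\IR;\cV)}$ for every $\cV_R$. Let $\delta_M$ be the subspace-independent bound on $\snorm{\widetilde\varepsilon(\cV_R)-\varepsilon^{(M)}(\cV_R)}$ from the proof of Lemma~\ref{eq:error_discrete_norm}. Then
\begin{equation*}
\sqrt{2\pi\,\varepsilon(\cV^\rb_{R,M})}
\le\sqrt{2\pi\,\varepsilon(\cVrb)}+2\sqrt{\delta_M}+2\norm{w^-(\mu+\imath\,\cdot)}_{L^2(\IR;\cV)}.
\end{equation*}
In the real-part version, $2\pi$ becomes $\pi$ and $w^-$ becomes $\Re\,w^-$. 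By Lemma~\ref{lmm:error_U}(ii), the last term is bounded by the last two terms of the displayed bound for $\sqrt{\varepsilon(\cVrb)}$, without the factor $e^{\mu T}$. So the consistency error enters linearly, through terms the theorem already contains, and no separate $E$ or $\sqrt{E}$ term is needed.
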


\begin{proof}
Exactly as in the proof of Proposition~\ref{eq:best_approximation},
and using that $\|v\|_{L^2(\mathfrak{I};\cV)}\leq e^{\mu T}\|v\|_{L^2_\mu(\IR_+;\cV)}$,
we have that
\begin{equation}
\begin{aligned}
	\norm{
		u
		-
		u^{\normalfont\text{(rb)}}_{R,M}
	}^2_{L^2(\mathfrak{I};\mathcal{V})}
    +
	\norm{
		\partial_t 
		\left(
			u
			-
			u^{\normalfont\text{(rb)}}_{R,M}
		\right)
	}^2_{L^2(\mathfrak{I};\mathcal{H})}
    \lesssim
    C(T,\mathsf{A})
    \varepsilon(\mathcal{V}^{\normalfont\text{(rb)}}_{R,M}).
\end{aligned}
\end{equation}
It follows from Lemma~\ref{eq:error_discrete_norm} that
\begin{equation}
\begin{aligned}
	\varepsilon
	\left(
		\IV^{\normalfont\text{(rb)}}_{R,M}
	\right)
	\leq
	&
	\snorm{
      	\varepsilon
      	\left(
      		\IV^{\normalfont\text{(rb)}}_{R,M}
      	\right)
		-
		\varepsilon^{(M)}
		\left(
			\IV^{\normalfont\text{(rb)}}_{R,M}
		\right)
	}
	+
	\varepsilon^{(M)}
	\left(
		\IV^{\normalfont\text{(rb)}}_{R,M}
	\right)
	\\
	\lesssim
	&
    \left(
       	\norm{
    			\widehat{U}^{(2)}(\mu+\imath \cdot) 
    		}^2_{\mathbb{H}^2(\mathscr{D}_\eta;\mathcal{V})}
        +
		\frac{e^{-2\mu t_0+2\frac{\mu^2}{\alpha^2}}}{\alpha^5 \mu^2 \min\{1, c^2_{\mathsf{A}}\}}
    		(\alpha^2+\mu^2)^4
    		\left(\frac{M \eta}{\alpha} \right)^\frac{7}{3}
	\norm{p}^2_{\mathcal{H}}\right)
	e^{-2 \left(\frac{\pi M \eta}{\alpha} \right)^{\frac{2}{3}}}
    \\
    &
    +
    E
	+
	\varepsilon^{(M)}
	\left(
		\IV^{\normalfont\text{(rb)}}_{R,M}
	\right),
\end{aligned}
\end{equation}
where $E$ is as in \eqref{eq:discrete_norm_Error_E}.
Recalling that $\IV^{\normalfont\text{(rb)}}_{R,M}$ minimizes
$\varepsilon^{(M)}$ defined in \eqref{eq:error_3}, cf.~\eqref{eq:fPOD},
we have that
\begin{equation}
\begin{aligned}
	\varepsilon^{(M)}\left(\IV^{\normalfont\text{(rb)}}_{R,M}\right)
	\leq
    &
	\varepsilon^{(M)}\left(\IV^{\normalfont\text{(rb)}}_{R}\right)
	\\
	\leq
    &
	\snorm{
		\varepsilon^{(M)}\left(\IV^{\normalfont\text{(rb)}}_{R}\right)
		-
		\varepsilon\left(\IV^{\normalfont\text{(rb)}}_{R}\right)
	}
	+
	\varepsilon\left(\IV^{\normalfont\text{(rb)}}_{R}\right)
	\\
	\lesssim
	&
    \left(
       	\norm{
    			\widehat{U}^{(2)}(\mu+\imath \cdot) 
    		}^2_{\mathbb{H}^2(\mathscr{D}_\eta;\mathcal{V})}
        +
		\frac{e^{-2\mu t_0+2\frac{\mu^2}{\alpha^2}}}{\alpha^5 \mu^2 \min\{1, c^2_{\mathsf{A}}\}}
    		(\alpha^2+\mu^2)^4
    		\left(\frac{\pi M \eta}{\alpha} \right)^\frac{7}{3}
            \norm{p}^2_{\mathcal{H}}
	\right)
	e^{-2 \left(\frac{\pi M \eta}{\alpha} \right)^{\frac{2}{3}}}
    \\
    &
	+
	\varepsilon
	\left(
		\IV^{\normalfont\text{(rb)}}_{R}
	\right) + E,
\end{aligned}
\end{equation}
where we have used Lemma~\ref{eq:error_discrete_norm} again.
Finally, we bound
$\varepsilon\left(\IV^{\normalfont\text{(rb)}}_{R}\right)$ using Lemma~\ref{lem:approximation_Hardy_spaces_2} and obtain the final result.
\end{proof}

\begin{remark}[Dependence of the convergence rate in Theorem~\ref{thm:fully_discrete_error}
on $\alpha$ and $t_0$]
In view of Lemma~\ref{lmm:Smallness_Rickert_wavelet}, and as discussed previously in Remark~\ref{rmk:dependence_alpha_t_0}, we examine the structure of the error estimate in Theorem~\ref{thm:fully_discrete_error}. 
In this case, the bound stated in \eqref{eq:error_bound_fully_discrete} consists of three main contributions: 
(i) exponential convergence with respect to $R$, i.e., the dimension of the sought finite-dimensional space, 
(ii) exponential convergence with respect to $M$, i.e., the number of samples in the Laplace domain, and 
(iii) a consistency term decaying exponentially with $\alpha t_0$. 

More precisely, selecting $\mu = \alpha$ and $\eta = \mu/2$, we obtain
\begin{equation}
	\norm{
		u
		-
		u^{\normalfont\text{(rb)}}_{R,M}
	}_{L^2(\mathfrak{I};\mathcal{V})}
    +
	\norm{
		\partial_t 
		\left(
			u
			-
			u^{\normalfont\text{(rb)}}_{R,M}
		\right)
	}_{L^2(\mathfrak{I};\mathcal{H})}
    \lesssim
    e^{\alpha T}
    \left(
    e^{-\left(\frac{\pi R }{4} \right)^{\frac{2}{3}} - \frac{\alpha t_0}{2}}
    +
    e^{-\left(\frac{\pi M }{2} \right)^{\frac{2}{3}} - \frac{\alpha t_0}{2}}
    +
    e^{-\frac{\alpha^2}{4} t_0^2}
    \right),
\end{equation}
where the hidden constant depends polynomially on $\alpha$, $T$, and $p$.

As in Remark~\ref{rmk:dependence_alpha_t_0}, this estimate shows that the exponential convergence rate with respect to $R$ is independent of the parameters characterizing the Ricker wavelet, $\alpha$ and $t_0$, provided that the error remains larger than the consistency term, which scales like $e^{\alpha T - \frac{\alpha^2}{4} t_0^2}$. In addition, the fully discrete scheme exhibits exponential convergence with respect to the number of Laplace-domain samples $M$.
\end{remark}

\section{Numerical Results}
\label{sec:numerical_results}
We present numerical results supporting our theoretical assertions
and demonstrate the computational benefits of the LT-MOR method for the scalar wave equation
(see Example~\ref{example:scalar_wave}).
In particular, we assess the performance of the LT-MOR method
in three key aspects: 
\begin{itemize}
	\item[(i)]
	Accuracy with respect to the high-fidelity solution.
	We set $\mathfrak{I} = (0,T)$ and 
	consider the following metric
	\begin{equation}\label{eq:rel_error}
	\begin{aligned}
		\text{Rel\_Error}^{\normalfont\text{(rb)}}_{R,M}(\mathfrak{I};\mathcal{X})
		&
		=
		\frac{
			\norm{
				u_h - u^{\normalfont\text{(rb)}}_{R,M}
			}_{L^2(\mathfrak{I};\mathcal{X})}
		}{
			\norm{
				u_h
			}_{L^2(\mathfrak{I};X)}
		}
		\\
		&
		\approx
		\frac{
			\left(
			\displaystyle\sum_{j=0}^{N_t}
			\norm{
				u_h(t_j) - u^{\normalfont\text{(rb)}}_{R,M}(t_j)
			}^2_{\cX}
			\right)^{\frac{1}{2}}
		}{
			\left(
			\displaystyle\sum_{j=0}^{N_t}
			\norm{
				u_h(t_j)
			}^2_{\cX}
			\right)^{\frac{1}{2}}
		},
	\end{aligned}
	\end{equation}
	where $\cX \in \{L^2(\Omega),H^1_0(\Omega)\}$, i.e.~we
	compute (an approximation of) the $L^2(\mathfrak{I};\cX)$-relative error for a number of reduced spaces of dimension $R \in \left\{1,\dots,R_{\text{max}}\right\}$, and $t_j = \frac{j}{N_t} T$, $j=1,\dots,N_t$.
	\item[(ii)]
	Accuracy with respect to the number of snapshots in the offline phase, i.e., the 
	number of samples in the Laplace domain.
    \item[(iii)]
	Speed-up with respect to the high-fidelity solver.
\end{itemize}

The FE implementation is conducted in the {\tt MATLAB}
library {\tt Gypsilab} \cite{alouges2018fem}.
We set $\Omega = (-\frac{1}{2},\frac{1}{2})^2$ and
consider
\begin{equation}\label{eq:initial_cond_gauss}
	p (\bm{x} )
	=
	\frac{1}{\sqrt{2\pi}\zeta}
	\exp\left(-\frac{\norm{\bm{x}-\bm{x}_0}^2}{2\zeta^2} \right),
	\quad \bm{x} \in \Omega.
\end{equation}
In addition, we use the following set-up:
\begin{itemize}
	\item[(i)] {\bf FE Discretization.}
	We consider a FE discretization using $H^1$-conforming $\mathcal{P}^1$
	elements on a quasi-uniform mesh $\mathcal{T}_h$ of $1.5\times 10^4$ triangles,
	with a total number of degrees of freedom equal to $14641$ and mesh size $h = 1.16 \times 10^{-2}$.
	\item[(ii)] {\bf Construction of the Reduced Space.}
	The space $\mathcal{V}^{\text{(rb)}}_{R,M} \subset \mathcal{V}_h$ is computed
	as in Section~\ref{sec:laplace_transform_MOR}.
	For the computation of the snapshots, we use the 
    sampling points given in Lemma~\ref{eq:error_discrete_norm},
    with a particular instance of $\mu>0$ to be specified. 
	We remark that, in view of the insights of \cite[Section 5.4]{henriquez2024fast},
	we only effectively compute $M+1$ samples instead of $2M+1$.
	\item[(iii)] {\bf Parameter Setting.}
	In \eqref{eq:initial_cond_gauss}, we set $\bm{x}_0 = (0.25,-0.15)^\top$,
	$\zeta =0.05$ and for the parameters of the Ricker wavelet we consider $t_0 = 2.5$ and $\alpha \in  \left\{\pi, \frac{3}{2} \pi,2 \pi,\frac{5}{2} \pi\right\}$.
	\item[(iv)] {\bf Time-stepping Scheme.}
	For both the computation of the high-fidelity solution and the reduced basis solution, 
	i.e., the numerical approximation of \eqref{eq:wave}--\eqref{eq:ic2} and Problem~\ref{pr:sdpr},
	respectively, we consider the implicit Newmark-beta time-stepping scheme.
	We set the final time to $T=10$, and the total number of time steps
    to $N_t = 2 \times 10^4$.
\end{itemize}

\subsection{Singular Values of the Snapshot Matrix}
Figure~\ref{fig:sing_val} portrays the decay of the singular values of the snapshot matrix for
the initial condition in \eqref{eq:initial_cond_gauss} with $\zeta = 0.05$ and $t_0 = 2.5$,
and for $\alpha \in \left\{\pi, \frac{3}{2} \pi, 2 \pi, \frac{5}{2} \pi\right\}$.
\begin{figure}[!ht]
	\centering
	\begin{subfigure}[t]{0.48\textwidth}
		\centering
		\includegraphics[width=\textwidth]
		{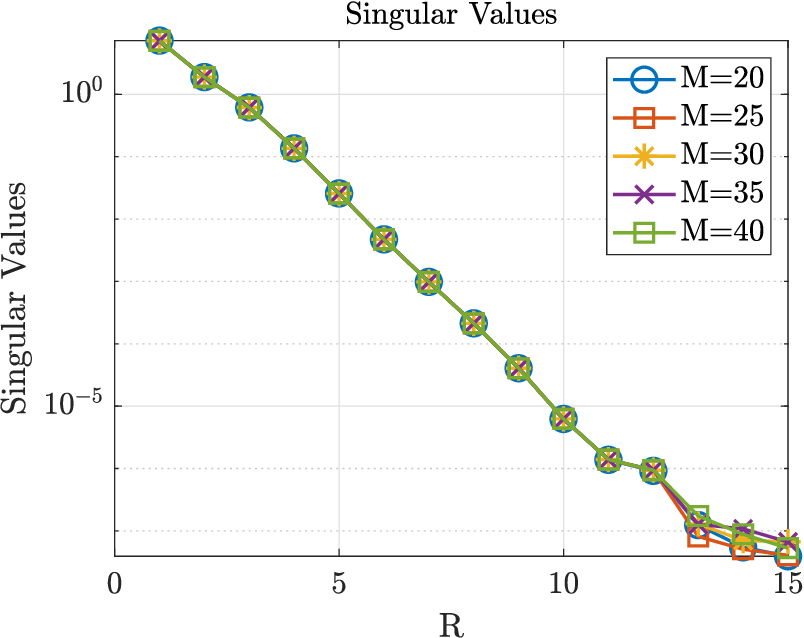}
		\subcaption{$\alpha = \pi$}
		\label{fig:sing_val_alpha_25}
	\end{subfigure}
	\hfill
	\begin{subfigure}[t]{0.48\textwidth}
		\centering
		\includegraphics[width=\textwidth]
		{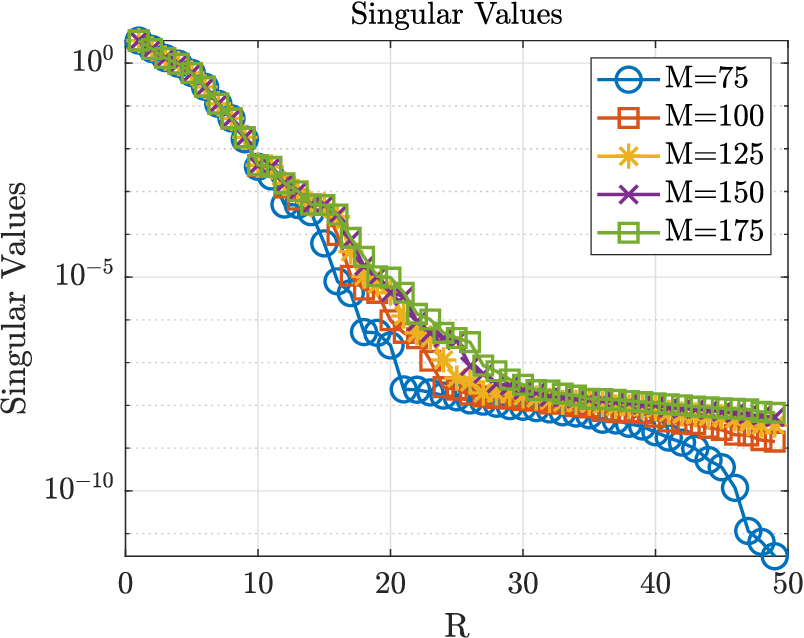}
		\subcaption{$\alpha = \frac{3}{2} \pi$}
		\label{fig:sing_val_alpha_5}
	\end{subfigure}

	\vspace{0.5em}

	\begin{subfigure}[t]{0.48\textwidth}
		\centering
		\includegraphics[width=\textwidth]
		{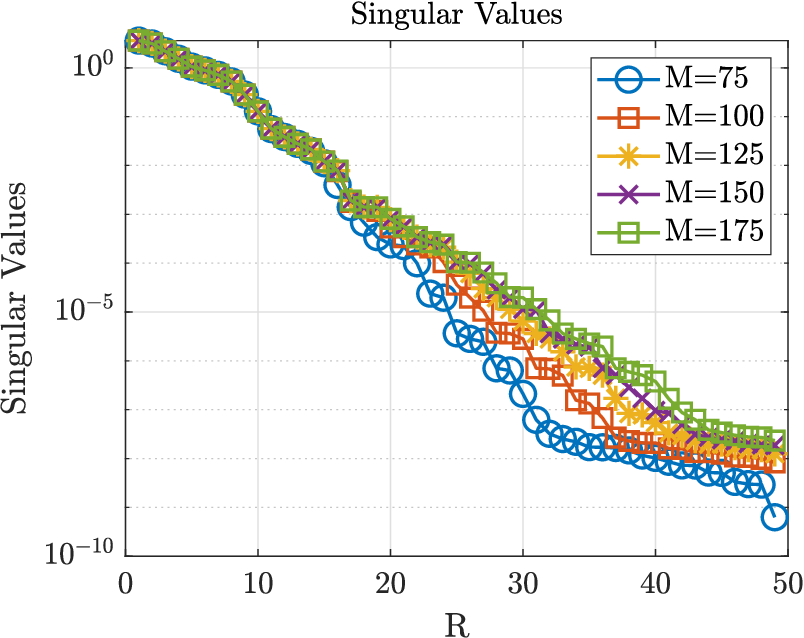}
		\subcaption{$\alpha = 2 \pi$}
		\label{fig:sing_val_alpha_75}
	\end{subfigure}
	\hfill
	\begin{subfigure}[t]{0.48\textwidth}
		\centering
		\includegraphics[width=\textwidth]
		{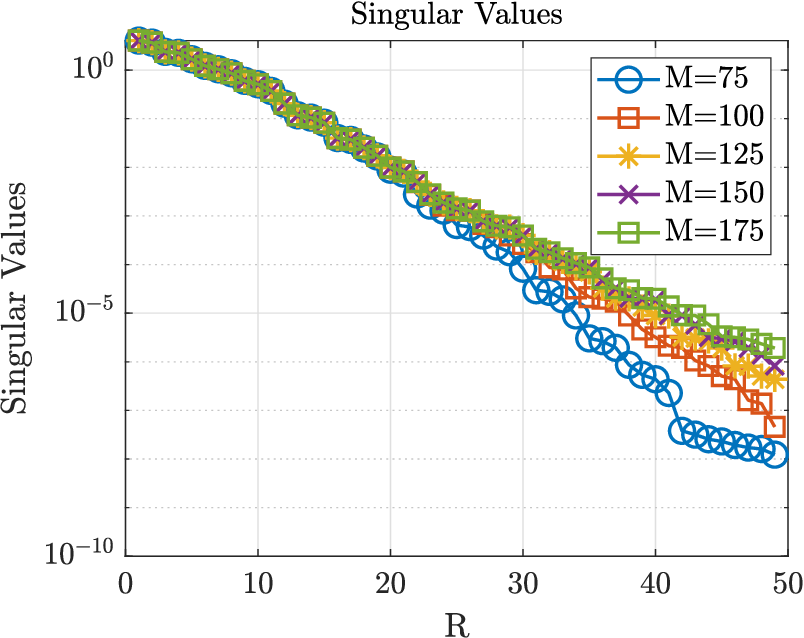}
		\subcaption{$\alpha = \frac{5}{2} \pi$}
		\label{fig:sing_val_alpha_10}
	\end{subfigure}

	\caption{
	Singular values of the snapshot matrix for the initial conditions
	stated in \eqref{eq:initial_cond_gauss} with $\zeta = 0.05$ and for $t_0 = 2.5$
    and for $\alpha \in  \left\{\pi, \frac{3}{2} \pi,2 \pi,\frac{5}{2} \pi\right\}$.
	}
	\label{fig:sing_val}
\end{figure}


\subsection{Convergence of the Relative Error}
Figure~\ref{fig:error_rel_U1} through Figure~\ref{fig:error_rel_U4} portray the
convergence of the relative error as defined in \eqref{eq:rel_error}
between the high-fidelity solution and the reduced one as the dimension of the reduced space
increases for $t_0 = 2.5$ and $\alpha \in  \left\{\pi, \frac{3}{2} \pi,2 \pi,\frac{5}{2} \pi\right\}$.
More precisely, Figure~\ref{fig:plot_relative_error_U1_L2} and Figure~\ref{fig:plot_relative_error_U1_H1}
present the aforementioned error measure with $\cX = L^2(\Omega)$ and $\cX=H^1_0(\Omega)$
in \eqref{eq:rel_error}, respectively, for $t_0 = 2.5$ and $\alpha = \pi$, and for $M \in \{20,25,30,35,40,45\}$.
For the choice of $\mu$, we select $\mu = \alpha$ for $\alpha = \pi$
and $\mu = \frac{\alpha}{8}$ for $\alpha \in  \left\{\frac{3}{2} \pi,2 \pi,\frac{5}{2} \pi\right\}$.
Again, we remark that under the considerations presented in \cite[{Section 5.4}]{henriquez2024fast}, effectively only half, i.e., $M+1$, snapshots are computed instead of $2M+1$. Figures~\ref{fig:plot_relative_error_U1_L2}--\ref{fig:plot_relative_error_U1_H1}, Figures~\ref{fig:plot_relative_error_U2_L2}--\ref{fig:plot_relative_error_U2_H1}, Figures~\ref{fig:plot_relative_error_U3_L2}--\ref{fig:plot_relative_error_U3_H1}
present the aforementioned error measure with $\cX = L^2(\Omega)$ and $\cX=H^1_0(\Omega)$, 
respectively, for $t_0 = 2.5$ and $\alpha \in  \left\{\ \frac{3}{2} \pi,2 \pi,\frac{5}{2} \pi\right\}$ and for $M \in  \{75,100,125,150,175\}$.
\begin{figure}[!ht]
	\centering
	\begin{subfigure}{0.48\linewidth}
		\includegraphics[width=\textwidth]
		{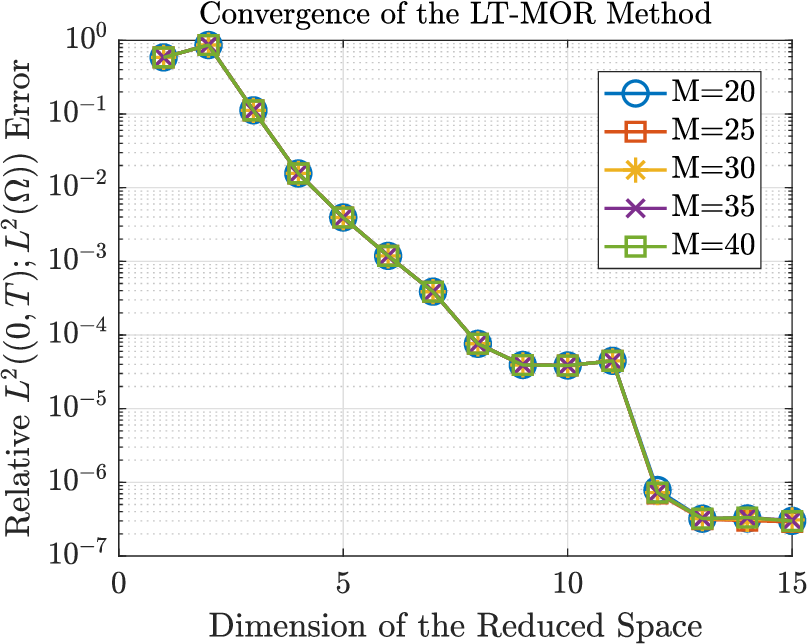}
		\subcaption{
		$\text{Rel\_Error}^{\normalfont\text{(rb)}}_{R,M}(\mathfrak{I};L^2(\Omega))$ for $\alpha =\pi$.}
		\label{fig:plot_relative_error_U1_L2}
	\end{subfigure}
	\begin{subfigure}{0.48\linewidth}
		\includegraphics[width=\textwidth]
		{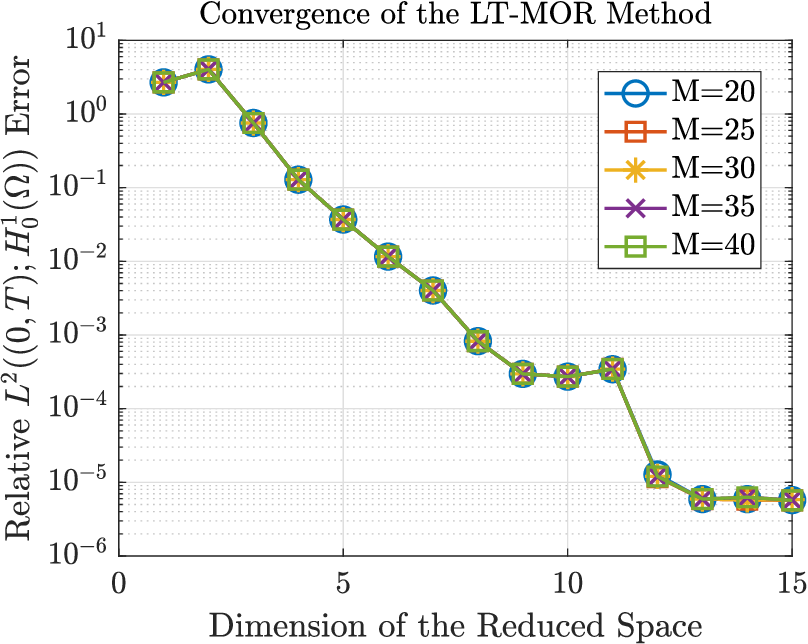}
		\subcaption{$\text{Rel\_Error}^{\normalfont\text{(rb)}}_{R,M}(\mathfrak{I};H^1_0(\Omega))$ for $\alpha =\pi$.}
		\label{fig:plot_relative_error_U1_H1}
	\end{subfigure}
	\caption{\label{fig:error_rel_U1} 
	Convergence of the relative error between the high-fidelity solution 
	and the reduced solution for $t_0 = 2.5$ and $\alpha = \pi$.
	In Figure~\ref{fig:plot_relative_error_U1_L2} the relative error is computed in the $L^2(\Omega)$-norm
	and in Figure~\ref{fig:plot_relative_error_U1_H1} in the $H^1_0(\Omega)$-norm.
	}
\end{figure}

\begin{figure}[!ht]
	\centering
	\begin{subfigure}{0.48\linewidth}
		\includegraphics[width=\textwidth]
		{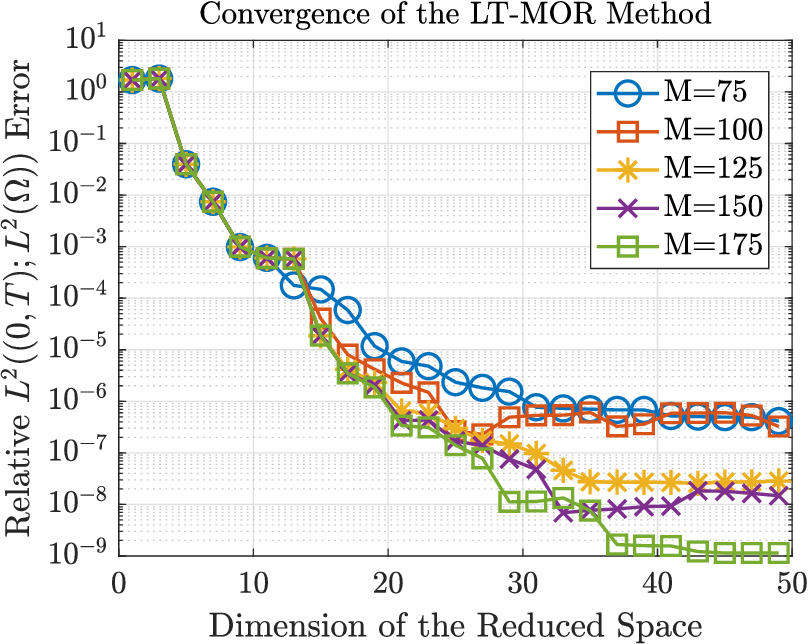}
		\subcaption{
		$\text{Rel\_Error}^{\normalfont\text{(rb)}}_{R,M}(\mathfrak{I};L^2(\Omega))$ for $\alpha = \frac{3}{2}\pi$.}
		\label{fig:plot_relative_error_U2_L2}
	\end{subfigure}
	\begin{subfigure}{0.48\linewidth}
		\includegraphics[width=\textwidth]
		{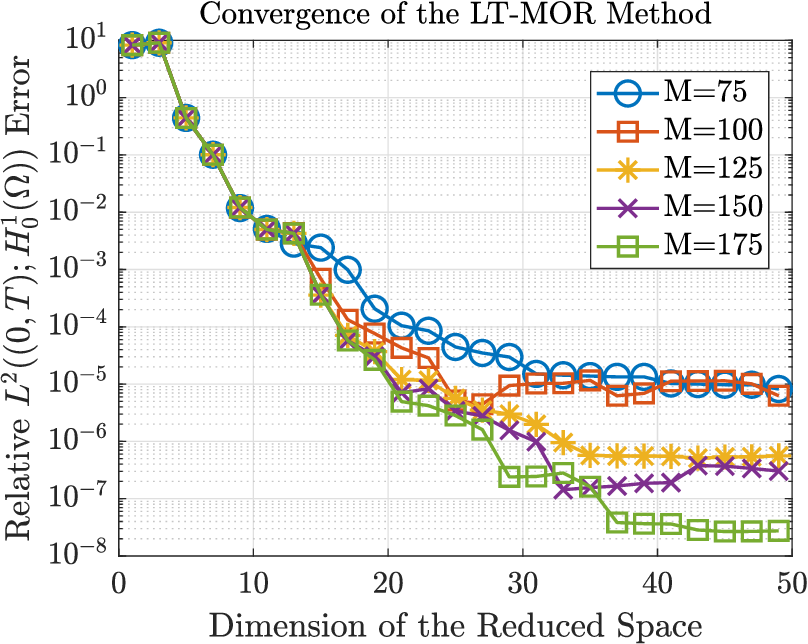}
		\subcaption{$\text{Rel\_Error}^{\normalfont\text{(rb)}}_{R,M}(\mathfrak{I};H^1_0(\Omega))$ for $\alpha =  \frac{3}{2} \pi$.}
		\label{fig:plot_relative_error_U2_H1}
	\end{subfigure}
	\caption{\label{fig:error_rel_U2} 
	Convergence of the relative error between the high-fidelity solution 
	and the reduced solution for $t_0 = 2.5$ and $\alpha = \frac{3}{2} \pi$.
	In Figure~\ref{fig:plot_relative_error_U2_L2} the relative error is computed in the $L^2(\Omega)$-norm
	and in Figure~\ref{fig:plot_relative_error_U2_H1} in the $H^1_0(\Omega)$-norm.
	}
\end{figure}

\begin{figure}[!ht]
	\centering
	\begin{subfigure}{0.48\linewidth}
		\includegraphics[width=\textwidth]
		{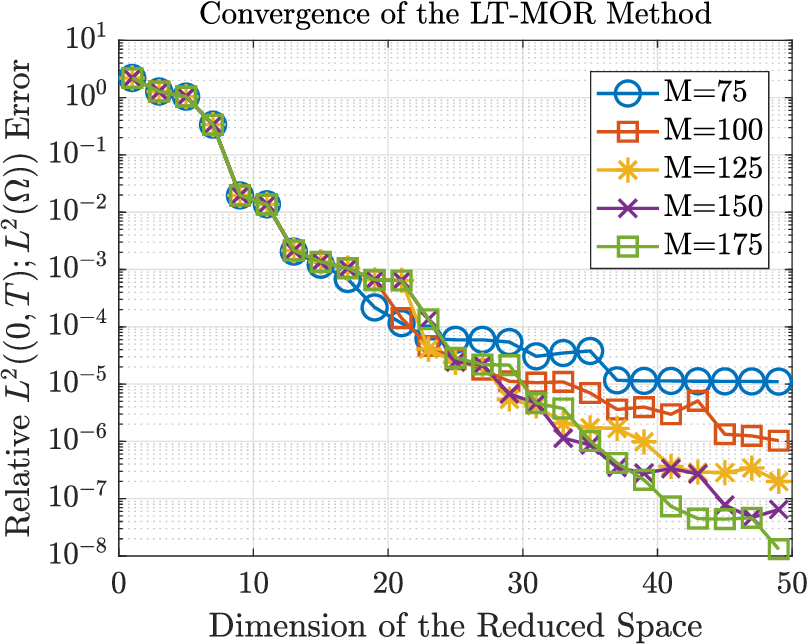}
		\subcaption{
		$\text{Rel\_Error}^{\normalfont\text{(rb)}}_{R,M}(\mathfrak{I};L^2(\Omega))$ for $\alpha = 2\pi$.}
		\label{fig:plot_relative_error_U3_L2}
	\end{subfigure}
	\begin{subfigure}{0.48\linewidth}
		\includegraphics[width=\textwidth]
		{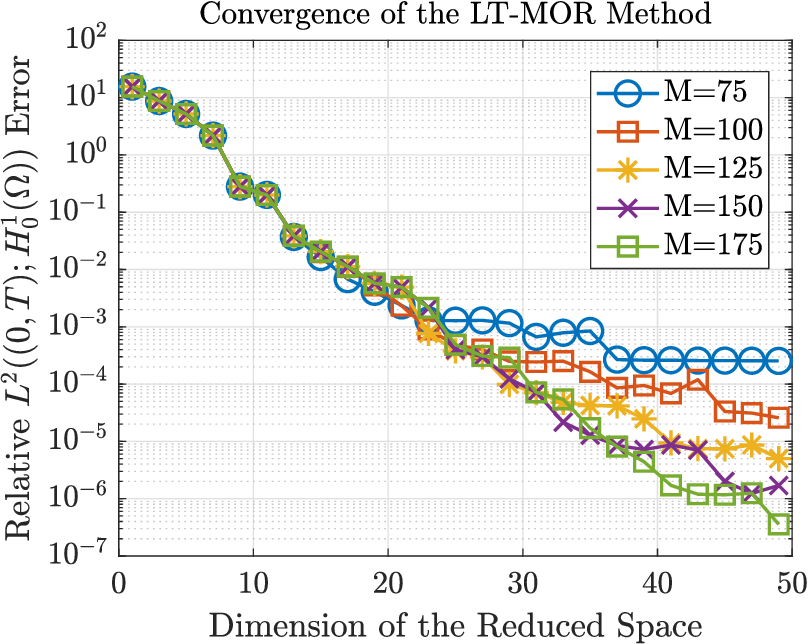}
		\subcaption{$\text{Rel\_Error}^{\normalfont\text{(rb)}}_{R,M}(\mathfrak{I};H^1_0(\Omega))$ for $\alpha =  2 \pi$.}
		\label{fig:plot_relative_error_U3_H1}
	\end{subfigure}
	\caption{\label{fig:error_rel_U3} 
	Convergence of the relative error between the high-fidelity solution 
	and the reduced solution for $t_0 = 2.5$ and $\alpha = 2 \pi$.
	In Figure~\ref{fig:plot_relative_error_U3_L2} the relative error is computed in the $L^2(\Omega)$-norm
	and in Figure~\ref{fig:plot_relative_error_U3_H1} in the $H^1_0(\Omega)$-norm.
	}
\end{figure}

\begin{figure}[!ht]
	\centering
	\begin{subfigure}{0.48\linewidth}
		\includegraphics[width=\textwidth]
		{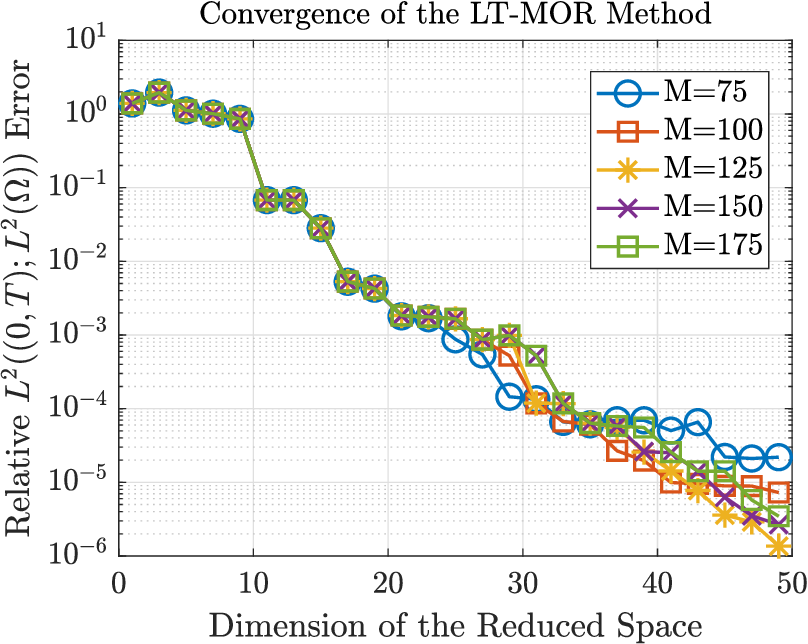}
		\subcaption{
		$\text{Rel\_Error}^{\normalfont\text{(rb)}}_{R,M}(\mathfrak{I};L^2(\Omega))$ for $\alpha = \frac{5}{2} \pi$.}
		\label{fig:plot_relative_error_U4_L2}
	\end{subfigure}
	\begin{subfigure}{0.48\linewidth}
		\includegraphics[width=\textwidth]
		{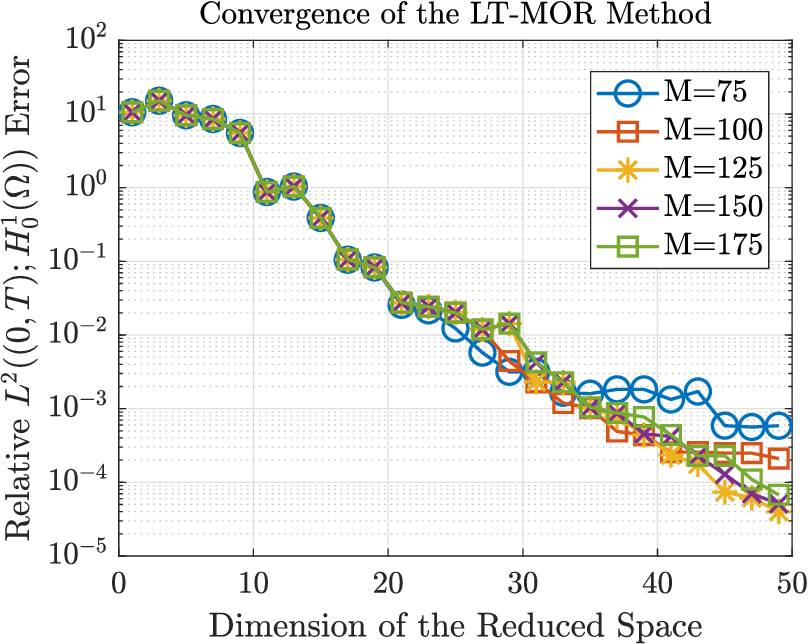}
		\subcaption{$\text{Rel\_Error}^{\normalfont\text{(rb)}}_{R,M}(\mathfrak{I};H^1_0(\Omega))$ for $\alpha =  \frac{5}{2} \pi$.}
		\label{fig:plot_relative_error_U4_H1}
	\end{subfigure}
	\caption{\label{fig:error_rel_U4} 
	Convergence of the relative error between the high-fidelity solution 
	and the reduced solution for $t_0 = 2.5$ and $\alpha = \frac{5}{2} \pi$.
	In Figure~\ref{fig:plot_relative_error_U4_L2} the relative error is computed in the $L^2(\Omega)$-norm and in Figure~\ref{fig:plot_relative_error_U4_H1} in the $H^1_0(\Omega)$-norm.
	}
\end{figure}

\subsection{Speed-up}

Figure~\ref{fig:speed_square}, more precisely Figures~\ref{fig:plot_speed_RB_600}--\ref{fig:plot_speed_RB_1600},
show the execution times of the LT-MOR method for
$M \in \{6,8,10,12,14,16\} \times 10^2$.
In each plot, the total time is broken down into the following contributions:
(1) assembling the FE discretization ({\sf Assemble FEM}), (2) computing the snapshots
(or high-fidelity solutions) in the Laplace domain ({\sf LD-HF}), (3) building the reduced basis ({\sf Build RB}),
(4) reconstructing the reduced solution in the high-fidelity space ({\sf Reconstruct HF}), and
(5) computing the reduced solution in the time domain ({\sf Solve TD-RB}).

\begin{figure}[!ht]
	\centering
	\begin{subfigure}{0.49\linewidth}
		\includegraphics[width=\textwidth]
		{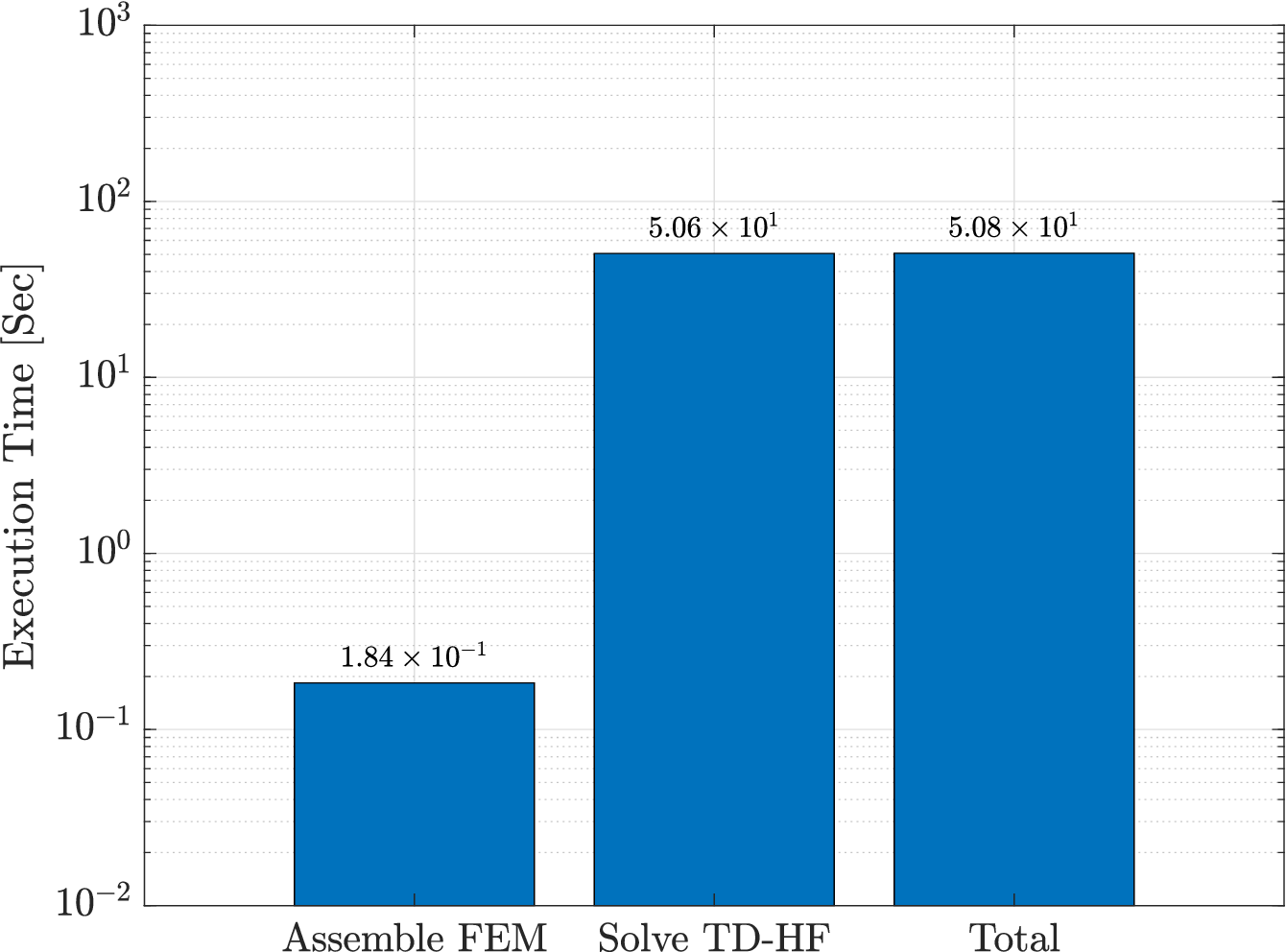}
        \subcaption{High-fidelity Solution.}
		\label{fig:plot_speed_RB_600}
	\end{subfigure}
	\begin{subfigure}{0.49\linewidth}
		\includegraphics[width=\textwidth]
		{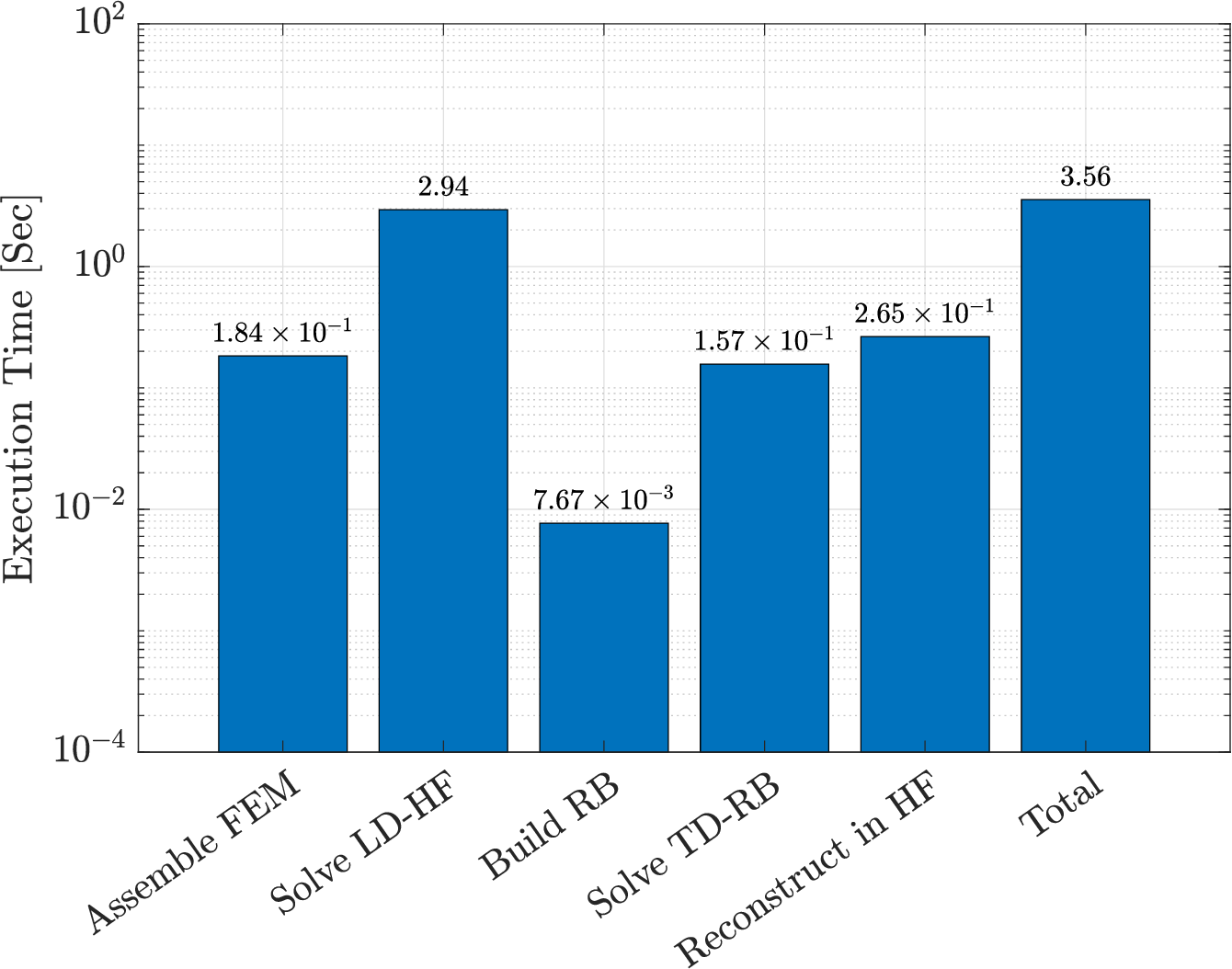}
		\subcaption{LT-MOR with $M=75$ and $R=50$.}
		\label{fig:plot_speed_RB_800}
	\end{subfigure}
	\centering
	\begin{subfigure}{0.49\linewidth}
		\includegraphics[width=\textwidth]
		{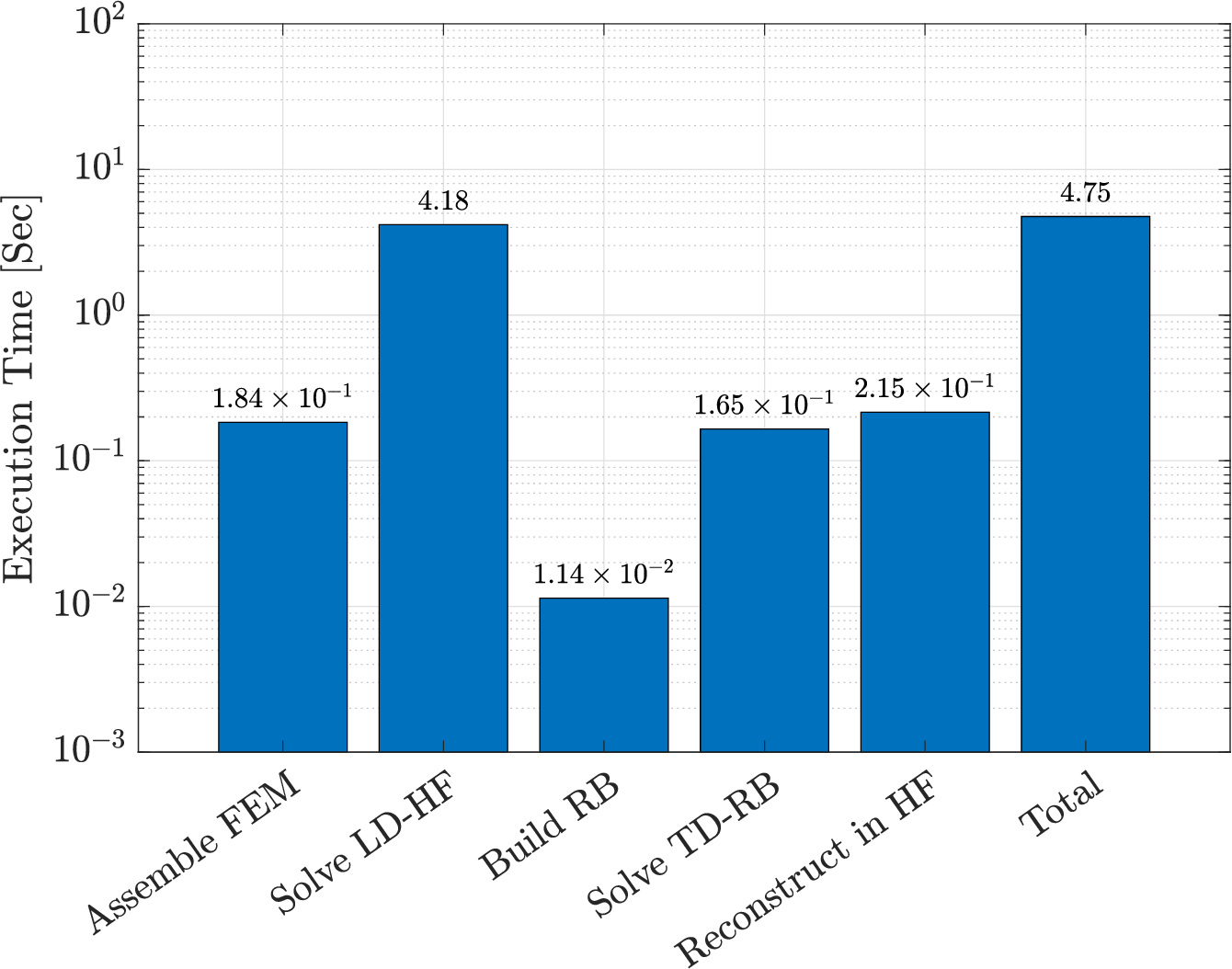}
		\subcaption{LT-MOR with $M=100$ and $R=50$.}
		\label{fig:plot_speed_RB_1000}
	\end{subfigure}
	\begin{subfigure}{0.49\linewidth}
		\includegraphics[width=\textwidth]
		{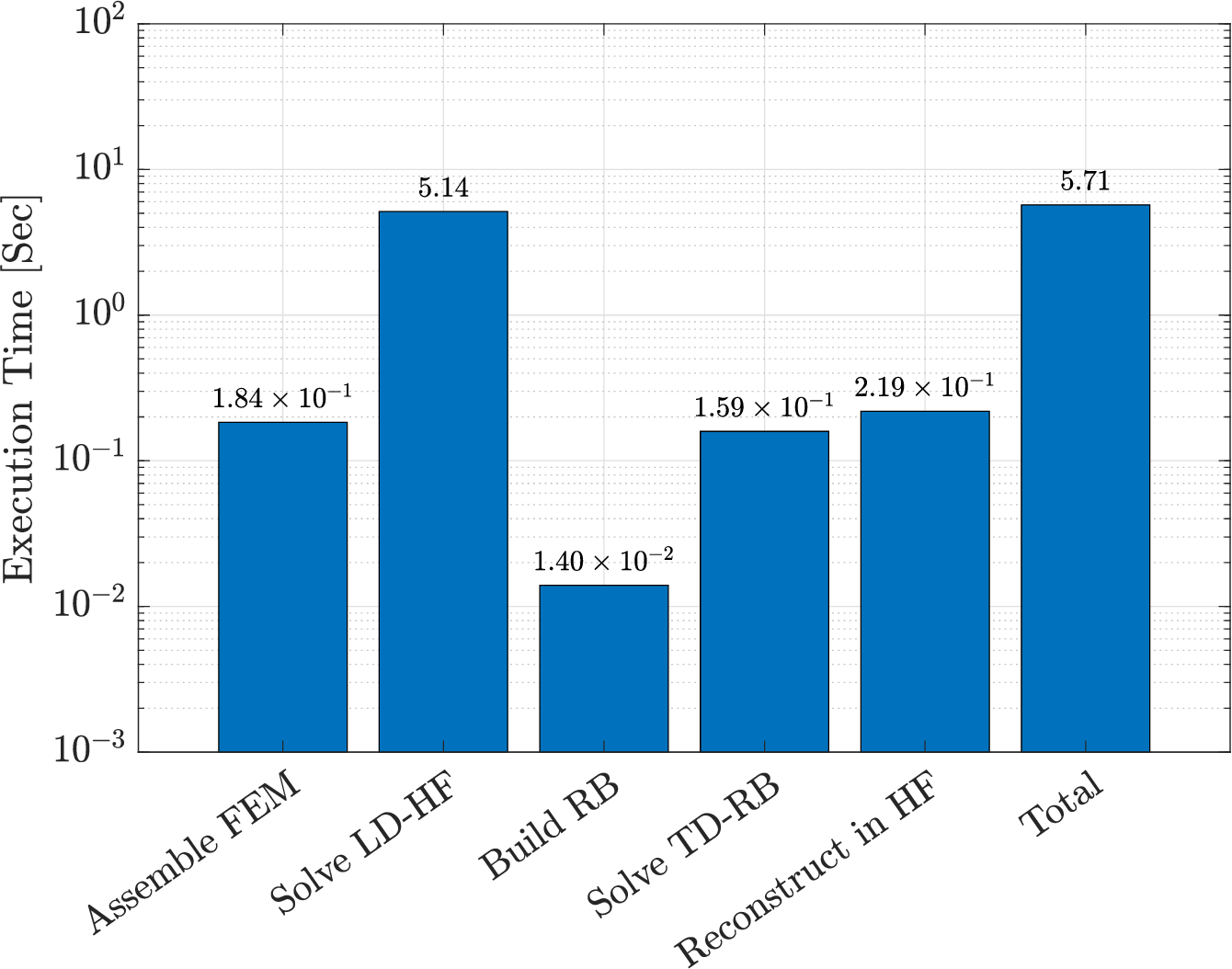}
		\subcaption{LT-MOR with $M=125$ and $R=50$.}
		\label{fig:plot_speed_RB_1200}
	\end{subfigure}
	\centering
	\begin{subfigure}{0.49\linewidth}
		\includegraphics[width=\textwidth]
		{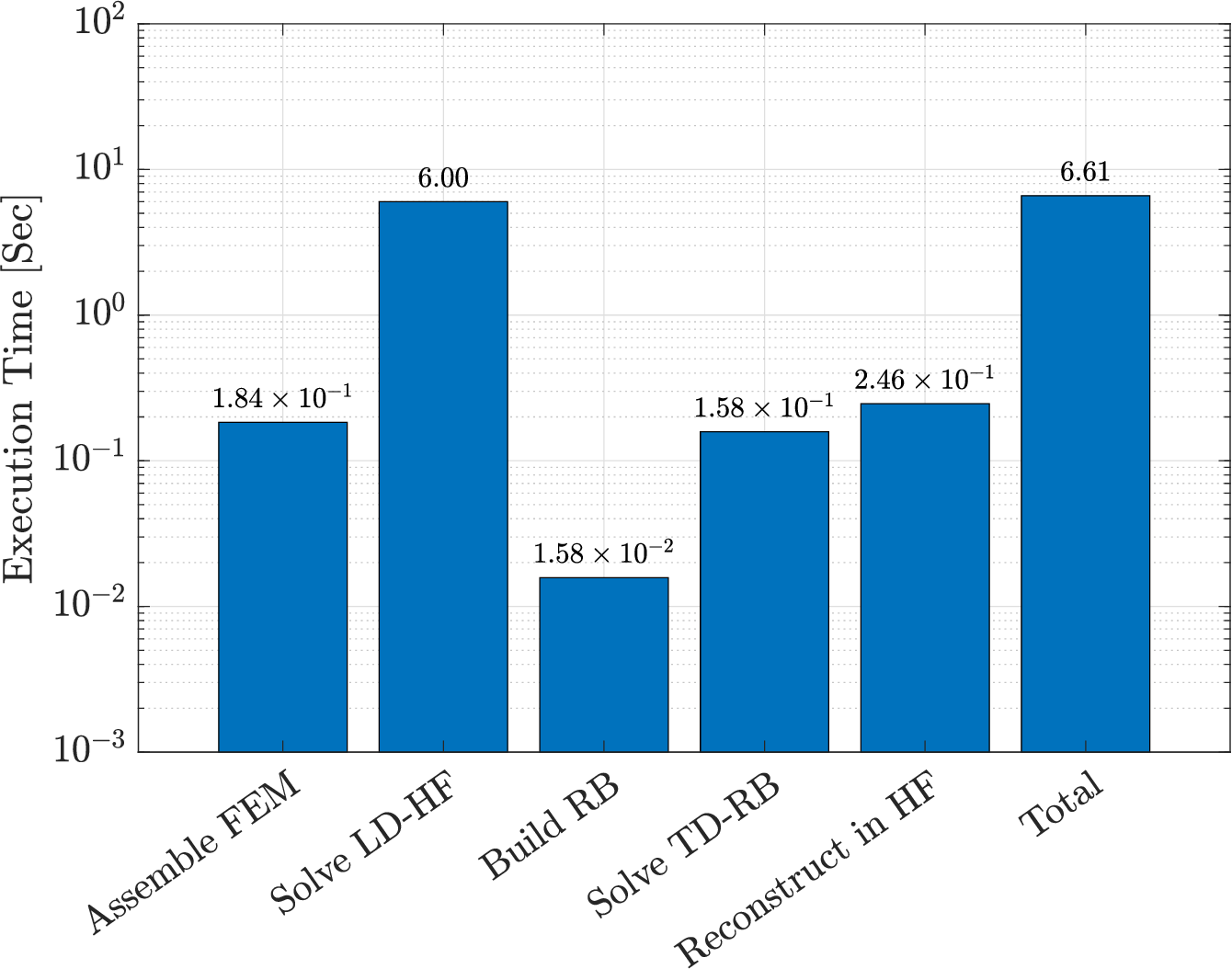}
		\subcaption{LT-MOR with $M=150$ and $R=50$.}
		\label{fig:plot_speed_RB_1400}
	\end{subfigure}
	\begin{subfigure}{0.49\linewidth}
		\includegraphics[width=\textwidth]
		{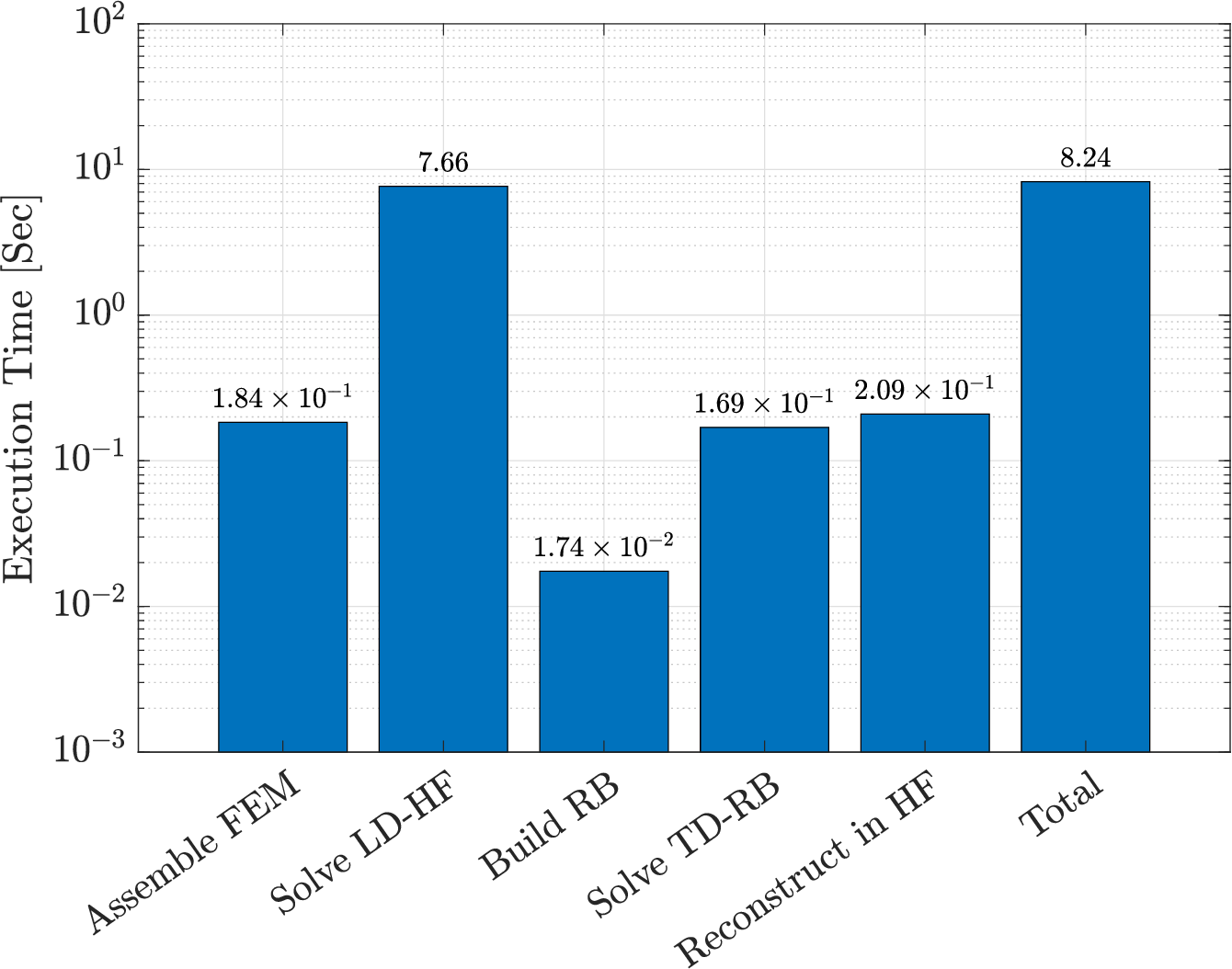}
		\subcaption{LT-MOR with $M=175$ and $R=50$.}
		\label{fig:plot_speed_RB_1600}
	\end{subfigure}
	\caption{\label{fig:speed_square} 
	Execution times of the LT-MOR method
	for the computation of the reduced basis solution 
	for $M \in \{75,100,125,150,175\}$, $R=50$, $t_0 = 2.5$,
    and $\alpha = \frac{5}{2} \pi$
	split into the following five main contributions:
	(1) Assembling the FE discretization ({\sf Assemble FEM}), (2) computing the snapshots 
	or high-fidelity solutions in the Laplace domain ({\sf LD-HF}),
	(3) building the reduced basis ({\sf Build RB}),
	and (4) computing the reduced solution in the time domain ({\sf Solve TD-RB}).
	}
\end{figure}

\subsection{Visualization of the Reduced Basis}
Let $\mathcal{V}^{\text{(rb)}}_R$ be as in \eqref{eq:fPOD} for some $R\in\IN$.
Then, $\left\{\varphi^{\textrm{(rb)}}_1,\dots,\varphi^{\textrm{(rb)}}_R\right\}$
constitutes an orthonormal basis of $\IV_R^{(\textrm{rb})}$, with $\varphi^{\textrm{(rb)}}_j$ as
in \eqref{eq:def_basis_rb}. Indeed, provided that the solution $\bm{\Phi}^{\textrm{(rb)}}_R$ to
\eqref{eq:fPOD} has been computed, one can plot the corresponding representation in $\mathcal{V}_h$
by using the expression in \eqref{eq:def_basis_rb}.
In Figures~\ref{fig:reduced_basis_1}--\ref{fig:reduced_basis_2} we plot the basis
$\left\{\varphi^{\textrm{(rb)}}_1,\dots,\varphi^{\textrm{(rb)}}_R\right\}$ in the space $\mathcal{V}_h$ for $R=1,\dots,8$.
\begin{figure}[!ht]
	\centering
	\begin{subfigure}{0.4\linewidth}
		\includegraphics[width=\textwidth]
		{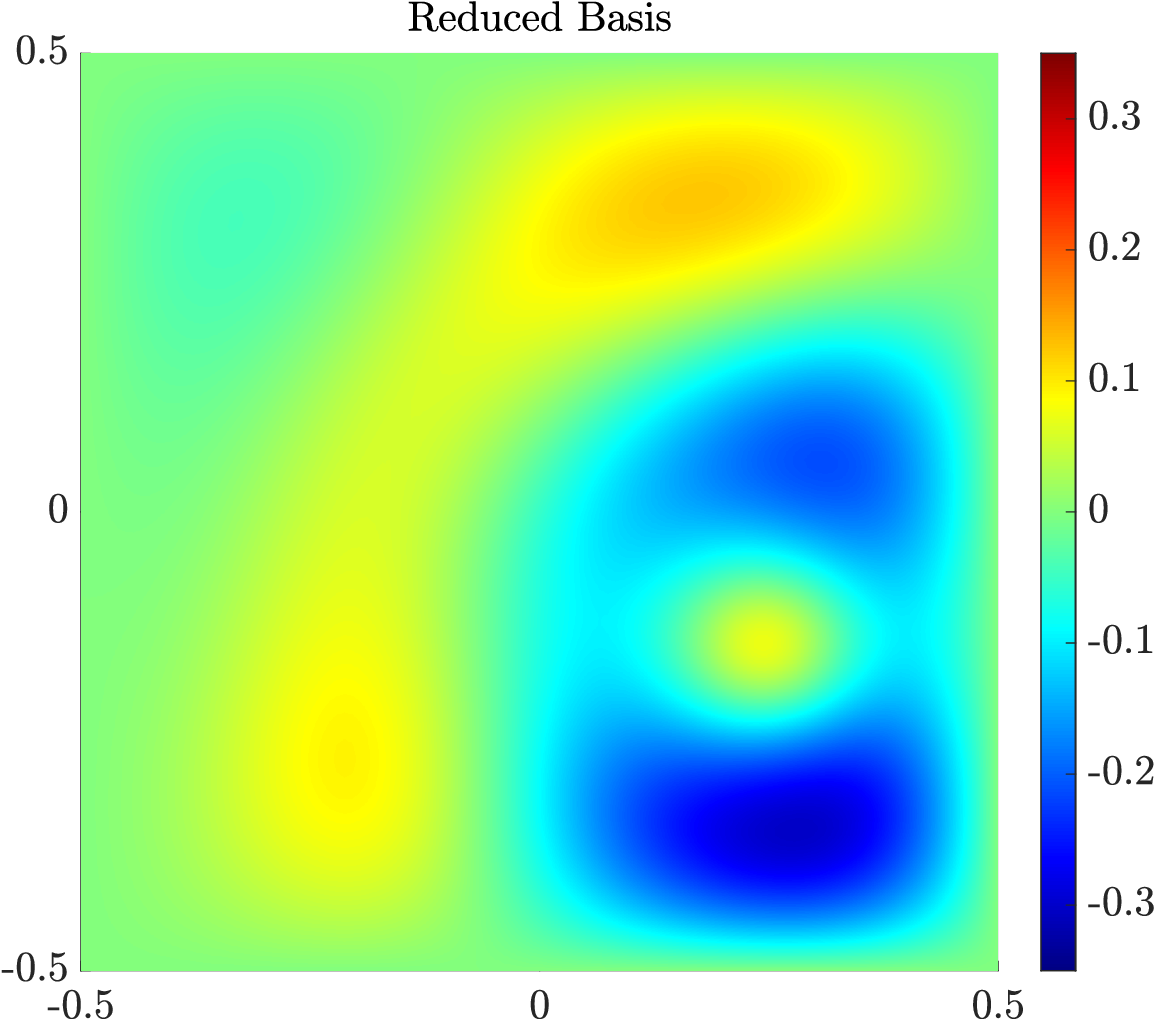}
		\subcaption{$\varphi^{\textrm{(rb)}}_1$}
		\label{fig:plot_mode_1}
	\end{subfigure}
	\begin{subfigure}{0.4\linewidth}
		\includegraphics[width=\textwidth]
		{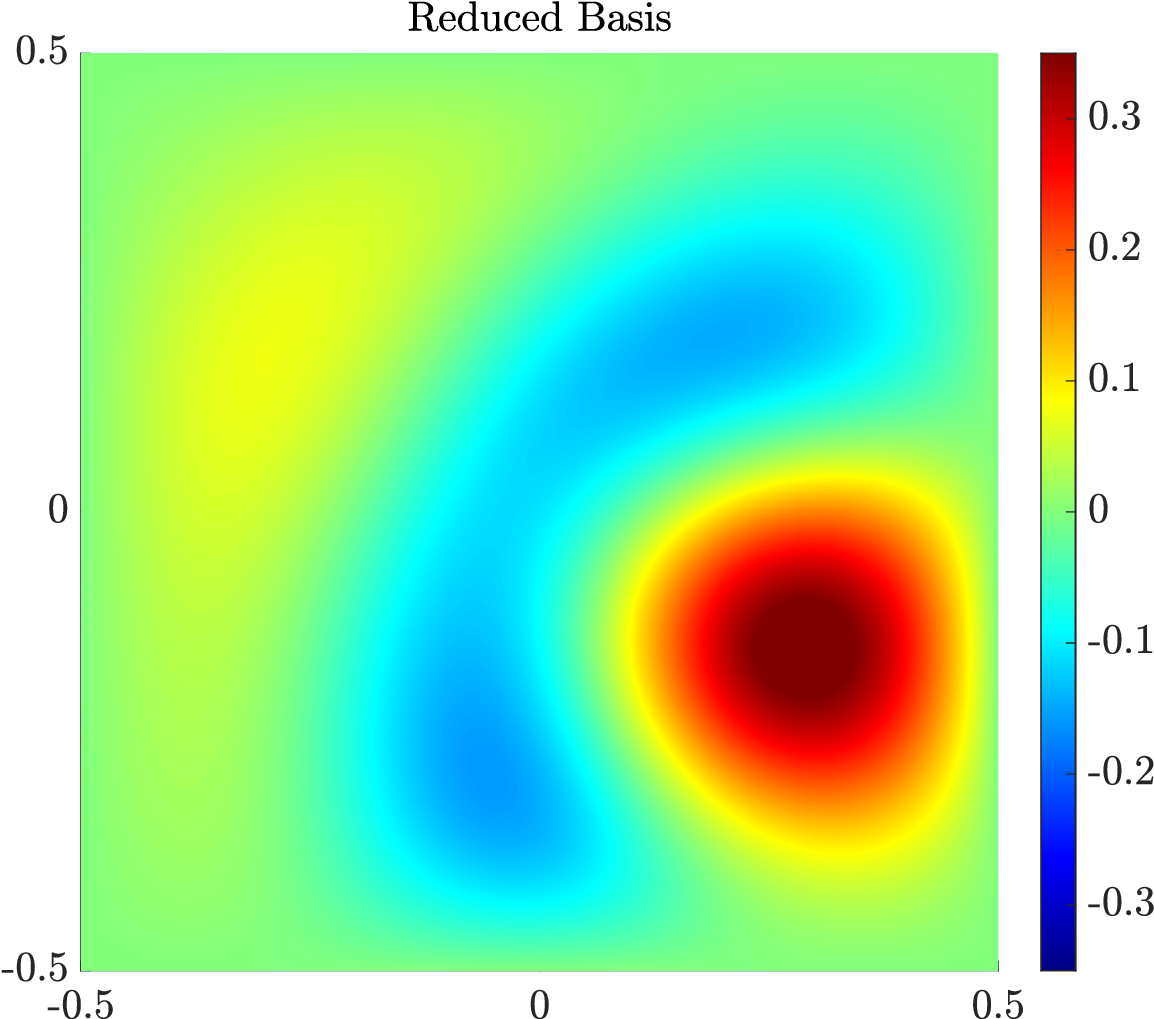}
		\subcaption{$\varphi^{\textrm{(rb)}}_2$}
		\label{fig:plot_mode_2}
	\end{subfigure}
	\centering
	\begin{subfigure}{0.4\linewidth}
		\includegraphics[width=\textwidth]
		{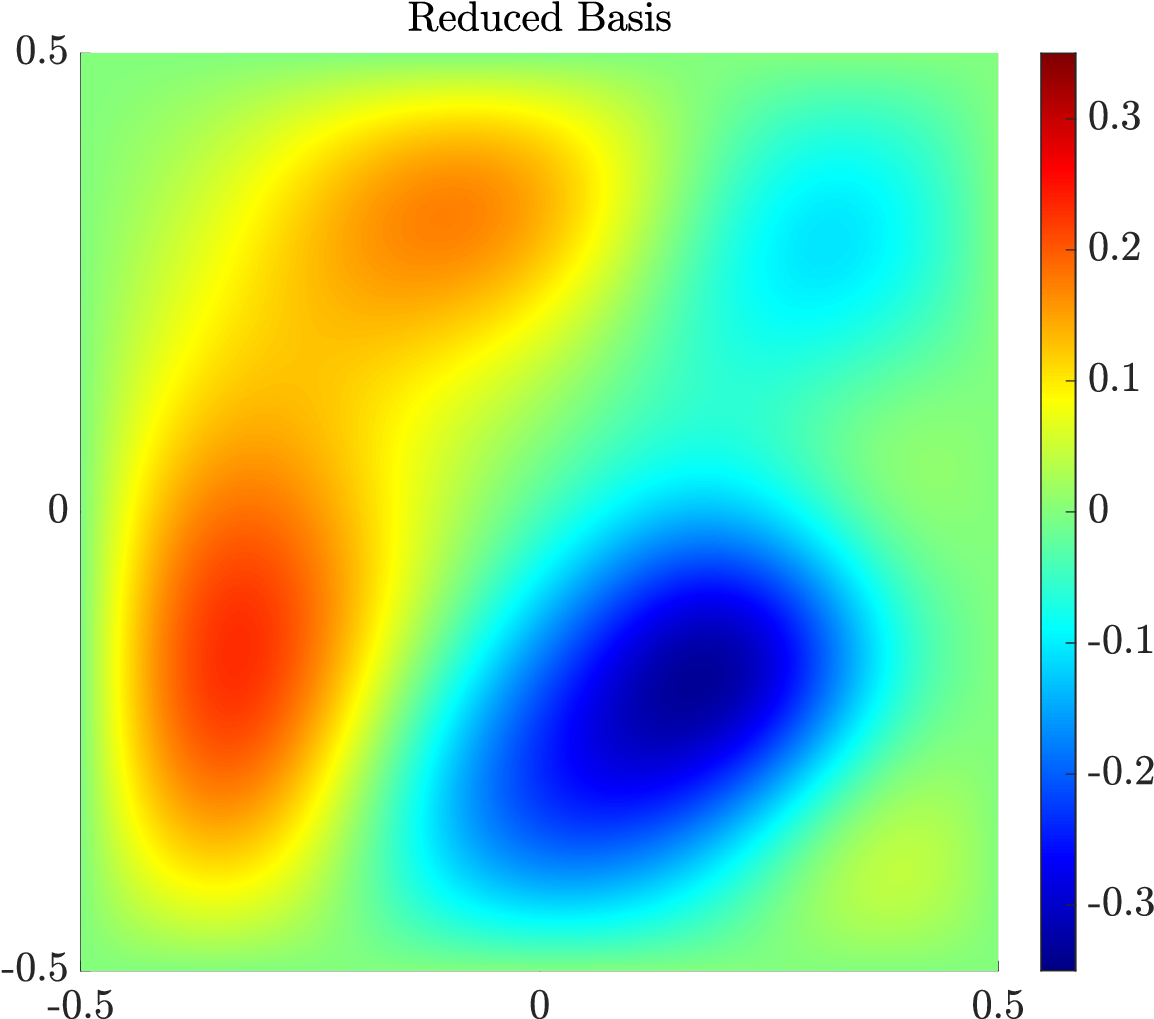}
		\subcaption{$\varphi^{\textrm{(rb)}}_3$}
		\label{fig:plot_mode_3}
	\end{subfigure}
	\begin{subfigure}{0.4\linewidth}
		\includegraphics[width=\textwidth]
		{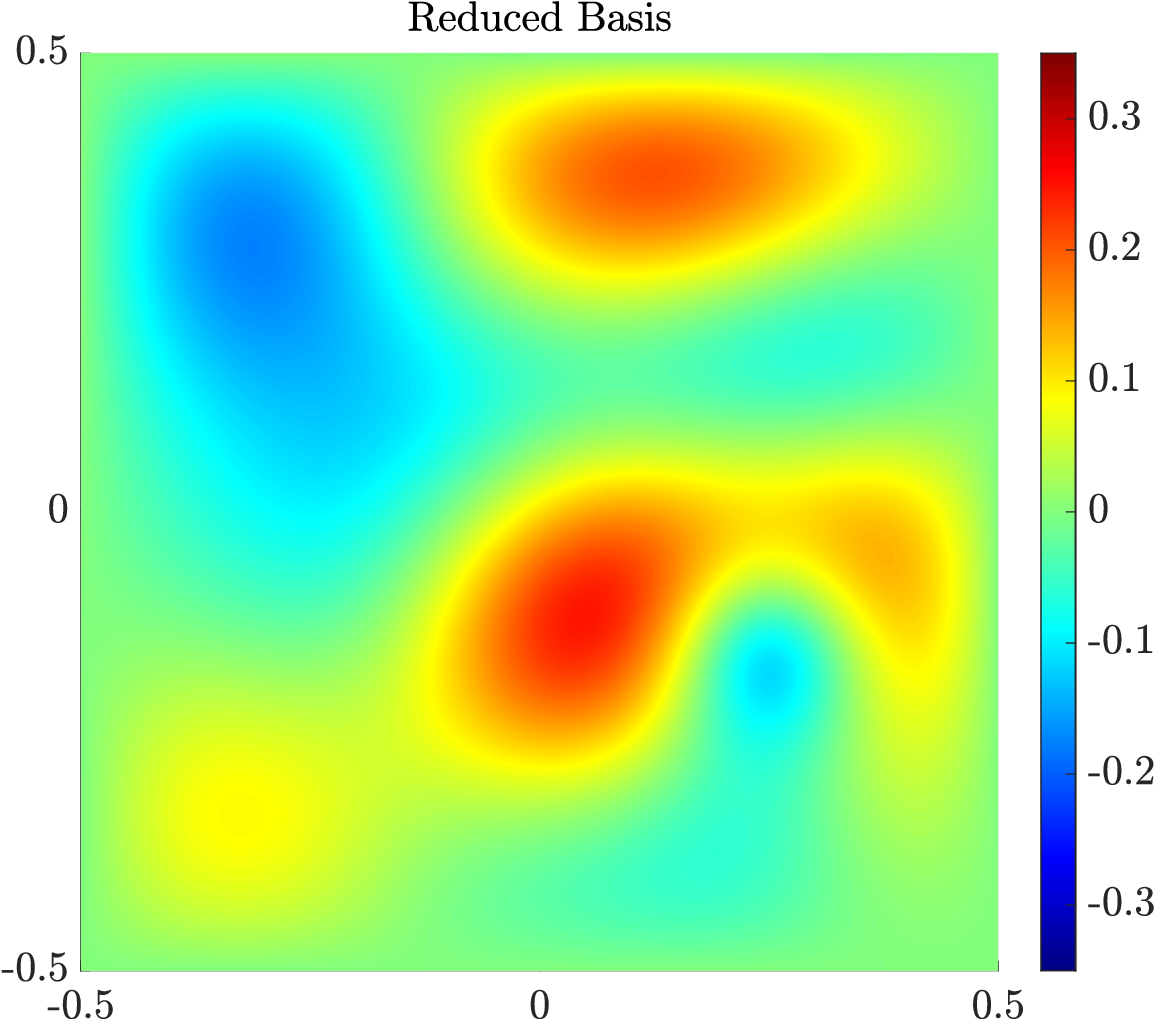}
		\subcaption{$\varphi^{\textrm{(rb)}}_4$}
		\label{fig:plot_mode_4}
	\end{subfigure}
	\caption{\label{fig:reduced_basis_1}
	Visualization of the first to fourth elements of the reduced space $\mathcal{V}^{\text{(rb)}}_{R,M}$
	for $t_0 = 2.5$, $\alpha = \frac{5}{2} \pi$, $M = 175$, and $R = 50$.
	}
\end{figure}

\begin{figure}[!ht]
	\centering
	\begin{subfigure}{0.4\linewidth}
		\includegraphics[width=\textwidth]
		{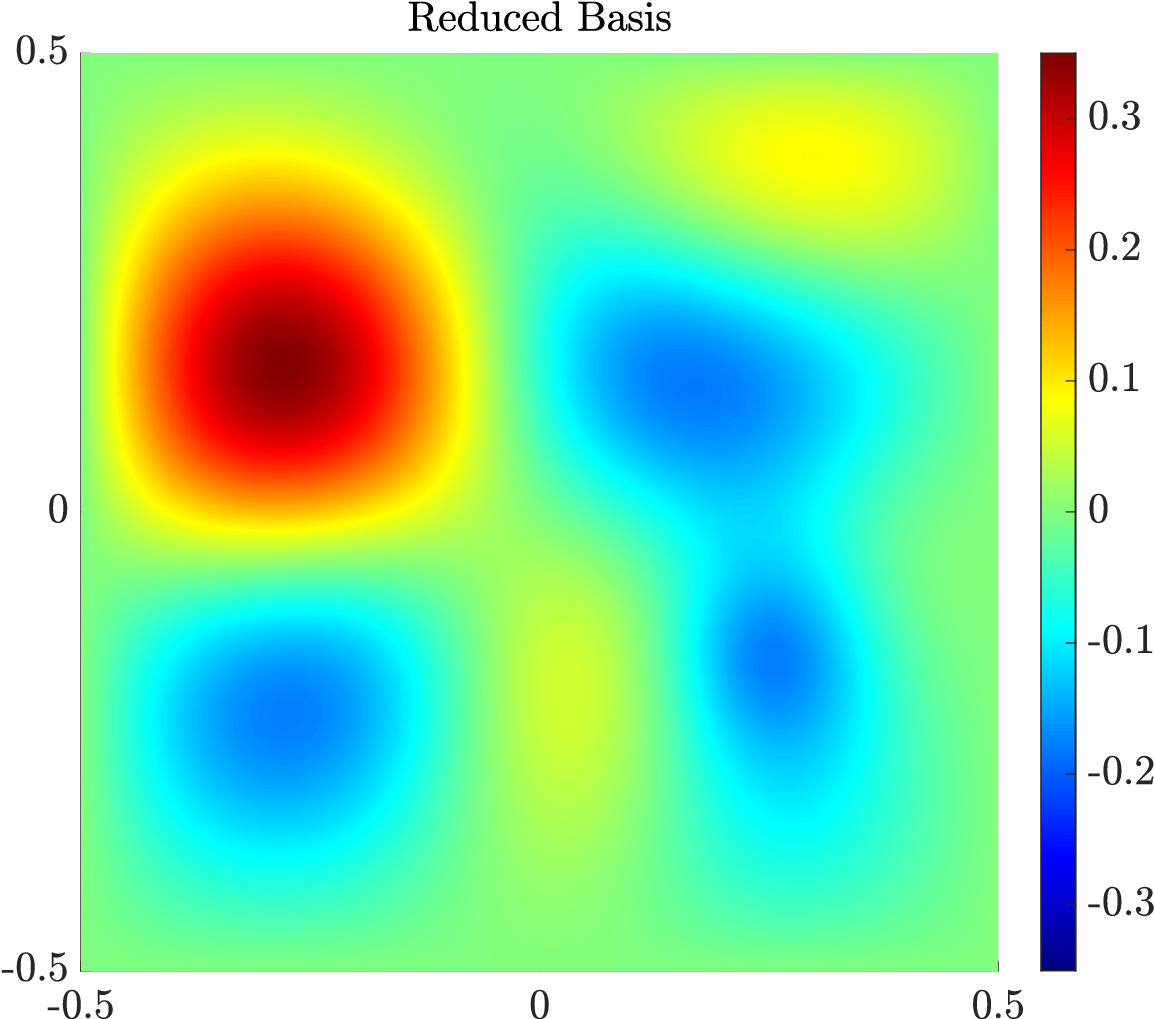}
		\subcaption{$\varphi^{\textrm{(rb)}}_5$}
		\label{fig:plot_mode_5}
	\end{subfigure}
	\begin{subfigure}{0.4\linewidth}
		\includegraphics[width=\textwidth]
		{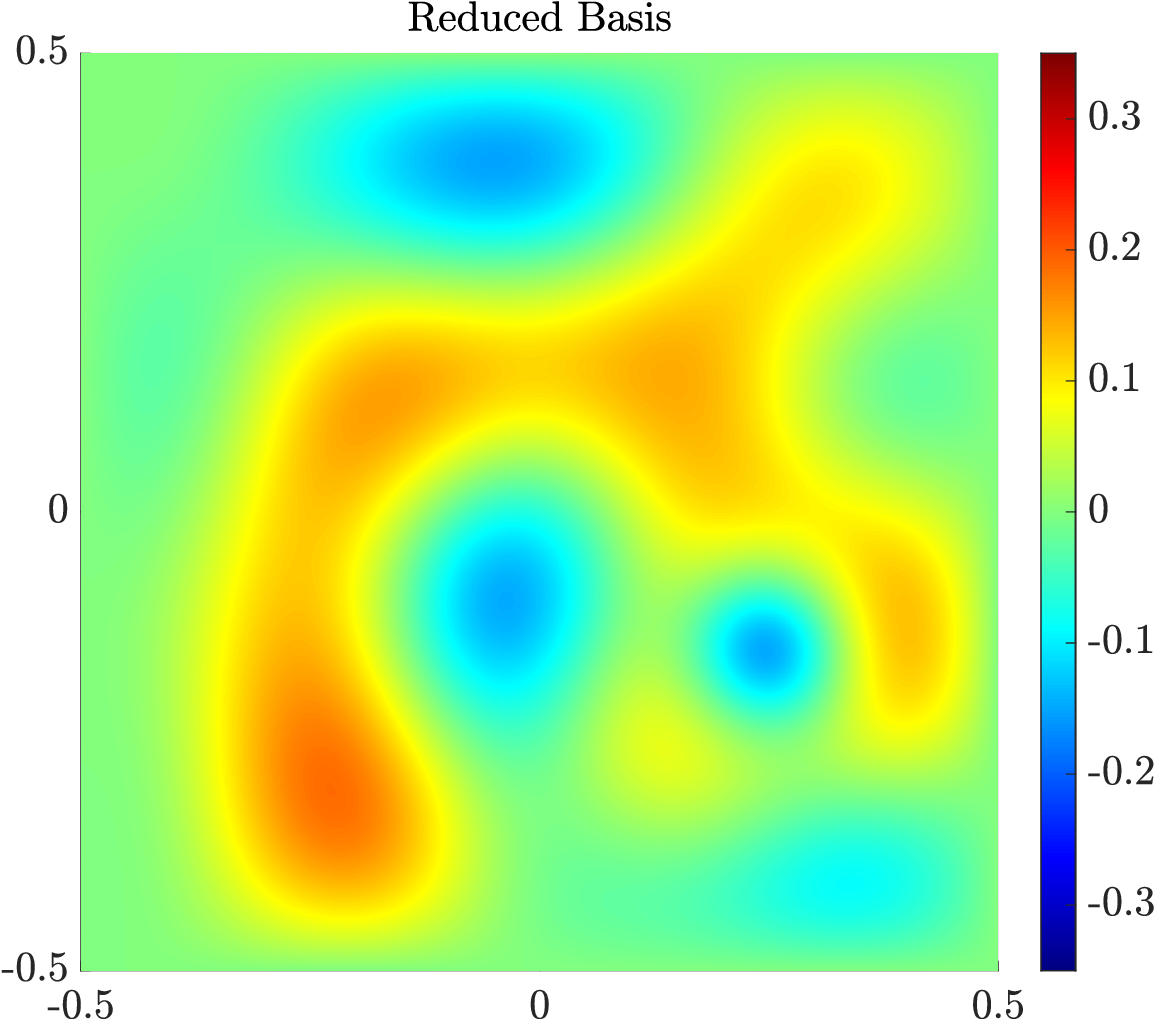}
		\subcaption{$\varphi^{\textrm{(rb)}}_6$}
		\label{fig:plot_mode_6}
	\end{subfigure}
	\centering
	\begin{subfigure}{0.4\linewidth}
		\includegraphics[width=\textwidth]
		{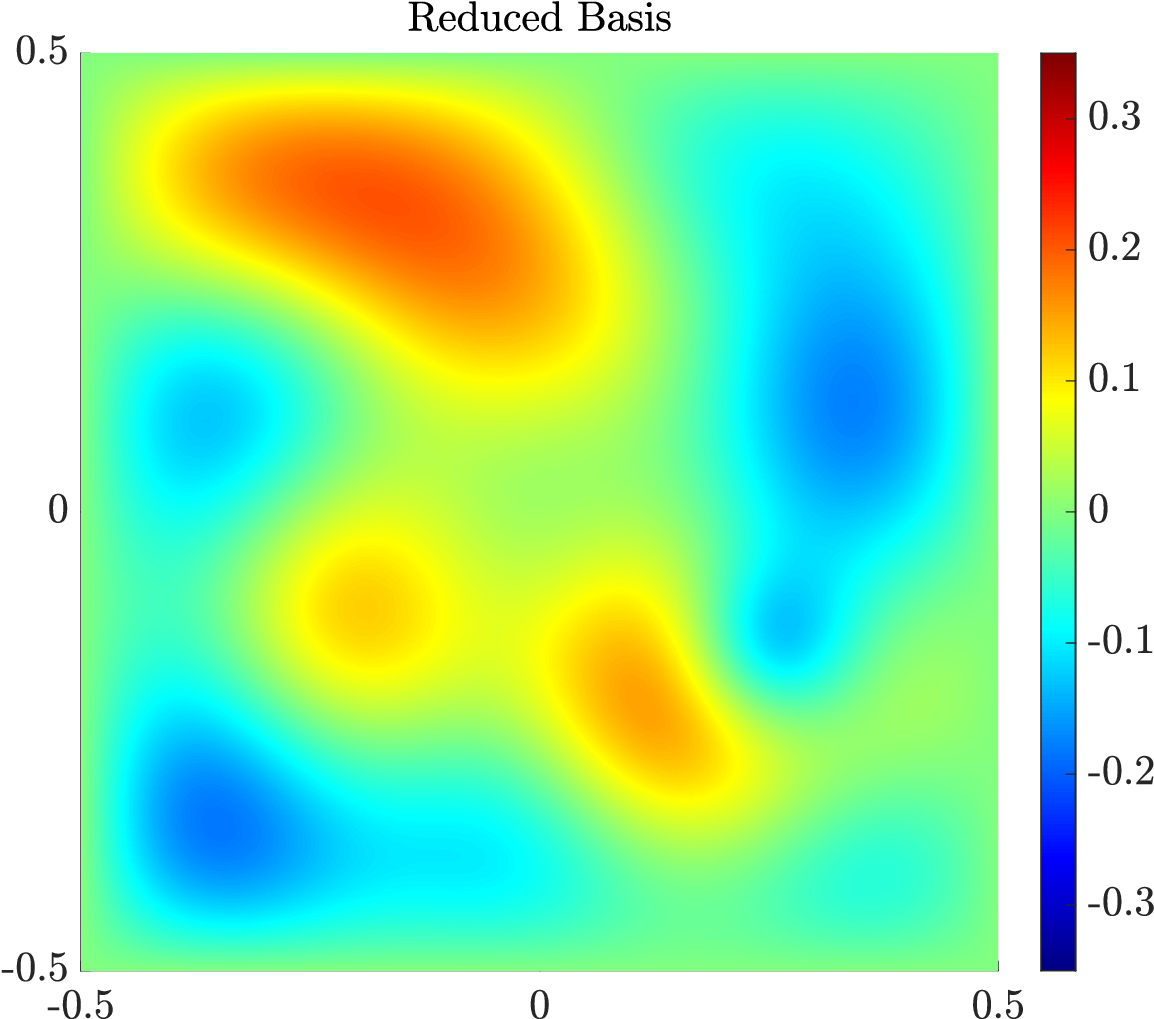}
		\subcaption{$\varphi^{\textrm{(rb)}}_7$}
		\label{fig:plot_mode_7}
	\end{subfigure}
	\begin{subfigure}{0.4\linewidth}
		\includegraphics[width=\textwidth]
		{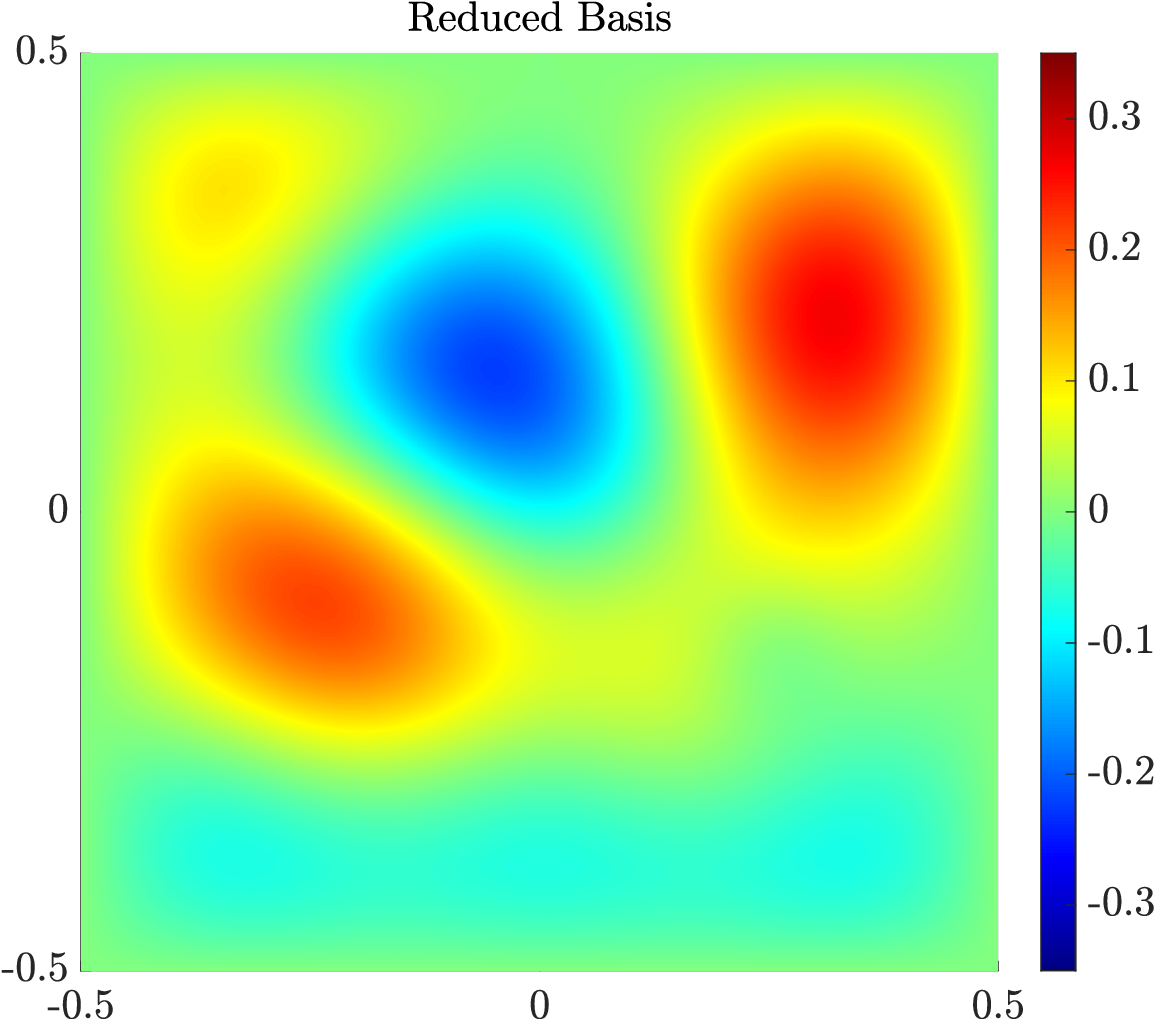}
		\subcaption{$\varphi^{\textrm{(rb)}}_8$}
		\label{fig:plot_mode_8}
	\end{subfigure}
	\caption{\label{fig:reduced_basis_2}
	Visualization of the fifth to eighth elements of the reduced space $\mathcal{V}^{\text{(rb)}}_{R,M}$
	for $t_0 = 2.5$, $\alpha = \frac{5}{2} \pi$, $M = 175$, and $R = 50$.
	}
\end{figure}

\subsection{Visualization of the Solution}
Figure~\ref{fig:plot_Sol_1} shows the LT-MOR solution and the corresponding
error with respect to the high-fidelity solution at different times, computed with
$M = 175$ and $R=50$. Specifically, the figure reports the solution/error pairs
at $t\in\{2.5,5,7.5,10\}$.
\begin{figure}[!ht]
	\centering
	\begin{subfigure}{0.4\linewidth}
		\includegraphics[width=\textwidth]
		{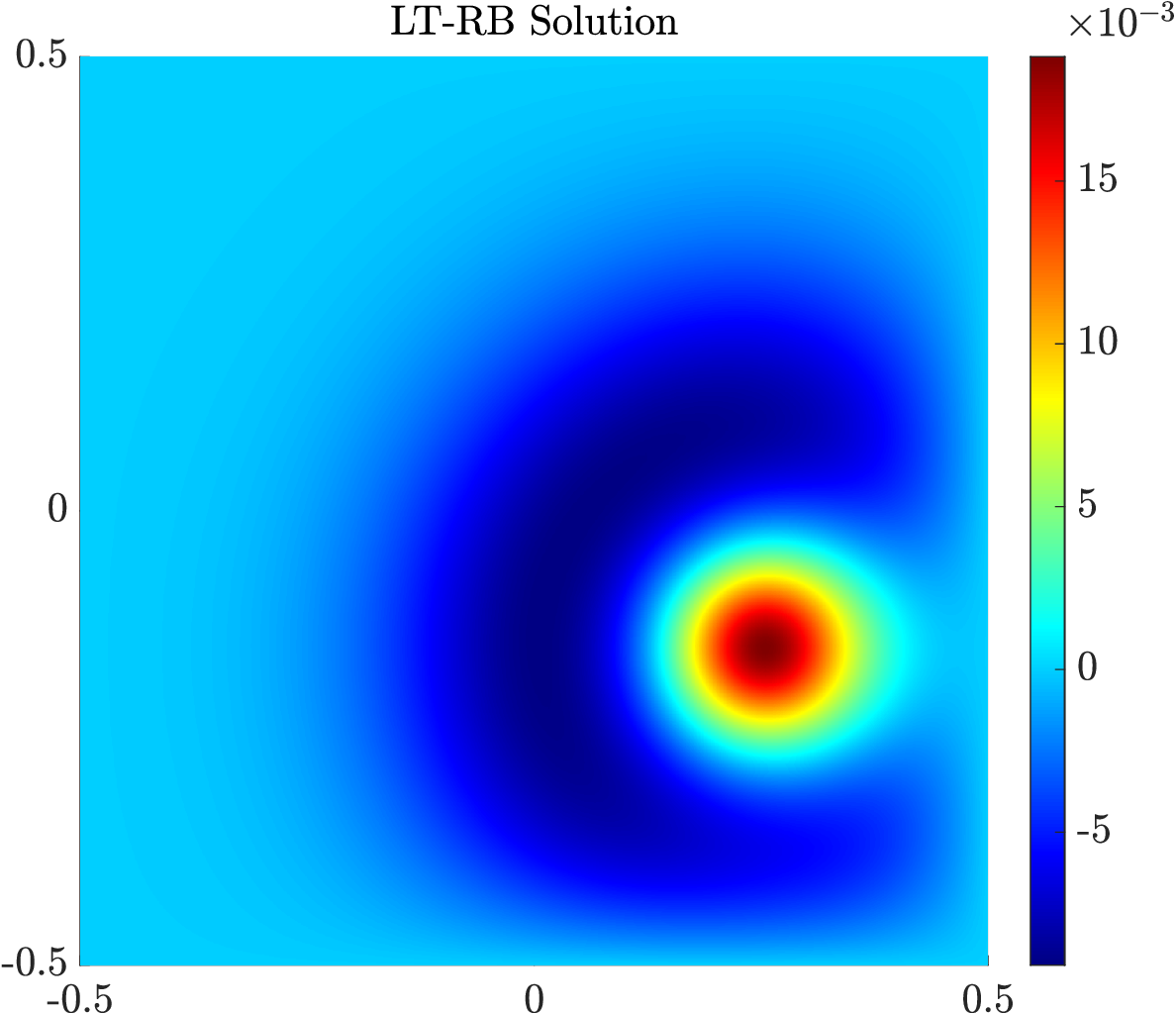}
		\subcaption{LT-MOR Solution at $t=2.5$.}
		\label{fig:plot_sol_1}
	\end{subfigure}
	\begin{subfigure}{0.4\linewidth}
		\includegraphics[width=\textwidth]
		{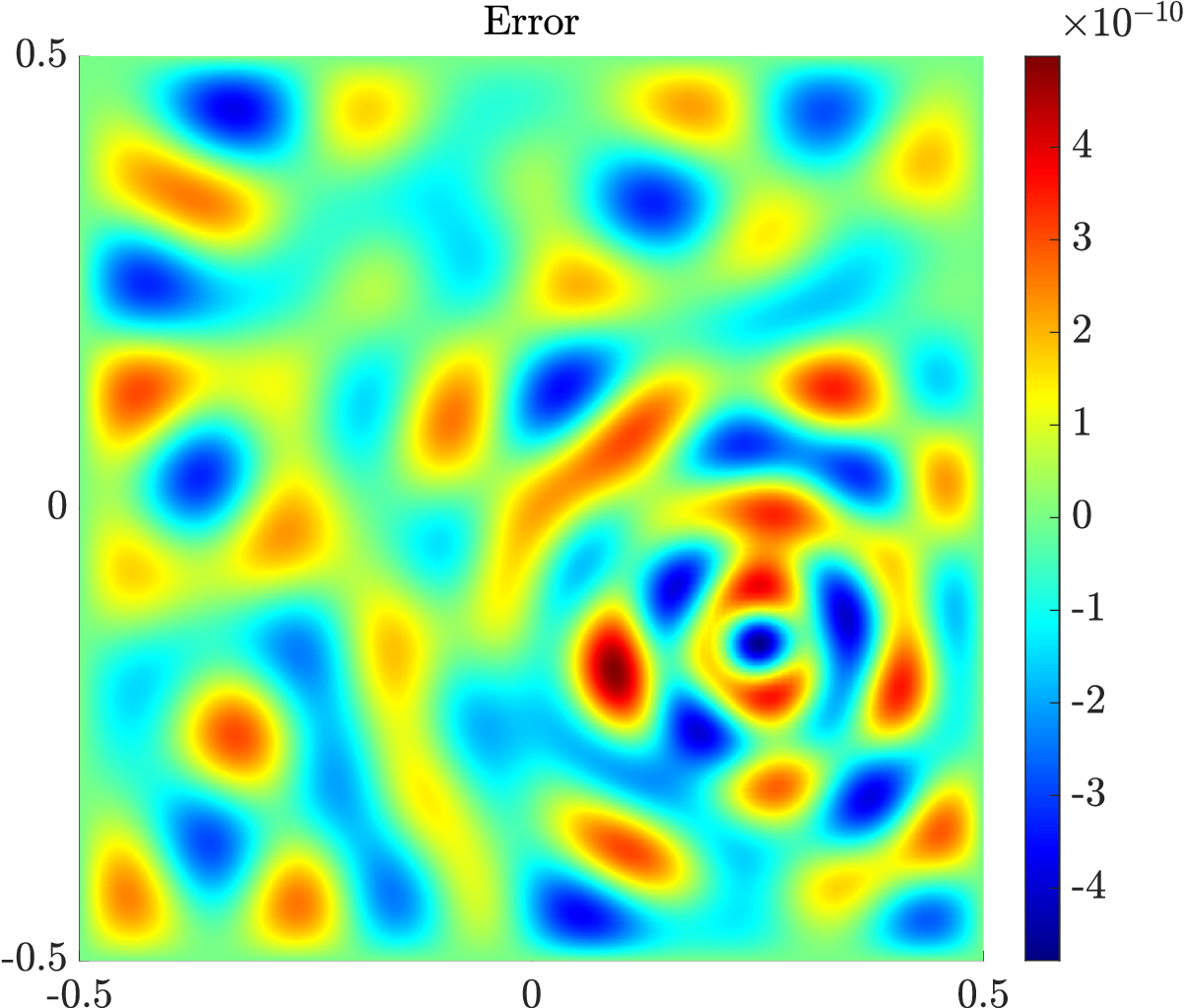}
		\subcaption{Error at $t=2.5$.}
		\label{fig:plot_sol_2}
	\end{subfigure}
	\centering
	\begin{subfigure}{0.4\linewidth}
		\includegraphics[width=\textwidth]
		{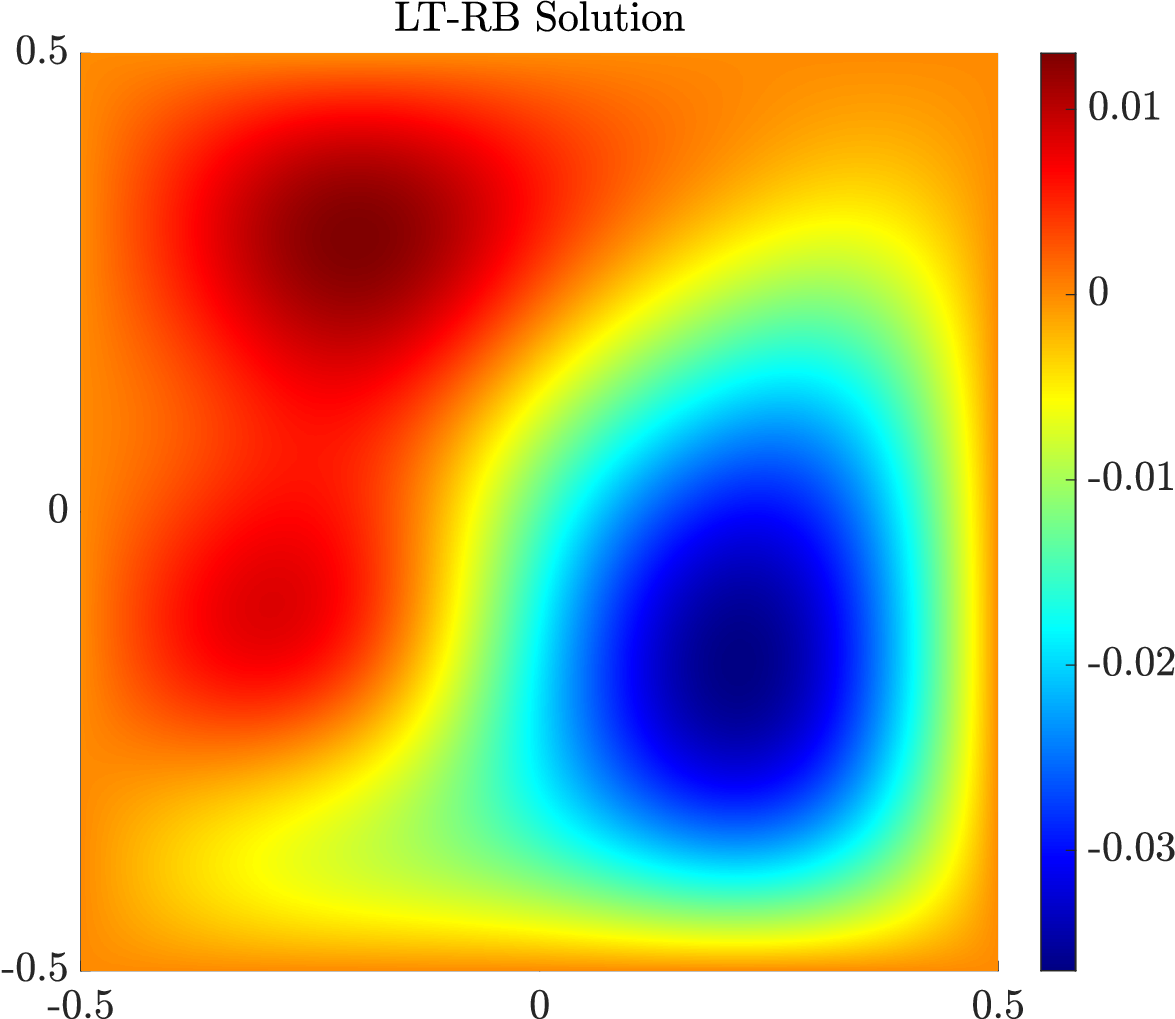}
		\subcaption{LT-MOR Solution at $t=5$.}
        \label{fig:plot_sol_3}
	\end{subfigure}
	\begin{subfigure}{0.4\linewidth}
		\includegraphics[width=\textwidth]
		{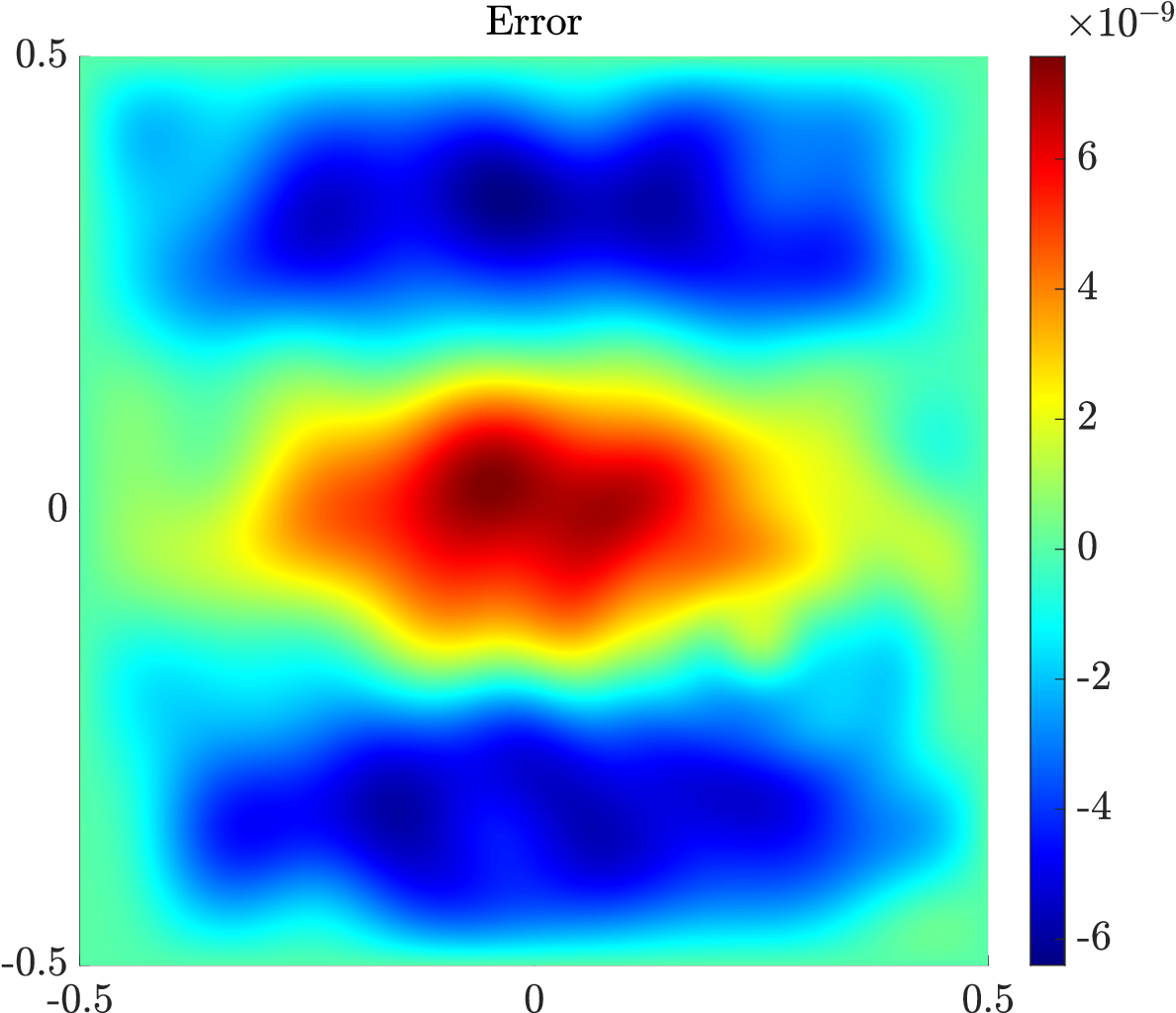}
		\subcaption{Error at $t=5$.}
		\label{fig:plot_sol_4}
	\end{subfigure}
	\centering
	\begin{subfigure}{0.4\linewidth}
		\includegraphics[width=\textwidth]
		{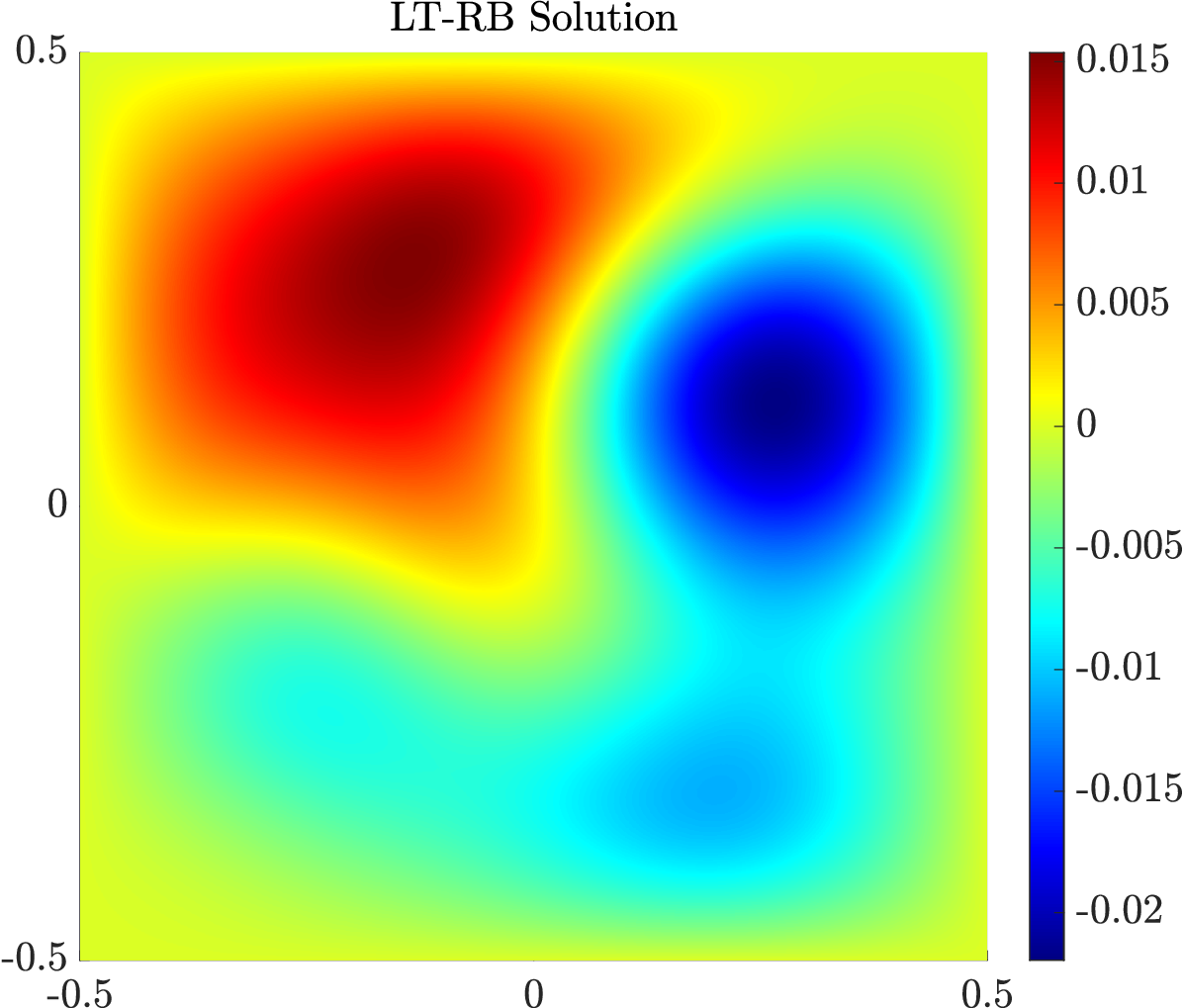}
		\subcaption{LT-MOR Solution at $t=7.5$.}
		\label{fig:plot_sol_5}
	\end{subfigure}
	\begin{subfigure}{0.4\linewidth}
		\includegraphics[width=\textwidth]
		{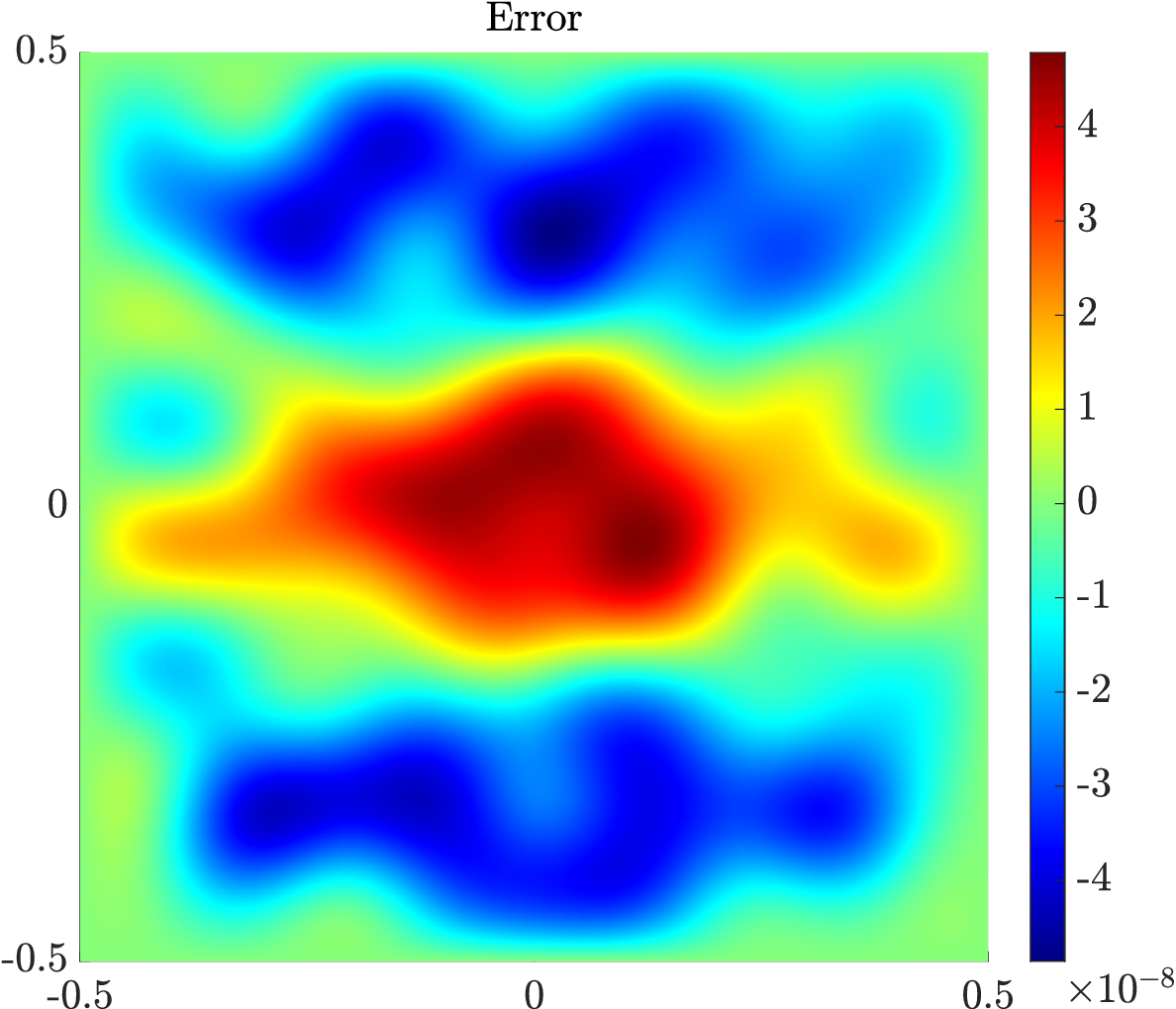}
		\subcaption{Error at $t=7.5$.}
		\label{fig:plot_sol_6}
	\end{subfigure}
	\centering
	\begin{subfigure}{0.4\linewidth}
		\includegraphics[width=\textwidth]
		{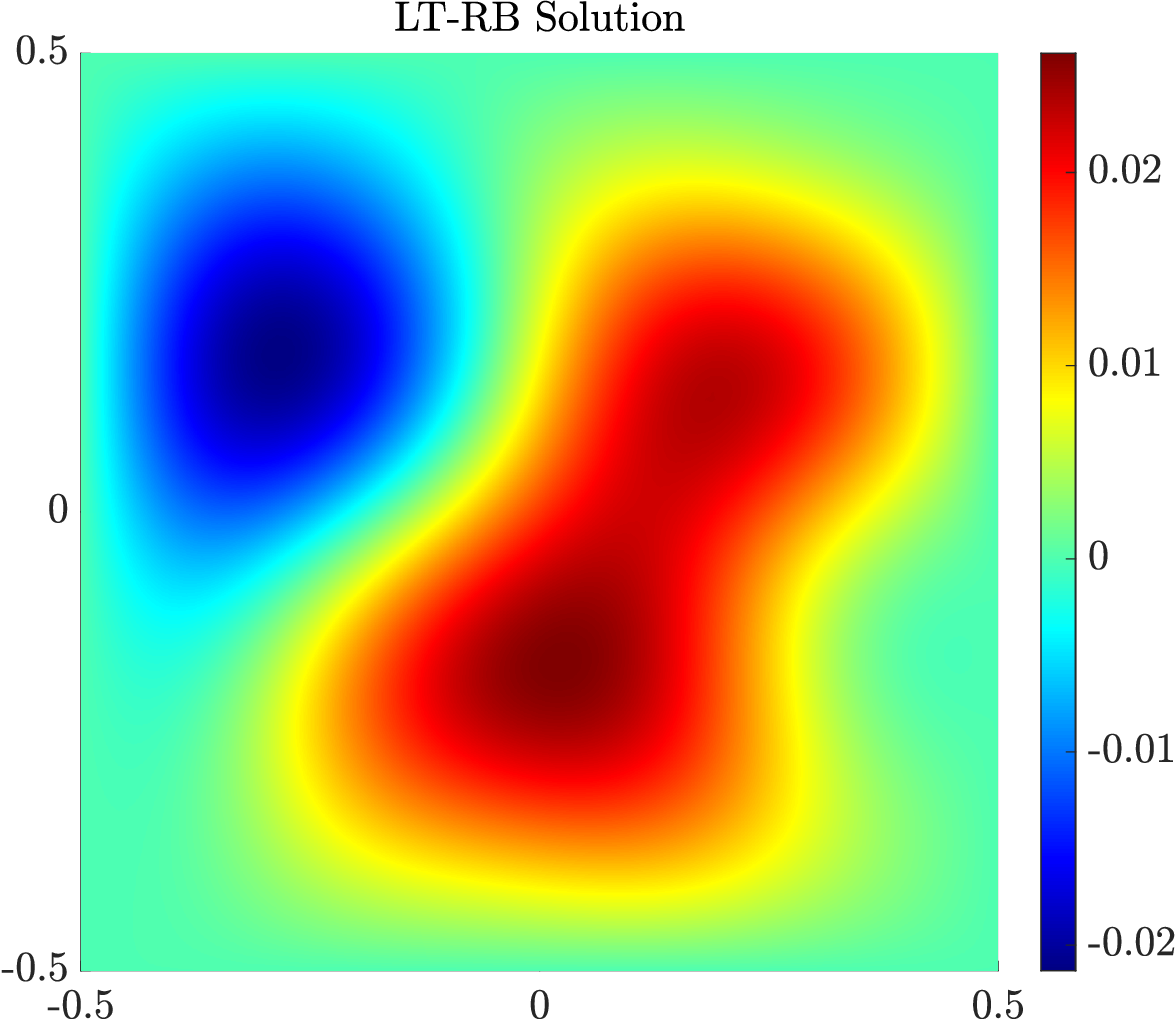}
		\subcaption{LT-MOR Solution at $t=10$.}
		\label{fig:plot_sol_7}
	\end{subfigure}
	\begin{subfigure}{0.4\linewidth}
		\includegraphics[width=\textwidth]
		{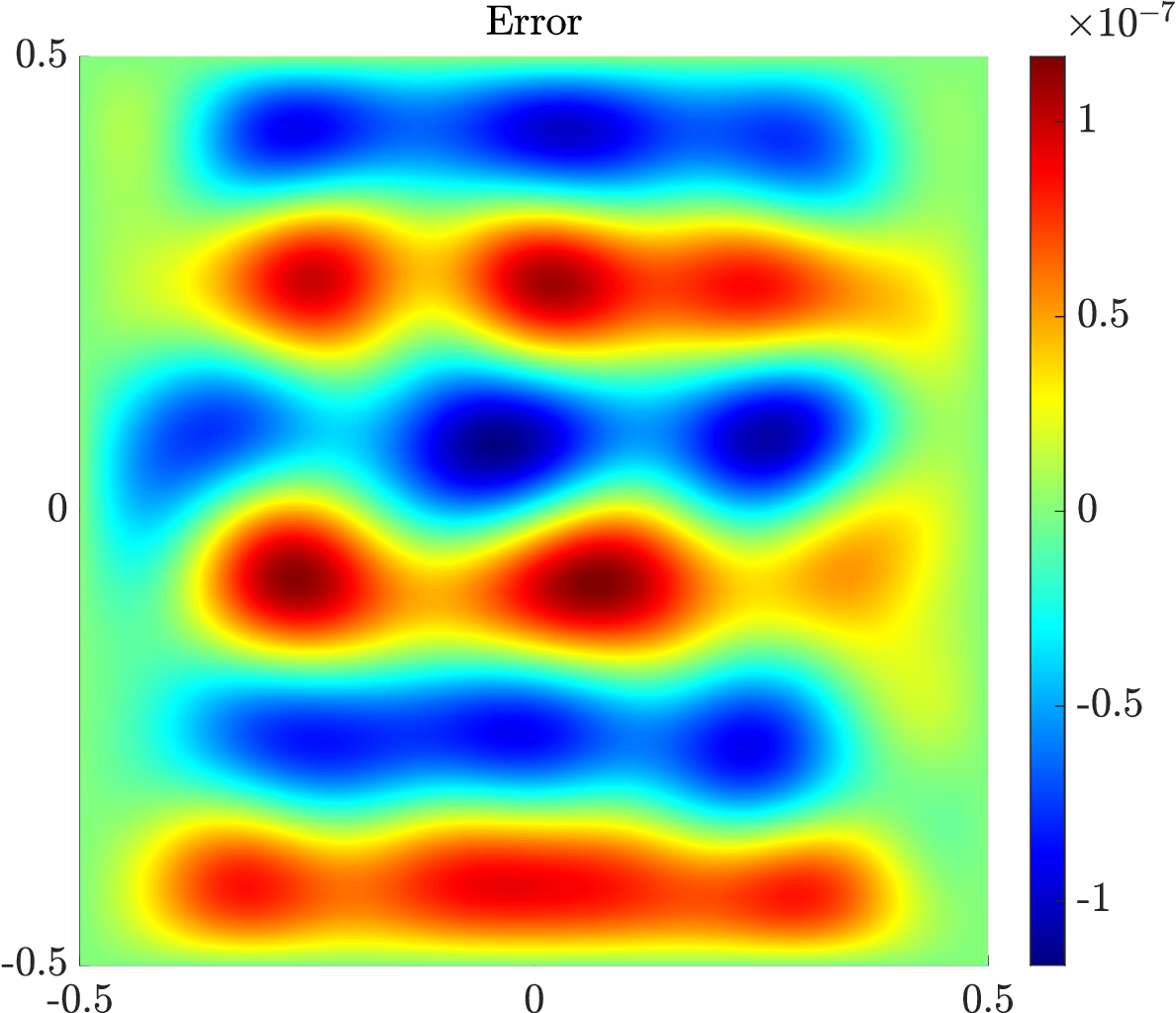}
		\subcaption{Error at $t=10$.}
		\label{fig:plot_sol_8}
	\end{subfigure}
	\caption{\label{fig:plot_Sol_1} 
	LT-MOR solution and the difference between the LT-MOR solution and the high-fidelity solution
	at different times. Figures~\ref{fig:plot_sol_1}--\ref{fig:plot_sol_8} show the LT-MOR solution and the
	corresponding error at $t\in\{2.5,5,7.5,10\}$ for $t_0 = 2.5$, $\alpha = \frac{5}{2} \pi$, $M = 175$, and $R = 50$.
	}
\end{figure}

\subsection{Discussion}
We conclude by discussing the three performance criteria introduced at the beginning of Section~\ref{sec:numerical_results}.

\begin{itemize}
    \item[(i)]
    {\bf Accuracy with respect to the high-fidelity solution.}
    Figures~\ref{fig:error_rel_U1}--\ref{fig:error_rel_U4} show a rapid decay of the relative error as the reduced dimension $R$ increases, in line with the exponential best-approximation bound established in Lemma~\ref{lem:approximation_Hardy_spaces_2}
    and in Theorem~\ref{thm:fully_discrete_error} when considering the fully discrete setting. In practice, this regime is observed once the Laplace variable is sampled sufficiently fine; otherwise, the error due to the discretization of the Laplace-domain quantities becomes dominant (see Section~\ref{sec:fully_discrete}, in particular Lemma~\ref{eq:error_discrete_norm} and subsequently Theorem~\ref{thm:fully_discrete_error}).

    \item[(ii)]
    {\bf Accuracy with respect to the number of snapshots in the offline phase.}
    The same figures also highlight that the attainable accuracy for fixed $R$ is influenced by the number of Laplace-domain samples. Section~\ref{sec:fully_discrete} quantifies this effect: Lemma~\ref{eq:error_discrete_norm} provides a concrete sampling strategy and shows an (essentially) exponential decay of the sampling error as $M$ increases, until the reduced-basis truncation error becomes dominant. We recall that, following the symmetry considerations discussed in \cite[Section~5.4]{henriquez2024fast}, we effectively compute only $M+1$ samples instead of $2M+1$.

    \item[(iii)]
    {\bf Speed-up.}
    The timing breakdown in Figure~\ref{fig:speed_square} indicates that the offline phase (cf.~Section~\ref{sec:laplace_transform_MOR}) is dominated by the independent Laplace-domain solves, while the online stage amounts to solving the reduced problem (Problem~\ref{pr:sdpr})---that is, dense systems of size $R\times R$ at each time step---and reconstructing the solution in $\mathcal{V}_h$. In the configurations reported in Figure~\ref{fig:speed_square} (total bars), the overall wall-clock time is reduced from $50.8\,$s (high-fidelity) to $3.56\,$s, $4.75\,$s, $5.71\,$s, $6.61\,$s, and $8.24\,$s (LT-MOR), corresponding to speed-ups of approximately $14.3\times$, $10.7\times$, $8.9\times$, $7.7\times$, and $6.2\times$, respectively. The reduced solve has cost $\mathcal{O}(R^3)$ per time step and the reconstruction $\mathcal{O}(R N_t)$ overall, which is small compared with the cost of computing the Laplace-domain samples. Since the Laplace-domain solves are embarrassingly parallel, additional wall-clock savings can be expected if these computations are distributed.
\end{itemize}

\section{Concluding Remarks}
\label{sec:concluding_remarks}
In this work we introduced and analyzed a Laplace-transform-based reduced basis (LT-MOR)
strategy for abstract second-order wave problems with vanishing initial conditions and a
separable forcing term given by the product of a temporal Ricker wavelet and a spatial
profile. Working in the Laplace domain turns the evolution problem into a family of
parameter-dependent, stationary problems in the complex Laplace variable, which allows one
to construct reduced spaces using standard snapshot/POD technology and to recover the
time-domain solution via any suitable time-stepping scheme.

On the theoretical side, we proved exponential convergence rates for the LT-MOR
approximation and established bounds that are robust with respect to the parameters
controlling the Ricker wavelet's shape and width. Our analysis also identifies an intrinsic
accuracy barrier governed by the value of the wavelet at the initial time, thereby providing
a computable criterion for the regime in which exponential decay of the error can be
expected. Importantly, the resulting estimates are independent of the underlying Galerkin
discretization space and hence decouple the MOR approximation properties from the specific
high-fidelity solver.

The numerical experiments for the scalar wave equation corroborate the theoretical
predictions: the reduced solutions achieve high accuracy with comparatively small reduced
dimensions, and the offline--online decomposition yields substantial speed-ups relative to
the high-fidelity time stepping while maintaining controlled errors over the full time
interval of interest.

Several extensions are natural and will be addressed in future work. First, it is of
practical interest to incorporate additional physical and/or geometric parameters (e.g.,
material coefficients or source locations), leading to a genuinely parametric setting and
to reduced space constructions based on POD--Greedy or related certified greedy strategies.
Second, it would be worthwhile to generalize the approach to alternative seismic source
models beyond the Ricker wavelet, for instance exponentially decaying wavelets such as the
K\"upper wavelet used in mantle-structure simulations \cite{igel1993p}. Finally, extending
the methodology and its analysis to more complex wave systems (e.g., elastic formulations),
and to adaptive sampling strategies in the Laplace domain, are directions of current and
future research on this topic. 

\begin{acknowledgements}
FH's work is funded by the Deutsche Forschungsgemeinschaft (DFG, German Research Foundation) – Project-ID 258734477 – SFB 1173 and the Austrian Science Fund (FWF) under the project I6667-N. 
Funding was also received from the European Research Council (ERC) under the Horizon 2020 research and innovation program 
of the European Union (Grant agreement No. 101125225, Project Name: ``New Frontiers in Optimal Adaptivity", PI: Prof. Dr. Michael Feischl).
\end{acknowledgements}


\bibliographystyle{spmpsci} 
\bibliography{ref.bib}

@book{quarteroni2015reduced,
	author = {Quarteroni, Alfio and Manzoni, Andrea and Negri, Federico},
	publisher = {Springer},
	title = {{Reduced basis methods for partial differential equations: an introduction}},
	volume = {92},
	year = {2015}}

@article{henriquez2024fast,
	author = {Henr{\'\i}quez, Fernando and Hesthaven, Jan S},
	journal = {arXiv preprint arXiv:2403.02847v3},
	title = {Fast Numerical Approximation of Parabolic Problems Using Model Order Reduction and the {L}aplace Transform},
	year = {2024}}

@incollection{sauter2010boundary,
  title={{Boundary element methods}},
  author={Sauter, Stefan A and Schwab, Christoph},
  booktitle={{Boundary Element Methods}},
  pages={183--287},
  year={2010},
  publisher={Springer}
}

@book{RR97,
	author = {Rosenblum, Marvin and Rovnyak, James},
	publisher = {Oxford University Press},
	title = {{Hardy classes and operator theory}},
	year = {1985}}

@book{hille1996functional,
	author = {Hille, Einar and Phillips, Ralph Saul},
	publisher = {American Mathematical Soc.},
	title = {{Functional analysis and semi-groups}},
	volume = {31},
	year = {1996}}

@book{bracewell2000fourier,
  author       = {Bracewell, Ronald N.},
  title        = {{The Fourier Transform and Its Applications}},
  edition      = {3rd},
  publisher    = {McGraw-Hill},
  address      = {Boston},
  year         = {2000},
  isbn         = {978-0-07-303938-1}
}

@book{stenger2012numerical,
  title={{Numerical methods based on Sinc and analytic functions}},
  author={Stenger, Frank},
  volume={20},
  year={2012},
  publisher={Springer Science \& Business Media}
}

@article{HBN86,
	author = {et T. Ha Duong, A Bamberger and Nedelec, JC},
	journal = {Mathematical methods in the applied sciences},
	number = {1},
	pages = {405--435},
	publisher = {Wiley Online Library},
	title = {Formulation variationnelle espace-temps pour le calcul par potentiel retard{\'e} de la diffraction d'une onde acoustique (I)},
	volume = {8},
	year = {1986}}

@article{MST20,
	author = {Melenk, Jens M and Sauter, Stefan A and Torres, C{\'e}line},
	journal = {SIAM Journal on Numerical Analysis},
	number = {4},
	pages = {2119--2143},
	publisher = {SIAM},
	title = {Wavenumber explicit analysis for {G}alerkin discretizations of lossy Helmholtz problems},
	volume = {58},
	year = {2020}}

@unknown{HenriquezHesthaven2024,
	author = {Henriquez, Fernando and Hesthaven, Jan},
	doi = {10.48550/arXiv.2405.19896},
	month = {05},
	title = {Fast Numerical Approximation of Linear, Second-Order Hyperbolic Problems Using Model Order Reduction and the {L}aplace Transform},
	year = {2024}}

@article{hawkins2025model,
author = {Hawkins, Rhys and Khalid, Muhammad Hamza and Schlottbom, Matthias and Smetana, Kathrin},
title = {{Model Order Reduction for Seismic Applications}},
journal = {SIAM Journal on Scientific Computing},
volume = {47},
number = {5},
pages = {B1045-B1076},
year = {2025},
doi = {10.1137/24M1667737}
}

@article{Hawkinsetal2023,
  author  = {Hawkins, R. and Khalid, M. H. and Smetana, K. and Trampert, J.},
  title   = {{Model order reduction for seismic waveform modelling: inspiration from normal modes}},
  journal = {Geophysical Journal International},
  year    = {2023},
  volume  = {234},
  number  = {3},
  pages   = {2255--2283},
  doi     = {10.1093/gji/ggad195}
}

@article{alford1974accuracy,
  title={{Accuracy of finite-difference modeling of the acoustic wave equation}},
  author={Alford, R. M. and Kelly, K. R. and Whitmore, N. D.},
  journal={Geophysics},
  volume={39},
  number={6},
  pages={834--842},
  year={1974},
  publisher={Society of Exploration Geophysicists},
  doi={10.1190/1.1440470}
}

@book{fichtner2010full,
  title={{Full Seismic Waveform Modelling and Inversion}},
  author={Fichtner, Andreas},
  year={2010},
  publisher={Springer-Verlag},
  address={Berlin, Germany},
  series={Advances in Geophysical and Environmental Mechanics and Mathematics},
  doi={10.1007/978-3-642-15807-0}
}

@misc{schneider2025understanding,
  title={Understanding the {Finite-Difference Time-Domain} Method},
  author={Schneider, John B.},
  year={2025},
  publisher={Washington State University},
  url={https://eecs.wsu.edu/~schneidj/ufdtd/},
  note={Online Open-Source Textbook}
}

@inproceedings{schneider2004plane,
  title={Plane waves in {FDTD} simulations and a nearly perfect total-field/scattered-field boundary},
  author={Schneider, John B.},
  booktitle={{2004 IEEE Antennas and Propagation Society International Symposium}},
  volume={2},
  pages={1927--1930},
  year={2004},
  organization={IEEE},
  doi={10.1109/APS.2004.1330580}
}

@article{SchneiderChen2011,
  author  = {Schneider, John B. and Chen, Zhen},
  title   = {Incorporating the {G-TFSF} Concept into the Analytic Field Propagation {TFSF} Method},
  journal = {IEEE Transactions on Antennas and Propagation},
  year    = {2011},
  volume  = {59},
  number  = {9},
  pages   = {3296--3304},
  doi     = {10.1109/tap.2011.2161452}
}

@article{ricker1943further,
  title={{Further developments in the wavelet theory of seismogram structure}},
  author={Ricker, Norman},
  journal={Bulletin of the Seismological Society of America},
  volume={33},
  number={3},
  pages={197--228},
  year={1943},
  publisher={Seismological Society of America},
  doi={10.1785/BSSA0330030197}
}

@article{ricker1944wavelet,
  title={{Wavelet functions and their polynomials}},
  author={Ricker, Norman},
  journal={Geophysics},
  volume={9},
  number={3},
  pages={314--323},
  year={1944},
  publisher={Society of Exploration Geophysicists},
  doi={10.1190/1.1445082}
}

@article{igel1993p,
  title={{P-SV} wave propagation in the {Earth's} mantle using finite differences: {Application} to heterogeneous lowermost mantle structure},
  author={Igel, Heiner and Weber, Michael},
  journal={Geophysical Research Letters},
  volume={20},
  number={7},
  pages={531--534},
  year={1993},
  doi={10.1029/93GL00431}
}

@article{komatitsch1999introduction,
  title={{Introduction to the spectral element method for three-dimensional seismic wave propagation}},
  author={Komatitsch, Dimitri and Tromp, Jeroen},
  journal={Geophysical Journal International},
  volume={139},
  number={3},
  pages={806--822},
  year={1999},
  publisher={Blackwell Publishing Ltd Oxford, UK},
  doi={10.1046/j.1365-246x.1999.00967.x}
}

@article{duan2023machine,
	author = {Duan, Junming and Wang, Qian and Hesthaven, Jan S},
	journal = {arXiv preprint arXiv:2305.09199},
	title = {{Machine learning enhanced real-time aerodynamic forces prediction based on sparse pressure sensor inputs}},
	year = {2023}}

@article{duan2023non,
	author = {Duan, Junming and Hesthaven, Jan S},
	journal = {arXiv preprint arXiv:2303.02986},
	title = {{Non-intrusive data-driven reduced-order modeling for time-dependent parametrized problems}},
	year = {2023}}

@article{peherstorfer2016data,
	author = {Peherstorfer, Benjamin and Willcox, Karen},
	journal = {Computer Methods in Applied Mechanics and Engineering},
	pages = {196--215},
	publisher = {Elsevier},
	title = {{Data-driven operator inference for nonintrusive projection-based model reduction}},
	volume = {306},
	year = {2016}}

@book{kutz2016dynamic,
	author = {Kutz, J Nathan and Brunton, Steven L and Brunton, Bingni W and Proctor, Joshua L},
	publisher = {SIAM},
	title = {{Dynamic mode decomposition: data-driven modeling of complex systems}},
	year = {2016}}

@article{schmid2010dynamic,
	author = {Schmid, Peter J},
	journal = {Journal of fluid mechanics},
	pages = {5--28},
	publisher = {Cambridge University Press},
	title = {{Dynamic mode decomposition of numerical and experimental data}},
	volume = {656},
	year = {2010}}

@article{hesthaven2023adaptive,
	author = {Hesthaven, Jan and Pagliantini, Cecilia and Ripamonti, Nicol{\`o}},
	journal = {Mathematics of Computation},
	title = {{Adaptive symplectic model order reduction of parametric particle-based Vlasov--Poisson equation}},
	year = {2023}}

@article{hesthaven2021structure,
	author = {Hesthaven, Jan and Pagliantini, Cecilia},
	journal = {Mathematics of Computation},
	number = {330},
	pages = {1701--1740},
	title = {{Structure-preserving reduced basis methods for Poisson systems}},
	volume = {90},
	year = {2021}}

@article{hesthaven2022rank,
	author = {Hesthaven, Jan S and Pagliantini, Cecilia and Ripamonti, Nicol{\`o}},
	journal = {ESAIM: Mathematical Modelling and Numerical Analysis},
	number = {2},
	pages = {617--650},
	publisher = {EDP Sciences},
	title = {Rank-adaptive structure-preserving model order reduction of {H}amiltonian systems},
	volume = {56},
	year = {2022}}

@article{afkham2017structure,
	author = {Afkham, Babak Maboudi and Hesthaven, Jan S},
	journal = {SIAM Journal on Scientific Computing},
	number = {6},
	pages = {A2616--A2644},
	publisher = {SIAM},
	title = {Structure preserving model reduction of parametric {H}amiltonian systems},
	volume = {39},
	year = {2017}}

@article{xiao2017parameterized,
	author = {Xiao, Dunhui and Fang, F and Pain, CC and Navon, IM},
	journal = {Computer Methods in Applied Mechanics and Engineering},
	pages = {868--889},
	publisher = {Elsevier},
	title = {{A parameterized non-intrusive reduced order model and error analysis for general time-dependent nonlinear partial differential equations and its applications}},
	volume = {317},
	year = {2017}}

@article{audouze2013nonintrusive,
	author = {Audouze, Christophe and De Vuyst, Florian and Nair, Prasanth B},
	journal = {Numerical Methods for Partial Differential Equations},
	number = {5},
	pages = {1587--1628},
	publisher = {Wiley Online Library},
	title = {{Nonintrusive reduced-order modeling of parametrized time-dependent partial differential equations}},
	volume = {29},
	year = {2013}}

@article{lieu2006reduced,
	author = {Lieu, Thuan and Farhat, Charbel and Lesoinne, Michel},
	journal = {Computer methods in applied mechanics and engineering},
	number = {41-43},
	pages = {5730--5742},
	publisher = {Elsevier},
	title = {{Reduced-order fluid/structure modeling of a complete aircraft configuration}},
	volume = {195},
	year = {2006}}

@incollection{rozza2014fundamentals,
	author = {Rozza, Gianluigi},
	booktitle = {{Separated Representations and PGD-Based Model Reduction: Fundamentals and Applications}},
	pages = {153--227},
	publisher = {Springer},
	title = {{Fundamentals of reduced basis method for problems governed by parametrized PDEs and applications}},
	year = {2014}}

@article{prud2002reliable,
	author = {Prud'Homme, Christophe and Rovas, Dimitrios V and Veroy, Karen and Machiels, Luc and Maday, Yvon and Patera, Anthony T and Turinici, Gabriel},
	journal = {J. Fluids Eng.},
	number = {1},
	pages = {70--80},
	title = {{Reliable real-time solution of parametrized partial differential equations: Reduced-basis output bound methods}},
	volume = {124},
	year = {2002}}

@book{hesthaven2016certified,
	author = {Hesthaven, Jan S and Rozza, Gianluigi and Stamm, Benjamin and others},
	publisher = {Springer},
	title = {{Certified reduced basis methods for parametrized partial differential equations}},
	volume = {590},
	year = {2016}}

@article{guglielmi2021pseudospectral,
	author = {Guglielmi, Nicola and L{\'o}pez-Fern{\'a}ndez, Mar{\'\i}a and Manucci, Mattia},
	journal = {Journal of Scientific Computing},
	pages = {1--31},
	publisher = {Springer},
	title = {{Pseudospectral roaming contour integral methods for convection-diffusion equations}},
	volume = {89},
	year = {2021}}

@article{guglielmi2023model,
	author = {Guglielmi, Nicola and Manucci, Mattia},
	journal = {SIAM Journal on Scientific Computing},
	number = {4},
	pages = {A1711--A1740},
	publisher = {SIAM},
	title = {{Model order reduction in contour integral methods for parametric PDEs}},
	volume = {45},
	year = {2023}}

@article{haasdonk2008reduced,
	author = {Haasdonk, Bernard and Ohlberger, Mario},
	journal = {ESAIM: Mathematical Modelling and Numerical Analysis},
	number = {2},
	pages = {277--302},
	publisher = {EDP Sciences},
	title = {{Reduced basis method for finite volume approximations of parametrized linear evolution equations}},
	volume = {42},
	year = {2008}}

@article{hesthaven2014efficient,
	author = {Hesthaven, Jan S and Stamm, Benjamin and Zhang, Shun},
	journal = {ESAIM: Mathematical Modelling and Numerical Analysis},
	number = {1},
	pages = {259--283},
	publisher = {EDP Sciences},
	title = {{Efficient greedy algorithms for high-dimensional parameter spaces with applications to empirical interpolation and reduced basis methods}},
	volume = {48},
	year = {2014}}

@article{liang2002proper,
	author = {Liang, YC and Lee, HP and Lim, SP and Lin, WZ and Lee, KH and Wu, CG1237},
	journal = {Journal of Sound and vibration},
	number = {3},
	pages = {527--544},
	publisher = {Elsevier},
	title = {{Proper orthogonal decomposition and its applications---Part I: Theory}},
	volume = {252},
	year = {2002}}

@inproceedings{OhlbergerRave2016,
  author    = {Ohlberger, Mario and Rave, Stephan},
  title     = {Reduced Basis Methods: Success, Limitations and Future Challenges},
  booktitle = {Proceedings of the Conference Algoritmy},
  year      = {2016},
  pages     = {1--12},
  url       = {http://pc2.iam.fmph.uniba.sk/institute/amuc/ojs/index.php/algoritmy/article/view/389},
  doi       = {10.48550/arxiv.1511.02021}
}

@article{GU19,
	author = {Greif, Constantin and Urban, Karsten},
	doi = {10.1016/j.aml.2019.05.013},
	journal = {Applied Mathematics Letters},
	pages = {216--222},
	publisher = {Elsevier},
	title = {{Decay of the Kolmogorov N-width for wave problems}},
	volume = {96},
	year = {2019}}

@article{AGU25,
	author = {Arbes, Florian and Greif, Constantin and Urban, Karsten},
	doi = {10.1007/s10444-025-10224-0},
	journal = {Advances in Computational Mathematics},
	number = {2},
	pages = {13},
	title = {{The Kolmogorov N-width for linear transport: exact representation and the influence of the data}},
	volume = {51},
	year = {2025}}

@article{Bigoni2020a,
	author = {Bigoni, Caterina and Hesthaven, Jan S.},
	doi = {10.1016/j.cma.2020.112896},
	issn = {00457825},
	journal = {Computer Methods in Applied Mechanics and Engineering},
	pages = {112896},
	publisher = {Elsevier B.V.},
	title = {{Simulation-based Anomaly Detection and Damage Localization: an application to Structural Health Monitoring}},
	volume = {363},
	year = {2020}}

@article{huynh_laplace_2011,
	author = {Huynh, D.B. Phuong and Knezevic, David J. and Patera, Anthony T.},
	doi = {10.1016/j.crma.2011.02.003},
	issn = {1631073X},
	journal = {Comptes Rendus Mathematique},
	langid = {english},
	number = {7},
	pages = {401--405},
	shortjournal = {Comptes Rendus Mathematique},
	title = {A {L}aplace transform certified reduced basis method; application to the heat equation and wave equation},
	volume = {349},
	year = {2011}}

@article{bui-thanh_model_2008,
	author = {Bui-Thanh, T. and Willcox, K. and Ghattas, O.},
	doi = {10.1137/070694855},
	issn = {1064-8275},
	journal = {{SIAM} Journal on Scientific Computing},
	number = {6},
	pages = {3270--3288},
	shortjournal = {{SIAM} J. Sci. Comput.},
	title = {{Model Reduction for Large-Scale Systems with High-Dimensional Parametric Input Space}},
	volume = {30},
	year = {2008}}

@article{Hesthaven2022,
	author = {Hesthaven, Jan S and Pagliantini, Cecilia and Rozza, Gianluigi},
	doi = {10.1017/S0962492922000058},
	issn = {0962-4929},
	journal = {Acta Numerica},
	pages = {265--345},
	title = {{Reduced basis methods for time-dependent problems}},
	volume = {31},
	year = {2022}}

@article{buffa_priori_2012,
	author = {Buffa, Annalisa and Maday, Yvon and Patera, Anthony T. and Prud'homme, Christophe and Turinici, Gabriel},
	doi = {10.1051/m2an/2011056},
	issn = {0764-583X, 1290-3841},
	journal = {{ESAIM}: Mathematical Modelling and Numerical Analysis},
	langid = {english},
	number = {3},
	pages = {595--603},
	rights = {{\copyright} {EDP} Sciences, {SMAI}, 2012},
	shortjournal = {{ESAIM}: M2AN},
	title = {{A priori convergence of the Greedy algorithm for the parametrized reduced basis method}},
	volume = {46},
	year = {2012}}

@article{devore_greedy_2013,
	author = {{DeVore}, Ronald and Petrova, Guergana and Wojtaszczyk, Przemyslaw},
	doi = {10.1007/s00365-013-9186-2},
	issn = {1432-0940},
	journal = {Constructive Approximation},
	langid = {english},
	number = {3},
	pages = {455--466},
	shortjournal = {Constr Approx},
	title = {Greedy Algorithms for Reduced Bases in {B}anach Spaces},
	volume = {37},
	year = {2013}}

@article{kuhlman_review_2013,
	author = {Kuhlman, Kristopher L.},
	doi = {10/f4xznh},
	issn = {1572-9265},
	journal = {Numerical Algorithms},
	langid = {english},
	number = {2},
	pages = {339--355},
	shortjournal = {Numer Algor},
	title = {Review of inverse {L}aplace transform algorithms for {L}aplace-space numerical approaches},
	urldate = {2022-12-23},
	volume = {63},
	year = {2013}}

@article{Rawlinson2014SeismicTomography,
  author  = {Nicholas Rawlinson and Andreas Fichtner and Malcolm Sambridge and Mallory K. Young},
  title   = {{Seismic Tomography and the Assessment of Uncertainty}},
  journal = {Advances in Geophysics},
  volume  = {55},
  pages   = {1--76},
  year    = {2014},
  doi     = {10.1016/bs.agph.2014.08.001},
  publisher = {Academic Press Inc.},
  issn = {0065-2687}
}

@article{Tromp2005AdjointBanana,
  author  = {Tromp, Jeroen and Tape, Carl and Liu, Qinya},
  title   = {{Seismic tomography, adjoint methods, time reversal and banana-doughnut kernels}},
  journal = {Geophysical Journal International},
  volume  = {160},
  number  = {1},
  pages   = {195--216},
  year    = {2005},
  doi     = {10.1111/j.1365-246X.2004.02453.x}
}

@article{Bozdag2011MisfitPhaseEnvelope,
  author  = {Bozda{\u g}, Ebru and Trampert, Jeannot and Tromp, Jeroen},
  title   = {{Misfit functions for full waveform inversion based on instantaneous phase and envelope measurements}},
  journal = {Geophysical Journal International},
  volume  = {185},
  number  = {2},
  pages   = {845--870},
  year    = {2011},
  doi     = {10.1111/j.1365-246X.2011.04970.x}
}

@book{Tromp2013FullWaveform,
  author    = {Tromp, Jeroen},
  title     = {{Full Seismic Waveform Modelling and Inversion}},
  series    = {Lecture Notes},
  publisher = {Princeton University},
  year      = {2013}
}

@article{berkooz1993proper,
  title={{The proper orthogonal decomposition in the analysis of turbulent flows}},
  author={Berkooz, Gal and Holmes, Philip and Lumley, John L},
  journal={Annual review of fluid mechanics},
  volume={25},
  number={1},
  pages={539--575},
  year={1993},
  publisher={Annual Reviews 4139 El Camino Way, PO Box 10139, Palo Alto, CA 94303-0139, USA}
}

@article{kunisch2001galerkin,
  title={{G}alerkin proper orthogonal decomposition methods for parabolic problems},
  author={Kunisch, Karl and Volkwein, Stefan},
  journal={Numerische mathematik},
  volume={90},
  number={1},
  pages={117--148},
  year={2001},
  publisher={Springer}
}

@article{haasdonk2013convergence,
  title={Convergence rates of the {POD}--greedy method},
  author={Haasdonk, Bernard},
  journal={ESAIM: Mathematical modelling and numerical Analysis},
  volume={47},
  number={3},
  pages={859--873},
  year={2013},
  publisher={EDP Sciences}
}

@incollection{siena2023introduction,
  title={An introduction to {POD}-greedy-{G}alerkin reduced basis method},
  author={Siena, Pierfrancesco and Girfoglio, Michele and Rozza, Gianluigi},
  booktitle={{Reduced Order Models for the Biomechanics of Living Organs}},
  pages={127--145},
  year={2023},
  publisher={Elsevier}
}

@article{peherstorfer2022breaking,
  title={{Breaking the Kolmogorov barrier with nonlinear model reduction}},
  author={Peherstorfer, Benjamin},
  journal={Notices of the American Mathematical Society},
  volume={69},
  number={5},
  pages={725--733},
  year={2022},
  publisher={American Mathematical Society}
}

@article{peherstorfer2020model,
  title={{Model reduction for transport-dominated problems via online adaptive bases and adaptive sampling}},
  author={Peherstorfer, Benjamin},
  journal={SIAM Journal on Scientific Computing},
  volume={42},
  number={5},
  pages={A2803--A2836},
  year={2020},
  publisher={SIAM}
}

@article{koch2007dynamical,
  title={{Dynamical low-rank approximation}},
  author={Koch, Othmar and Lubich, Christian},
  journal={SIAM Journal on Matrix Analysis and Applications},
  volume={29},
  number={2},
  pages={434--454},
  year={2007},
  publisher={SIAM}
}

@article{sapsis2009dynamically,
  title={{Dynamically orthogonal field equations for continuous stochastic dynamical systems}},
  author={Sapsis, Themistoklis P and Lermusiaux, Pierre FJ},
  journal={Physica D: Nonlinear Phenomena},
  volume={238},
  number={23-24},
  pages={2347--2360},
  year={2009},
  publisher={Elsevier}
}

@article{musharbash2015error,
  title={{Error analysis of the dynamically orthogonal approximation of time dependent random PDEs}},
  author={Musharbash, Eleonora and Nobile, Fabio and Zhou, Tao},
  journal={SIAM Journal on Scientific Computing},
  volume={37},
  number={2},
  pages={A776--A810},
  year={2015},
  publisher={SIAM}
}

@article{iollo2014advection,
  title={{Advection modes by optimal mass transfer}},
  author={Iollo, Angelo and Lombardi, Damiano},
  journal={Physical Review E},
  volume={89},
  number={2},
  pages={022923},
  year={2014},
  publisher={APS}
}

@article{lee2020model,
  title={{Model reduction of dynamical systems on nonlinear manifolds using deep convolutional autoencoders}},
  author={Lee, Kookjin and Carlberg, Kevin T},
  journal={Journal of Computational Physics},
  volume={404},
  pages={108973},
  year={2020},
  publisher={Elsevier}
}

@article{reiss2018shifted,
  title={{The shifted proper orthogonal decomposition: A mode decomposition for multiple transport phenomena}},
  author={Reiss, Julius and Schulze, Philipp and Sesterhenn, J{\"o}rn and Mehrmann, Volker},
  journal={SIAM Journal on Scientific Computing},
  volume={40},
  number={3},
  pages={A1322--A1344},
  year={2018},
  publisher={SIAM}
}

@article{taddei2020registration,
  title={{A registration method for model order reduction: data compression and geometry reduction}},
  author={Taddei, Tommaso},
  journal={SIAM Journal on Scientific Computing},
  volume={42},
  number={2},
  pages={A997--A1027},
  year={2020},
  publisher={SIAM}
}

@article{urban2012new,
  title={{A new error bound for reduced basis approximation of parabolic partial differential equations}},
  author={Urban, Karsten and Patera, Anthony T},
  journal={Comptes Rendus. Math{\'e}matique},
  volume={350},
  number={3-4},
  pages={203--207},
  year={2012}
}

@article{yano2014space,
  title={{A space-time hp-interpolation-based certified reduced basis method for Burgers' equation}},
  author={Yano, Masayuki and Patera, Anthony T and Urban, Karsten},
  journal={Mathematical Models and Methods in Applied Sciences},
  volume={24},
  number={09},
  pages={1903--1935},
  year={2014},
  publisher={World Scientific}
}

@article{peng2016symplectic,
  title={Symplectic model reduction of {H}amiltonian systems},
  author={Peng, Liqian and Mohseni, Kamran},
  journal={SIAM Journal on Scientific Computing},
  volume={38},
  number={1},
  pages={A1--A27},
  year={2016},
  publisher={SIAM}
}

@inproceedings{hesthaven2022structure,
  title={Structure-preserving model order reduction of {H}amiltonian systems},
  author={Hesthaven, Jan S and Pagliantini, Cecilia and Ripamonti, Nicol{\`o} and others},
  booktitle={{Proc Int Cong Math}},
  volume={7},
  pages={5072--97},
  year={2022}
}

@article{wang2012proper,
  title={{Proper orthogonal decomposition closure models for turbulent flows: a numerical comparison}},
  author={Wang, Zhu and Akhtar, Imran and Borggaard, Jeff and Iliescu, Traian},
  journal={Computer Methods in Applied Mechanics and Engineering},
  volume={237},
  pages={10--26},
  year={2012},
  publisher={Elsevier}
}

@article{pan2018data,
  title={{Data-driven discovery of closure models}},
  author={Pan, Shaowu and Duraisamy, Karthik},
  journal={SIAM Journal on Applied Dynamical Systems},
  volume={17},
  number={4},
  pages={2381--2413},
  year={2018},
  publisher={SIAM}
}

@article{iollo2000stability,
  title={Stability properties of {POD}--{G}alerkin approximations for the compressible Navier--Stokes equations},
  author={Iollo, Angelo and Lanteri, St{\'e}phane and D{\'e}sid{\'e}ri, J-A},
  journal={Theoretical and Computational Fluid Dynamics},
  volume={13},
  number={6},
  pages={377--396},
  year={2000},
  publisher={Springer}
}

@article{hesthaven2022reduced,
  title={{Reduced basis methods for time-dependent problems}},
  author={Hesthaven, Jan S and Pagliantini, Cecilia and Rozza, Gianluigi},
  journal={Acta Numerica},
  volume={31},
  pages={265--345},
  year={2022},
  publisher={Cambridge University Press}
}

@article{feischl2025optimal,
  title={{Optimal Time-Adaptivity for Parabolic Problems with applications to Model Order Reduction}},
  author={Feischl, Michael and Henr{\'\i}quez, Fernando and Niederkofler, David},
  journal={arXiv preprint arXiv:2512.05676},
  year={2025}
}

@article{hesthaven2026nonlinear,
  title={{Nonlinear model reduction for transport-dominated problems}},
  author={Hesthaven, Jan S and Peherstorfer, Benjamin and Unger, Benjamin},
  journal={arXiv preprint arXiv:2602.01397},
  year={2026}
}

@article{alouges2018fem,
  title={{FEM and BEM simulations with the Gypsilab framework}},
  author={Alouges, Fran{\c{c}}ois and Aussal, Matthieu},
  journal={The SMAI journal of computational mathematics},
  volume={4},
  pages={297--318},
  year={2018}
}

@article{henriquez2021shape,
  title={{Shape holomorphy of the Calder{\'o}n projector for the Laplacian in $\mathbb{R}^2$}},
  author={Henr{\'\i}quez, Fernando and Schwab, Christoph},
  journal={Integral Equations and Operator Theory},
  volume={93},
  number={4},
  pages={43},
  year={2021},
  publisher={Springer}
}

\end{document}